\let\cal\mathcal
\def\Fscr{{\cal F}}
\def\Gscr{{\cal G}}
\def\Lscr{{\cal L}}
\def\Mscr{{\cal M}}
\def\Nscr{{\cal N}}
\def\Oscr{{\cal O}}
\def\Rscr{{\cal R}}
\def\Wscr{{\cal W}}
\def\Bimod{\operatorname{Bimod}}
\def\BIMOD{\operatorname{BIMOD}}
\def\Gr{\operatorname{Gr}}
\def\QGr{\operatorname{QGr}}
\def\Qcoh{\operatorname{Qcoh}}
\def\Ext{\operatorname {Ext}}
\def\Hom{\operatorname {Hom}}
\def\RHom{\operatorname {RHom}}
\def\ker{\operatorname {ker}}
\def\gkdim{\operatorname {GK dim}}
\def\pd{\operatorname {pd}}
\DeclareMathOperator{\Proj}{Proj}
\DeclareMathOperator{\Tors}{Tors}
\DeclareMathOperator{\Aut}{Aut}
\def\Frac{\operatorname{Frac}}
\newcommand{\hookdownarrow}{\mathrel{\rotatebox[origin=c]{-90}{$\hookrightarrow$}}}
\newtheorem{lemma}{Lemma}[section]
\newtheorem{proposition}[lemma]{Proposition}
\newtheorem{theorem}[lemma]{Theorem}
\newtheorem{corollary}[lemma]{Corollary}
\newtheorem{convention}[lemma]{Convention}
\newtheorem*{theorem*}{Theorem}
\theoremstyle{definition}
\newtheorem{definition}[lemma]{Definition}
\theoremstyle{remark}
\newtheorem{remark}[lemma]{Remark}
\newtheorem{notation}{Notation}
\newdimen\uboxsep \uboxsep=1ex
\def\uboxn#1{\vtop to 0pt{\hrule height 0pt depth 0pt\vskip\uboxsep
\hbox to 0pt{\hss #1\hss}\vss}}
\def\uboxs#1{\vbox to 0pt{\vss\hbox to 0pt{\hss #1\hss}
\vskip\uboxsep\hrule height 0pt depth 0pt}}
\let\oldmarginpar\marginpar
\def\marginpar#1{\oldmarginpar{\tiny\textcolor{red}{#1}}}
\newcommand\blfootnote[1]{%
  \begingroup
  \renewcommand\thefootnote{}\footnote{#1}%
  \addtocounter{footnote}{-1}%
  \endgroup
}
\author{Dennis Presotto}
\title[Symmetric noncommutative birational transformations.]{Symmetric noncommutative birational transformations.}
\begin{document}
\begin{abstract}
In \cite{PresVdB} certain birational transformations were constructed between the noncommutative schemes associated to quadratic and cubic three dimensional Sklyanin algebras.
In the current paper we consider the inverse birational transformations and show that they are of the same type. 
Moreover we extend everything to the $\mathbb{Z}$-algebras context, which allows us to incorporate the noncommutative quadrics introduced by Van den Bergh in \cite{VdB38}.
\end{abstract}
\maketitle
\blfootnote{The author was supported by a Ph.D. fellowship of the Research Foundation Flanders (FWO).}
\tableofcontents

\section{Introduction}

Throughout this paper $k$ will be an algebraically closed field and all rings will be algebras over $k$. 
Following tradition \cite{artinzhang} we associate a noncommutative projective scheme $X = \Proj(A)$ to a connected graded $k$-algebra $A=k \oplus A_1 \oplus \ldots$ which is generated by $A_1$. $\Proj(A)$ is defined via its category of ``quasicoherent sheaves'':
\[ \Qcoh( X) := \QGr(A) = \Gr(A)/ \Tors(A) \]
where $\Gr(A)$ is the category of graded right $A$-modules and $\Tors(A)$ is the full subcategory of torsion $A$-modules, i.e. those graded right
$A$-modules that have locally right bounded grading \cite{artinzhang}. 
A non-trivial class of noncommutative surfaces is given by three dimensional Artin-Schelter regular algebras as defined in \cite{artinschelter}. Recall that a connected graded algebra $A$ is called AS-regular of dimension $d$ if it satisfies the following conditions
\begin{enumerate}[(i)]
\item The Hilbert series $h_n(A) := \dim_k(A_n)$ is bounded by a polynomial in $n$.
\item $A$ has finite global dimension $d$
\item $A$ has the Gorenstein property with respect to $d$
\end{enumerate}
Such algebras with $d=3$ and which are generated in degree 1 are classified in \cite{artinschelter, ATV1}. There are two possibilities for the number of generators and relations:
\begin{enumerate}[(i)]
\item $A$ is generated by three elements satisfying three quadratic
  relations (the ``quadratic case'').  In this case $A$ has Hilbert
  series $1/(1-t)^3$, i.e.\ the same Hilbert series as a polynomial
  ring in three variables. Therefore we think of $\Proj(A)$ as being a noncommutative $\mathbb{P}^2$.
\item $A$ is generated by two elements satisfying two cubic relations
(the ``cubic case''). In this case $A$ has Hilbert series $1/(1-t)^2(1-t^2)$ and we can think of $\Proj(A)$ as being a noncommutative $\mathbb{P}^1 \times \mathbb{P}^1$. (The rationale for this is explained in \cite{VdB38}.)
\end{enumerate}
We write $(r,s)$ for the number of generators of $A$ and the degrees of the relations. Thus $(r,s)=(3,2)$ or $(2,3)$ depending on whether $A$ is quadratic or cubic.\\
The classification of three-dimensional AS-regular algebras $A$ is in terms of suitable geometric data $(Y,\Lscr,\sigma)$ where $Y$ is a $k$-scheme, $\sigma$ is an automorphism of $Y$ and $\Lscr$ is a line bundle on $Y$. More precisely: starting from $(Y,\Lscr,\sigma)$ one can construct a twisted homogeneous coordinate ring $B(Y,\Lscr,\sigma)$ (see for example \cite{AV}). This is a connected graded $k$-algebra with $r := \dim_k(B_1)$ equal to 2 or 3. Setting $s=5-r$ as above, we can then find the AS-regular algebra $A(Y,\Lscr,\sigma)$ by dropping all relations in degree $s+1$ and higher. In particular there is a surjective morphism $A(Y,\Lscr,\sigma) \rightarrow B(Y,\Lscr,\sigma)$ giving rise to an inclusion 
\begin{eqnarray}
\label{eq:commutativecurve} \QGr(B) \hookrightarrow \QGr(A)
\end{eqnarray}
Moreover there is an equivalence of categories $\QGr(B) \cong \Qcoh(Y)$ (see for example \cite{AV}). Therefore one often says there is a commutative curve $Y$ contained inside the noncommutative surface $X$.

Below we say that $A$ is a (quadratic or cubic) Sklyanin algebra if $Y$ is smooth and $\sigma$ is a translation.
\\

Inspired by the commutative case it makes sense to expect that in a suitable sense noncommutative $\mathbb{P}^2$s are birationally equivalent to noncommutative $\mathbb{P}^1 \times \mathbb{P}^1$s. Such birational equivalences were constructed in \cite{RogSierStaf} and \cite{PresVdB}.

In \cite{PresVdB} we provide a noncommutative version of the standard birational transformation $\mathbb{P}^1 \times \mathbb{P}^1 \dashrightarrow \mathbb{P}^2$  by showing that for each cubic Sklyanin algebra $A$ there exists a quadratic Sklyanin algebra $A'$ and an inclusion 
\begin{eqnarray}
\label{eq:valuesvw} \check{A'} \hookrightarrow \check{A}^{(2)}
\end{eqnarray}
 where $\check{A}$ and $\check{A'}$ are the associated $\mathbb{Z}$-algebras. (See \S \ref{sec:Z-algebras} for more on $\mathbb{Z}$-algebras.) We then check that this inclusion gives rise to an isomorphism of the function fields $\Frac_0(A') \cong \Frac_0(A)$, a result which was already announced in \cite{VdBSt} and \cite{Sierratalk}. 
In \cite{PresVdB} we also provide a noncommutative version of the Cremona transform $\mathbb{P}^2 \dashrightarrow \mathbb{P}^2$ by showing that for each quadratic Sklyanin algebra $A$ there exists a quadratic Sklyanin algebra $A'$ and an inclusion 
\begin{eqnarray}
\label{eq:valuesvwquadratic} \check{A'} \hookrightarrow \check{A}^{(2)}
\end{eqnarray}
The construction was based on the choice of 3 non-collinear points $p,q,r$ on $Y$ and if $A=A(Y,\Lscr,\psi)$ then $A'=A(Y,\Lscr \otimes \psi^*\Lscr \otimes \Oscr_Y(-p-q-r), \psi)$. \\

In \S \ref{sec:PresVdB} we recapitulate the construction of \eqref{eq:valuesvw} and \eqref{eq:valuesvwquadratic}.\\

In \S \ref{sec:addendum} we extend the above to the level of $\mathbb{Z}$-algebras (see \S \ref{sec:Z-algebras} and Remark \ref{rem:defsklyaninquadric} for the appropriate definitions) and prove the following:
\begin{theorem*}[Theorem \ref{thm:firstextension}]
Let $A$ be a cubic Sklyanin $\mathbb{Z}$-algebra. Then there is a quadratic Sklyanin $\mathbb{Z}$-algebra $A'$ and an inclusion  $A' \hookrightarrow A^{(2)}$ inducing an isomorphism between their function fields. This inclusion is constructed with respect to a point $p$ on $Y$ such that if $A = A(Y,\Lscr_0,\Lscr_1,\Lscr_2)$ then $A'=A(Y,\Lscr_0 \otimes \Lscr_1 \otimes \Oscr_Y(-p), \Lscr_2 \otimes \Lscr_3 \otimes \Oscr_Y(-\tau^{-1}p)$.
\end{theorem*}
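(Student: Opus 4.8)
The plan is to transport the graded construction recalled in \S\ref{sec:PresVdB} into the $\mathbb{Z}$-algebra setting, exploiting that every step there involves only finitely many consecutive line bundles and is therefore insensitive to whether the sequence $(\Lscr_i)_i$ is periodic (the graded case being exactly the periodic one). Write $B=B(Y,(\Lscr_i)_i)$ for the twisted homogeneous coordinate $\mathbb{Z}$-algebra of the cubic data, so $B_{ij}=H^0(Y,\Lscr_i\otimes\dots\otimes\Lscr_{j-1})$, and recall (the $\mathbb{Z}$-algebra form of \cite{ATV1}) the canonical surjection $A\twoheadrightarrow B$ whose kernel is generated by the normal element $g$ of degree $4$; in particular $A_{ij}=B_{ij}$ for $j-i\le 3$, while $\dim_k A_{i,i+4}=9$ and $\dim_k B_{i,i+4}=8$. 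Put $\Nscr_i:=\Lscr_{2i}\otimes\Lscr_{2i+1}$ and $\Mscr_i:=\Nscr_i\otimes\Oscr_Y(-\tau^{-i}p)$, so $\deg\Mscr_i=3$, and let $B'=B(Y,(\Mscr_i)_i)$ and $A'=A(Y,(\Mscr_i)_i)$ be the associated twisted coordinate, resp.\ quadratic Sklyanin, $\mathbb{Z}$-algebras. First I would check that $(Y,(\Mscr_i)_i)$ is admissible Sklyanin data (Remark~\ref{rem:defsklyaninquadric}): the degree is correct, and the regularity and translation conditions reduce to the corresponding conditions on $(\Nscr_i)_i$ --- hence on $(\Lscr_i)_i$ --- because the point corrections telescope, the net twist entering the degree-$2$ regularity relation being $\Oscr_Y(\tau^{-i+1}p+\tau^{-i-1}p-2\tau^{-i}p)\cong\Oscr_Y$, which is trivial since $(\tau^{-i+1}p-\tau^{-i}p)+(\tau^{-i-1}p-\tau^{-i}p)=\tau+(-\tau)=0$ in the group law of $Y$. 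This is precisely why the correcting point advances by $\tau^{-1}$ from $\Mscr_i$ to $\Mscr_{i+1}$.

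Second, I would build the inclusion of twisted coordinate rings. For $i\le j$,
\[ \Mscr_i\otimes\dots\otimes\Mscr_{j-1}\;\cong\;\Lscr_{2i}\otimes\dots\otimes\Lscr_{2j-1}\otimes\Oscr_Y(-D_{ij}),\qquad D_{ij}:=\textstyle\sum_{l=i}^{j-1}\tau^{-l}p, \]
with $D_{ij}$ effective of degree $j-i$ and $D_{ij}+D_{jk}=D_{ik}$. Taking global sections identifies $B'_{ij}$ with the subspace of $B_{2i,2j}=(B^{(2)})_{ij}$ of sections vanishing on $D_{ij}$, and these subspaces are closed under the $\mathbb{Z}$-algebra multiplication because cup product commutes with restriction along $D_{ij}\hookrightarrow D_{ik}$. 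Thus $B'\hookrightarrow B^{(2)}$ is a monomorphism of $\mathbb{Z}$-algebras, and all the Hilbert functions come out as expected since $h^0$ of a positive-degree line bundle on the elliptic curve $Y$ equals its degree.

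The main obstacle is to lift this to a morphism $\varphi\colon A'\to A^{(2)}$. As $A'$ is generated in degree $1$ with $A'_{i,i+1}=H^0(Y,\Mscr_i)$ and is cut out by its degree-$2$ relation spaces $R'_i=\Ker\bigl(H^0(Y,\Mscr_i)\otimes H^0(Y,\Mscr_{i+1})\to H^0(Y,\Mscr_i\otimes\Mscr_{i+1})\bigr)$, a morphism $\varphi$ amounts to the inclusions $H^0(Y,\Mscr_i)\hookrightarrow H^0(Y,\Nscr_i)=B_{2i,2i+2}=A_{2i,2i+2}=(A^{(2)})_{i,i+1}$ together with the vanishing of $R'_i$ inside $(A^{(2)})_{i,i+2}=A_{2i,2i+4}$. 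By the previous paragraph $R'_i$ maps to $0$ in $B_{2i,2i+4}$; since $A_{2i,2i+4}\twoheadrightarrow B_{2i,2i+4}$ has one-dimensional kernel $k\,g_{2i}$, the image of $R'_i$ lies in $k\,g_{2i}$, and the point is to show it is zero. Writing $\mu_i\colon H^0(Y,\Mscr_i)\otimes H^0(Y,\Mscr_{i+1})\to A_{2i,2i+4}$ for the multiplication, its composite with $A_{2i,2i+4}\to B_{2i,2i+4}$ is the cup product onto $H^0(Y,\Mscr_i\otimes\Mscr_{i+1})$, which is surjective (the degrees being $3=2\,g(Y)+1$ on $Y$, a standard surjectivity for global sections); hence $\rk\mu_i\ge 6=\dim_k H^0(Y,\Mscr_i\otimes\Mscr_{i+1})$ and $\Ker\mu_i\subseteq R'_i$, with equality precisely when $g_{2i}\notin\im\mu_i$. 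So everything reduces to showing that the cubic normal element $g_{2i}$ is not a product of sections pulled from $H^0(Y,\Mscr_i)$ and $H^0(Y,\Mscr_{i+1})$. I expect to prove this from the explicit description of the cubic normal element in \cite{ATV1, PresVdB}, or geometrically: every section in $H^0(Y,\Mscr_i)$ vanishes at $\tau^{-i}p$, and this vanishing is exactly the constraint incompatible with $g_{2i}$; as the argument involves only $\Lscr_{2i},\dots,\Lscr_{2i+3}$, it is identical to the one in \cite{PresVdB}.

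Finally, $\varphi$ is injective and induces an isomorphism $\Frac_0(A')\cong\Frac_0(A)$, as in \cite{PresVdB, VdBSt}. For injectivity: $H^0(Y,\Mscr_i)\hookrightarrow A_{2i,2i+2}$ is injective and $A'$ is a domain, so it suffices to identify the induced map of fraction fields with the known isomorphism. Concretely $A^{(2)}$ is a finite right $A'$-module (same GK-dimension $2$, with $\dim_k(A^{(2)})_{i,i+n}\sim n^2$ against $\dim_k A'_{i,i+n}\sim n^2/2$), and the description of $\im\varphi$ through the divisors $D_{ij}$ shows that after Ore-localizing at the generic point $\varphi$ becomes an isomorphism, so $\QGr(A')$ and $\QGr(A)\cong\QGr(A^{(2)})$ have the same generic stalk. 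None of the four steps uses periodicity of $(\Lscr_i)_i$, so they apply verbatim to any cubic Sklyanin $\mathbb{Z}$-algebra, which gives the theorem.
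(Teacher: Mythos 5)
Your route is genuinely different from the paper's (you build $A'$ abstractly from the helix $\Mscr_i=\Lscr_{2i}\otimes\Lscr_{2i+1}\otimes\Oscr_Y(-\tau^{-i}p)$ and try to lift the section-theoretic inclusion $B'\hookrightarrow B^{(2)}$ to $\varphi:A'\to A^{(2)}$ via generators and relations, rather than defining the subalgebra $D\subseteq A^{(2)}$ by bimodule $\Hom$-spaces as in \eqref{eq:definitionDandDY}), but it has a genuine gap exactly at the step you yourself flag as the main obstacle. The statement $g_{2i}\notin\im\mu_i$ (equivalently: the image of $H^0(\Mscr_i)\otimes H^0(\Mscr_{i+1})$ in $A_{2i,2i+4}$ is only $6$-dimensional, so the quadratic relations of $A'$ really die in $A^{(2)}$) is not a routine verification; it is the same content as ``$D$ is generated in degree $1$ and has the correct Hilbert function in degree $2$,'' which the paper obtains from the $\Ext$-vanishing of Lemma \ref{cor:standardvanishing2}, which in turn rests on the point-module resolution over the quadric (Lemma \ref{lem:quadricresolution}) and hence on the $\mathbb{Z}$-domain property of quadrics proved in Appendix \ref{sec:appendix}. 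Your proposed justification --- ``the argument involves only $\Lscr_{2i},\dots,\Lscr_{2i+3}$, it is identical to the one in \cite{PresVdB}'' --- does not close this: \cite{PresVdB} treats the $1$-periodic (graded) case, and a general cubic helix restricted to four consecutive terms need not come from a graded cubic Sklyanin algebra, so there is no automatic reduction; removing the periodicity hypothesis is precisely what Theorem \ref{thm:firstextension} adds, and it is where the new ingredients of this paper enter. (Your use of ``the normal element $g$ of degree $4$'' in the $\mathbb{Z}$-algebra setting also needs a citation to the quadric literature rather than \cite{ATV1}, though that is minor.)

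A second, related gap is injectivity of $\varphi$. Because you construct $\varphi$ out of $A'$ by generators and relations, you must rule out that $\varphi$ kills the degree-$3$ normal elements $g'_i\in A'_{i,i+3}$ (the composite $A'\to A^{(2)}\to B^{(2)}$ certainly kills them, so this is a real danger); ``the degree-one pieces inject and $A'$ is a domain'' does not imply injectivity, and your appeal to identifying the map on fraction fields is circular, since the fraction-field statement is what you are trying to prove. The paper sidesteps this entirely: $D$ is a subalgebra of $A^{(2)}$ by construction (via \cite[Lemma 8.2.1]{VdB19}), and one then proves $D\cong\widecheck{A'}$ by the surjection $D\to D_Y$ plus the Hilbert-series count, after which injectivity and the Hilbert-series growth argument for surjectivity of $\Frac_{0,0}(D)\to\Frac_{0,0}(A)$ come essentially for free. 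If you want to salvage your approach, you would need an independent proof that the subalgebra of $A^{(2)}$ generated by the spaces $H^0(Y,\Mscr_i)$ has Hilbert function $h'$ in every degree (not just degree $2$), and at that point you have effectively reconstructed the paper's argument.
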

\begin{remark}
The existence of the function field a cubic Sklyanin $\mathbb{Z}$-algebra (or more generally a ``quadric'') is proven in Appendix \ref{sec:appendix}.
\end{remark}
In \S \ref{sec:addendum} we also provide noncommutative versions of the inverse birational transformation $\mathbb{P}^2 \dashrightarrow \mathbb{P}^1 \times \mathbb{P}^1$ as follows:
\begin{theorem*}[Theorem \ref{thm:secondextension}]
Let $A$ be a quadratic Sklyanin $\mathbb{Z}$-algebra, then there is a cubic Sklyanin $\mathbb{Z}$-algebra $A'$ and an inclusion $A' \hookrightarrow A$ inducing an isomorphism between their function fields. This inclusion is constructed with respect to points $p,q$ on $Y$ such that if $A = A(Y,\Lscr,\psi)$ then $A'=A(Y,\Lscr \otimes \Oscr_Y(-p), \psi^* \Lscr \otimes \Oscr_Y(-q), \psi^{*2} \Lscr \otimes \Oscr_Y(-\psi^{-3}p))$.
\end{theorem*}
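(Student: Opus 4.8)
The plan is to construct $A'$ by hand, as a sub-$\mathbb{Z}$-algebra of $A$ generated in ``degree one'', essentially by running the construction of \S\ref{sec:PresVdB} underlying \eqref{eq:valuesvw} in the reverse direction. Write $A = A(Y,\Lscr,\psi)$ and $\Lscr_i := \psi^{*i}\Lscr$, so that the embedded commutative curve $Y \subset X$ furnishes identifications $A_{i,i+1} \cong H^0(Y,\Lscr_i)$. Given $p,q \in Y$, set $t_0 = p$, $t_1 = q$, $t_2 = \psi^{-3}p$, and fix $t_i$ for all other $i$ by the requirement that $\{\Lscr_i \otimes \Oscr_Y(-t_i)\}_{i \in \mathbb{Z}}$ be the geometric datum of a cubic Sklyanin $\mathbb{Z}$-algebra over $Y$; it is this requirement that forces the twists appearing in the statement. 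Let $V_i \subseteq A_{i,i+1} \cong H^0(Y,\Lscr_i)$ be the two-dimensional subspace of sections vanishing at $t_i$, and let $A'$ be the sub-$\mathbb{Z}$-algebra of $A$ generated by the $V_i$. This automatically produces an inclusion $A' \hookrightarrow A$, and everything then reduces to showing that $A'$ is what we claim.

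First, the image of $A'$ in the twisted homogeneous coordinate ring $B = B(Y,\{\Lscr_i\})$ is visibly the twisted homogeneous coordinate ring $B' = B(Y,\{\Lscr_i \otimes \Oscr_Y(-t_i)\})$, because a product of sections vanishing at prescribed points vanishes along the union of those points; since $\deg(\Lscr_i \otimes \Oscr_Y(-t_i)) = 2$, this already carries the geometric datum of a cubic algebra. The real work is to compute a presentation of $A'$ itself: one must show that the kernel of the multiplication $V_i \otimes V_{i+1} \otimes V_{i+2} \to A_{i,i+3}$ is exactly two-dimensional and spanned by the two defining cubic relations of the cubic AS-regular $\mathbb{Z}$-algebra attached to $\{\Lscr_i \otimes \Oscr_Y(-t_i)\}$, and that no relation in degree three is lost or gained. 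This is the inverse-direction counterpart of the relation computation carried out for \eqref{eq:valuesvw}, and it is precisely here that the mutual position of $p$, $q$ and $\psi^{-3}p$ is used; I expect this to be the main obstacle, since --- in contrast to \eqref{eq:valuesvw}, where passing to the Veronese $A^{(2)}$ leaves room to maneuver --- here $A'$ embeds into $A$ on the nose and the cubic relations must be reproduced exactly. Once the presentation is known, the Hilbert series of $A'$ is the cubic one, and by the classification of three-dimensional AS-regular algebras of cubic type (the geometric datum being nondegenerate because $Y$ is smooth) $A'$ is AS-regular; the Sklyanin hypotheses pass from $A$ to $A'$.

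It remains to identify the function fields. Existence of $\Frac_0(A')$ is guaranteed by Appendix \ref{sec:appendix}, and the inclusion $A' \hookrightarrow A$ induces a homomorphism $\Frac_0(A') \to \Frac_0(A)$. To see it is an isomorphism, I would feed the cubic Sklyanin $\mathbb{Z}$-algebra $A'$ into the forward transformation of Theorem \ref{thm:firstextension}, taking the auxiliary point $p''$ to be the one with $\psi^*\Lscr \cong \Oscr_Y(p + q + p'')$: a comparison of geometric data shows the quadratic Sklyanin $\mathbb{Z}$-algebra it produces is isomorphic to $A$, so that $\Frac_0(A') \cong \Frac_0(A)$ follows from the function field statement already contained in Theorem \ref{thm:firstextension}. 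Alternatively, one localizes $A' \hookrightarrow A$ at all nonzero homogeneous elements and verifies the Ore-cofinality condition directly: for $j-i$ large enough a Riemann--Roch count on $Y$ produces a nonzero section of $\Lscr_j \otimes \cdots \otimes \Lscr_{k-1}$ vanishing at the relevant twist points and at the finitely many points of $Y$ at which a prescribed $a \in A_{i,j}$ fails to vanish, and multiplying $a$ by it lands inside $A'$ --- the same mechanism used to prove the function field isomorphism for \eqref{eq:valuesvw} in \cite{PresVdB}.
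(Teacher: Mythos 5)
Your construction of $A'$ as the sub-$\mathbb{Z}$-algebra of $A$ generated by the codimension-one subspaces $V_i\subset A_{i,i+1}$ is consistent with the paper (the paper's $D$ is generated in degree one and $D_{i,i+1}=V_i$), but the step you yourself flag as ``the real work'' --- showing that this generated subalgebra has exactly the cubic relations and the cubic Hilbert function --- is precisely the content of the proof, and you give no method for it. With $A'$ defined only as a generated subalgebra you have no a priori control of $\dim A'_{i,i+n}$ from either side, and there is no evident way to ``compute the kernel of $V_i\otimes V_{i+1}\otimes V_{i+2}\to A_{i,i+3}$'' directly inside $A$. The paper avoids this by \emph{defining} the candidate algebra as $D_{m,n}=\Hom(\Oscr_X(-n),\Oscr_X(-m)\otimes m_{d_m}\cdots m_{d_{n-1}})$ and then proving generation in degree one, surjectivity onto the twisted homogeneous coordinate ring $D_Y$, and the cubic Hilbert function via the point-module resolution (Lemma \ref{lem:quadraticresolution}), the Ext-vanishing of Lemma \ref{cor:standardvanishing}, and the colength formula \eqref{eq:colength}; AS-regularity is then deduced from this data rather than from an explicit presentation. (Even your ``visibly'' claim that the image of $A'$ in $B$ is $B(Y,\{\Lscr_i\otimes\Oscr_Y(-t_i)\})$ needs the surjectivity of multiplication maps of degree-two line bundles on $Y$, which is not automatic and is part of Step 4.)

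The function-field argument also has genuine gaps. Your first route only produces an abstract isomorphism $\Frac_0(A')\cong\Frac_0(A)$: feeding $A'$ into Theorem \ref{thm:firstextension} yields an inclusion of a quadratic algebra $A''$ into $A'^{(2)}$, and even granting $A''\cong A$ (itself the computation done in \S\ref{sec:invertibletransformquadric}), this does not show that the \emph{given} inclusion $A'\hookrightarrow A$ induces the isomorphism, which is what the theorem asserts; making the two compatible is exactly the invertibility analysis of the later sections, so this route is close to circular here. Your second route rests on a false membership test: whether $ab$ lies in $A'$ cannot be detected by vanishing of its image in $B$ at the points $d_i$ (plus finitely many others). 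Indeed the codimension of $A'_{m,n}$ in $A_{m,n}$ grows quadratically, like $(n-m)^2/4$ by \eqref{eq:colength}, whereas vanishing at the $d_i$'s imposes only linearly many conditions; moreover for $n-m\geq 2$ an element of $A_{m,n}$ is not a section of a line bundle on $Y$, so a Riemann--Roch count on $Y$ produces elements of $B$, not of $A$, and says nothing about membership in $A'$. This quadratic-versus-linear discrepancy is exactly why the paper proves surjectivity in Step 7 of \S\ref{subsec:quadricinverse} by comparing the growth of the colength ($\sim N^2/4$) with that of $\dim_k A_{i+n,i+n+N}$ ($\sim N^2/2$) to produce a nonzero common multiplier inside the bimodule-defined $D$, rather than by counting vanishing conditions.
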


In \S \ref{sec:cremona} we show (Theorem \ref{thm:cremona}) that, modulo some technical hypothesis, the noncommutative Cremona as in \eqref{eq:valuesvwquadratic} factors through the noncommutative $\mathbb{P}^2 \dashrightarrow \mathbb{P}^1 \times \mathbb{P}^1$ and $\mathbb{P}^1 \times \mathbb{P}^1 \dashrightarrow \mathbb{P}^2$. As such all of these are examples of ``quadratic transforms'', a more general type of noncommutative birational transformations introduced in \S \ref{sec:cremona}. \\

In the last sections we show that quadratic transforms are invertible in the following sense (for simplicity we omit some technical hypotheses):
\begin{theorem*}[Theorem \ref{thm:inversemain}]
Let $\gamma: A' \hookrightarrow A^{(w)}$ be a quadratic transform. Then there exists a quadratic transform $\delta: A \hookrightarrow A'^{(v)}$ such that the compositions $\gamma \circ \delta: A \hookrightarrow A^{(vw)}$ and $\delta \circ \gamma: A' \hookrightarrow A'^{(vw)}$ induce the identity map on the function fields.
\end{theorem*}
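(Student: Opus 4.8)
Recall from \S\ref{sec:cremona} that a quadratic transform $\gamma\colon A'\hookrightarrow A^{(w)}$ is manufactured from the geometric data of $A$: writing $A = A(Y,\Lscr_\bullet)$ with structure automorphism $\sigma$, it depends on a configuration $\underline p$ of points of $Y$ meeting certain nondegeneracy hypotheses, and the resulting $A'$ is again of the form $A(Y,\Lscr'_\bullet)$, each $\Lscr'_i$ being obtained from the $\Lscr_j$ by twisting with $\Oscr_Y(-\underline p_{(i)})$ for an explicit $\sigma$-shifted sub-configuration of $\underline p$ --- exactly the shape of Theorems \ref{thm:firstextension} and \ref{thm:secondextension}. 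In particular the curve $Y$ and the automorphism $\sigma$ are untouched; only the line bundles move, and the induced function-field isomorphism $\Frac_0(A')\cong\Frac_0(A)$ is compatible with the canonical presentations of the two graded fraction rings that come from the geometry and that depend only on $(Y,\sigma)$. The plan is (i) to invert the line-bundle twist and thereby build the inverse transform $\delta$, and (ii) to observe that, since $\delta$'s twist undoes $\gamma$'s, the two canonical presentations match up and the composites act trivially on function fields.

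\textbf{Construction of $\delta$.}
I would reverse-engineer a point configuration $\underline q$ on the same curve $Y$ for which the quadratic transform attached to $A' = A(Y,\Lscr'_\bullet)$ and $\underline q$ recovers $A$, up to the Veronese reindexing recorded by the multiplier $v$. At the level of data this is pure arithmetic on $Y$: the assignment $\Lscr_\bullet\mapsto\Lscr'_\bullet$ subtracts a collection of $\sigma$-shifted point divisors, and one inverts it by adding them back; using the group structure on $\Pic(Y)$ each re-added positive divisor is traded for a negative one --- with the new point determined from $\underline p$ and $\sigma$ --- at the cost of a fixed twist that is absorbed into a shift of the sequence $\Lscr_\bullet$, so that the inverse data-map already has the form allowed for a quadratic transform. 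This yields $\underline q = \underline q(\underline p,\sigma)$ together with an inclusion $\delta\colon A\hookrightarrow A'^{(v)}$. It then remains to verify that $\delta$ genuinely satisfies the definition of a quadratic transform; the only substantive point is the nondegeneracy of $\underline q$, which I would deduce from that of $\underline p$. This is where the technical hypotheses suppressed in the statement intervene: one checks that they are symmetric under $\underline p\leftrightarrow\underline q$, or replaces them by their symmetrisation.

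\textbf{The composites, and the main difficulty.}
By \S\ref{sec:cremona} both $\gamma$ and $\delta$ extend to isomorphisms of graded fraction rings, these extensions are functorial under composition of the underlying algebra inclusions, and the canonical inclusion of a Veronese induces the identity on $\Frac_0$; hence $\gamma\circ\delta\colon A\hookrightarrow A^{(vw)}$ induces --- after the canonical identification $\Frac_0(A^{(vw)}) = \Frac_0(A)$ --- a ring endomorphism of $\Frac_0(A)$, and $\delta\circ\gamma$ one of $\Frac_0(A')$, and I must show both are the identity. The cleanest route is to note that the function-field isomorphism attached to a quadratic transform is, by its construction in \S\ref{sec:cremona}, exactly the comparison of both fraction rings with the presentation determined by $(Y,\sigma)$; since $\gamma$ and $\delta$ share $(Y,\sigma)$ and use mutually inverse twists, these comparisons compose to the identity. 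Concretely this amounts to checking that the bimodule isomorphisms implementing $\gamma$'s twist and $\delta$'s twist cancel, and this bookkeeping is the main obstacle --- the alternative being a rigidity statement to the effect that the function field admits no nontrivial automorphism compatible with the commutative curve $Y$ of \eqref{eq:commutativecurve}. As a consistency check one may also reduce the general case, via the factorisation of Theorem \ref{thm:cremona}, to invertibility of the two basic transforms of Theorems \ref{thm:firstextension} and \ref{thm:secondextension}.
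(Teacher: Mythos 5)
Your geometric description of the inverse (a point configuration on the same $Y$ obtained by inverting the twist in $\Pic(Y)$, then an appeal to Theorems \ref{thm:firstextension} and \ref{thm:secondextension}, plus the reduction of the general case to these two basic transforms) does match where the paper ends up, but the proposal has a genuine gap at exactly the point you flag as ``the main obstacle'': nothing in your argument actually shows that $\gamma\circ\delta$ and $\delta\circ\gamma$ induce the \emph{identity} on the function fields rather than some automorphism. The claimed ``cleanest route'' rests on a comparison of both fraction rings with ``the presentation determined by $(Y,\sigma)$'', but no such canonical presentation of the noncommutative function field $\Frac_{0,0}(A)$ exists in this setting --- the function field is not that of $Y$, and the isomorphism $\Frac_{0,0}(A')\cong\Frac_{0,0}(A)$ attached to a quadratic transform is nothing more than the map induced by the inclusion itself, so there is nothing to ``compose to the identity''. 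The alternative rigidity statement (no nontrivial automorphism of the function field compatible with $Y$) is likewise unproven and is not available in the paper. There is also a related structural problem: if you build $\delta$ by applying Theorem \ref{thm:secondextension} (or \ref{thm:firstextension}) to $A'$ with your reverse-engineered points, you obtain an inclusion $A''\hookrightarrow A'^{(v)}$ with $A''$ only \emph{abstractly} isomorphic to $A$; choosing an identification $A''\cong A$ changes the induced map on function fields by an automorphism, so the statement you need to prove is not even well posed along this route without more care.

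The paper resolves precisely this issue by a different mechanism. It introduces the notion of an \emph{inner} morphism (Definition \ref{def:inner}): $\phi:A\hookrightarrow A^{(v)}$ is inner if $\phi(a)=z_m a z_n^{-1}$ for explicit nonzero elements $z_m\in\Frac(A)_{vm,m}$, and such morphisms manifestly induce the identity on $Q_{0,0}$; compositions of invertible inclusions are then handled by the lemma in \S\ref{sec:innermorphisms}, reducing everything to Theorems \ref{thm:inversemaincase1} and \ref{thm:inversemaincase2}. For those, $\delta$ is \emph{not} defined by first prescribing points: one forms a $\mathbb{Z}^2$-algebra $\tilde{A}$ gluing $A$ and $A'$, proves via the dimension count of Lemma \ref{lem:negativedim} (which needs the $I$-basis machinery of Appendix \ref{sec:Ibasis} and the hypothesis that the relevant points lie in different $\tau$-orbits) that certain connecting spaces $\tilde{A}_{(i,j),(i-1,j+2)}$ and $\tilde{A}_{(i,j),(i+1,j-1)}$ are one-dimensional, and uses their generators $\delta_{i,j},\gamma_{i,j}$ to define $\delta:A\hookrightarrow A'^{(v)}$ directly, with $\gamma\circ\delta$ literally equal to conjugation by $z_i=\gamma_{(2i,0),(0,2i)}\delta_{(0,2i),(i,0)}$, hence inner. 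Only afterwards does one identify $\delta$ as a quadratic transform with respect to points satisfying your Picard-group relations (via everywhere nonvanishing sections of degree-zero line bundles on $Y$), and \S\ref{sec:invertabilitytransform} then checks that the two constructions are mutually inverse on geometric data ($p''=p$, $q''=q$, $\Lscr_i''\cong\Lscr_i$), which is what makes $\delta\circ\gamma$ inner as well. None of this machinery --- the inner-morphism criterion, the $\mathbb{Z}^2$-algebra gluing, the dimension estimates --- is present or replaced in your proposal, so the core of the theorem remains unproved.
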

\begin{remark}
In the above theorem the possible values for $w$ are determined by the definition of a quadratic transform. Moreover $v$ can be chosen as a function of $w$ and $\frac{v}{w} \in \left \{ \frac{1}{2}, 1, 2 \right \}$.
\end{remark}
In \S \ref{sec:innermorphisms} we show that it suffices to prove this theorem in case $\gamma$ is as in Theorem \ref{thm:firstextension} or Theorem \ref{thm:secondextension}.
Both of these cases are covered in \S \ref{sec:invertibletransformquadric}. The proof is quite technical and uses a $\mathbb{Z}^2$-algebra which in a certain sense glues $A$ and $A'$. On the other hand the geometric picture is rather simple. For example if $A=A(Y,\Lscr,\psi)$ is a quadratic Sklyanin algebra and $\gamma: A' \hookrightarrow A$ is constructed with respect to points $p,q$ on $Y$ as in Theorem \ref{thm:secondextension}, then the inverse $\delta: A \hookrightarrow A^{\prime (2)}$ is constructed with respect to $p'$ where $p+q+p' \sim [\psi_* \Lscr]$ in the Picard group of $Y$. Conversely if $A=A(Y,(\Lscr_i)_i)$ is a cubic Sklyanin $\mathbb{Z}$-algebra and $\gamma: A' \hookrightarrow A^{(2)}$ is constructed with respect to a point $p$ on $Y$ as in Theorem \ref{thm:firstextension}, then the inverse $\delta: A \hookrightarrow A'$ is constructed with respect to $p', q'$ where $p+ \tau q' \sim [\Lscr_0]$ and $p + p' \sim [\Lscr_1]$ in the Picard group of $Y$.


\section{Acknowledgements}
The author wishes to thank Michel Van den Bergh for providing many interesting ideas and for reading through the results multiple times.
The author is also very grateful to Susan J. Sierra for inviting him to the University of Edinburgh and for posing the question about the nature of the inverses to the birational transformations considered in \cite{PresVdB}.
This question directly lead to the current paper.
\section{$\mathbb{Z}$-algebras}
\label{sec:Z-algebras}

In this section we recall some definitions and facts on $\mathbb{Z}$-algebras. We refer the reader to \cite{Sierra} or sections 3 and 4 of \cite{VdB38} for a more thorough introduction. Recall
that a $\mathbb{Z}$-algebra is defined as an algebra $R$ (without
unit) with a decomposition 
\[ \displaystyle R = \bigoplus_{(m,n) \in \mathbb{Z}^2} R_{m,n} \]
such that addition is degree-wise and multiplication satisfies $R_{m,n}R_{n,j} \subset R_{m,j}$ and $R_{m,n}R_{i,j}=0$ if $n \neq i$. Moreover there are local units $e_n\in R_{n,n}$ such that for each $x \in R_{m,n}: e_m x = x = x e_n$. 
\begin{notation}
If $A$ is a graded algebra, then it gives rise to a
$\mathbb{Z}$-algebra $\check{A}$ via $\check{A}_{m,n} = A_{n-m}$.
\end{notation} 
In particular the notion of a $\mathbb{Z}$-algebra is a generalization of a ($\mathbb{Z}$)-graded algebra. Based on this, most graded notions have a natural $\mathbb{Z}$-algebra counterpart. For example we say that a $\mathbb{Z}$-algebra $R$ is positively graded if $R_{m,n}=0$ for $m > n$. \\

A graded $R$-module is an $R$-module $M$ together with a decomposition $M = \oplus_n M_n$ such that the $R$-action on $M$ satisfies $M_m R_{m,n} \subset M_n$ and $M_m R_{i,n} =0$ if $i \neq m$. The category of graded $R$-modules is denoted $\Gr(R)$ and similar to the graded case we use the notation $\QGr(R) := \Gr(R) / \Tors(R)$.

\begin{remark} \label{rem:OX}
If $A$ is a graded algebra, then obviously $\Gr(A)=\Gr(\check{A})$ by identifying $A(n) \in \Gr(A)$ with $e_{-n}\check{A} \in \Gr(\check{A})$. Similarly $\QGr(A)=\QGr(\check{A})$.
\end{remark}

\begin{definition}
Let $R$ be a $\mathbb{Z}$-algebra. Then for each $n \in \mathbb{Z}$ we define $R(n)$ by setting $\left(R(n) \right)_{i,j} = R_{i+n,j+n}$ with obvious multiplication. We say $R$ is $n$-periodic if there is a $\mathbb{Z}$-algebra-isomorphism $R \cong R(n)$.
\end{definition}

\begin{lemma}
Let $R$ be a $\mathbb{Z}$-algebra. Then there exists a graded algebra $A$ such that $R = \check{A}$ if and only if $R$ is 1-periodic.
\end{lemma}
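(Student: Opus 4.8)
The plan is to handle the two implications separately. The direction ``$R=\check A\implies R$ is $1$-periodic'' is immediate from the definitions: if $R_{m,n}=A_{n-m}$ then $R(1)_{m,n}=R_{m+1,n+1}=A_{(n+1)-(m+1)}=A_{n-m}=R_{m,n}$, and the multiplication maps of $R$ and of $R(1)$ coincide because both are the ones induced by the graded multiplication of $A$; hence $R=R(1)$, and in particular $R\cong R(1)$. All the content is in the converse, and the idea there is simple: $1$-periodicity provides, for each $d$, an isomorphism between the ``slice'' $R_{0,e}$ and the slice $R_{d,d+e}$, and such isomorphisms are exactly the data one needs to turn $\bigoplus_d R_{0,d}$ into a graded algebra whose associated $\mathbb{Z}$-algebra recovers $R$.

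So suppose we are given a $\mathbb{Z}$-algebra isomorphism $\phi\colon R\xrightarrow{\sim}R(1)$. First I would unwind $\phi$ into a coherent family of isomorphisms. Since $\phi$ is degree preserving it restricts to isomorphisms $\phi_{i,j}\colon R_{i,j}\xrightarrow{\sim}R_{i+1,j+1}$, and iterating $\phi$ and $\phi^{-1}$ yields isomorphisms $\psi^{(n)}\colon R_{i,j}\xrightarrow{\sim}R_{i+n,j+n}$ for all $n\in\mathbb{Z}$. These satisfy the cocycle identity $\psi^{(m+n)}=\psi^{(n)}\circ\psi^{(m)}$, and --- because $\phi$ is a homomorphism of (non-unital) rings, not merely a graded linear isomorphism --- they are multiplicative: $\psi^{(n)}(x)\psi^{(n)}(y)=\psi^{(n)}(xy)$ whenever $xy$ is defined. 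I would also record that, being an isomorphism, $\phi$ carries the local unit $e_m$ to the local unit $e_{m+1}$, so that $\psi^{(n)}(e_m)=e_{m+n}$.

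Next I would build the graded algebra and exhibit the identification. Put $A_d:=R_{0,d}$ for $d\in\mathbb{Z}$, $A:=\bigoplus_d A_d$, and define $A_d\times A_e\to A_{d+e}$ by $x\cdot y:=x\,\psi^{(d)}(y)$, the right-hand side being the product $R_{0,d}\times R_{d,d+e}\to R_{0,d+e}$ in $R$. Associativity is the one thing that needs checking: for $x\in R_{0,d}$, $y\in R_{0,e}$, $z\in R_{0,f}$ one finds $(x\cdot y)\cdot z=\big(x\,\psi^{(d)}(y)\big)\psi^{(d+e)}(z)$ and $x\cdot(y\cdot z)=x\,\psi^{(d)}\big(y\,\psi^{(e)}(z)\big)=x\,\big(\psi^{(d)}(y)\,\psi^{(d+e)}(z)\big)$ using multiplicativity of $\psi^{(d)}$ and $\psi^{(d)}\circ\psi^{(e)}=\psi^{(d+e)}$, and the two agree by associativity in $R$; the element $e_0\in A_0$ is a two-sided unit because $\psi^{(d)}(e_0)=e_d$ together with the local-unit axioms of $R$. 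Thus $A$ is a $\mathbb{Z}$-graded algebra (and $\mathbb{N}$-graded when $R$ is positively graded, since then $A_d=R_{0,d}=0$ for $d<0$). Finally, define $\Phi\colon R\to\check A$ degreewise by $\Phi|_{R_{m,n}}=\psi^{(-m)}\colon R_{m,n}\xrightarrow{\sim}R_{0,n-m}=A_{n-m}=\check A_{m,n}$; this is a degreewise bijection, it sends $e_n$ to $\psi^{(-n)}(e_n)=e_0$ (the local unit of $\check A$ in degree $(n,n)$), and for $x\in R_{m,n}$, $y\in R_{n,k}$ one checks $\Phi(x)\cdot_{\check A}\Phi(y)=\psi^{(-m)}(x)\,\psi^{(n-m)}\big(\psi^{(-n)}(y)\big)=\psi^{(-m)}(x)\,\psi^{(-m)}(y)=\psi^{(-m)}(xy)=\Phi(xy)$, again by the cocycle and multiplicativity properties. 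Hence $\Phi$ is a $\mathbb{Z}$-algebra isomorphism, and transporting the grading of $A$ along it we may take $R=\check A$ on the nose.

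I expect the only real obstacle to be bookkeeping: keeping the indices of the iterated maps $\psi^{(n)}$ straight, and being disciplined about where the hypothesis that $\phi$ is a \emph{ring} homomorphism (and not just a graded linear isomorphism $R\cong R(1)$) is actually used --- it enters precisely in the associativity check and in the multiplicativity of $\Phi$. Conceptually there is nothing deeper than ``periodicity lets one transport the slice $R_{0,e}$ to $R_{d,d+e}$, which is the data of a graded multiplication''.
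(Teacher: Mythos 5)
Your argument is correct. Note, however, that the paper does not actually carry out this proof: it dismisses the ``only if'' direction as immediate from the definition of $\check{A}$ (as you do) and refers the ``if'' direction to \cite[Lemma 3.4]{VdB38}, so your contribution is to supply in full the construction that the paper outsources. What you write is the expected argument: extract from the isomorphism $R\cong R(1)$ the multiplicative, cocycle-compatible family $\psi^{(n)}\colon R_{i,j}\to R_{i+n,j+n}$, put $A_d:=R_{0,d}$ with product $x\cdot y:=x\,\psi^{(d)}(y)$, and check associativity, the unit $e_0$, and that $\Phi|_{R_{m,n}}=\psi^{(-m)}$ is a $\mathbb{Z}$-algebra isomorphism onto $\check{A}$; all of these verifications are carried out correctly, including the (automatic) fact that $\phi(e_m)=e_{m+1}$. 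The only slightly loose point is the closing remark that one may ``take $R=\check{A}$ on the nose'': strictly, your construction produces an isomorphism $R\cong\check{A}$ rather than an equality of the underlying graded vector spaces, but this is exactly the sense in which the statement is meant (and used) in the paper, so it is harmless.
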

\begin{proof}
The ``only if'' part is obvious from the definition of $\check{A}$. The ``if'' part is proven in \cite[Lemma 3.4]{VdB38}.
\end{proof}

\begin{remark} \label{rem:quotientfield}
From a categorical point of view a $\mathbb{Z}$-algebra $R$ is nothing but a $k$-linear category $\Rscr$ whose objects are given by the integers. The homogeneous elements of the algebra then correspond to morphisms between two such integers via $\Hom_\Rscr(-j,-i) = R_{i,j}$ and multiplication in $R$ corresponds to composition of morphisms in $\Rscr$. 



\end{remark}

\subsection{AS-regular $\mathbb{Z}$-algebras}

\begin{definition}
Let $R$ be a $\mathbb{Z}$-algebra, then $R$ is said to be connected, if it is positively graded, $\dim_k(R_{m,n})<\infty$ for each $m,n$ and $R_{m,m} \cong k$ for all $m$. We say $R$ is generated in degree 1 if $R_{m,m+1}R_{m+1,n} = R_{m,n}$ holds for all $m<n$. If $R$ is a connected $\mathbb{Z}$-algebra, generated in degree 1, then we denote $S_{n,R} = e_nR/ (e_n R)_{\geq n+1}$. I.e. $S_{n,R}$ is the unique $R$-module concentrated in degree $n$ where it is equal to the base field $k$.
\end{definition}

We can now give the definition of an AS-regular $\mathbb{Z}$-algebra as in \cite{VdB38}
\begin{definition}
A $\mathbb{Z}$-algebra $R$ over $k$ is said to be AS-regular if the following conditions are satisfied:
\begin{enumerate}
\item $R$ is connected and generated in degree 1
\item $\dim_k(R_{m,n})$ is bounded by a polynomial in $n-m$
\item The projective dimension of $S_{n,R}$ is finite and bounded by a number independent of $n$
\item $\displaystyle \forall n \in \mathbb{N}: \sum_{i,j} \dim_k \left( \Ext_{\Gr(R)}^i(S_{j,R},e_nR) \right) =1$ (the ``Gorenstein condition'')
\end{enumerate}
\end{definition}
It is immediate that if a graded algebra $A$ is AS-regular, then $\check{A}$ is AS-regular in the above sense.

$\mathbb{Z}$-algebra analogues of three dimensional quadratic and cubic AS-regular algebras were classified in \cite{VdB38}, it is shown in loc.sit. that every quadratic AS-regular $\mathbb{Z}$-algebra is of the form $\check{A}$ for some quadratic AS-regular algebra $A$. However most cubic AS-regular $\mathbb{Z}$-algebras are not 1-periodic. To distinguish cubic AS-regular algebras from the more general cubic AS-regular $\mathbb{Z}$-algebras, one often refers to the latter as \emph{quadrics}.

Similar to the graded case, the classification of three-dimensional quadratic and cubic AS-regular $\mathbb{Z}$-algebras in terms of geometric data $(Y, ( \Lscr_i )_{i \in \mathbb{Z}})$ where $Y$ is a $k$-scheme and $( \Lscr_i )_{i \in \mathbb{Z}}$ is an elliptic helix of line bundles on $Y$ (see \cite{bondalpolishchuk} for more information on helices). Starting from the geometric data one first constructs a $\mathbb{Z}$-algebra analogue  $B=B(Y,(\Lscr_i)_i)$ of the twisted homogeneous coordinate ring:
\[ B_{i,j} := \begin{cases} \Gamma(Y, \Lscr_i \otimes \Lscr_{i+1} \otimes \ldots \Lscr_{j-1}) & \textrm{if $i \leq j$} \\ 0 & \textrm{ if $i > j$} \end{cases} \]

 Again there is an equivalence of categories.
\begin{eqnarray} \label{eq:YisB} \Qcoh(Y) \cong \QGr(B): \Fscr \mapsto \bigoplus_{i \geq 0} \Gamma(Y, \Fscr \otimes \Lscr_0 \otimes \Lscr_1 \otimes \ldots \Lscr_{i-1}) \end{eqnarray}
(see \cite[Corollary 5.5.9]{VdB38}). $r = \dim_k(B_{i,i+1})$ does not depend on $i$ and equals 3 (in the quadratic case) or 2 (in the cubic case). $A(Y,(\Lscr_i)_i)$ is then obtained from $B$ by only preserving the relations in degree $(i,i+s)$ for $s=5-r$. It is shown that $\dim_k( A_{i,i+n})$ does not depend on $i$; it hence makes sense to write $h(n) := \dim_k(A_{i,i+n})$ and one checks that $h(n)$ coincides with the Hilbert series in the graded case (even thought AS-regular $\mathbb{Z}$-algebras need not be 1-periodic).

\subsection{$\mathbb{Z}$-domains and $\mathbb{Z}$-fields of fractions}

In this section we give the natural generalizations of ``domain'' and ``field of fractions'' for $\mathbb{Z}$-algebras. Among other generalizations, these notions can also be found in \cite[\S 2]{ChanNyman}.

\begin{definition}
Let $R$ be a $\mathbb{Z}$-algebra. Then we say that $R$ is a $\mathbb{Z}$-domain if the following condition is satisfied:
\[ \forall i,j,k \in \mathbb{Z}, \forall x \in R_{i,j}, \forall y \in R_{j,k}: xy=0 \Rightarrow x=0 \ \vee \ y=0 \]
\end{definition}
It is known that three dimensional AS-regular algebras are domains (\cite[Theorem 3.9]{ATV2} and \cite[Theorem 8.1]{ATV2}). We extend this result to the level of quadrics and show:
\begin{theorem} \label{thm:quadricdomain}
Let $A = A(Y, ( \Lscr_i )_i)$ be a quadric. Then $A$ is a $\mathbb{Z}$-domain.
\end{theorem}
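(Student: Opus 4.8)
The plan is to reduce the $\mathbb{Z}$-algebra statement to the already-known domain property of three-dimensional AS-regular graded algebras (quadratic and cubic), exploiting the geometric description $A = A(Y,(\Lscr_i)_i)$ and the equivalence \eqref{eq:YisB} relating the twisted homogeneous coordinate $\mathbb{Z}$-algebra $B = B(Y,(\Lscr_i)_i)$ to $\Qcoh(Y)$. First I would record that since $A$ is a quadric, there is a surjection $A \twoheadrightarrow B$ of $\mathbb{Z}$-algebras (keeping the relations in degrees $(i,i+s)$), so we have the familiar exact structure: $B$ is itself a $\mathbb{Z}$-domain because each $B_{i,j} = \Gamma(Y, \Lscr_i \otimes \cdots \otimes \Lscr_{j-1})$ is a space of sections of a line bundle on the integral curve $Y$ and multiplication is multiplication of sections, which is injective whenever the factors are nonzero. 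So the target of the surjection is already a domain; the content is controlling the kernel.

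Next I would try to transport the graded proof from \cite{ATV2} to the $\mathbb{Z}$-algebra setting. The standard argument for AS-regular algebras of dimension $3$ uses that such an algebra is noetherian, has a central (or normalizing) element $g$ in low degree whose quotient is the twisted homogeneous coordinate ring $B(Y,\Lscr,\sigma)$ of an elliptic curve (or the relevant degenerate $Y$), and then one runs an induction on degree: given $x \in A_{i,j}$, $y \in A_{j,k}$ with $xy = 0$, one looks at the images $\bar x, \bar y$ in $A/gA \cong B$; since $B$ is a domain one of them, say $\bar y$, vanishes, so $y = g y'$ with $y'$ in strictly lower total degree $j$-to-$(k-1)$ shifted appropriately, then use that $g$ is regular (nonzerodivisor) to cancel and invoke the induction hypothesis on $x y' = 0$. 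For this I would need: (a) existence of a normalizing element $g$ in the $\mathbb{Z}$-algebra $A$, i.e. an element (or family $g_n \in A_{n,n+s'}$ for the appropriate $s'$) that is normalizing and whose quotient is exactly $B$; (b) that such $g$ is a nonzerodivisor, which itself follows from the AS-regular/Gorenstein structure via a Koszul-type or free-resolution argument as in \cite{VdB38}; and (c) that $A$ (equivalently $\QGr(A)$) is noetherian, which is established in \cite{VdB38} for quadrics.

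So the main steps are: (1) cite from \cite{VdB38} that $A$ is noetherian and that $A/gA \cong B$ for a normalizing element $g$ (in the cubic/quadric case $g$ lives in degree offset $4$, coming from the last superfluous relation); (2) show $g$ is regular, using the minimal free resolution of $S_{n,A}$ guaranteed by AS-regularity — the Gorenstein condition forces the resolution to be self-dual of length $3$, and this rigidity is exactly what prevents $g$ from being a zerodivisor; (3) verify $B$ is a $\mathbb{Z}$-domain directly from the line-bundle description and integrality of $Y$ (handling the possibly reducible/non-reduced $Y$ cases that occur for non-generic helices — here one may need $Y$ to be integral, or argue component by component, which is the one genuinely fiddly point); (4) run the induction on $k-i$ to conclude. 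The main obstacle I anticipate is step (2) together with the precise bookkeeping of the normalizing element in the $\mathbb{Z}$-algebra (rather than graded) language: one must make sense of ``$g$'' as a compatible family $(g_n)$, check the twisted-centrality relations $g_n a = a^{\phi} g_{?}$ degree by degree, and confirm regularity without the convenience of a single homogeneous central element. A secondary subtlety is that for some quadrics $Y$ is not reduced or not irreducible, so the naive ``sections of a line bundle on an integral curve'' argument for $B$ being a domain needs adjustment — possibly by passing to the generic point of the distinguished component or by a direct degree argument using the helix structure.
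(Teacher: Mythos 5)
There is a genuine gap, and it is exactly at the point you yourself flag as ``fiddly'': step (3) is false for a general quadric. The theorem is stated for \emph{all} quadrics $A=A(Y,(\Lscr_i)_i)$, and the point scheme $Y$ of a cubic AS-regular $\mathbb{Z}$-algebra need not be integral -- it can be a reducible or non-reduced divisor of bidegree $(2,2)$. In that case $B(Y,(\Lscr_i)_i)$ is \emph{not} a $\mathbb{Z}$-domain: two sections vanishing on complementary components (or sections in the ideal of $Y_{\mathrm{red}}$ when $Y$ is non-reduced) multiply to zero. The situation is precisely that of $k[x,y,z]$ modulo a reducible cubic: the ambient algebra is a domain even though the quotient by the normalizing element is not, so the reduction ``$\bar x\bar y=0$ in $B$, hence $\bar x=0$ or $\bar y=0$, hence $x$ or $y$ is divisible by $g$'' never gets started, and neither ``arguing component by component'' nor passing to a distinguished component rescues it, since a zero divisor of $B$ need not lift to a multiple of the normalizing family. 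A secondary, lesser issue is that you also lean on the existence of a normalizing family $(g_n)$ with $A/(g)\cong B$ and on its regularity in the non-1-periodic $\mathbb{Z}$-algebra setting; regularity could be extracted from a Hilbert-series count once $A/(g)\cong B$ is known, but the degenerate-$Y$ obstruction above is fatal to the overall strategy regardless.

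For comparison, the paper's proof (Appendix \ref{sec:appendix}) deliberately avoids the quotient $A\twoheadrightarrow B$ altogether and instead adapts the GK-dimension/duality argument of \cite{ATV2}: one forms the two-sided ideal $N$ with $e_iN$ the largest submodule of $e_iA$ of GK-dimension $\le 2$, shows $A/N$ is a $\mathbb{Z}$-domain by a multiplicity argument (Lemmas \ref{lem:d-length} and \ref{lem:ANdomain}, using $e(e_iA)=\tfrac12$), and then kills $N$: if $e_0N\neq 0$ it is a second syzygy of projective dimension $\le 1$ and GK-dimension exactly $2$ (Lemmas \ref{lem:syzygy} and \ref{lem:pdimpliesgkdim}), its $\Ext^1(-,A)$ is finite-dimensional because it is an $\Ext^3$ (Lemma \ref{lem:torsionext3}), and the duality formula for Hilbert series (Corollary \ref{lem:Hilbertdualcomplex}) then forces the dual to have negative multiplicity, a contradiction. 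That argument uses only the AS-regular $\mathbb{Z}$-algebra structure (Hilbert function, Gorenstein duality, 2-periodicity) and applies uniformly to all quadrics, with no integrality hypothesis on $Y$ and no normal element. If you want to salvage your route, you would have to restrict to quadrics with $Y$ integral (e.g.\ the Sklyanin case) and additionally supply the normalizing family and its regularity in the $\mathbb{Z}$-algebra language; as a proof of the stated theorem it does not go through.
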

\begin{proof}
The proof of this theorem is postponed to Appendix \ref{sec:appendix}.
\end{proof}

Let $R$ be a $\mathbb{Z}$-algebra and $\Rscr$ the associated category as in Remark \ref{rem:quotientfield}. Let $W$ be a collection of homogeneous elements and let $\Wscr$ be the corresponding collection of morphisms. We then say that $R$ is localizable at $W$ if 
$(\Rscr,\Wscr)$ admits a calculus of fractions (see for example \cite{GabrielZisman}). In this case we define $R[W^{-1}]$ as the $\mathbb{Z}$-algebra associated to $\Rscr[\Wscr^{-1}]$.\\

Using the theory of (right) fractions for a category one easily checks that the following definition makes sense.
\begin{definition} \label{def:Zfieldoffractions}
Let $R$ be a $\mathbb{Z}$-domain, then $R$ admits a $\mathbb{Z}$-field of (right) fractions if the following condition is satisfied:
\[ \forall r \in R_{l,i}, s \in R_{l,j} \setminus \{0\}: \exists n \in \mathbb{Z}: \exists r' \in R_{j,n}, s' \in R_{i,n} \setminus \{0\}: rs' = sr' \]
The elements of $\Frac(R)_{i,j}$ are equivalence classes of couples $(r,s)$ where $r \in R_{i,l}, s \in R_{j,l}\setminus \{ 0 \}$ for some $l \in \mathbb{Z}$. The equivalence relation is given by 
\begin{eqnarray*} &\forall l_1, l_2 \in \mathbb{Z}, \forall r_1 \in R_{i,l_1}, r_2 \in R_{i,l_2}, s_1 \in R_{j,l_1}\setminus \{ 0 \}, s_2 \in R_{j,l_2} \setminus \{ 0 \}:& \\
&(r_1,s_1) \sim (r_2,s_2)& \\ &\Updownarrow& \\ & \exists l_3 \in \mathbb{Z}, \exists x \in R_{l_1,l_3} \setminus \{0\}, \exists y \in R_{l_2,l_3} \setminus \{0 \}: r_1 x = r_2 y \textrm{ and } s_1 x = s_2 y& \end{eqnarray*}
\end{definition}
The following is obvious from the definition:
\begin{proposition} \label{prp:gradedZfractions}
Let $A$ be a graded domain. Then
\[ A \textrm{ admits a (graded) field of (right) fractions} \Leftrightarrow \check{A} \textrm{ admits a $\mathbb{Z}$-field of (right) fractions} \]
Moreover in this case $\widecheck{Frac(A)} = Frac(\check{A})$
\end{proposition}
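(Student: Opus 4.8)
The plan is to observe that everything reduces, via the tautological identification $\check{A}_{m,n} = A_{n-m}$, to the classical theory of graded rings of right fractions. I would begin by noting that $A$ is a graded domain if and only if $\check{A}$ is a $\mathbb{Z}$-domain: an element $x \in \check{A}_{m,n}$ is exactly a homogeneous element of $A$ of degree $n-m$, and $x \in \check{A}_{m,n}$, $y \in \check{A}_{n,j}$ are composable precisely when their product is taken in $A_{j-m}$, so the two domain conditions are verbatim the same statement. Thus the hypothesis that $A$ be a graded domain is equivalent to $\check{A}$ being a $\mathbb{Z}$-domain, and both $\Frac(A)$ and $\Frac(\check{A})$ make sense under the same hypothesis.

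For the equivalence in the statement, I would unwind the condition of Definition \ref{def:Zfieldoffractions} for $R = \check{A}$. An element $r \in R_{l,i}$ is a homogeneous element of $A$ of degree $i-l$; since the index $l$ is free, every pair of homogeneous elements $r, s$ of $A$ (with $s \ne 0$) arises in this way by choosing $l$ and setting $i = l + \deg r$, $j = l + \deg s$. Writing $m = n - l$, the elements $r' \in R_{j,n}$ and $s' \in R_{i,n} \setminus \{0\}$ demanded by the condition are homogeneous elements of $A$ of degrees $m - \deg s$ and $m - \deg r$, and the required equation $r s' = s r'$ is an equation in $A_m$. The degree constraints are automatic once $rs' = sr'$ holds, and conversely any graded right Ore pair $(r',s')$ for $(r,s)$ satisfies them with $m := \deg r + \deg s'$ and $n := l + m$. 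Hence the condition in Definition \ref{def:Zfieldoffractions} for $\check{A}$ is literally the graded right Ore condition for $A$, which is exactly what it means for the graded domain $A$ to admit a graded field of right fractions. This proves the ``$\Leftrightarrow$''.

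For the ``moreover'', recall that $\Frac(A)$ is the $\mathbb{Z}$-graded ring whose degree-$d$ part consists of equivalence classes of pairs $(r,s)$ of homogeneous elements with $s \ne 0$ and $\deg r - \deg s = d$, where $(r_1,s_1) \sim (r_2,s_2)$ iff there are nonzero homogeneous $x,y$ with $r_1 x = r_2 y$ and $s_1 x = s_2 y$, and where addition and multiplication of fractions are defined by the usual formulas after bringing the fractions to a common right denominator via the Ore condition. Applying the dictionary $R_{l,i} \leftrightarrow A_{i-l}$ to the description of $\Frac(\check{A})_{i,j}$ in Definition \ref{def:Zfieldoffractions}: a pair $(r,s)$ with $r \in R_{i,l}$, $s \in R_{j,l} \setminus \{0\}$ has $\deg r - \deg s = (l-i) - (l-j) = j-i$, so $\Frac(\check{A})_{i,j}$ is precisely $\Frac(A)_{j-i} = \widecheck{\Frac(A)}_{i,j}$; the equivalence relation of Definition \ref{def:Zfieldoffractions} is, under the dictionary, the one just described; and the $\mathbb{Z}$-algebra multiplication on $\Frac(\check{A})$ (composition of fractions), being computed by the same ``push the denominator across using the Ore condition'' recipe, transports to the multiplication of $\Frac(A)$. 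Therefore $\Frac(\check{A}) = \widecheck{\Frac(A)}$ as $\mathbb{Z}$-algebras. Equivalently, in the language of Remark \ref{rem:quotientfield}, both sides are the localization of the $k$-linear category $\Rscr$ attached to $\check{A}$ at the class of all nonzero homogeneous morphisms, and hence coincide by the universal property of localization.

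I do not expect a genuine obstacle here: the entire content is bookkeeping, namely keeping track of how the free ``base'' index $l$ and the auxiliary index $n$ in the $\mathbb{Z}$-picture correspond to the degree shifts implicit in the graded picture, and observing that each well-definedness check for the operations on $\Frac(\check{A})$ is word-for-word the classical check for $\Frac(A)$. This is precisely why the statement is labelled as obvious from the definitions; the only mild care needed is to match the existential ``$\exists n$'' in Definition \ref{def:Zfieldoffractions} with ``a common right denominator of sufficiently high degree'' on the graded side.
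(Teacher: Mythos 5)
Your proposal is correct and follows the only route available, namely a direct unwinding of Definition \ref{def:Zfieldoffractions} through the dictionary $\check{A}_{m,n}=A_{n-m}$; this is exactly what the paper has in mind when it states the proposition is obvious from the definition and gives no further proof. Your degree bookkeeping (in particular $\Frac(\check{A})_{i,j}=\Frac(A)_{j-i}=\widecheck{\Frac(A)}_{i,j}$) is accurate, so nothing is missing.
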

Recall that graded domains admit fields of (right-)fractions when they are graded (right-)noetherian or when they have subexponential growth. A similar result can be found in \cite{ChanNyman}:
\begin{proposition} \label{prp:channyman}
Let $R$ be a $\mathbb{Z}$-domain such that each $e_i R$ is a uniform module (i.e. for all nonzero $M,N \subset e_i R: M \cap N \neq 0$). Then $R$ admits a $\mathbb{Z}$-field of (right) fractions.
\end{proposition}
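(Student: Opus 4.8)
The plan is to notice that, given Definition \ref{def:Zfieldoffractions}, the proposition reduces to verifying a single right Ore type condition, and that this follows from uniformity exactly as in the classical fact ``a right uniform domain is a right Ore domain''. Concretely, fix $l,i,j\in\mathbb{Z}$ together with $r\in R_{l,i}$ and $s\in R_{l,j}\setminus\{0\}$; I must produce $n\in\mathbb{Z}$, $r'\in R_{j,n}$ and $s'\in R_{i,n}\setminus\{0\}$ with $rs'=sr'$.

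First I would dispose of the degenerate case $r=0$. Since $e_iR$ is uniform it is in particular nonzero, so there is some $n$ with $R_{i,n}\neq 0$; choosing $0\neq s'\in R_{i,n}$ and $r'=0\in R_{j,n}$ gives $rs'=0=sr'$, as required. Now assume $r\neq 0$. Then $rR=re_iR$ and $sR=se_jR$ are graded right submodules of $e_lR$ — it is precisely because $r$ and $s$ share the left index $l$ that they live in the same ambient module — and both are nonzero since $r=re_i\in rR$ and $s=se_j\in sR$. As $e_lR$ is uniform, $rR\cap sR\neq 0$; being an intersection of graded submodules it is graded, hence contains a nonzero homogeneous element $t$, say $t\in R_{l,n}$. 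From $t\in (rR)_{l,n}=rR_{i,n}$ and $t\in (sR)_{l,n}=sR_{j,n}$ I obtain $s'\in R_{i,n}$ and $r'\in R_{j,n}$ with $t=rs'=sr'$. Finally $rs'=t\neq 0$ forces $s'\neq 0$, so $(n,r',s')$ is the desired triple and the condition of Definition \ref{def:Zfieldoffractions} holds.

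I do not expect a genuine obstacle: the cancellation half of a calculus of right fractions is automatic because $R$ is a $\mathbb{Z}$-domain (if $xa=xb$ with $x\neq 0$ then $x(a-b)=0$, whence $a=b$), and the multiplicative system of nonzero homogeneous elements is closed under composition and contains the local units for the same reason, so Definition \ref{def:Zfieldoffractions} really does capture the whole content. The only points requiring a little care are the bookkeeping ones flagged above: taking the ambient uniform module to be $e_lR$, extracting a homogeneous element from the graded intersection $rR\cap sR$, and verifying that the element $s'$ produced is nonzero rather than merely that $t$ is.
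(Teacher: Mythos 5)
Your proposal is correct and follows the same route as the paper: reduce to the Ore-type condition in Definition \ref{def:Zfieldoffractions}, handle $r=0$ by taking $r'=0$, and for $r\neq 0$ use that $rR$ and $sR$ are nonzero graded submodules of the uniform module $e_lR$, so a nonzero homogeneous element of $rR\cap sR$ yields the required $r'$ and $s'\neq 0$. Your version merely spells out the bookkeeping (gradedness of the intersection, nonvanishing of $s'$) that the paper leaves implicit.
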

\begin{proof}
We need to show that for all $r \in R_{l,i}, s \in R_{l,j} \setminus \{0\}$ there exists an $n \in \mathbb{Z}$ and elements $r' \in R_{j,n}, s' \in R_{i,n} \setminus \{0\}$ such that $rs' = sr'$. If $r=0$ then it suffices to take $r'=0$. If $r \neq 0$ the existence of $r'$ and $s'$ follows from the fact that $rR$ and $sR$ are nonzero submodules of the uniform module $e_lR$. 
\end{proof}
Similarly the following will be shown in Appendix \ref{sec:appendix}:

\begin{theorem} \label{thm:quadricsfunctionfield}
Let $A$ be a quadric. Then $A$ admits a $\mathbb{Z}$-field of fractions.
\end{theorem}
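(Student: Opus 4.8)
The plan is to derive the statement from Proposition \ref{prp:channyman}. By Theorem \ref{thm:quadricdomain} a quadric $A = A(Y,(\Lscr_i)_i)$ is already known to be a $\mathbb{Z}$-domain, so it will suffice to prove that each $e_l A$ is a uniform module; equivalently, I would show that for every $l$ and all nonzero homogeneous $x \in A_{l,i}$, $y \in A_{l,j}$ one has $xA \cap yA \neq 0$ (any two nonzero graded submodules of $e_l A$ contain cyclic submodules of this shape). I would argue by contradiction and assume $xA \cap yA = 0$.

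First I would dispose of the degenerate cases and normalize: if $i = l$ then $x$ is a nonzero scalar multiple of $e_l$, so $xA = e_l A \supseteq yA \neq 0$, a contradiction; hence $i,j > l$. Since $A$ is connected and generated in degree $1$, the spaces $A_{i,m_0}$ and $A_{j,m_0}$ are nonzero for $m_0 := \max(i,j)$, and since $A$ is a $\mathbb{Z}$-domain I can multiply $x$ and $y$ by suitable nonzero elements to obtain nonzero $x', y' \in A_{l,m_0}$ with $x'A \subseteq xA$, $y'A \subseteq yA$, hence still $x'A \cap y'A = 0$; put $d := m_0 - l \geq 1$. Now I would use that $x', y'$ are non-zero-divisors: for each $n \geq d$ the maps $A_{m_0,l+n} \to A_{l,l+n}$ given by left multiplication by $x'$ and by $y'$ are injective and have images meeting only in $0$, so
\[ 2\, h(n-d) \leq \dim_k A_{l,l+n} = h(n), \]
where $h(m) = \dim_k A_{p,p+m}$ is the (well-defined) Hilbert function of the quadric. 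Iterating gives $h(n) \geq 2^{\lfloor n/d \rfloor}$, i.e.\ exponential growth, which contradicts condition~(2) in the definition of an AS-regular $\mathbb{Z}$-algebra (indeed $h$ has generating function $1/((1-t)^2(1-t^2))$). This contradiction shows that $e_l A$ is uniform for every $l$, and Proposition \ref{prp:channyman} then yields the claim.

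The whole argument is short once set up, and I expect the only delicate point to be the normalization step — replacing $x,y$ by elements $x',y'$ lying in one and the same graded component $A_{l,l+d}$ with $d \geq 1$ without destroying $x'A\cap y'A = 0$ — which is exactly where connectedness, generation in degree $1$, and the $\mathbb{Z}$-domain property of Theorem \ref{thm:quadricdomain} are all used. (One can also skip this step and work directly with $x\in A_{l,i}$, $y\in A_{l,j}$, using that the Hilbert function of a quadric is non-decreasing to get $h(n)\geq h(n-a)+h(n-b)\geq 2\,h(n-\max(a,b))$ for $a=i-l$, $b=j-l$.)
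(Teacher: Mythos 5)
Your proof is correct, and the reduction is the same as the paper's: both invoke Theorem \ref{thm:quadricdomain} together with Proposition \ref{prp:channyman}, so that everything comes down to showing each $e_lA$ is uniform. Where you genuinely differ is in how uniformity is established. The paper argues with Gelfand--Kirillov dimension: any nonzero graded submodule $M \subset e_lA$ contains a copy $xe_jA$ of some $e_jA$ (as $A$ is a $\mathbb{Z}$-domain) and hence has $\gkdim = 3$; by the multiplicity computation behind Lemma \ref{lem:d-length} (every $e_jA$ has multiplicity $1/2$, and multiplicities of submodules are nonnegative multiples of $1/2$) the quotient $e_lA/M$ then has $\gkdim < 3$, so a second nonzero submodule $N$ with $M \cap N = 0$ cannot embed into it. You instead run the classical ``subexponential growth implies Ore'' argument adapted to $\mathbb{Z}$-algebras: after normalizing $x,y$ into a common graded piece $A_{l,l+d}$ with $d \geq 1$ (legitimate, since $h(n) > 0$ for all $n \geq 0$ and right multiplication by nonzero elements preserves nonvanishing in a $\mathbb{Z}$-domain), disjointness of $x'A$ and $y'A$ forces $h(n) \geq 2h(n-d)$ for all $n \geq d$, hence exponential growth of the Hilbert function, contradicting condition (2) in the definition of an AS-regular $\mathbb{Z}$-algebra. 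Your route is more elementary and self-contained at this step --- it uses only the $\mathbb{Z}$-domain property and the polynomial growth bound, bypassing Lemma \ref{lem:d-length} and the multiplicity arithmetic --- whereas the paper's GK-dimension argument reuses machinery its appendix has already built (the same lemmas drive the proof that $N = 0$ in Theorem \ref{thm:quadricdomain}), so each approach fits naturally into its own context.
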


\begin{remark} \label{rem:quadricsfunctionfields}
As three dimensional, quadratic AS-regular $\mathbb{Z}$-algebras are 1-periodic, the existence of their $\mathbb{Z}$-field of fractions is automatic from Proposition \ref{prp:gradedZfractions}.
\end{remark}

\begin{remark}
When $Q=\Frac(A)$ it is customary to refer to $Q_{0,0}$ as the function field of $\QGr(A)$. We will do so throughout this paper.
\end{remark}

\section{Summary of the results in \cite{PresVdB}}
\label{sec:PresVdB}

In \cite{PresVdB} we construct noncommutative versions of the birational transformation $\mathbb{P}^1 \times \mathbb{P}^1 \dashrightarrow \mathbb{P}^2$ and the Cremona Transform $\mathbb{P}^2 \dashrightarrow \mathbb{P}^2$ using the following recipe:

\begin{enumerate}
\item[Step 1)] Let $A$ be a 3-dimensional Sklyanin algebra. Let $(Y,\Lscr,\sigma)$ be the associated geometric data, $B$ be the associated twisted homogeneous coordinate ring and $\Qcoh(X) = \QGr(A)$. Denote $\Bimod(Y-Y) \hookrightarrow \Bimod(X-X)$ for the categories of bimodules (recall: $\Bimod(X_1-X_2)$ is the category of right exact functors $\Qcoh(X_1) \rightarrow \Qcoh(X_2)$ commuting with direct limits) with $o_Y, o_X$ corresponding to the identity functors. Let $o_X(-Y) = \ker \left( o_X \twoheadrightarrow o_Y \right)$\footnote{Unfortunately $\Bimod(X_1-X_2)$ appears not to be an abelian category.
This technical difficulty is solved in \cite{VdB19} by embedding $\Bimod(X-Y)$ into a larger abelian category $\BIMOD(X_1-X_2)$ consisting of ``weak bimodules''. 
In particular we will always construct kernels and cokernels as elements in $\BIMOD(X_1-X_2)$. This being said, these technical complication will be invisible in this paper as all bimodules we construct will be elements of the (smaller) category $\Bimod(X_1-X_2)$. }.

\item[Step 2)] Choose a divisor $d$ on $Y$ in the following way:
\begin{itemize}
\item $d= p + q + r$ for $p,q,r$ distinct non-collinear points in case $A$ is quadratic
\item $d = p$ in case $A$ is cubic
\end{itemize}
Let $\Oscr_d$ be it's structure sheaf and denote $o_d \in \Bimod(Y-Y) \subset \Bimod(X-X)$ for the bimodule
\[ o_d := ( - \otimes_Y \Oscr_d) \]


and 
\begin{align}\label{eq:defmd}
 m_d & := \ker(o_X \twoheadrightarrow o_d) \\
\notag m_{d,Y} & := \ker( o_Y \twoheadrightarrow o_d)
\end{align}
\item[Step 3)] Define $\mathbb{Z}$-algebras $D_Y$ and $D$ as follows
\begin{align} 
\notag (D_Y)_{m,n}& = 
\begin{cases}
\Hom \left( \Oscr_Y(-2n), \Oscr_Y(-2m) \otimes_{o_Y} m_{\tau^{-m} d,Y} \ldots m_{\tau^{-n+1} d,Y} \right) & \text{if $n\ge m$}\\
0 & \text{if $n< m$}
\end{cases}\\
\label{eq:definitionDandDY} & \\
\notag D_{m,n} &=
\begin{cases}
\Hom \left( \Oscr_X(-2n), \Oscr_X(-2m) \otimes_{o_X} m_{\tau^{-m} d} \ldots m_{\tau^{-n+1} d} \right) &\text{if $n\ge m$}\\
0&\text{if $n<m$}
\end{cases}
\end{align}
where $\tau = \sigma^{s+1}$,
\begin{eqnarray} \label{eq:OX1}
\Oscr_X(i) = \pi(A(i)), \ \Oscr_Y(i) = \pi(B(i))
\end{eqnarray} and $\pi$ is the quotient functor $\Gr(A) \rightarrow \QGr(A)$.\\
By \cite[Lemma 8.2.1]{VdB19} the inclusions $m_{\tau^{-i}d} \hookrightarrow o_X$ give rise to an inclusion $D \hookrightarrow \check{A}^{(2)}$.
\item[Step 4)] One shows that $D_Y$ is a twisted homogeneous coordinate ring, i.e. there is a sequence of line bundles $(\Gscr_i)_{i \in \mathbb{Z}}$ such that
\begin{eqnarray}
\label{eq:firstGi} (D_Y)_{m,n} = \Gamma(Y, \Gscr_m \otimes \ldots \otimes \Gscr_{n-1}) 
\end{eqnarray}
Moreover the $\Gscr_i$ must form an ``elliptic helix'' (for a quadratic AS regular $\mathbb{Z}$-algebra), i.e.
\begin{itemize}
\item $\deg \Gscr_i = 3$ for all $i$
\item $\Gscr_0 \not \cong \Gscr_1$
\item $\Gscr_i \otimes \Gscr_{i+1}^{\otimes -2} \otimes \Gscr_{i+2} \equiv \Oscr_Y$
\end{itemize}
\item[Step 5)]
One shows that $D$ is generated in degree 1. For this we need some natural vanishing results on $\Oscr_X(a) \otimes m_{\tau^{-m} d} \ldots m_{\tau^{-n+1} d}$ 
(We will recapitulate and slightly extend these vanishing results in Lemmas \ref{cor:standardvanishing2} and \ref{cor:standardvanishing}.)
\item[Step 6)]
One shows that the canonical map $D \rightarrow D_Y$ is surjective and that $D$ has the correct Hilbert series. It then follows that $D$ is the AS-regular $\mathbb{Z}$-algebra corresponding to the elliptic helix $(\Gscr_i)_{i \in \mathbb{Z}}$ on $Y$. As $D$ is quadratic, there is an AS-regular quadratic algebra $A'$ such that $D = \widecheck{A'}$ (see \cite[Theorem 4.2.2]{VdB38}).
\item[Step 7)]
Check that the induced morphism 
\[ \Frac_0(A') = \Frac_{0,0}(D) \rightarrow \Frac_{0,0}(\check{A}) = \Frac_0(A) \]
 is an isomorphism. This check is immediate as injectivity comes for free and surjectivity follows by a Hilbert series argument.
\end{enumerate}

\section{Addendum to \cite{PresVdB}}
\label{sec:addendum}
As explained in the previous section, in \cite{PresVdB} we construct noncommutative versions of the birational transformation $\mathbb{P}^1 \times \mathbb{P}^1 \dashrightarrow \mathbb{P}^2$. In this section we slightly generalize this (Theorem \ref{thm:firstextension}) and in addition we discuss a noncommutative version of the inverse birational transformation $\mathbb{P}^2 \dashrightarrow \mathbb{P}^1 \times \mathbb{P}^1$ (Theorem \ref{thm:secondextension}). 

\begin{theorem} \label{thm:firstextension}
Let $A = A(Y,(\Lscr_i)_{i\in \mathbb{Z}})$ be a quadric such that
\begin{itemize}
\item $Y$ is a smooth elliptic curve
\item $\Lscr_2 \cong \alpha^* \Lscr_0$ for some $\alpha \in \Aut(Y)$ which is given by translation by a point of order at least 3 (i.e. $\alpha^2 \neq Id$).
\end{itemize}
then there exists a quadratic Sklanin algebra $A'$ and an inclusion $\widecheck{A'} \hookrightarrow A^{(2)}$ inducing an isomorphism of function fields.
\end{theorem}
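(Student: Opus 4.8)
The plan is to follow the seven-step recipe of \S\ref{sec:PresVdB} essentially verbatim, but in the $\mathbb{Z}$-algebra setting and with the divisor $d$ taken to be a single point $p$ on $Y$ (the cubic case of the recipe), so that $\tau = \sigma^{s+1}$ is replaced by the shift $\tau$ coming from the helix $(\Lscr_i)_i$. Concretely, I would first form the bimodules $o_X$, $o_p = (-\otimes_Y \Oscr_p)$ and $m_{\tau^{-m}p} = \ker(o_X \twoheadrightarrow o_{\tau^{-m}p})$ inside $\Bimod(X-X)$ (and their curve analogues inside $\Bimod(Y-Y)$), and then define the $\mathbb{Z}$-algebras $D$ and $D_Y$ exactly as in \eqref{eq:definitionDandDY}, with $\Oscr_X(i) = \pi(e_{-i}A)$ and $\Oscr_Y(i) = \pi(e_{-i}B)$. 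The inclusion $D \hookrightarrow A^{(2)}$ is then obtained, as in Step~3, from \cite[Lemma~8.2.1]{VdB19} applied to the inclusions $m_{\tau^{-i}p}\hookrightarrow o_X$. So the content of the theorem reduces to: (a) identifying $D_Y$ with a twisted homogeneous coordinate ring of an elliptic helix $(\Gscr_i)_i$ on $Y$; (b) showing $D$ is a connected AS-regular quadratic $\mathbb{Z}$-algebra with the right Hilbert series; (c) concluding $D = \widecheck{A'}$ for a quadratic Sklyanin algebra $A'$; and (d) checking the induced map on function fields is an isomorphism, using Theorem~\ref{thm:quadricsfunctionfield} to know $\Frac(A)$ exists in the first place.

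For step (a) I would compute $(D_Y)_{m,n}$ by repeatedly tensoring with the $m_{\tau^{-j}p,Y}$, which on the curve are just $\Oscr_Y(-\tau^{-j}p)\otimes_{o_Y}(-)$ twisted appropriately; unwinding the definition of $o_Y$ via \eqref{eq:YisB} should give
\[
(D_Y)_{m,n} = \Gamma\!\left(Y,\ \Gscr_m \otimes \cdots \otimes \Gscr_{n-1}\right)
\]
with $\Gscr_i = \Lscr_{2i}\otimes\Lscr_{2i+1}\otimes\Oscr_Y(-\tau^{-i}p)$ or a similar explicit combination (this is exactly where the candidate $A' = A(Y,\Lscr_0\otimes\Lscr_1\otimes\Oscr_Y(-p),\ \Lscr_2\otimes\Lscr_3\otimes\Oscr_Y(-\tau^{-1}p),\dots)$ of the Introduction comes from). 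I then need $\deg\Gscr_i = 3$ (automatic since $\deg\Lscr_j = 2$ in the cubic helix, minus the point), $\Gscr_0\not\cong\Gscr_1$ (this is where the hypothesis that $\alpha$ is translation by a point of order $\geq 3$, i.e. $\alpha^2\neq\Id$, must be used — it prevents a degeneration making the helix non-elliptic), and the helix relation $\Gscr_i\otimes\Gscr_{i+1}^{\otimes -2}\otimes\Gscr_{i+2}\equiv\Oscr_Y$, which should follow from the corresponding elliptic-helix relation for the $\Lscr_i$.

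For step (b), the generation of $D$ in degree~$1$ and the Hilbert series computation require the vanishing results on $\Oscr_X(a)\otimes m_{\tau^{-m}p}\cdots m_{\tau^{-n+1}p}$ promised in Lemmas~\ref{cor:standardvanishing2} and~\ref{cor:standardvanishing}; granting those, surjectivity of $D\to D_Y$ and the count $\dim_k D_{m,n} = \binom{n-m+2}{2}$ go through as in Step~6, and then the classification of quadratic AS-regular $\mathbb{Z}$-algebras \cite[Theorem~4.2.2]{VdB38} gives $D = \widecheck{A'}$ with $A'$ quadratic AS-regular; smoothness of $Y$ and $\alpha$ being a translation make $A'$ Sklyanin. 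Finally step (d): injectivity of $\Frac_{0,0}(D)\to\Frac_{0,0}(\check A)$ is free from $D\hookrightarrow A^{(2)}$ and the fact that $A$ is a $\mathbb{Z}$-domain (Theorem~\ref{thm:quadricdomain}), while surjectivity follows by comparing growth/Hilbert series of $D$ and $\check A^{(2)}$ as in Step~7. I expect the main obstacle to be step~(a)--(b) in the genuinely non-$1$-periodic setting: in \cite{PresVdB} one could work with a graded algebra and a single automorphism $\sigma$, whereas here $\tau$ acts on an infinite family of line bundles with no periodicity, so one must be careful that the vanishing lemmas and the surjectivity argument are stated and applied \emph{uniformly in the index}, which is precisely what the $\mathbb{Z}$-algebra versions of the AS-regular axioms (boundedness of $\pd S_{n,R}$ independent of $n$) are designed to support; verifying $\Gscr_0\not\cong\Gscr_1$ under the stated order-$\geq 3$ hypothesis is the one genuinely new geometric input.
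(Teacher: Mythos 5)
Your outline follows the same route as the paper's proof: run the recipe of \S\ref{sec:PresVdB} with $d=p$ and $\tau=\alpha^2$, define $D$ and $D_Y$ as in \eqref{eq:definitionDandDY}, identify $D_Y$ as the twisted homogeneous coordinate ring of the helix $\Gscr_i=\Oscr_Y(-\tau^{-i}p)\otimes\Lscr_{2i}\otimes\Lscr_{2i+1}$, show $D\to D_Y$ is surjective with the quadratic Hilbert series so that $D=\widecheck{A'}$ by \cite[Theorem 4.2.2]{VdB38}, and get the function field isomorphism from injectivity (the $\mathbb{Z}$-domain property) plus a Hilbert series/growth argument. All of that matches the paper, including the role of $\alpha^2\neq\Id$ in $\Gscr_0\not\cong\Gscr_1$ and the appeal to Theorems \ref{thm:quadricdomain} and \ref{thm:quadricsfunctionfield}.

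The gap is in your step (b): you ``grant'' the vanishing results of Lemma \ref{cor:standardvanishing2}, but in the genuinely non-$1$-periodic situation these are precisely what has to be proved afresh --- they cannot be quoted from \cite{PresVdB}, which works with graded (hence $1$-periodic) algebras. The paper's proof obtains them from Lemma \ref{lem:quadricresolution}, the $\mathbb{Z}$-algebra analogue of the resolution of a truncated point module,
\begin{equation*}
0\rightarrow e_{i+3}A\rightarrow e_{i+1}A\oplus e_{i+2}A\rightarrow e_iA\rightarrow P_i\rightarrow 0,
\end{equation*}
spliced with part of the minimal resolution of the simple module $S_{i+1}$, and then combined with Lemma \ref{lem:standardvanishing}. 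Establishing that resolution is the real new content: one must transport the results of \cite{ATV2} on line modules and on point modules having projective dimension $\leq 2$ to quadrics, and this rests on the $\mathbb{Z}$-domain property of Theorem \ref{thm:quadricdomain} (the appendix), not merely on the uniform-in-$n$ formulation of the AS-regularity axioms that you point to. So the item you single out as the ``one genuinely new geometric input'' ($\Gscr_0\not\cong\Gscr_1$) is in fact easy, while the missing idea in your proposal is the point-module resolution over the quadric and its use to prove the Ext-vanishing that drives generation in degree $1$, the surjectivity of $D\to D_Y$, and the Hilbert series count.
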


\begin{remark} \label{rem:defsklyaninquadric}
We refer to a quadric as in Theorem \ref{thm:firstextension} as a cubic Sklyanin $\mathbb{Z}$-algebra.
\end{remark}
\begin{remark}
Theorem \ref{thm:firstextension} extends \cite{PresVdB} in the sense that it does not require $A$ to be 1-periodic.
\end{remark}
\begin{theorem} \label{thm:secondextension}
Let $A = A(Y,\psi,\Lscr)$ be a quadratic Sklyanin algebra. Then there exists a cubic Sklyanin $\mathbb{Z}$-algebra $D$ and an inclusion $D \hookrightarrow \check{A}$ inducing an isomorphism of function fields. The construction of $D$ depends on two points, $p,q \in Y$ and $D$ is 1-periodic, i.e. of the form $D = \widecheck{A'}$, if $q = \sigma p$ for $\sigma$ a translation such that $\sigma^2=\psi^{-3}$.
\end{theorem}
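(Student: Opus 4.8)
The plan is to run the seven-step construction of Section~\ref{sec:PresVdB} ``in reverse'': starting from the quadratic Sklyanin algebra $A=A(Y,\psi,\Lscr)$ we build a $\mathbb{Z}$-subalgebra $D\subseteq\check A$ whose curve part $D_Y$ is a twisted homogeneous coordinate ring of a degree-$2$ elliptic helix on $Y$, and then we invoke the classification of three-dimensional cubic AS-regular $\mathbb{Z}$-algebras of \cite{VdB38}. The geometric transformation being realised is $\mathbb{P}^2\dashrightarrow\mathbb{P}^1\times\mathbb{P}^1$, classically given by the linear system of conics through two points $p,q$ (blow up $p$ and $q$, then blow down the line joining them); this is why $D$ depends on exactly two points of $Y$, and why the inclusion lands in $\check A$ rather than in $\check A^{(2)}$ --- each step of $D$ corresponds to one unit of $\Oscr_X$-degree minus a single point, so that $D$ has two generators sitting inside $A_1$, as befits a cubic $\mathbb{Z}$-algebra.

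Keeping the notation of Section~\ref{sec:PresVdB} --- $X=\Proj A$, $o_X(-Y)=\ker(o_X\twoheadrightarrow o_Y)$, and $m_c=\ker(o_X\twoheadrightarrow o_c)$, $m_{c,Y}=\ker(o_Y\twoheadrightarrow o_c)$ for a point $c\in Y$ --- we fix $p,q\in Y$, put $\tau=\psi^3$, and let $(c_i)_{i\in\mathbb{Z}}$ be the sequence of points with $c_{2k}=\tau^{-k}p$ and $c_{2k+1}=\tau^{-k}q$. We then define
\[
D_{m,n}=\Hom\!\bigl(\Oscr_X(-n),\ \Oscr_X(-m)\otimes_{o_X}m_{c_m}m_{c_{m+1}}\cdots m_{c_{n-1}}\bigr)\qquad(n\ge m),
\]
with $D_{m,n}=0$ for $n<m$, and let $D_Y$ be given by the same formula with $o_X,\Oscr_X,m_{c_i}$ replaced by $o_Y,\Oscr_Y,m_{c_i,Y}$. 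As in Step~3 of Section~\ref{sec:PresVdB}, the inclusions $m_{c_i}\hookrightarrow o_X$ together with \cite[Lemma 8.2.1]{VdB19} produce the inclusion $D\hookrightarrow\check A$.

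The verification follows Steps~4--7. First, using \eqref{eq:YisB} and the fact that ideal sheaves of points on the smooth curve $Y$ are line bundles, one identifies $D_Y$ with the twisted homogeneous coordinate ring of a sequence $(\Gscr_i)_{i\in\mathbb{Z}}$ with $\Gscr_i\cong\psi^{*i}\Lscr\otimes\Oscr_Y(-c_i)$, so that $\deg\Gscr_i=2$; in particular $\Gscr_0=\Lscr\otimes\Oscr_Y(-p)$, $\Gscr_1=\psi^*\Lscr\otimes\Oscr_Y(-q)$ and $\Gscr_2=\psi^{*2}\Lscr\otimes\Oscr_Y(-\psi^{-3}p)$. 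A direct computation in $\Pic(Y)$ --- using that translations act trivially on $\Pic^0(Y)$ --- shows that $(\Gscr_i)$ satisfies the relations of a cubic elliptic helix, namely the $\Pic(Y)$-identity $\Gscr_i\otimes\Gscr_{i+1}^{\otimes-1}\otimes\Gscr_{i+2}^{\otimes-1}\otimes\Gscr_{i+3}\equiv\Oscr_Y$ together with $\Gscr_2\cong\alpha^*\Gscr_0$ for a translation $\alpha$ of order $\ge 3$; thus $(Y,(\Gscr_i))$ is the geometric datum of a cubic Sklyanin $\mathbb{Z}$-algebra in the sense of Remark~\ref{rem:defsklyaninquadric}. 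Second, one proves the vanishing statements for $\Oscr_X(a)\otimes_{o_X}m_{c_m}\cdots m_{c_{n-1}}$, extending Lemmas~\ref{cor:standardvanishing2} and~\ref{cor:standardvanishing}, and deduces that $D$ is generated in degree~$1$, that the canonical map $D\to D_Y$ is surjective, and that $\dim_k D_{m,n}$ equals the cubic Hilbert series $h(n-m)$ with generating function $1/\bigl((1-t)^2(1-t^2)\bigr)$; concretely, adjoining the ideal bimodule of the $k$-th point should reduce $\dim\Hom$ by $\lceil k/2\rceil$, turning $1,3,6,10,\dots$ into $1,2,4,6,\dots$. The classification of cubic AS-regular $\mathbb{Z}$-algebras in \cite{VdB38} then shows that $D$ is the one attached to $(\Gscr_i)$, hence a cubic Sklyanin $\mathbb{Z}$-algebra, and that $D$ is $1$-periodic --- equivalently $D=\widecheck{A'}$ for a cubic Sklyanin algebra $A'$ --- exactly when the helix $(\Gscr_i)$ is shift-isomorphic, which the same $\Pic$-computation translates into $q=\sigma p$ for a translation $\sigma$ with $\sigma^2=\psi^{-3}$. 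Finally, the inclusion $D\hookrightarrow\check A$ induces an injection $\Frac_{0,0}(D)\hookrightarrow\Frac_{0,0}(\check A)=\Frac_0(A)$ (the $\mathbb{Z}$-fields of fractions exist by Theorems~\ref{thm:quadricdomain} and~\ref{thm:quadricsfunctionfield} for $D$, and by $1$-periodicity for $\check A$), and surjectivity follows from a Hilbert series comparison as in Step~7 of Section~\ref{sec:PresVdB}; hence the function fields coincide.

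The main obstacle should be the package of vanishing results for the $X$-bimodules $\Oscr_X(a)\otimes_{o_X}m_{c_m}\cdots m_{c_{n-1}}$, together with the Hilbert series bookkeeping of Steps~5--6: one must control these objects on the noncommutative surface $X$, not merely on the commutative curve $Y$, showing that the relevant cohomology and higher $\Ext$-groups vanish and that the codimension drop at the $k$-th step is exactly $\lceil k/2\rceil$. This is precisely where the Sklyanin hypotheses are used: $Y$ is a smooth elliptic curve, $\psi$ is a translation of infinite order (so the auxiliary translation $\alpha$ above has order $\ge 3$), and $p,q$ lie in sufficiently general position that no unexpected linear dependences among the points $\{c_i\}$ occur. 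Once these inputs are secured, the remaining work --- identifying the helix $(\Gscr_i)$, invoking the classification of \cite{VdB38}, and comparing function fields --- is formal.
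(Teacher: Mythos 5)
Your proposal is correct and follows essentially the same route as the paper: the same sequence of points $d_{2k}=\tau^{-k}p$, $d_{2k+1}=\tau^{-k}q$, the same definition of $D$ and $D_Y$ with the inclusion via \cite[Lemma 8.2.1]{VdB19}, the identification $\Gscr_i\cong\psi^{i*}\Lscr\otimes\Oscr_Y(-d_i)$, the vanishing/Hilbert-series bookkeeping with colength increments $\lceil k/2\rceil$ yielding the cubic Hilbert function, and the same injectivity-plus-growth argument for the function fields. The only (harmless) overstatements are in your closing remarks: the theorem needs only $\tau=\psi^3\neq\operatorname{id}$ rather than $\psi$ of infinite order, and no genericity of $p,q$ is required at this stage (such conditions enter only later, e.g.\ in Theorem \ref{thm:cremona} and the invertibility results).
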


As both Theorem \ref{thm:firstextension} and \ref{thm:secondextension} contain statements on AS-regular $\mathbb{Z}$-algebras which need not be 1-periodic we introduce some notation for an AS-regular $\mathbb{Z}$-algebra $A$:
Inspired by $(\ref{eq:OX1})$ and Remark \ref{rem:OX} we define 
\begin{eqnarray} \label{eq:OX2}
\Oscr_X(i) = \pi( e_{-i}A), \ \Oscr_Y(i)= \pi( e_{-i}B)
\end{eqnarray}
Under the equivalence of categories $\QGr(B) \cong \Qcoh(Y)$ (see \eqref{eq:YisB}) $\Oscr_Y(i)$ can be identified with
 \begin{eqnarray} \label{eq:identifyOY} \begin{cases} \Lscr_0 \Lscr_1 \ldots \Lscr_{i-1} & \textrm{ if } i > 0 \\ \Oscr_Y & \textrm{ if } i=0 \\ (\Lscr_0 \Lscr_1 \ldots \Lscr_{-i-1})^{-1} & \textrm{ if } i<0 \end{cases} \end{eqnarray}
 
Similar to the graded case \cite[\S 4]{PresVdB} we have
\begin{eqnarray*}
\Hom ( \Oscr_X(-j), \Oscr_X(-i)) & = & \Hom_{\QGr(A)}( \pi(e_jA), \pi(e_iA)) \\
 & = & \lim_{n \rightarrow \infty} \Hom_{\Gr(A)}( e_jA_{\geq n}, e_iA) \\
 & = & \Hom_{\Gr(A)}( e_jA, e_iA) \\ 
 & = & A_{i,j}
\end{eqnarray*}
where the second equality follows from the fact that $e_jA$ and $e_iA$ are finitely generated $A$-modules (see \cite[Proposition 7.2]{artinzhang} for the graded case). The third equality follows from the Gorenstein condition as this implies $\Hom_{\Gr(A)}( e_jA/e_jA_{\geq n}, e_iA) = \Ext^1_{\Gr(A)}( e_jA/e_jA_{\geq n}, e_iA)=0$.

We also have the following standard vanishing result

\begin{lemma}\label{lem:standardvanishing}
Let $A$ be an AS-regular $\mathbb{Z}$-algebra of dimension 3 with Hilbert function $h$ and let $X = \QGr(A)$ and $\Oscr_X(i)$ be as above, then we have
\begin{eqnarray*}
\Ext^n(\Oscr_X(i), \Oscr_X(j)) & = & 0 \textrm{ for $n \neq 0,2$} \\ 
\dim_k \left(\Ext^2(\Oscr_X(i), \Oscr_X(j)) \right) & = & \begin{cases} 0 & \textrm{ if $j \geq i-s$} \\ h(i-s-j) & \textrm{ if $j \leq i-s-1$} \end{cases}
\end{eqnarray*}
\end{lemma}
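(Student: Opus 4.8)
The plan is to reduce everything to the Serre-duality and cohomology-vanishing toolkit available for three-dimensional AS-regular $\mathbb{Z}$-algebras, mimicking the graded proof of the analogous vanishing in \cite{PresVdB, VdB19, VdB38}. First I would recall that by the computation just carried out in the excerpt, $\Hom(\Oscr_X(i),\Oscr_X(j)) = A_{j,i}$, and more generally that for an AS-regular $\mathbb{Z}$-algebra of dimension $3$ the objects $\Oscr_X(i)$ form a ``geometric'' collection with $e_iA$ having projective dimension bounded independently of $i$. The Gorenstein condition, which for a three-dimensional AS-regular $\mathbb{Z}$-algebra takes the form $\sum_{i,j}\dim_k\Ext^i_{\Gr(R)}(S_{j,R},e_nR)=1$ concentrated in homological degree $3$, is what gives a Serre functor on $D^b(\qgr A)$ of the shape $\Oscr_X(-s)[2]$ (twisted by the appropriate shift $s=5-r$), exactly as in \cite[\S 3]{artinschelter} and the $\mathbb{Z}$-algebra version in \cite{VdB38}.

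The key steps, in order, would be: (1) establish that $\Ext^n(\Oscr_X(i),\Oscr_X(j))=0$ for $n<0$ trivially and for $n$ strictly bigger than the cohomological dimension; here one uses that $\QGr(A)$ has cohomological dimension $2$ (a consequence of global dimension $3$ together with the Gorenstein/$\chi$-type conditions, as in \cite{artinzhang} and its $\mathbb{Z}$-algebra analogue), so only $n=0,1,2$ survive a priori. (2) Show $\Ext^1(\Oscr_X(i),\Oscr_X(j))=0$ for all $i,j$: this is the key ``regularity''/ample-sequence statement, and I would prove it by Serre duality, $\Ext^1(\Oscr_X(i),\Oscr_X(j)) \cong \Ext^{1}(\Oscr_X(j),\Oscr_X(i-s))^{*}$, combined with the vanishing of $\Hom$ computed from the Hilbert function and an induction on the twist, precisely as in the graded case \cite[\S 4]{PresVdB}. (3) Apply Serre duality once more to get $\Ext^2(\Oscr_X(i),\Oscr_X(j)) \cong \Hom(\Oscr_X(j),\Oscr_X(i-s))^{*} = A_{i-s,j}^{*}$, whose dimension is $h(i-s-j)$ when $i-s-j\ge 0$ and $0$ otherwise (since $A$ is positively graded and $A_{m,m}=k$ forces $h(0)=1$, $h(\ell)=0$ for $\ell<0$); rewriting $i-s-j\ge 0 \iff j\le i-s$, and noting that the case $j=i-s$ gives $h(0)=1\neq 0$ must be folded into the ``$j\le i-s-1$'' branch, one matches the stated formula after checking the boundary $j=i-s$ separately, where $\Ext^2=k$ and indeed $h(0)=1$; so the split in the lemma should be read as $j\ge i-s \Rightarrow 0$ meaning $j>i-s$, consistent with $h(i-s-j)=0$ for $i-s-j\le 0$ except the diagonal, which I would double-check against the normalization of $h$ and $s$ in the source.

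Concretely I would structure the write-up as: invoke the ample/geometric sequence property of $(\Oscr_X(i))_i$ for a three-dimensional AS-regular $\mathbb{Z}$-algebra (citing \cite{VdB38}), invoke the existence of a Serre functor $\mathbb{S} = -\otimes \Oscr_X(-s)[2]$ on $D^b(\qgr A)$ (again \cite{VdB38}, or deduce it from the Gorenstein condition as in \cite{artinschelter}), then chase Serre duality to convert all $\Ext$'s into $\Hom$'s and the already-established $\Hom$-computation $\Hom(\Oscr_X(-j),\Oscr_X(-i))=A_{i,j}$, and finally read off dimensions from the Hilbert function $h$.

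The main obstacle I anticipate is the passage from the graded statements of \cite{PresVdB} to the genuinely non-$1$-periodic $\mathbb{Z}$-algebra setting: one must make sure that all the ingredients — finite cohomological dimension of $\QGr(A)$, the $\chi$ condition needed for $\Hom_{\QGr} = \lim \Hom_{\Gr}$ to stabilize, the existence and precise form of the Serre functor, and the independence of the Hilbert function $h(n)=\dim_k A_{i,i+n}$ from $i$ — are available for arbitrary three-dimensional AS-regular $\mathbb{Z}$-algebras and not just for $\check{A}$ with $A$ graded. All of these are in \cite{VdB38}, so the real work is bookkeeping rather than new ideas; the one genuinely delicate point is pinning down the correct shift $s$ and the off-by-one in the Hilbert-function indexing so that the two branches of the displayed formula agree at the boundary $j = i-s$, which I would verify by testing the smallest cases $i=j$ and $i-j=s$ directly.
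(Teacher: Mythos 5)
Your overall strategy (convert the statement into a Serre-duality computation and read dimensions off the Hilbert function) is not the paper's route and, as set up, it contains a genuine error at exactly the point you flagged as delicate. The Serre twist for a three-dimensional AS-regular ($\mathbb{Z}$-)algebra is $-(s+1)$, not $-s$: the Gorenstein parameter is $s+1$, since the minimal resolution of the simple module $S_{n}$ ends in $e_{n+s+1}A$ (degree $3$ in the quadratic case $s=2$, degree $4$ in the cubic case $s=3$); compare the commutative model $\mathbb{P}^2$, where $s=2$ and $\omega_{\mathbb{P}^2}=\Oscr(-3)$. With the correct twist one gets $\Ext^2(\Oscr_X(i),\Oscr_X(j))\cong\Hom(\Oscr_X(j),\Oscr_X(i-s-1))^*$, of dimension $h(i-s-1-j)$, which vanishes precisely when $j\geq i-s$. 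So the vanishing branch of the Lemma is correct exactly as printed, and it is the statement actually used later (it is the base case, $i-j=s$, of Lemmas \ref{cor:standardvanishing2} and \ref{cor:standardvanishing}); your computation instead leads you to assert $\Ext^2(\Oscr_X(i),\Oscr_X(i-s))=k$ and to ``re-read'' the first branch as the strict inequality $j>i-s$, which is backwards: on $\mathbb{P}^2$ one has $\Ext^2(\Oscr(i),\Oscr(i-2))=H^2(\Oscr(-2))=0$. (The printed dimension $h(i-s-j)$ in the second branch is the harmless discrepancy — it should be $h(i-s-1-j)$ — and it is never used downstream.) You noticed the boundary tension but resolved it in the wrong direction, by adjusting the branch that matters rather than the one that doesn't.

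There is also a structural issue with the route itself. The paper's proof is the Artin–Zhang argument (\cite{artinzhang}, Theorem 8.1): the Gorenstein condition forces the local cohomology of $A$ to be concentrated in degree $3$, where it is the (shifted) graded dual of $A$, and the standard exact sequences relating $\Gr(A)$, $\Tors(A)$ and $\QGr(A)$ then yield all three statements at once, including $\Ext^1(\Oscr_X(i),\Oscr_X(j))=0$. Your step (2) cannot be carried out by Serre duality alone, since duality sends $\Ext^1$ to $\Ext^1$; the real input is the vanishing of the intermediate local cohomology of $A$, i.e.\ precisely the Artin–Zhang computation. Moreover, the existence and shape of a Serre functor on $\QGr(A)$ in the non-$1$-periodic $\mathbb{Z}$-algebra setting is normally deduced from these same cohomology computations, so unless you cite an independent source for it, the duality-based argument is circular. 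If you fix the twist to $-(s+1)$ and either do the local-cohomology computation directly or carefully source the Serre functor, the proof goes through and recovers the corrected dimension formula together with the vanishing branch as stated.
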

\begin{proof}
Similar to \cite[Theorem 8.1]{artinzhang}.
\end{proof}
\begin{remark}
Throughout the rest of the paper $X$ and $\Oscr_X(i)$ will be defined as in Lemma \ref{lem:standardvanishing}.
\end{remark}
We now give an overview of both proofs separately. The recipe is as in \S \ref{sec:PresVdB} and we only highlight the steps that need to be adapted.
\subsection{Noncommutative $\mathbb{P}^1 \times \mathbb{P}^1 \dashrightarrow \mathbb{P}^2$. (Proof of Theorem \ref{thm:firstextension})}
\label{subsec:quadric}
\begin{enumerate}
\item[Step 1)]
Let $A = A(Y,(\Lscr_i)_{i \in \mathbb{Z}})$ be a cubic Sklyanin $\mathbb{Z}$-algebra as in the statement of Theorem \ref{thm:firstextension}.
\item[Steps 2) and 3)]
Let $p$ be a point on $Y$ and set $\tau = \alpha^2 \neq Id$ and $d=p$ in the definition of $D$ and $D_Y$ as in \eqref{eq:definitionDandDY}.



\item[Step 4)]
Computations similar to the ones in \cite[\S 5]{PresVdB} show that $D_Y$ is a twisted homogeneous coordinate ring, i.e.
\[ D_{Y,m,n} = \Gamma \left( Y,\Gscr_{m} \otimes \ldots \otimes \Gscr_{n-1} \right) \]
for some new collection of line bundles on $Y$:
\begin{eqnarray}
\label{eq:Gscriquadric} \Gscr_i = \Oscr_Y(- \tau^{-i} p) \otimes \Lscr_{2i} \otimes \Lscr_{2i+1} \end{eqnarray}
It follows easily that these line bundles satisfy
\begin{enumerate}
\item $\deg \Gscr_i =3$
\item $\Gscr_0\not\cong\Gscr_1$.
\item $\Gscr_i \otimes \Gscr_{i+1}^{\otimes -2}\otimes \Gscr_{i+2} \cong\Oscr_Y$.
\item $\psi^{\ast}(\Gscr_i) \cong \Gscr_{i+1}$
 where $\psi$ is an arbitrary translation satisfying $\psi^3=\tau$.
\end{enumerate}


\item[Steps 5), 6) and 7)]

We need the following generalizations of \cite[Lemma 6.2, Lemma 6.4 and Lemma 6.5]{PresVdB}:
\begin{lemma} \label{lem:quadricresolution}
Let $A$ be as above and let $p$ be a point in $Y$. Let $i \in \mathbb{Z}$ and let $P_i$ be the point module generated in degree $i$ corresponding to the point $p$ (i.e. the point module truncated at degree $i$). 

Then there is a complex of the following form:
\begin{equation}
\label{minimal}
0\rightarrow e_{i+5}A \xrightarrow{(\zeta,0)} e_{i+4}A^{\oplus 2} \oplus e_{i+3}A \rightarrow e_{i+2}A^{\oplus 3} \rightarrow e_iA\rightarrow P_i\rightarrow 0
\end{equation}
where $\zeta$ is part of the minimal resolution of $S_{i+1}$ 
 as given in \cite[Definition 4.1.1.]{VdB19} 
\[
0\rightarrow e_{i+5}A \xrightarrow{\zeta} e_{i+4}A^{\oplus 2}\xrightarrow{\varepsilon} e_{i+2}A^{\oplus 2}\xrightarrow{\delta_0} e_{i+1}A\xrightarrow{\gamma} S_{i+1}\rightarrow0
\]
Moreover the complex \eqref{minimal} is exact everywhere except at $e_i A$ where it has one-dimensional cohomology, concentrated in degree $i+1$. 
\end{lemma}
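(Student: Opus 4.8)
To prove Lemma~\ref{lem:quadricresolution} I would proceed by a mapping-cone/octahedron argument, constructing the complex \eqref{minimal} explicitly from two well-understood resolutions: the minimal resolution of the simple $S_{i+1}$ (which is the $\mathbb{Z}$-algebra version of the standard resolution of $k$ over a cubic AS-regular algebra, as recalled in the statement via $\zeta,\varepsilon,\delta_0,\gamma$) and the minimal resolution of the point module $P_i$ itself. The key geometric input is that $P_i$, being a truncation of a point module over a cubic AS-regular $\mathbb{Z}$-algebra, has Hilbert function $1,1,1,\dots$ and therefore admits a linear resolution whose shape is dictated by the Hilbert series $1/((1-t)^2(1-t^2))$ of $A$; concretely one expects $P_i$ to be resolved by a complex with terms $e_iA$, $e_{i+2}A$, $e_{i+3}A$, $e_{i+5}A$ (the shifts being forced by the degrees of the cubic relations $s=3$ and the syzygies among them). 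So the first step is to write down the minimal projective resolution of $P_i$ and identify its differentials with the data appearing in the resolution of $S_{i+1}$.

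\textbf{Construction of the complex.}
The natural approach is to note that $P_i$ sits in a short exact sequence relating it to $e_iA$ and a shifted copy of a point module, or more usefully, to compare the resolution of $S_{i+1} = e_{i+1}A/(e_{i+1}A)_{\geq i+2}$ with that of $P_i$. Since $P_i$ is generated in degree $i$ and one-dimensional in each degree $\geq i$, the cokernel of $e_{i+1}A \hookrightarrow P_i[\text{appropriate shift}]$... more precisely, I would use that there is a surjection $e_iA \twoheadrightarrow P_i$ whose kernel $K$ is a submodule of $e_iA$ generated in degree $i+2$ (because $P_i$ agrees with $e_iA$ in degrees $i, i+1$). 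Then $K$ should be resolved by $e_{i+2}A^{\oplus 3} \leftarrow (e_{i+4}A^{\oplus 2}\oplus e_{i+3}A) \leftarrow e_{i+5}A$, where the rank-$3$ term in degree $i+2$ reflects $\dim A_{i,i+2}=3$ versus $\dim (P_i)_{i+2}=1$, and the map $e_{i+5}A\xrightarrow{(\zeta,0)} e_{i+4}A^{\oplus 2}\oplus e_{i+3}A$ being $\zeta$ on the first factor and zero on the second is exactly the point where the resolution of $S_{i+1}$ is reused. Splicing $0\to K\to e_iA\to P_i\to 0$ with the resolution of $K$ gives \eqref{minimal}. The claim about cohomology — exact except at $e_iA$, where $H = (P_i$ vs. image$)$ is one-dimensional concentrated in degree $i+1$ — then has to be verified: in fact I suspect the statement is that the complex computes $P_i$ only up to this discrepancy, i.e. the "resolution" is of $e_iA/(\text{something one-dimensional})$ rather than of $P_i$ on the nose, and the one-dimensional cohomology in degree $i+1$ is precisely $(P_i)_{i+1}=k$ or its mismatch with $(e_iA)_{i+1}$.

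\textbf{Main obstacle.}
The technical heart — and where I would spend the most care — is identifying the differential into $e_{i+4}A^{\oplus 2}\oplus e_{i+3}A$ and verifying that exactness fails \emph{only} at $e_iA$ and \emph{only} in degree $i+1$ with the stated dimension. This requires knowing precisely the minimal resolution of a (truncated) point module over a cubic AS-regular $\mathbb{Z}$-algebra, for which I would invoke the point-module resolution machinery from \cite{VdB19} (the reference \cite[Definition 4.1.1]{VdB19} is already cited for $\zeta$) together with a Hilbert-series bookkeeping: the alternating sum of the Hilbert functions of the five nonzero terms $e_{i+5}A, e_{i+4}A^{\oplus 2}\oplus e_{i+3}A, e_{i+2}A^{\oplus 3}, e_iA, P_i$ must vanish identically, and one checks this using $h(n)$ and the known Hilbert function of $P_i$; any discrepancy is then pinned down degree-by-degree in low degrees, which is where the single $k$ in degree $i+1$ emerges. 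The subtlety is that $\zeta$ appearing with a zero in the second coordinate means the complex is \emph{not} minimal and hence not automatically a resolution — so the exactness has to be argued by hand rather than quoted — but once the differentials are matched to those of the resolution of $S_{i+1}$ (which is exact by AS-regularity) and the splicing sequence, a diagram chase plus the Hilbert-function count finishes it.
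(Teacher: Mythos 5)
Your construction rests on a false Hilbert-function claim, and this is exactly where it breaks. In the cubic case $\dim_k A_{i,i+1}=h(1)=2$ while $\dim_k (P_i)_{i+1}=1$, so $P_i$ does \emph{not} agree with $e_iA$ in degree $i+1$: the kernel $K=\ker(e_iA\twoheadrightarrow P_i)$ already has a one-dimensional piece in degree $i+1$ and is not generated in degree $i+2$ (likewise $\dim_k A_{i,i+2}=4$, not $3$). Consequently the shape you posit for the minimal resolution of $P_i$ (terms $e_iA$, $e_{i+2}A$, $e_{i+3}A$, $e_{i+5}A$) is wrong; the correct minimal resolution, which is the actual crux of the paper's argument, is
\[
0\rightarrow e_{i+3}A\rightarrow e_{i+1}A\oplus e_{i+2}A\rightarrow e_iA\rightarrow P_i\rightarrow 0,
\]
the $\mathbb{Z}$-algebra analogue of \cite[Proposition 6.7.i]{ATV2} (check: $(1-t)(1-t^2)\cdot\frac{1}{(1-t)^2(1-t^2)}=\frac{1}{1-t}$). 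Your ``resolve $K$ and splice'' strategy is moreover internally inconsistent with the statement being proved: if \eqref{minimal} were obtained by splicing a projective resolution of $K$ onto $0\to K\to e_iA\to P_i\to 0$, it would be exact everywhere, contradicting the asserted one-dimensional cohomology at $e_iA$; similarly, the alternating sum of Hilbert functions of the terms of \eqref{minimal} does \emph{not} vanish identically but fails by exactly one dimension in degree $i+1$. The complex \eqref{minimal} is instead assembled, as in \cite[Lemma 6.2]{PresVdB}, from the displayed minimal resolution of $P_i$ by replacing its summand $e_{i+1}A$ with the relevant part of the minimal resolution of $S_{i+1}$; this replacement is what produces the $(\zeta,0)$ differential and simultaneously destroys exactness at $e_iA$ precisely in degree $i+1$, since the substituted modules are generated in degrees $\geq i+2$.

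The genuine technical work, which your proposal leaves untouched, is establishing the displayed resolution of $P_i$ in the $\mathbb{Z}$-algebra setting; it cannot simply be quoted from \cite{VdB19}. The paper adapts \cite{ATV2} by (i) using the 1--1 correspondence between truncated point modules and points of $Y$ from \cite[\S 5]{VdB19}, (ii) generalizing line modules to modules $e_jA/aA$ with $a\in A_{j,j+1}$ and invoking that $A$ is a $\mathbb{Z}$-domain (Theorem \ref{thm:quadricdomain}) to control their Hilbert series and get projective dimension $1$, and (iii) proving $\pd(P_i)\leq 2$ via a variant of \cite[Proposition 2.46.i]{ATV2}. Without these steps neither the existence of \eqref{minimal} nor the exact location and size of its cohomology is established, so the proposal as written does not prove the lemma.
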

\begin{proof}
Similar to the proof of \cite[Lemma 6.2]{PresVdB} this is based on fact that the minimal projective resolution for $P_i$ is given by
\begin{eqnarray}
\label{eq:minimalprojectiveq}
0\rightarrow e_{i+3}A\rightarrow e_{i+1}A \oplus e_{i+2}A \rightarrow e_iA\rightarrow P_i \rightarrow 0
\end{eqnarray}
The latter is a $\mathbb{Z}$-algebra analogue of the projective resolution of a point module as in \cite[Proposition 6.7.i]{ATV2}. The proof in loc. cit. can be adapted to a $\mathbb{Z}$-algebra version as follows:\\
First we note that by \cite[\S 5]{VdB19} there is a 1-1-correspondence between point modules (truncated in degree $i$) and points on $Y$, so the correspondence between $p \in Y$ and $P_i$ is well defined.\\
Next we generalize line modules to modules of the form $e_j A/ a A$ where $a \in A_{j,j+1}$. As $A$ is a $\mathbb{Z}$-domain (Theorem \ref{thm:quadricdomain}) we know that line modules have the desired Hilbert series and have projective dimension 1. The characterization of line modules as in \cite[Corollary 2.43]{ATV2} generalizes to this new definition of line modules if we replace the modules $A(-i)$ in the graded case by $e_i A$.\\
Finally we find $(\ref{eq:minimalprojectiveq})$ by generalizing the proof of \cite[Proposition 6.7.i]{ATV2}. This proof uses the fact that point modules in the graded case have projective dimension 2. A careful observation however shows that projective dimension $\leq 2$ suffices for the proof. This in turn can be shown by a variation of \cite[Proposition 2.46.i]{ATV2}.
\end{proof}

In particular \cite[Lemma 6.4, Lemma 6.5]{PresVdB} generalize to the following

\begin{lemma} \label{cor:standardvanishing2}
With the notations as above we have for $i-j \leq 3$:
\[ \Ext^2( \Oscr_X(i), \Oscr_X(j) \otimes m_{\tau^{-m}d} \ldots m_{\tau^{-n+1}d}) = 0 \]
and for $i-j \leq 2m-2n+2$:
\[ \Ext^1( \Oscr_X(i), \Oscr_X(j) \otimes m_{\tau^{-m}d} \ldots m_{\tau^{-n+1}d}) = 0 \]
\end{lemma}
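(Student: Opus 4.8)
\textbf{Proof proposal for Lemma \ref{cor:standardvanishing2}.}

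The plan is to run the same argument as in \cite[Lemma 6.4, Lemma 6.5]{PresVdB}, using the resolution of the point module furnished by Lemma \ref{lem:quadricresolution} in place of the graded resolution used in \emph{loc.\ cit.}, and feeding into it the bimodule analogue of the twist-and-tensor bookkeeping from Step 3) together with the Ext-vanishing and dimension count of Lemma \ref{lem:standardvanishing}. First I would treat the base case $n=m+1$ (a single factor $m_{\tau^{-m}d}$, with $d=p$). Applying $\uRHom(\Oscr_X(i),-)$, or rather $\Ext^\bullet(\Oscr_X(i),-)$, to the defining short exact sequence $0 \to m_{\tau^{-m}d} \to o_X \to o_{\tau^{-m}d} \to 0$ tensored on the left by $\Oscr_X(j)$, I get a long exact sequence relating $\Ext^\bullet(\Oscr_X(i),\Oscr_X(j)\otimes m_{\tau^{-m}d})$ to $\Ext^\bullet(\Oscr_X(i),\Oscr_X(j))$ and $\Ext^\bullet(\Oscr_X(i),\Oscr_X(j)\otimes o_{\tau^{-m}d})$. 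The first of these is controlled by Lemma \ref{lem:standardvanishing} with $s=3$ (since in the cubic/quadric case $s=5-r=3$): $\Ext^2(\Oscr_X(i),\Oscr_X(j))=0$ exactly when $j\ge i-3$, i.e.\ when $i-j\le 3$, which is our hypothesis; and $\Ext^{\ne 0,2}=0$ always. The term $\Oscr_X(j)\otimes o_{\tau^{-m}d}$ is, up to the twist, the image in $X$ of the point module $P$ at the point $\tau^{-m}p$, whose $\Ext$'s against $\Oscr_X(i)$ are computed from the resolution \eqref{minimal} (equivalently \eqref{eq:minimalprojectiveq} for the $\Ext^2$ vanishing); since a point module has projective dimension $\le 2$ in $\QGr$, $\Ext^{\ge 3}$ vanishes, and the relevant $\Ext^2$ term is a quotient of $\Ext^2(\Oscr_X(i), \text{(free pieces)})$, each of which vanishes in the stated range by Lemma \ref{lem:standardvanishing}. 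Chasing the long exact sequence then gives $\Ext^2(\Oscr_X(i),\Oscr_X(j)\otimes m_{\tau^{-m}d})=0$.

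Next I would induct on the number of factors $N=n-m$. Write $m_{\tau^{-m}d}\cdots m_{\tau^{-n+1}d} = m_{\tau^{-m}d}\otimes (m_{\tau^{-m-1}d}\cdots m_{\tau^{-n+1}d})$, use the short exact sequence obtained by tensoring $0\to m_{\tau^{-m}d}\to o_X\to o_{\tau^{-m}d}\to 0$ on the right by the product of the remaining $N-1$ factors and on the left by $\Oscr_X(j)$, and apply $\Ext^\bullet(\Oscr_X(i),-)$. The middle term becomes $\Ext^\bullet(\Oscr_X(i),\Oscr_X(j)\otimes m_{\tau^{-m-1}d}\cdots m_{\tau^{-n+1}d})$, handled by the inductive hypothesis (the index range $i-j\le 3$ is unchanged). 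The third term involves tensoring the point module at $\tau^{-m}p$ by the remaining $m$-factors; using the resolution \eqref{minimal} of that point module by shifted free modules $e_\bullet A$, this reduces to $\Ext^2(\Oscr_X(i),\Oscr_X(j')\otimes m_{\tau^{-m-1}d}\cdots m_{\tau^{-n+1}d})$ for various $j'\in\{j, j-1, j-2, j-3\}$ coming from the degrees appearing in \eqref{minimal}, each again covered by the inductive hypothesis provided $i-j'\le 3$; the worst case $j'=j-3$ needs $i-j\le 0$, so one has to be slightly careful and instead use that the point module has the correct Hilbert series so that only $\Ext^2$ against $e_iA$ (the degree-$0$ term of the resolution) contributes in the boundary range — this is exactly the bookkeeping of \cite[Lemma 6.4]{PresVdB}, and I would follow it verbatim. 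The $\Ext^1$ statement is then derived in the same way: the range $i-j\le 2m-2n+2 = 2(m-n)+2$ is precisely what is needed so that, at each stage of the induction, the relevant $\Ext^1$ of $\Oscr_X(i)$ against the twist of a point module (computed from \eqref{minimal}, where the degrees shift by at most $2$ relative to the free generators) vanishes by the already-established $\Ext^2$ vanishing one step down, together with Lemma \ref{lem:standardvanishing}; this mirrors \cite[Lemma 6.5]{PresVdB}.

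The main obstacle is not any single computation but the precise bookkeeping of degree shifts in the $\mathbb{Z}$-algebra setting: in \cite{PresVdB} the resolution of the point module is written for a graded algebra and the shifts $A(-k)$ translate cleanly, whereas here one must carry along the indices $i, i+2, i+3, i+4, i+5$ from \eqref{minimal} and verify that, after tensoring with each $m_{\tau^{-\ell}d}$ and twisting by $\Oscr_X(j)$, the resulting indices still lie in the range where Lemma \ref{lem:standardvanishing} gives vanishing (or, in the boundary cases, that the Hilbert-series / Gorenstein count still yields $0$). One also has to make sure the point module appearing is the one at $\tau^{-m}p$ (not $p$), which is harmless since the hypothesis on $Y$, $\tau$ is translation-invariant, but it must be tracked. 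Once this is set up, the long-exact-sequence chase is routine and identical in spirit to \cite[\S 6]{PresVdB}.
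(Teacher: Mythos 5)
Your proposal is correct and follows exactly the route the paper takes: the paper's proof is simply a reference to \cite[Lemma 6.4, Lemma 6.5]{PresVdB}, run with the $\mathbb{Z}$-algebra point-module resolution of Lemma \ref{lem:quadricresolution} and the vanishing of Lemma \ref{lem:standardvanishing}, which is precisely the long-exact-sequence induction on the number of factors $m_{\tau^{-\ell}d}$ that you spell out, including the degree bookkeeping from \eqref{minimal}.
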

\begin{proof}
Similar to \cite[Lemma 6.4, Lemma 6.5]{PresVdB} by using Lemmas \ref{lem:quadricresolution} and \ref{lem:standardvanishing}.
\end{proof}

One can now copy the proofs in \cite[\S6]{PresVdB} to find that $D$ and $D_Y$ are generated in degree 1, that the canonical map $D \rightarrow D_Y$ is surjective and an isomorphism in degree 0,1 and 2, that $D$ has the correct Hilbert series and hence $D \cong A(Y,(\Gscr_i)_{i \in \mathbb{Z}}) = \widecheck{A'}$ for the graded algebra $A' = A'(Y,\Gscr_0, \psi)$. Similarly the results in \cite[\S7]{PresVdB} extend to an isomorphism of function fields $\Frac_{0,0}(A) \cong \Frac_0(A')$.


\end{enumerate}
\subsection{Noncommutative $\mathbb{P}^2 \dashrightarrow \mathbb{P}^1 \times \mathbb{P}^1$ (Proof of Theorem \ref{thm:secondextension})}
\label{subsec:quadricinverse}

\begin{enumerate}
\item[Step 1)]
Throughout this section $A = A(Y,\Lscr,\psi)$ is a quadratic Sklyanin algebra, hence $Y$ is a smooth elliptic curve, $\Lscr$ is a degree 3 line bundle on $Y$ and $\psi \in \Aut(Y)$ is given by translation by a point with $\tau = \psi^3 \neq Id$. 

\item[Steps 2) and 3)]
The construction of $D$ is slightly different from the one in \eqref{eq:definitionDandDY}:\\
Let $p,q$ be points on $Y$ and denote
\begin{eqnarray} \label{definitiondi}
d_i = \begin{cases} 
\tau^{-j}p & \text{if $i=2j$} \\
\tau^{-j}q & \text{if $i=2j+1$} \\
\end{cases}
\end{eqnarray}
where $\tau = \psi^3$. We define $m_{d_i,Y}$ and $m_{d_i}$ as in $(\ref{eq:defmd})$ and introduce $\mathbb{Z}$-algebra $D_Y$ and $D$ via
\begin{align} 
(D_Y)_{m,n}& = 
\begin{cases}
\Hom(\Oscr_Y(-n), \Oscr_Y(-m) \otimes_{o_Y} m_{d_{m},Y} \ldots m_{d_{n-1},Y} ) & \text{if $n\geq m$}\\
0 & \text{if $n< m$}
\end{cases}\\
D_{m,n} &=
\begin{cases}
\Hom(\Oscr_X(-n), \Oscr_X(-m) \otimes_{o_X} m_{d_{m}} \ldots m_{d_{n-1}} ) &\text{if $n\geq m$}\\
0&\text{if $n<m$}
\end{cases}
\end{align}
Again using \cite[Lemma 8.2.1]{VdB19}, there is an inclusion $D \hookrightarrow \check{A}$.
\item[Step 4)]

Computations similar to the ones in \cite[\S 5]{PresVdB} show
\[ D_{Y,m,n} = \Gamma \left( Y,\Gscr_m \otimes \ldots \otimes \Gscr_{n-1} \right) \]
for some collection of line bundles on $Y$ given by:
\begin{eqnarray} \label{eq:Gscriquadratic} \Gscr_{i} = \Oscr_Y(-d_i) \otimes \psi^{i*} \Lscr  \end{eqnarray}
and these line bundles satisfy
\begin{enumerate}
\item $\deg \Gscr_{i}=2$
\item $\Gscr_{0}\not\cong\Gscr_{2}$.
\item $\Gscr_{i}\otimes \Gscr_{i+1}^{-1}\otimes \Gscr_{i+2}^{-1} \otimes \Gscr_{i+3} \cong\Oscr_Y$.
\item $\alpha^{\ast}(\Gscr_i) \cong \Gscr_{i+2}$
 where $\alpha$ is an arbitrary translation satisfying $\alpha^2=\tau$.
\end{enumerate}
Moreover if $q = \alpha^{-1} p$ then $\Gscr_{i+1} \cong \sigma^* \Gscr_i$ for an arbitrary translation ${\sigma: Y \rightarrow Y}$ such that $\sigma^2 = \alpha$.

\item[Steps 5) and 6)]
This is completely analogous to \cite[\S6]{PresVdB}. The only difference lies in the resolution of $\Oscr(-i) \otimes m_{d_n}$: this time it is of the form:
\[ 0 \rightarrow \Oscr(-i-3) \rightarrow \Oscr(-i-2)^{\oplus 2} \rightarrow \Oscr(-i) \otimes m_{d_n} \rightarrow 0 \] 
This is based on the following:
\begin{lemma} \label{lem:quadraticresolution}
Let $A=A(Y,\Lscr,\psi)$ be a quadratic AS-regular algebra of dimension 3, let $p \in Y$ be a point and let $P$ be the corresponding point module. Then the minimal resolution of $P$ has the following form
\[ 0 \rightarrow A(-2) \rightarrow A(-1)^{\oplus 2} \rightarrow A \rightarrow P \rightarrow 0 \]
\end{lemma}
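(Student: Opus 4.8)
\emph{Plan.} This is the graded quadratic analogue of \cite[Proposition 6.7.i]{ATV2}, and I would prove it exactly as Lemma \ref{lem:quadricresolution} is proved: construct the minimal graded free resolution of $P$ syzygy by syzygy, controlling ranks and degrees by the Hilbert series of $P$ and by the fact that $A$ is a domain, so that everything reduces to a single geometric statement. Recall that a point module of a $3$-dimensional quadratic AS-regular algebra has $\dim_k P_n = 1$ for all $n\ge 0$, so $h_P(t)=1/(1-t)$, while $h_A(t)=1/(1-t)^3$.

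\emph{The formal part.} As $P$ is cyclic, generated in degree $0$ by some $x_0$, there is a minimal surjection $A\twoheadrightarrow P$ whose kernel $K=\operatorname{ann}(x_0)$ is a graded right ideal with $K_0=0$ and $K_1=\ker(A_1\xrightarrow{\,\cdot x_0\,}P_1)$ two-dimensional. The one nontrivial claim, say $(\star)$, is that $K$ is generated by $W:=K_1$; equivalently $A/WA$ has Hilbert series $1/(1-t)$, and since $A/WA$ always surjects onto $P$ this forces $A/WA\cong P$. Granting $(\star)$, a basis of $W$ gives a minimal surjection $A(-1)^{\oplus 2}\twoheadrightarrow K$ with kernel $L$, and from the two short exact sequences one computes $h_L=\frac{2t}{(1-t)^3}-\big(\frac{1}{(1-t)^3}-\frac{1}{1-t}\big)=\frac{t^2}{(1-t)^3}=h_{A(-2)}$, so $\dim_k L_2=1$. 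Since $A$ is a domain, multiplication by a nonzero element of $L_2$ defines an injective map $A(-2)\hookrightarrow L\subseteq A(-1)^{\oplus 2}$ whose cokernel has Hilbert series $h_L-h_{A(-2)}=0$, hence is an isomorphism. Splicing $0\to A(-2)\xrightarrow{\ \sim\ }L\to A(-1)^{\oplus 2}\to K\to 0$ with $0\to K\to A\to P\to 0$ yields the asserted complex, and it is minimal because every differential is a matrix of linear forms.

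\emph{The geometric input and the main obstacle.} The whole proof therefore hinges on $(\star)$, which I expect to be the only genuine work. I would establish it through the commutative curve $Y$: under $\QGr(B)\cong\Qcoh(Y)$ the point module $P$ corresponds, up to shift, to the skyscraper sheaf of a point $p\in Y$, we have $A_1=B_1=\Gamma(Y,\Lscr)$, and $W$ is the space of sections of $\Lscr$ vanishing at $p$. Then $\dim_k(A/WA)_n=\dim_k(B/WB)_n$ equals $h^0(Y,\Mscr_n)-\dim_k(W\cdot B_{n-1})$ for a degree-$3n$ line bundle $\Mscr_n$ (a twisted product of translates of $\Lscr$), and $(\star)$ reduces to the surjectivity of the multiplication map $\Gamma(Y,\Lscr(-p))\otimes\Gamma(Y,\Nscr_{n-1})\to\Gamma(Y,\Lscr(-p)\otimes\Nscr_{n-1})$ for all $n\ge 2$, where $\deg\Lscr(-p)=2$ and $\deg\Nscr_{n-1}=3(n-1)\ge 3$; on the elliptic curve $Y$ this is the classical theorem on multiplication of global sections (Mumford/Castelnuovo). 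Alternatively $(\star)$ is precisely \cite[Proposition 6.7.i]{ATV2}, whose proof (via linear modules and the point scheme) can be quoted directly, just as was done at the $\mathbb{Z}$-algebra level in the proof of Lemma \ref{lem:quadricresolution}.
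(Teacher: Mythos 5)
Your fallback option---quoting \cite[Proposition 6.7]{ATV2} directly---is exactly what the paper does: its entire proof of this lemma is the one-line observation that the statement follows immediately from that proposition. Relative to that, your formal bookkeeping (computing $h_K$ and $h_L$ from the Hilbert series and using that $A$ is a domain to identify the second syzygy with $A(-2)$, minimality being automatic by degree reasons) is a correct elaboration of how the resolution follows once one knows your claim $(\star)$ that $K=\operatorname{ann}(x_0)$ is generated by $W=K_1$.

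The problem is your proposed self-contained proof of $(\star)$: the asserted equality $\dim_k(A/WA)_n=\dim_k(B/WB)_n$ is a genuine gap. Here $B=B(Y,\Lscr,\psi)=A/gA$ for the canonical normal element $g$ of degree $3$, so one only has a surjection $A/WA\twoheadrightarrow A/(WA+gA)=B/\overline{W}B$, which bounds $\dim_k(A/WA)_n$ from \emph{below} by $\dim_k(B/\overline{W}B)_n$. The curve-side computation (surjectivity of $\Gamma(Y,\Lscr(-p))\otimes\Gamma(Y,\Nscr)\rightarrow\Gamma(Y,\Lscr(-p)\otimes\Nscr)$ for $\deg\Nscr\geq 3$, which does give $B/\overline{W}B\cong P$) therefore yields nothing beyond the surjection $A/WA\twoheadrightarrow P$ you already had; it cannot bound $\dim_k(A/WA)_n$ from above. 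Equality in all degrees is equivalent to $g\in WA$, and that is essentially the content of $(\star)$ itself, so the reduction is circular as written. This extra input is precisely what the proof in \cite{ATV2} supplies through the noncommutative structure theory (a point module is a quotient of a line module $A/uA$ with $u\in W$, the kernel being a shifted line module, and the resolution is obtained as a mapping cone)---it is not a consequence of multiplication of sections on $Y$ alone. So either quote \cite[Proposition 6.7]{ATV2} as the paper does, or reproduce that line-module argument; the route through $B$ does not close.
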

\begin{proof}
This follows immediately from \cite[Proposition 6.7]{ATV2}.
\end{proof}
From this we find the following standard vanishing results:
\begin{lemma} \label{cor:standardvanishing}
With the notations as above we have for $i-j \leq 2$:
\[ \Ext^2( \Oscr_X(i), \Oscr_X(j) \otimes m_{m_d} \ldots m_{d_{n-1}}) = 0 \]
and for $i-j \leq m-n+1$:
\[ \Ext^1( \Oscr_X(i), \Oscr_X(j) \otimes m_{d_m} \ldots m_{d_{n-1}}) = 0 \]
\end{lemma}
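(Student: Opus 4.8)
The statement to prove is Lemma~\ref{cor:standardvanishing}, the analogue of \cite[Lemma 6.4, Lemma 6.5]{PresVdB} in the quadratic case. The plan is to follow exactly the same strategy as in the cubic case (Lemma~\ref{cor:standardvanishing2}), replacing the input resolution by the one coming from Lemma~\ref{lem:quadraticresolution}. So first I would recall that $\Oscr_X(-i)\otimes m_{d_n}$ has the short resolution
\[ 0 \rightarrow \Oscr_X(-i-3) \rightarrow \Oscr_X(-i-2)^{\oplus 2} \rightarrow \Oscr_X(-i)\otimes m_{d_n}\rightarrow 0, \]
obtained by applying $\pi$ to the minimal resolution of the point module $P$ and twisting; this is the content of the sentence preceding the lemma in the excerpt. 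Then I would prove the two vanishing statements by induction on $n-m$, the length of the bimodule product $m_{d_m}\cdots m_{d_{n-1}}$.

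\textbf{The inductive step.} For the base case $n=m$ the product is empty, so $\Oscr_X(-m)\otimes m_{d_m}\cdots m_{d_{n-1}}=\Oscr_X(-m)$ and both claims reduce to Lemma~\ref{lem:standardvanishing}: with $s=2$ (the quadratic case), $\Ext^2(\Oscr_X(i),\Oscr_X(j))=0$ precisely when $j\ge i-2$, i.e.\ $i-j\le 2$, which matches the stated range; and $\Ext^1$ vanishes always by Lemma~\ref{lem:standardvanishing}. For the inductive step I would tensor the short resolution above (with $i$ replaced so that the leftmost term matches $m_{d_m}$) on the right by $m_{d_{m+1}}\cdots m_{d_{n-1}}$, or equivalently peel off the last factor $m_{d_{n-1}}$, to get a short exact sequence
\[ 0 \rightarrow \Oscr_X(-m')\otimes(\cdots) \rightarrow \Oscr_X(-m'')^{\oplus 2}\otimes(\cdots) \rightarrow \Oscr_X(-m)\otimes m_{d_m}\cdots m_{d_{n-1}}\rightarrow 0 \]
relating the length-$(n-m)$ product to length-$(n-m-1)$ products with shifted twists. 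Applying $\Hom(\Oscr_X(i),-)$ and chasing the resulting long exact sequence, the required vanishing for the length-$(n-m)$ term follows from the inductive hypothesis applied to the two outer terms, provided the numerical hypotheses $i-j\le 2$ (resp.\ $i-j\le m-n+1$) propagate correctly through the degree shifts by $2$ and $3$ introduced by the resolution. One has to check that the arithmetic closes: the "$-3$" shift in the leftmost term costs one more in the bound but is compensated because the product has shrunk by one factor, and similarly for $\Ext^1$ the shift in $m-n+1$ is exactly what is needed.

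\textbf{Where the work is.} The genuinely non-formal point is the bookkeeping of which twist $\Oscr_X(-\ast)$ appears when one removes a factor from $m_{d_m}\cdots m_{d_{n-1}}$: unlike the cubic case, here the divisors $d_i$ alternate between translates of $p$ and of $q$, so one must be careful that tensoring $\Oscr_X(-m)\otimes m_{d_m}$ with the resolution produces $\Oscr_X(-m-2)$ and $\Oscr_X(-m-3)$ twisted by $m_{d_{m+1}}\cdots$ with the correct indices, and that the "$-3$" (rather than "$-2$" as in \cite{PresVdB}'s cubic computation, because quadratic point modules have a different resolution) does not break the bound. This is the main obstacle, and it is precisely where Lemma~\ref{lem:quadraticresolution} enters: once the exact shape of the point-module resolution is fixed, everything else is the same induction as in \cite[Lemma 6.4, Lemma 6.5]{PresVdB}. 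Hence the proof I would write is: \emph{``Similar to \cite[Lemma 6.4, Lemma 6.5]{PresVdB}, by induction on $n-m$ using the short exact sequence obtained from Lemma~\ref{lem:quadraticresolution} together with Lemma~\ref{lem:standardvanishing} for the base case.''} — with the numerical check on the bounds being the one detail worth spelling out.
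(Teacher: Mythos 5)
Your proposal coincides with the paper's own (very terse) proof: the paper just says the lemma follows as in \cite{PresVdB}, Lemmas 6.4 and 6.5, using Lemma \ref{lem:quadraticresolution} for the point-module resolution and Lemma \ref{lem:standardvanishing} for the base vanishing, which is exactly the induction on $n-m$ you outline. One caution on the detail you flag as "where the work is": the resolution you quote (twists $-i-2$, $-i-3$) is the paper's displayed formula, but Lemma \ref{lem:quadraticresolution} actually gives $0 \to \Oscr_X(-i-2) \to \Oscr_X(-i-1)^{\oplus 2} \to \Oscr_X(-i)\otimes m_{d_n} \to 0$ (colength $1$, as required by \eqref{eq:colength}), and it is with these twists that the numerical bounds in the statement close.
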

\begin{proof}
Similar to \cite[Lemma 6.4, Lemma 6.5]{PresVdB} by using Lemmas \ref{lem:quadraticresolution} and \ref{lem:standardvanishing}.
\end{proof}
For checking that $D$ has the correct Hilbert series we see that (using computation similar to \cite[Corollary 5.2.4]{VdB19}) the colength of $m_{d_{m}} \ldots m_{d_{m+i-1}}$ inside $o_X$ is
\begin{eqnarray} \label{eq:colength} \begin{cases}  \displaystyle
2 \cdot \frac{a(a+1)}{2} =  a(a+1) & \textrm{ if $i=2a$}\\
\displaystyle \frac{(a+2)(a+1)}{2} + \frac{a(a+1)}{2} = (a+1)^2 & \textrm{ if $i=2a+1$}
\end{cases}
\end{eqnarray}
Using Lemma \ref{cor:standardvanishing} this implies
\[ \dim D_{m,m+i} = \begin{cases}
\displaystyle \frac{(2a+1)(2a+2)}{2} - a(a+1) = (a+1)^2 & \textrm{ if $i=2a$}\\
\displaystyle \frac{(2a+2)(2a+3)}{2} -(a+1)^2 = (a+1)(a+2) & \textrm{ if $i=2a+1$}
\end{cases}
\]
i.e. $D$ has the Hilbert series of a cubic AS-regular $\mathbb{Z}-algebra$ and hence we can conclude that $D$ is AS-regular.

\item[Step 7)]
Injectivity of $\Frac_{0,0}(D) \rightarrow \Frac_0(A)$ is again immediate. For surjectivity we need to prove that for any fixed $n \in \mathbb{N}$ there is some $N \in \mathbb{N} $ such that $\Hom(\Oscr_X(-i-n-N),\Oscr_X(-i-n) \otimes m_{d_{i}} \ldots m_{d_{i+n+N-1}}) \neq 0$ holds for all $i \in \mathbb{Z}$. Now note that the colength of $m_{d_{i}} \ldots m_{d_{i+n+N-1}}$ inside $o_X$ is given by $a^2$ or $a(a+1)$ if $n+N = 2a$ or $2a+1$ respectively. In particular this colenth grows like $\frac{N^2}{4}$ for $N$ large. Hence the codimension of $\Hom(\Oscr_X(-i-n-N),\Oscr_X(-i-n) \otimes m_{d_{i}} \ldots m_{d_{i+n+N-1}} )$ inside $A_{i+n,i+n+N}$ grows at most like $\frac{N^2}{4}$. 
As $\dim_k A_{i+n,i+n+N}$ grows like $\frac{N^2}{2}$ it hence follows that $\Gamma(\Oscr_X(-i-n-N),\Oscr_X(-i-n) \otimes m_{d_{i}} \ldots m_{d_{i+n+N-1}}) \neq 0$ for large $N$.
\end{enumerate}

\newpage
\section{Cremona Transformations and quadratic transforms}
\begin{convention}
In the following sections we will always identify graded algebras with their associated $\mathbb{Z}$-algebras. The $\mathbb{Z}$-algebra $D$ which was introduced in sections \ref{sec:PresVdB} and \ref{sec:addendum} will now be denoted by $A'$. Moreover we will use the terminology ``(three dimensional) Sklyanin $\mathbb{Z}$-algebras'' for both quadratic and cubic Sklyanin $\mathbb{Z}$-algebras.
\end{convention}
\label{sec:cremona}
The goal of this section is to link the noncommutative version of the Cremona transform $\mathbb{P}^2 \dashrightarrow \mathbb{P}^2$ as constructed in \cite{PresVdB} (see also \S \ref{sec:PresVdB} for a reminder of this construction) to the noncommutative versions of $\mathbb{P}^1 \times \mathbb{P}^1 \dashrightarrow \mathbb{P}^2$ and $\mathbb{P}^2 \dashrightarrow \mathbb{P}^1 \times \mathbb{P}^1$ as in Theorems \ref{thm:firstextension}, \ref{thm:secondextension}. This is inspired by the following commutative picture:

\begin{figure}[ht]
\includegraphics[width=15cm]{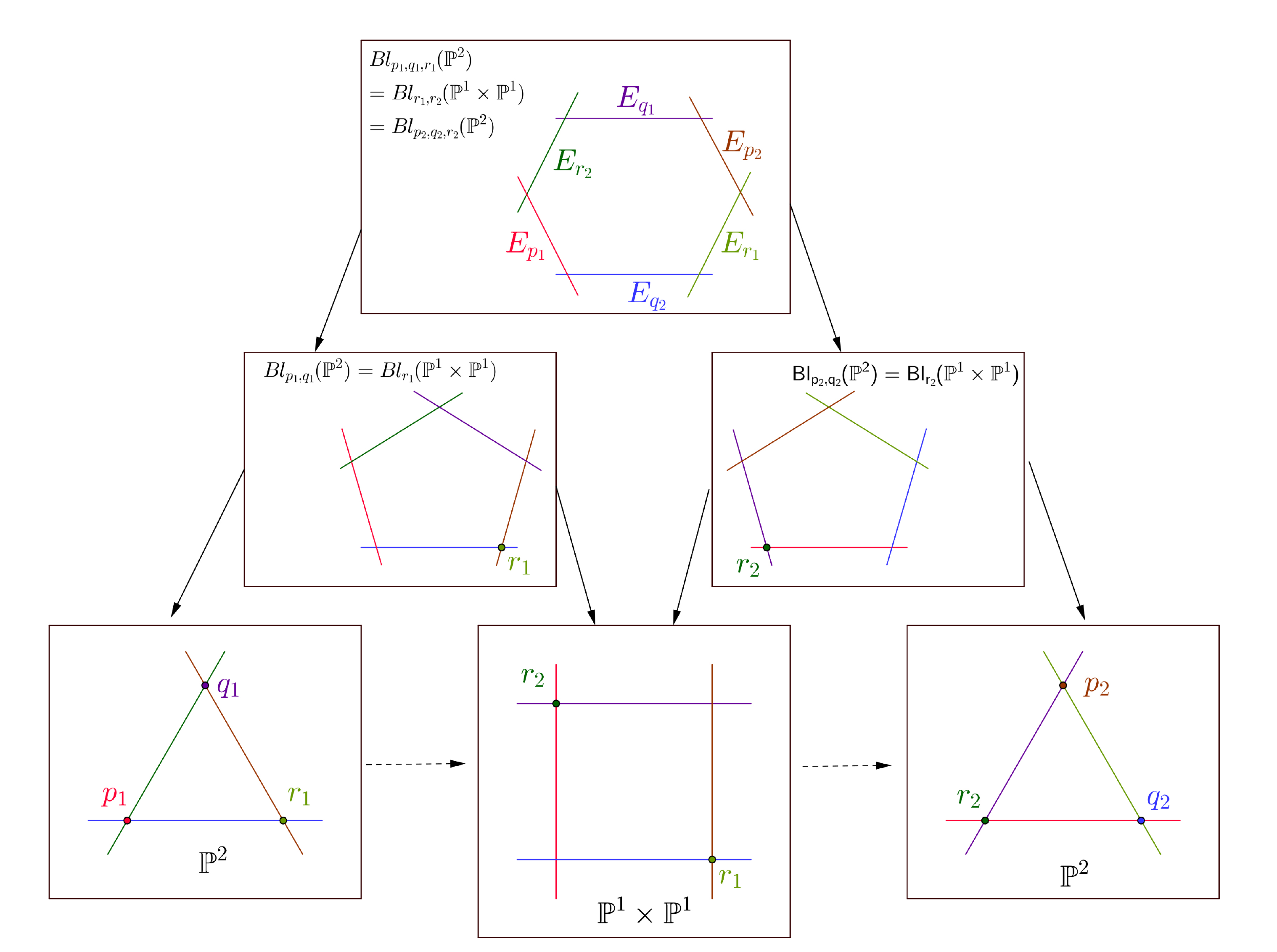}
\caption{The commutative Cremona transform}
\label{fig:cremona}
\end{figure}

i.e. the (commutative) Cremona transform $\gamma: \mathbb{P}^2 \dashrightarrow \mathbb{P}^2$ (given by blowing up three non-collinear points $p_1, q_1, r_1$, see for example \cite[Example V.4.2.3.]{hartshorne}) factors as $\gamma = \gamma_1 \circ \gamma_2$ where $\gamma_1: \mathbb{P}^1 \times \mathbb{P}^1 \dashrightarrow \mathbb{P}^2$ and $\gamma_2: \mathbb{P}^2 \dashrightarrow \mathbb{P}^1 \times \mathbb{P}^1$ are the standard birational transformations obtained by blowing up one point in $\mathbb{P}^1 \times \mathbb{P}^1$ and two points in $\mathbb{P}^2$.\\

We prove that (modulo some technical assumptions) the same holds in the noncommutative setting:
\begin{theorem} \label{thm:cremona}
Let $A=A(Y,\Lscr,\psi)$ and $A'$ be quadratic Sklyanin algebras and let $\gamma: A' \hookrightarrow A^{(2)}$ be a noncommutative Cremona transformation as in \cite{PresVdB}. Denote $d=p+q+r$ as in loc. cit. and assume $p,q$ and $r$ are non-collinear points lying in different $\tau$-orbits where $\tau= \psi^3$. Then there exists a cubic Sklyanin $\mathbb{Z}$-algebra $A''$ and inclusions $\gamma_1: A' \hookrightarrow A^{\prime \prime (2)}$, $\gamma_2: A'' \hookrightarrow A$ such that $\gamma_1$ is as in Theorem \ref{thm:firstextension}, $\gamma_2$ is as in Theorem \ref{thm:secondextension} and $\gamma = \gamma_2 \circ \gamma_1$.
\end{theorem}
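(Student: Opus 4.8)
The plan is to explicitly identify the intermediate algebra $A''$ and to check that the three constructions are literally built from the same bimodule data, so that the factorization $\gamma = \gamma_2 \circ \gamma_1$ holds at the level of $\mathbb{Z}$-algebra inclusions into $\check A^{(2)}$. Recall from \S\ref{sec:PresVdB} that the Cremona transform $\gamma$ is obtained from the divisor $d = p+q+r$ via the $\mathbb{Z}$-algebra $D$ whose graded piece $D_{m,n}$ is built from $m_{\tau^{-m}d}\cdots m_{\tau^{-n+1}d}$, where $\tau = \psi^3$. The factorization picture suggests that the intermediate $A'' = A(Y,\Gscr_0^{\prime\prime},\Gscr_1^{\prime\prime},\dots)$ should be the cubic Sklyanin $\mathbb{Z}$-algebra produced by Theorem \ref{thm:secondextension} applied to $A$ with the two chosen points being (say) $p$ and $q$; then Theorem \ref{thm:firstextension} applied to $A''$ with the single remaining point $r$ (suitably translated) should return a quadratic Sklyanin algebra, and the claim is that this coincides with $A'$.

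First I would fix the point data precisely. By Theorem \ref{thm:secondextension}, choosing $p,q$ gives $A''$ with helix $\Gscr_i'' = \Oscr_Y(-d_i'')\otimes\psi^{i*}\Lscr$ where $d_{2j}'' = \tau^{-j}p$, $d_{2j+1}'' = \tau^{-j}q$. By Theorem \ref{thm:firstextension} applied to $A''$ (whose associated automorphism $\alpha$ satisfies $\alpha^2 = \tau$), choosing a point $r'$ on $Y$ produces a quadratic helix $\Gscr_i' = \Oscr_Y(-\tau^{-i}r')\otimes \Gscr_{2i}''\otimes\Gscr_{2i+1}''$. Substituting and simplifying the line-bundle tensor products, $\Gscr_i'$ becomes $\Oscr_Y(-\tau^{-i}(r'+p+q))\otimes\psi^{2i*}\Lscr\otimes\psi^{(2i+1)*}\Lscr$; I would then compare this with the helix attached to $A'$ in the Cremona construction, which (by the formula recalled after \eqref{eq:valuesvwquadratic}, i.e.\ $A' = A(Y,\Lscr\otimes\psi^*\Lscr\otimes\Oscr_Y(-p-q-r),\psi)$, read off in $\mathbb{Z}$-algebra form) is $\Oscr_Y(-\tau^{-i}(p+q+r))\otimes\psi^{2i*}\Lscr\otimes\psi^{(2i+1)*}\Lscr$. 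These agree precisely when $r' = r$ (using non-collinearity and the distinct-orbit hypothesis to guarantee all the point modules and line bundles involved are the ones claimed, and that the constructions of Theorems \ref{thm:firstextension}/\ref{thm:secondextension} apply). This pins down $A''$ uniquely and shows $\gamma_1$, $\gamma_2$ land in the right algebras.

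Next I would verify that the inclusions compose correctly. All three maps $\gamma$, $\gamma_1$, $\gamma_2$ arise from inclusions of tensor products of the bimodules $m_\bullet \hookrightarrow o_X$ via \cite[Lemma 8.2.1]{VdB19}; the key point is that the bimodule defining $D_{m,n}$ for the Cremona transform factors as a tensor product $\bigl(m_{d_{2m}''}\cdots m_{d_{2n-1}''}\bigr)$-type datum from $\gamma_2$ followed by the $m_{\tau^{-\bullet}r}$ datum from $\gamma_1$, precisely because $\tau^{-k}(p+q+r) = $ the union of the relevant $d_i''$ and $\tau^{-k}r$ divisors, and the associativity of $-\otimes_{o_X}-$ together with the functoriality of \cite[Lemma 8.2.1]{VdB19} turns this into the commuting triangle $\gamma = \gamma_2\circ\gamma_1$ of $\mathbb{Z}$-algebra maps. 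Since all three induce isomorphisms on function fields (Theorems \ref{thm:firstextension}, \ref{thm:secondextension}, and \cite{PresVdB}), the composite identity on $\Frac_{0,0}$ is automatic. The main obstacle I anticipate is bookkeeping: matching the doubling of indices (both $\gamma$ and $\gamma_1$ target $\check A^{(2)}$-type objects while $\gamma_2$ does not, so one must track the Veronese shifts carefully) and confirming that the technical hypotheses — non-collinearity and distinct $\tau$-orbits — are exactly what is needed so that the relevant divisors $\tau^{-k}p$, $\tau^{-k}q$, $\tau^{-k}r$ never collide, which is what makes the tensor-factor decomposition of $m_{\tau^{-k}d}$ valid and keeps the intermediate algebra $A''$ a genuine cubic Sklyanin $\mathbb{Z}$-algebra rather than a degenerate quadric.
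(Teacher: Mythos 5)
Your overall strategy is the same as the paper's: take $A''$ to be the cubic Sklyanin $\mathbb{Z}$-algebra of Theorem \ref{thm:secondextension} attached to the points $p,q$, and exhibit the Cremona transform as $\gamma_2$ followed by a Theorem \ref{thm:firstextension}-type inclusion determined by the residual point $r$ (your helix computation forcing $r'=r$ parallels the paper's degree-one identification). The genuine gap is in the step you treat as formal bookkeeping: the claim that the bimodule datum defining the Cremona algebra ``factors'' through the $m_{d_i}$-datum of $\gamma_2$ ``precisely because'' the divisors decompose. For bimodules over a noncommutative surface the identity $m_{p+q}=m_p m_q$ is not automatic for distinct points; it is exactly the content of Lemma \ref{lem:msplits}, whose proof is a real argument (a pullback diagram of bimodules, needing $o_p\otimes o_q=0=\mathcal{T}or_1(o_p,o_q)$, which via \cite[Lemma 5.5.1, Proposition 5.1.2]{VdB19} reduces to the vanishing of $\Ext^1_Y(\Oscr_p,\Oscr_q)$ and $\Hom_Y(\Oscr_p,\Oscr_{\tau q})$, i.e.\ to $p\neq q$ and $p\neq \tau^{\pm 1}q$ --- this is where the distinct-$\tau$-orbit hypothesis actually enters, not merely to keep points from ``colliding''). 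Without this identity you have neither the inclusions $m_{\tau^{-i}(p+q+r)}\hookrightarrow m_{\tau^{-i}(p+q)}=m_{d_{2i}}m_{d_{2i+1}}$ that define $\gamma_1$ nor the equality $\gamma=\gamma_2\circ\gamma_1$, so this lemma needs to be stated and proved, not assumed.

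A second, smaller point: even granting the factorization, identifying $\gamma_1$ as an inclusion ``as in Theorem \ref{thm:firstextension}'' does not follow from associativity of $-\otimes_{o_X}-$ and functoriality alone, because the residual data $m_{\tau^{-k}r}$ lives over $X=\Proj(A)$, whereas the Theorem \ref{thm:firstextension} construction for $A''$ uses the bimodules $m'_{\tau^{-k}r}=\ker(o_{X'}\twoheadrightarrow o_{r})$ over the new scheme $X'$ associated to $A''$. The paper bridges this by using that $A'$ is generated in degree one and comparing the two sides of \eqref{eq:cremonafactor} for $n=m+1$ as concrete subspaces of global sections on $Y$; that is where the choice $p'=r$ is verified. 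Your helix comparison identifies the target algebra up to isomorphism, but by itself it does not show that the specific subspaces $\gamma_1(A'_{m,m+1})\subset A''_{2m,2m+2}$ are the ones produced by the construction with the point $r$, so a degree-one section computation of this kind should be added.
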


Before we can prove this we need the following technical result:

\begin{lemma} \label{lem:msplits}
Let $p$ and $q$ be points such that $p \neq \tau^i q$ for $i \in \{ -1, 0 , 1 \}$. Then $m_{p+q} = m_p m_q$.
\end{lemma}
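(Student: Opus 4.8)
\textbf{Proof plan for Lemma \ref{lem:msplits}.}

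The plan is to work inside the abelian category $\BIMOD(X-X)$ of weak bimodules and compute both sides. Recall that by definition $m_d = \ker(o_X \twoheadrightarrow o_d)$ where $o_d = (-\otimes_Y \Oscr_d)$, so $m_{p+q}$, $m_p$ and $m_q$ are each the kernel of a surjection from $o_X$ onto the structure bimodule of a reduced point (or pair of points). First I would observe that $m_p m_q \subset m_p \subset o_X$ and that $m_p m_q$ is exactly the kernel of the composite $o_X \twoheadrightarrow o_q \hookrightarrow (m_p)^{-1}$-type map; more concretely, tensoring the defining sequence $0 \to m_q \to o_X \to o_q \to 0$ on the left by $m_p$ and using that $m_p \hookrightarrow o_X$ yields a commutative diagram comparing $m_p m_q$ to $m_{p+q}$. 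The inclusion $m_p m_q \subseteq m_{p+q}$ is essentially formal: a section vanishing on $p$ and then (after dividing) vanishing on $q$ vanishes on $p+q$. So the content is the reverse inclusion, or equivalently that the two subobjects of $o_X$ have the same colength.

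The cleanest route is a colength count. Using the vanishing results already recorded (Lemma \ref{cor:standardvanishing}, or rather its analogue for whichever of the quadratic/cubic settings this lemma is applied in, together with Lemma \ref{lem:standardvanishing}), one computes $\dim_k$ of the graded pieces. The bimodule $o_d$ for $d$ a reduced divisor of degree $\deg d$ contributes colength $\deg d$ in each sufficiently high degree; so $o_X / m_{p+q}$ has colength $2$ and $o_X/(m_p m_q)$ also has colength $2$ provided $m_p m_q \to o_X$ is still injective with the expected cokernel, which is where the hypothesis $p \neq \tau^i q$ for $i \in \{-1,0,1\}$ enters: it guarantees that the point $q$ does not ``collide'' with the image of $p$ under the relevant twist when one multiplies, so that no extra vanishing or torsion is created and $m_p m_q$ is genuinely the ideal of the two distinct points. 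Concretely I would resolve $o_X(-1)\otimes m_q$ (resp.\ the appropriate shift) by the point-module resolution of Lemma \ref{lem:quadraticresolution} (or Lemma \ref{lem:quadricresolution} in the cubic case) and tensor with $m_p$, using Lemma \ref{cor:standardvanishing} to kill the $\Ext^1$ and $\Ext^2$ obstructions so that the multiplication map $m_p \otimes m_q \to o_X$ has no higher homology and lands isomorphically onto a subobject of the expected colength.

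Having both $m_p m_q \subseteq m_{p+q} \subseteq o_X$ with equal finite colength, they must coincide. The main obstacle I anticipate is bookkeeping around the shifts and the precise role of the non-collision hypothesis: one must check that $p \neq \tau^{\pm 1} q$ (not merely $p\neq q$) is exactly what is needed so that the twisted-by-$\psi$ copies of $p$ that appear when $m_q$ is pushed past $m_p$ remain disjoint from $p$ itself; if they were to collide, $m_p m_q$ would acquire a length-two ``fat point'' at one place instead of two reduced points at two places, and the colengths would still match but the subobjects would differ, so one really does need to see that the support is reduced. Verifying this is a local computation on $Y$ using the explicit description \eqref{eq:Gscriquadratic}--\eqref{eq:identifyOY} of the line bundles and the point modules, and it is routine once set up, but it is the step that uses the hypothesis in an essential way.
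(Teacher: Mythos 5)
Your overall scaffold (the formal inclusion $m_pm_q\subseteq m_p\cap m_q=m_{p+q}$ followed by a comparison of colengths) is a legitimate strategy, and it differs in packaging from the paper, which instead shows that both $m_pm_q$ and $m_{p+q}$ are the pullback of $m_p\rightarrow o_X\leftarrow m_q$. But the entire content of the lemma sits in the step you leave unverified: computing the colength of $m_pm_q$. By right exactness of $-\otimes m_q$ applied to $m_p\to o_X\to o_p\to 0$, one has $m_q/(m_pm_q)\cong o_p\otimes m_q$, so the colength of $m_pm_q$ equals $1+\length(o_p\otimes m_q)$, and showing this is $2$ amounts to showing $o_p\otimes m_q\cong o_p$, i.e.\ $o_p\otimes o_q=0=\mathcal{T}or_1(o_p,o_q)$. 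That is exactly the vanishing the paper establishes (via \cite[Lemma 5.5.1, Proposition 5.1.2]{VdB19}, reducing to $\Hom_X(\Oscr_p,\Oscr_q)=\Ext^1_X(\Oscr_p,\Oscr_q)=0$ and then to $\Ext^1_Y(\Oscr_p,\Oscr_q)=0=\Hom_Y(\Oscr_p,\Oscr_{\tau q})$), and it is precisely where the hypothesis $p\neq\tau^iq$, $i\in\{-1,0,1\}$, is consumed. The lemmas you invoke instead (Lemma \ref{cor:standardvanishing}, Lemma \ref{lem:quadraticresolution}) control $\Ext^i(\Oscr_X(i),\Oscr_X(j)\otimes m_{d_m}\cdots)$, i.e.\ dimensions of Hom-spaces of graded pieces; they do not by themselves give the length of the bimodule quotient $o_X/(m_pm_q)$, nor do they tell you the sections of the \emph{product} bimodule, which is essentially what is being proven. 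So as written the proposal defers the one nontrivial point to ``a routine local computation'' without identifying the actual mechanism.

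A second, smaller but telling problem is your closing remark that in a collision case ``the colengths would still match but the subobjects would differ.'' This is inconsistent with your own argument: given the inclusion $m_pm_q\subseteq m_{p+q}$, equal finite colength forces equality, full stop. What actually fails when $p=\tau^{\pm1}q$ (or $p=q$) is the colength computation itself, because $\mathcal{T}or_1(o_p,o_q)$ or $o_p\otimes o_q$ no longer vanishes and $o_p\otimes m_q$ need not have length $1$ (and for $p=q$ even the identification $m_{p+q}=m_p\cap m_q$ breaks, since $o_{p+q}$ is then not $o_p\oplus o_q$). The fact that you locate the role of the hypothesis in ``reducedness of the support'' rather than in this Tor/tensor vanishing indicates the key step has not been correctly pinned down, so I would count this as a genuine gap rather than a complete alternative proof.
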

\begin{proof}
Consider the following diagram:

\begin{center}
\begin{tikzpicture}
\matrix(m)[matrix of math nodes,
row sep=4em, column sep=4em,
text height=1.5ex, text depth=0.25ex]
{m_p \otimes o_q & o_q & \\
m_p & o_X & o_p \\
m_p \otimes m_q & m_q & o_p \otimes m_q \\};
\path[->,font=\scriptsize]
(m-1-1) edge node[above]{$a_1$} (m-1-2)
(m-2-1) edge node[above]{$a_2$}(m-2-2)
        edge node[left]{$g_1$}(m-1-1)
(m-3-1) edge node[below]{$a_3$}(m-3-2)
        edge node[left]{$f_1$}(m-2-1)
(m-2-2) edge node[above]{$b_2$}(m-2-3)
        edge node[left]{$g_2$}(m-1-2)
(m-3-2) edge node[below]{$b_3$}(m-3-3)
        edge node[right]{$f_2$}(m-2-2) 
(m-3-3) edge node[right]{$f_3$}(m-2-3)
;
\end{tikzpicture}
\end{center}
The middle row and column are the exact sequences as in \eqref{eq:defmd}. The last row is obtained by applying $- \otimes m_q$ to the middle row, it hence is automatically right exact. 
Similarly the first column is right exact.\\

We can identify $m_pm_q$ both with the image of $f_1$ and $a_3$ (and hence the kernel of $g_1$ and $b_3$). To see this recall that $m_pm_q$ is defined as the image of 
\[ f: m_p \otimes m_q \rightarrow o_X \otimes o_X \rightarrow o_X \]
The identification then follows as $f= f_2 \circ a_3 = a_2 \circ f_1$ and as $a_2$ and $f_2$ are monomorphisms.\\

Next we claim that $a_1$ and $f_3$ are in fact isomorphisms. The proof follows from this claim as it implies  $m_p m_q$ serves both as a kernel for $b_2 \circ f_2$ and $g_2 \circ a_2$. As such it is the pullback of
\begin{center}
\begin{tikzpicture}
\matrix(m)[matrix of math nodes,
row sep=4em, column sep=4em,
text height=1.5ex, text depth=0.25ex]
{m_p & o_X \\
 & m_q \\};
\path[->,font=\scriptsize]
(m-1-1) edge node[above]{$a_2$} (m-1-2)
(m-2-2) edge node[right]{$f_2$}(m-1-2)
;
\end{tikzpicture}
\end{center}
On the other hand $o_{p+q} = o_p \oplus o_q$ such that $m_{p+q}$, being the kernel of ${o_X \xrightarrow{b_2 \oplus g_2} o_{p+q}}$, also is a pullback of the above diagram.
In particular $m_{p+q} \cong m_p m_q$. \\

It hence remains to prove the above claim. 
As the argument is the same for both morphisms we only explain this for $a_1$. Note that this map is obtained by tensoring $m_p \hookrightarrow o_X$ with $o_q$. As such the cokernel of $a_1$ is given by $o_p \otimes o_q$ whereas the kernel is a subobject of $\mathcal{T}or_1(o_p,o_q)$ (see \cite[\S3]{VdB19} for the definition of $\mathcal{T}or_i$). In particular it suffices to show $o_p \otimes o_q = 0 = \mathcal{T}or_1(o_p,o_q)$. By \cite[Lemma 5.5.1]{VdB19} we need to show $\Hom_X(\Oscr_p,\Oscr_q)= 0 =\Ext^1_X(\Oscr_p,\Oscr_q)$. For $\Hom_X$ this follows from the fact that $\Oscr_p$ and $\Oscr_q$ are simple. For $\Ext^1$, we can use \cite[Proposition 5.1.2 and (5.3)]{VdB19} to reduce the computations to showing $\Ext^1_Y(\Oscr_p,\Oscr_q)=0=\Hom_Y(\Oscr_p,\Oscr_{\tau q})$. The latter follows $p$, $q$ and $\tau q$ are different points by assumption.

\end{proof}
We can now finish the main result of this section:
\begin{proof}[Proof of Theorem \ref{thm:cremona}]
Recall from \S \ref{sec:PresVdB} that $\gamma: A' \hookrightarrow A^{(2)}$ was constructed as follows:
\begin{eqnarray} \notag (A')_{m,n} = & \Hom_X(\Oscr_X(-2n),\Oscr_X(-2m) \otimes_{o_X} m_{\tau^{-m}d} \ldots m_{\tau^{-n+1}d}) & \\
\notag & \hookdownarrow & \\ \label{eq:constructioncremona} & A_{2m,2n} = \Hom_X(\Oscr_X(-2n),\Oscr_X(-2m)) & \end{eqnarray}
We construct $\gamma_2: A'' \hookrightarrow A$ with respect to the points $p,q$ as in Theorem \ref{thm:secondextension}. I.e.
\[ (A'')_{m,n} = \Hom_X(\Oscr_X(-n),\Oscr_X(-m) \otimes_{o_X} m_{d_m} \ldots m_{d_{n-1}}) \]
where 
\[ d_i = \begin{cases} 
\tau^{-j}p & \text{if $i=2j$} \\
\tau^{-j}q & \text{if $i=2j+1$}
\end{cases}
\]
By Lemma \ref{lem:msplits} for each $i$ we can write $m_{d_{2i}}m_{d_{2i+1}} = m_{\tau^{-i}(p+q)}$. In particular the inclusions $m_{\tau^{-i}(p+q+r)} \hookrightarrow m_{\tau^{-i}(p+q)}$ give rise to an inclusion $\gamma_1: A' \hookrightarrow A^{\prime \prime (2)}$ such that $\gamma = \gamma_2 \circ \gamma_1$. It hence remains to show that $\gamma_1$ is in fact an inclusion as in Theorem \ref{thm:firstextension}. For this we need to prove the existence of a point $p'$ such that
\begin{eqnarray} \label{eq:cremonafactor} \ \ \ \ \ \ \ \gamma_1((A')_{m,n}) = \Hom_{X'}(\Oscr_{X'}(-2n), \Oscr_{X'}(-2m) \otimes_{o_{X'}} m'_{\tau^{-m}p'} \ldots m'_{\tau^{-n+1}p'})\end{eqnarray}
where $m_{p'}' = \ker \left( o_{X'} \rightarrow o_{p'} \right)$. 
As $A'$ is generated in degree 1, it suffices to check \eqref{eq:cremonafactor} for $n=m+1$ in which case the left hand side of \eqref{eq:cremonafactor} equals \[ \Gamma(Y,\psi^{2m*}\Lscr \otimes \psi^{2m+1*}\Lscr \otimes \Oscr_Y(-\tau^{-m}p-\tau^{-m}q-\tau^{-m}r))\] and the right hand side equals 
\[ \Gamma(Y,\psi^{2m*}\Lscr \otimes \Oscr_Y(\tau^{-m}p) \otimes \psi^{2m+1*}\Lscr \otimes \Oscr_Y(-\tau^{-m}q) \otimes \Oscr_Y(\tau^{-m}p')) \]. Hence the theorem is proven by choosing $p'=r$.
\end{proof}
Inspired by Theorem \ref{thm:cremona} we make the following definition:
\begin{definition} \label{def:quadratictransform}
Let $A, A'$ be three dimensional Sklyanin algebras. An inclusion $A' \hookrightarrow A^{(v)}$ is called a quadratic transform if it can be written as a composition of inclusions as in Theorems \ref{thm:firstextension} and \ref{thm:secondextension}.
\end{definition}
\begin{remark}
By construction a quadratic transform always induces an isomorphism of function fields.
\end{remark}
It immediately follows from the definition that if $A' \hookrightarrow A^{(v)}$ is a quadratic transform, then $v=2^n$ for some nonnegative integer $n$. By construction our noncommutative versions of $\mathbb{P}^1 \times \mathbb{P}^1 \dashrightarrow \mathbb{P}^2$ and $\mathbb{P}^2 \dashrightarrow \mathbb{P}^1 \times \mathbb{P}^1$ as in Theorems \ref{thm:firstextension}, \ref{thm:secondextension} are quadratic transforms. Theorem \ref{thm:cremona} implies the noncommutative Cremona transform as in \S \ref{sec:PresVdB} is a quadratic transform as well. Similar to the construction in Figure \ref{fig:cremona} one can introduce a (commutative) cubic Cremona transform $\mathbb{P}^1 \times \mathbb{P}^1 \dashrightarrow \mathbb{P}^1 \times \mathbb{P}^1$ by blowing up two points in each copy of $\mathbb{P}^1 \times \mathbb{P}^1$, provided that these points do not lie on one ruling, see for example Figure \ref{fig:cremonacubic}. As this cubic Cremona factors through $\mathbb{P}^2$ using the classical birational transformation, there is an obvious noncommutative generalization $A' \hookrightarrow A^{(2)}$ where $A$ and $A'$ are cubic Sklyanin $\mathbb{Z}$-algebras. By definition this noncommutative cubic Cremona transformation is a quadratic transform as well.
\begin{figure}[ht]
\includegraphics[width=15cm]{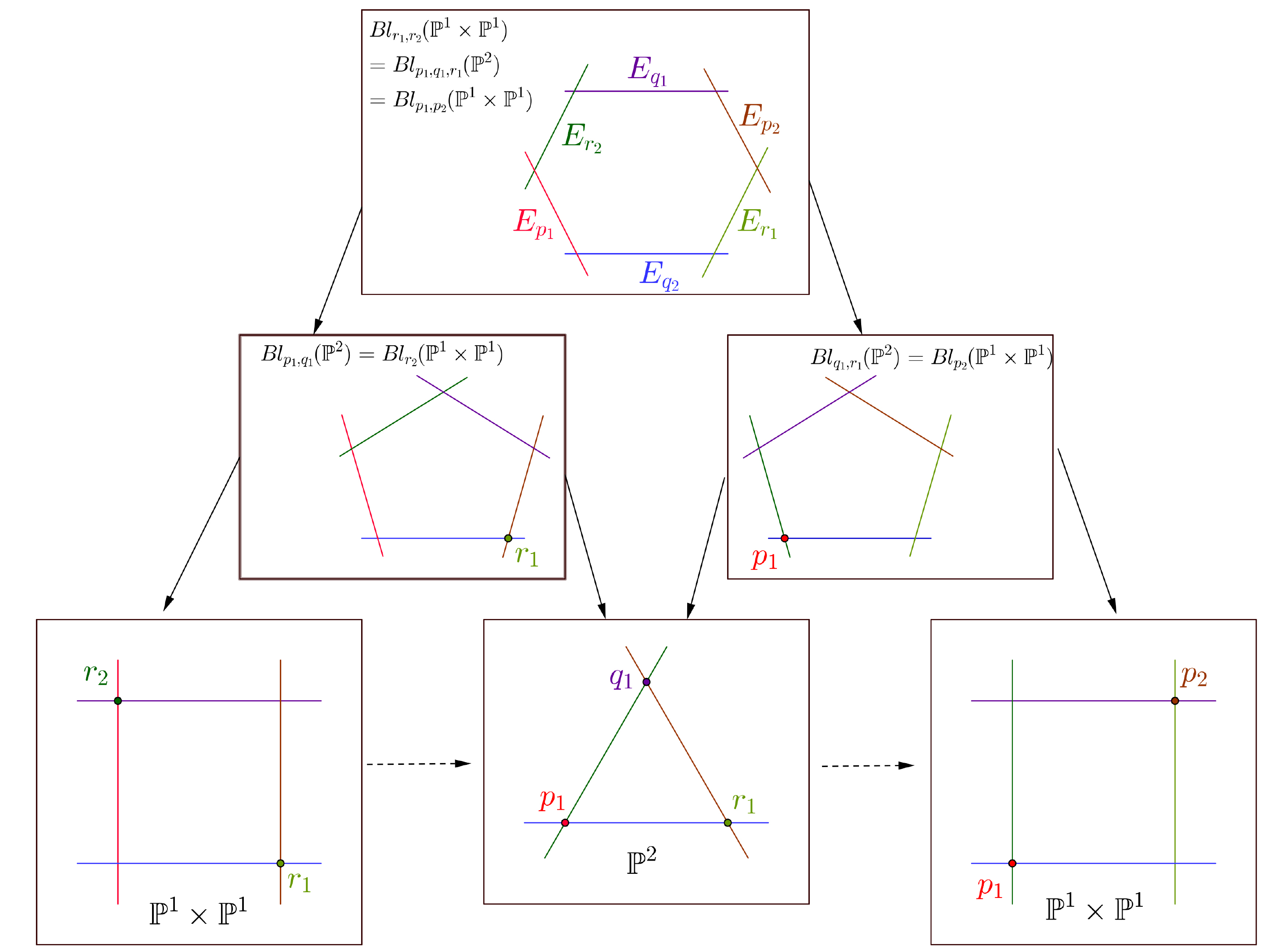}
\caption{The commutative cubic Cremona transform}
\label{fig:cremonacubic}
\end{figure}

\section{Inner morphisms}
\label{sec:innermorphisms}

The main goal of the following sections is to prove that in a suitable sense quadratic transforms are invertible:

\begin{definition} \label{def:inner}
Let A be a $\mathbb{Z}$-algebra such that $Q := \Frac(A)$ exists. 
We say that an injective morphism of $\mathbb{Z}$-algebras $\phi:A\hookrightarrow A^{(v)}$ is  \emph{inner} if there
exist $z_m\in Q_{vm,m}-\{0\}$ such that for $a\in A_{m,n}$ we have $\phi(a)=z_m a z_n^{-1}$. Moreover we require 
\begin{align}
\notag z_m&\in A_{vm,m} && \text{if $m<0$}\\
\label{eq:definner} z_0&=1\\
\notag z_m^{-1}&\in A_{m,vm} && \text{if $m>0$}
\end{align}
\end{definition}

The following is clear
\begin{proposition}
If $A\rightarrow A^{(v)}$ is inner then the induced map $Q_{0,0} \rightarrow (Q^{(v)})_{0,0}=Q_{0,0}$ is the identity.
\end{proposition}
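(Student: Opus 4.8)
The statement to prove is the final \begin{proposition}: if $A \to A^{(v)}$ is inner, then the induced map on $Q_{0,0}$ is the identity.

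\medskip

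The plan is to unwind the definitions. An inner morphism $\phi: A \hookrightarrow A^{(v)}$ is by definition conjugation by elements $z_m \in Q_{vm,m} \setminus \{0\}$, namely $\phi(a) = z_m a z_n^{-1}$ for $a \in A_{m,n}$, normalized so that $z_0 = 1$. The key point is to track what $\phi$ does on the level of fraction fields. Since $\phi$ is injective and $A$ is a $\mathbb{Z}$-domain with $Q = \Frac(A)$, the morphism $\phi$ extends (uniquely) to a morphism $Q(\phi): Q \to Q^{(v)}$ of $\mathbb{Z}$-fields of fractions, because inverting the image of a nonzero homogeneous element of $A$ is forced once one inverts that element in $A$ itself. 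Concretely, an element of $Q_{i,j}$ is represented by a pair $(r,s)$ with $r \in R_{i,l}$, $s \in R_{j,l} \setminus \{0\}$ (in the notation of Definition~\ref{def:Zfieldoffractions}), and $Q(\phi)$ sends the class of $(r,s)$ to the class of $(\phi(r), \phi(s))$, which makes sense because $\phi(s) \neq 0$ by injectivity.

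\medskip

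Now I would compute this on $Q_{0,0}$. An element of $Q_{0,0}$ is the class of a pair $(r,s)$ with $r \in A_{0,l}$, $s \in A_{0,l} \setminus \{0\}$ for some $l$; informally $rs^{-1}$. Applying the formula for $\phi$ on homogeneous elements: $\phi(r) = z_0 r z_l^{-1} = r z_l^{-1}$ and $\phi(s) = z_0 s z_l^{-1} = s z_l^{-1}$, using $z_0 = 1$. Hence $Q(\phi)(rs^{-1}) = \phi(r)\phi(s)^{-1} = (r z_l^{-1})(s z_l^{-1})^{-1} = r z_l^{-1} z_l s^{-1} = r s^{-1}$. So $Q(\phi)$ is the identity on $Q_{0,0}$. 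The only mild care needed is that the manipulation $(rz_l^{-1})(sz_l^{-1})^{-1} = rs^{-1}$ is carried out inside $Q$, where all the relevant elements are invertible, so associativity and the cancellation $z_l^{-1}z_l = e_{?}$ (the appropriate local unit) apply; this is legitimate precisely because we have passed to the field of fractions. Finally, under the identification $(Q^{(v)})_{0,0} = Q_{0,0}$ the induced map is exactly $Q(\phi)|_{Q_{0,0}}$, which we have just shown is the identity.

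\medskip

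I do not anticipate a serious obstacle here — this is the ``the following is clear'' proposition and the content is genuinely just bookkeeping with the definition of inner morphism and of the $\mathbb{Z}$-field of fractions. The one spot that deserves a sentence of justification is why $\phi$ extends to $Q$: one should note that $\phi$ sends nonzero homogeneous elements to nonzero homogeneous elements (injectivity plus $A$ being a $\mathbb{Z}$-domain), so by the universal property of localization / calculus of fractions it extends to $Q(\phi): Q \to Q^{(v)}$, and the extension is necessarily given on a fraction $(r,s)$ by $(\phi(r), \phi(s))$. Everything else is the short computation above.
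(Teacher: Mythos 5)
Your computation is correct and is exactly the argument the paper has in mind (it omits it as ``clear''): using $z_0=1$, a fraction $rs^{-1}$ with $r,s\in A_{0,l}$ maps to $(rz_l^{-1})(sz_l^{-1})^{-1}=rs^{-1}$, and the extension of $\phi$ to $Q$ is justified by injectivity together with $A$ being a $\mathbb{Z}$-domain. Nothing further is needed.
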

\begin{definition} \label{def:invertible}
Let $A$ and $A'$ be $\mathbb{Z}$-algebras such that $\Frac(A)$ and $\Frac(A')$ exist. An inclusion $\gamma: A' \hookrightarrow A^{(w)}$ is said to be \emph{invertible} if there exists an inclusion $\delta: A \hookrightarrow A^{\prime (v)}$ such that $\delta \circ \gamma: A' \hookrightarrow A^{\prime (vw)}$ and $\gamma \circ \delta: A \hookrightarrow A^{(vw)}$ are inner.

\end{definition}
We can now state the main result of this paper:
\begin{theorem} \label{thm:inversemain} Assume that $\gamma: A'\hookrightarrow A^{(2^m)}$ is a quadratic transform between (three dimensional) Sklyanin $\mathbb{Z}$-algebras. Write $\gamma = \gamma_1 \circ \ldots \circ \gamma_t$ with the $\gamma_j$ as in Theorems \ref{thm:firstextension}, \ref{thm:secondextension}. Moreover we assume that for each factor $\gamma_i: A_i \hookrightarrow A_{i+1}^{(2)}$ (with $A_{i+1}$ necessarily cubic) the points $p_i, q_i$ used in the construction of $\gamma_i$ lie in different $\tau$-orbits. Then $\gamma$ is invertible and the ``inverse'' $\delta$ can be chosen as a quadratic transform $\delta: A\rightarrow A'^{(2^n)}$ with $|n-m| \leq 1$.
\end{theorem}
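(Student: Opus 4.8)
The plan is to reduce the general statement to the two ``building block'' cases of Theorems \ref{thm:firstextension} and \ref{thm:secondextension}, and then invert each of those directly. First I would observe that the notions ``invertible'' and ``inner'' compose well: if $\gamma = \gamma_1 \circ \cdots \circ \gamma_t$ with each $\gamma_j$ invertible with inverse $\delta_j$, then $\delta := \delta_t \circ \cdots \circ \delta_1$ should be an inverse for $\gamma$. This requires checking that a composition of inner morphisms is inner (immediate from Definition \ref{def:inner}, by multiplying the scalars $z_m$, after tracking the shifts $A^{(v)}$ correctly and verifying the integrality conditions \eqref{eq:definner}) and that inner morphisms absorb into quadratic transforms on either side without destroying invertibility — i.e. if $\gamma$ is invertible and $\phi$ is inner then $\gamma \circ \phi$ and $\phi \circ \gamma$ are invertible with the obviously modified inverses. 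Granting this bookkeeping, it suffices to prove that each single factor $\gamma_j$ as in Theorem \ref{thm:firstextension} or \ref{thm:secondextension} is invertible, with an inverse that is again a single such factor (so that $t$ changes by at most... actually the count of factors in $\delta$ equals $t$, and the total power $2^n$ versus $2^m$ changes by at most a factor of $2$ per step in a controlled way, giving $|n-m|\le 1$ once one checks the ratios $v/w \in \{1/2,1,2\}$ claimed in the remark after Theorem \ref{thm:inversemain}).

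The heart of the argument is therefore the inversion of the two elementary transforms, and this is exactly what \S \ref{sec:invertibletransformquadric} is announced to do via a $\mathbb{Z}^2$-algebra gluing $A$ and $A'$. Concretely, for $\gamma: A' \hookrightarrow A$ as in Theorem \ref{thm:secondextension} (the noncommutative $\mathbb{P}^2 \dashrightarrow \mathbb{P}^1\times\mathbb{P}^1$, built from points $p,q$), I would construct $\delta: A \hookrightarrow A'^{(2)}$ as a transform of type Theorem \ref{thm:firstextension} (noncommutative $\mathbb{P}^1\times\mathbb{P}^1 \dashrightarrow \mathbb{P}^2$) with respect to the point $p'$ determined by $p+q+p' \sim [\psi_*\Lscr]$ in $\Pic(Y)$, as stated in the introduction; dually, for $\gamma: A' \hookrightarrow A^{(2)}$ of type Theorem \ref{thm:firstextension} built from $p$, I would take $\delta: A \hookrightarrow A'$ of type Theorem \ref{thm:secondextension} with respect to points $p',q'$ satisfying $p + \tau q' \sim [\Lscr_0]$ and $p + p' \sim [\Lscr_1]$. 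The point-counting ensures the new helix of line bundles $(\Gscr_i)$ produced by $\delta$ (computed via \eqref{eq:Gscriquadric} or \eqref{eq:Gscriquadratic}) agrees with the original $(\Lscr_i)$ up to the appropriate shift, so that $\delta\circ\gamma$ and $\gamma\circ\delta$ land back in the right $\mathbb{Z}$-algebra.

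To actually verify that the composite $\gamma\circ\delta: A \hookrightarrow A^{(4)}$ (resp.\ $A^{(2)}$, $A$, depending on types) is \emph{inner}, I would work degree by degree: both $\gamma$ and $\delta$ are given on $(m,n)$-components by $\Hom$-spaces of the form $\Hom(\Oscr(-\ast n), \Oscr(-\ast m)\otimes m_{d_m}\cdots m_{d_{n-1}})$, and the composite is the inclusion associated to a product of ideal-bimodules $m_{e_m}\cdots m_{e_{n-1}}$ where, by Lemma \ref{lem:msplits} (which lets us split $m_{p+q} = m_p m_q$ when the points are in distinct $\tau$-orbits — this is where the orbit hypothesis in the theorem is used) together with the point-relations above, each $m_{e_i}$ is a \emph{principal} ideal bimodule $o_X(-D_i)$ attached to a principal divisor $D_i \sim 0$. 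A choice of rational section of each $\Oscr_Y(D_i)$, pushed up to $X$, provides the scalars $z_m \in Q_{\ast m, m}$ conjugation by which recovers the composite map, and the integrality conditions \eqref{eq:definner} follow because the relevant $m_{e_i}$ genuinely sit inside $o_X$ on one side and $o_{X'}$ on the other. The main obstacle I expect is precisely this last verification: showing that after splitting into principal pieces the conjugating elements can be chosen coherently in $m$ (i.e.\ multiplicatively compatible, $z_0 = 1$, with the correct sidedness), which is why the authors pass to a $\mathbb{Z}^2$-algebra — it packages $A$, $A'$, and the intermediate twisted homogeneous coordinate rings into one object in which both $\gamma$ and $\delta$ are visibly ``multiplication by a section'', making the inner-ness of the composite a formal consequence. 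Getting that $\mathbb{Z}^2$-algebra set up correctly, and checking the Picard-group relations close up exactly (rather than up to an unwanted translation), is the real work; everything else is the compositional bookkeeping of the first paragraph.
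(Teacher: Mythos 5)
Your first paragraph (reduce to single factors via the fact that compositions of invertible/inner inclusions are invertible, then track the powers to get $|n-m|\le 1$) is exactly the paper's reduction, and the geometric data you assign to the inverses ($p+q+\tau p'\sim[\Lscr_0]$, resp.\ $p+\tau q'\sim[\Lscr_0]$, $p+p'\sim[\Lscr_1]$) is the correct one. The genuine gap is in how you propose to verify inner-ness of the composites. You want to define $\delta$ directly as the transform attached to $p'$ (resp.\ $p',q'$) and then exhibit the conjugating elements as ``rational sections of $\Oscr_Y(D_i)$ pushed up to $X$'', the divisors being principal after splitting via Lemma \ref{lem:msplits}. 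This does not work as stated: the elements $z_m$ of Definition \ref{def:inner} must lie in $\Frac(A)$ and satisfy the integrality conditions \eqref{eq:definner}, whereas sections of line bundles on $Y$ live in $B$, have no canonical lift to $A$, and even after choosing lifts nothing guarantees that conjugation by them implements $\gamma\circ\delta$. The paper's argument runs in the opposite direction: it first builds $\delta$ inside the glued $\mathbb{Z}^2$-algebra $\tilde A$, using that the components $\tilde A_{(i,j),(i-1,j+2)}$ and $\tilde A_{(i,j),(i+1,j-1)}$ are one-dimensional, so that $\gamma\circ\delta$ is inner by construction with $z_i=\gamma_{(2i,0),(0,2i)}\delta_{(0,2i),(i,0)}$, and only afterwards proves that this $\delta$ is a quadratic transform with respect to $p'$ (resp.\ $p',q'$), by mapping to $\tilde B$ and using that sections of the degree-zero bundles such as $\Lscr_{i+j}(-d_j-d_{j+1}-\tau^{-i+1}p')$ are nowhere vanishing (Lemma \ref{lem:followingidentities}). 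The needed one-dimensionality is the dimension count of Lemmas \ref{lem:negativedim} and \ref{lem:negativedimension2}, resting on the $I$-basis machinery of Appendix \ref{sec:Ibasis}; that, together with the standing assumptions of \S\ref{sec:quadricinquadratic}, is where the distinct-$\tau$-orbit hypothesis is actually consumed --- Lemma \ref{lem:msplits} is used for the Cremona factorization (Theorem \ref{thm:cremona}), not here. None of this content is supplied or replaced by your sketch.

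A second, related gap: you only discuss $\gamma\circ\delta$. Inner-ness of $\delta\circ\gamma$ is not a formal consequence of inner-ness of $\gamma\circ\delta$, and the paper needs a separate argument for it (\S\ref{sec:invertabilitytransform}): one reruns the construction with $\delta$ in the role of $\gamma$ and checks by Picard-group computations that it returns the original $\gamma$ ($p''=p$, $q''=q$, $\Lscr''_i\cong\Lscr_i$), so that $\delta\circ\gamma$ coincides with a composite already known to be inner. Your remark that the new helix ``agrees with the original up to the appropriate shift'' points in this direction but is neither carried out nor identified as the step that yields the second inner composite; as written, your proposal would establish at best a one-sided inverse.
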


We will call $\delta$ as in the previous theorem an inverse quadratic transform to $\gamma$.

The following reduces the amount of work for proving Theorem \ref{thm:inversemain} dramatically.

\begin{lemma}
Let $\gamma_1: A' \hookrightarrow A^{(w_1)}$ and $\gamma_2: A'' \hookrightarrow A^{\prime (w_2)}$ be invertible as in definition \ref{def:invertible}. Then $\gamma_1 \circ \gamma_2$ is invertible as well.
\end{lemma}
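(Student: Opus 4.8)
The plan is to show that the composition of two invertible inclusions is again invertible by composing the respective inverse inclusions in the appropriate order, and then checking that the resulting composite is inner. So suppose $\delta_1: A \hookrightarrow A'^{(v_1)}$ witnesses the invertibility of $\gamma_1: A' \hookrightarrow A^{(w_1)}$ and $\delta_2: A' \hookrightarrow A''^{(v_2)}$ witnesses the invertibility of $\gamma_2: A'' \hookrightarrow A'^{(w_2)}$. The natural candidate for an inverse to $\gamma_1 \circ \gamma_2: A'' \hookrightarrow A^{(w_1 w_2)}$ is $\delta_2 \circ \delta_1: A \hookrightarrow A''^{(v_1 v_2)}$, where one reads the exponents using the obvious identification $(A''^{(v_2)})^{(v_1)} = A''^{(v_1 v_2)}$. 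The point is then to verify that both composites $(\delta_2 \circ \delta_1) \circ (\gamma_1 \circ \gamma_2)$ and $(\gamma_1 \circ \gamma_2) \circ (\delta_2 \circ \delta_1)$ are inner in the sense of Definition \ref{def:inner}.

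The first step is to record that ``inner'' is closed under composition: if $\phi: A \hookrightarrow A^{(v)}$ is given by $\phi(a) = z_m a z_n^{-1}$ and $\phi': A \hookrightarrow A^{(v')}$ is given by $\phi'(a) = z'_m a z_n'^{-1}$, then $\phi' \circ \phi$ (with the evident identification of exponents) is given by conjugation with the elements $z'_{vm} z_m \in Q_{vv'm, m}$, which one checks again satisfy the normalisations \eqref{eq:definner} — namely $z'_0 z_0 = 1$ and the required membership conditions for $m$ positive or negative follow since each factor individually lies in the corresponding piece of $A$ (for $m<0$) or has inverse there (for $m>0$), and products/inverses of such elements stay in the right place because $A$ is a $\mathbb{Z}$-subalgebra closed under the relevant multiplications. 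The second step is to reorganise the fourfold composite $(\delta_2 \circ \delta_1) \circ (\gamma_1 \circ \gamma_2)$ as $\delta_2 \circ (\delta_1 \circ \gamma_1) \circ \gamma_2$; by hypothesis $\delta_1 \circ \gamma_1: A' \hookrightarrow A'^{(v_1 w_1)}$ is inner, and conjugating that inner automorphism on both sides by $\gamma_2$ and $\delta_2$ — i.e. the composite $\delta_2 \circ (\text{inner}) \circ \gamma_2$ — produces an inner inclusion of $A''$ into a twist of $A''$, because $\delta_2 \circ \gamma_2$ is itself inner and inserting an inner factor in the middle only alters the conjugating elements by further elements of $Q$ satisfying the right normalisations. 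The symmetric manipulation handles $(\gamma_1 \circ \gamma_2) \circ (\delta_2 \circ \delta_1) = \gamma_1 \circ (\gamma_2 \circ \delta_2) \circ \delta_1$.

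Concretely I would isolate a small lemma: if $\psi: B \hookrightarrow B^{(v)}$ is inner and $\alpha: C \hookrightarrow B^{(w)}$, $\beta: B \hookrightarrow C^{(w')}$ are such that $\beta \circ \alpha: C \hookrightarrow C^{(ww')}$ is inner, then $\beta \circ \psi \circ \alpha: C \hookrightarrow C^{(vww')}$ is inner. This is proved by writing $\psi(b) = z_m b z_n^{-1}$, transporting the $z_m$ through $\beta$ (which takes values in $\Frac$ by extending $\beta$ to fraction fields) and combining with the conjugating elements of $\beta \circ \alpha$; the normalisation $z_0 = 1$ ensures the degree-$0$ conjugator of the composite is still $1$, and the membership conditions are inherited. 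Applying this lemma twice — once with $\psi = \delta_1 \circ \gamma_1$ and once with $\psi = \gamma_2 \circ \delta_2$ — finishes the proof.

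The main obstacle I expect is bookkeeping rather than conceptual: one must be careful that the conjugating elements $z_m$ of an inner map, once pushed forward along an inclusion $\beta$ that is itself only inner (not an isomorphism), still land in the function field and still satisfy the one-sided integrality constraints of \eqref{eq:definner}; this requires extending all the inclusions in sight to maps of $\mathbb{Z}$-fields of fractions (legitimate by Proposition \ref{prp:gradedZfractions} and Theorem \ref{thm:quadricsfunctionfield}, which guarantee the relevant $\Frac$'s exist) and checking that the normalisation clauses, which distinguish the cases $m<0$, $m=0$, $m>0$, are preserved under the products that arise. Keeping track of which copy of a twisted algebra each element lives in — i.e. the exponent arithmetic $(A^{(v)})^{(v')} = A^{(vv')}$ — is the other place where a slip is easy, so I would fix that identification once at the outset and use it silently thereafter.
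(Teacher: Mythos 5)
Your proposal is correct and takes essentially the same route as the paper: both use $\delta_2\circ\delta_1$ as the candidate inverse, regroup the fourfold composite around the inner map $\gamma_2\circ\delta_2$ (resp.\ $\delta_1\circ\gamma_1$), push its conjugating elements through the outer inclusion via the fraction field, and multiply them with the conjugators of $\gamma_1\circ\delta_1$ (resp.\ $\delta_2\circ\gamma_2$), finally checking the normalisations of \eqref{eq:definner}. Your ``sandwich lemma'' is just a packaged form of the paper's explicit computation, and your preliminary closure-under-composition step is not actually needed for the argument.
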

\begin{proof}
By assumption there exist inclusion $\delta_1: A \hookrightarrow A^{\prime (v_1)}$ and $\delta_2: A' \hookrightarrow A^{\prime \prime (v_2)}$ such that $\delta_1 \circ \gamma_1$, $\delta_2 \circ \gamma_2$, $\gamma_1 \circ \delta_1$ and $\gamma_2 \circ \delta_2$ are inner. We now claim $(\delta_2 \circ \delta_1) \circ (\gamma_1 \circ \gamma_2)$ and $(\gamma_1 \circ \gamma_2) \circ (\delta_2 \circ \delta_1)$ are inner as well. As both proofs are analogous, we only prove the latter. Let $z_{1,m} \in \Frac{A}_{w_1v_1m,m}-\{0\}$ and $z_{2,m} \in \Frac{A'}_{w_2v_2m,m}-\{0\}$ be as in Definition \ref{def:inner}. Then for each $a \in A_{m,n}$:
\begin{eqnarray*}
((\gamma_1 \circ \gamma_2) \circ (\delta_2 \circ \delta_1))(a) & = & \gamma_1 \left( (\gamma_2 \circ \delta_2) (\delta_1(x)) \right) \\
& = & \gamma_1 \left( z_{2,v_1m} \delta_1(x) z_{2,v_1n}^{-1} \right) \\
& = & \gamma_1 (z_{2,v_1m}) (\gamma_1 \circ \delta_1)(x) \gamma_1(z_{2,v_1n})^{-1} \\
& = & \gamma_1 (z_{2,v_1m}) (z_{1,m} x z_{1,n}^{-1}) \gamma_1(z_{2,v_1n})^{-1} \\
& = & \left( \gamma_1 (z_{2,v_1m}) z_{1,m} \right) x \left( \gamma_1(z_{2,v_1n})z_{1,n}\right)^{-1}
\end{eqnarray*}
Moreover obviously
\begin{eqnarray*}
\gamma_1 (z_{2,v_2m}) z_{1,m} &\in & \Frac(A)_{w_1v_1w_2v_2m, m}\setminus \{0\} \\
\gamma_1 (z_{2,v_2m}) z_{1,m} &\in & A  \ \ \ \text{if $m<0$}\\
\gamma_1 (z_{2,0})) z_{1,0}&=& 1 \\
\left( \gamma_1 (z_{2,v_2m}) z_{1,m} \right)^{-1}&\in & A \ \ \ \text{if $m>0$}
\end{eqnarray*}
\end{proof}

The proof of Theorem \ref{thm:inversemain} then follows from the following 2 theorems which are proven in the next section:

\begin{theorem} \label{thm:inversemaincase1} Let $\gamma: A'\hookrightarrow A$ be a quadratic transform as in Theorem \ref{thm:secondextension}. Then $\gamma$ is invertible and the inverse can be chosen as $\delta: A \hookrightarrow A^{\prime(2)}$ as in Theorem \ref{thm:firstextension}.
\end{theorem}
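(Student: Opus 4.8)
The goal is to invert a single $\mathbb{P}^2 \dashrightarrow \mathbb{P}^1\times\mathbb{P}^1$ transform $\gamma\colon A'\hookrightarrow A$ by producing a $\mathbb{P}^1\times\mathbb{P}^1\dashrightarrow\mathbb{P}^2$ transform $\delta\colon A\hookrightarrow A'^{(2)}$ of the type in Theorem \ref{thm:firstextension}, and then checking that both composites $\delta\circ\gamma$ and $\gamma\circ\delta$ are inner in the sense of Definition \ref{def:inner}. Concretely, write $A=A(Y,\Lscr,\psi)$; by Theorem \ref{thm:secondextension} the algebra $A'$ is the AS-regular $\mathbb{Z}$-algebra attached to the elliptic helix $\Gscr_i = \Oscr_Y(-d_i)\otimes\psi^{i*}\Lscr$, where $d_{2j}=\tau^{-j}p$, $d_{2j+1}=\tau^{-j}q$ and $\tau=\psi^3$. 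The plan is to let $\delta$ be the transform of Theorem \ref{thm:firstextension} applied to $A'$ built from the single point $p'$ determined by the linear equivalence $p+q+p'\sim[\psi_*\Lscr]$ on $Y$ (as announced in the introduction), with the relevant automorphism being $\alpha$ satisfying $\alpha^2 = \tau$. First I would verify that $A'$ genuinely satisfies the hypotheses of Theorem \ref{thm:firstextension} — that $Y$ is a smooth elliptic curve (given) and that $\Gscr_2\cong\alpha^*\Gscr_0$ for a translation of order $\geq 3$ — which is exactly point (d) of the Step 4 list in \S\ref{subsec:quadricinverse}.

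\textbf{Computing the composite on the geometric side.} The heart of the argument is to compute $\delta\circ\gamma$ and $\gamma\circ\delta$ explicitly on degree-$1$ pieces using the $\Hom(\Oscr_X(-n),\Oscr_X(-m)\otimes m_{\cdots})$ description of these $\mathbb{Z}$-algebras. Since all the algebras involved are generated in degree $1$ and are domains (Theorem \ref{thm:quadricdomain}), it suffices to pin down the composite on $A_{m,m+1}$, resp.\ $A'_{m,m+1}$, and then extend multiplicatively; the inner elements $z_m$ are then forced, being ratios in $\Frac(A)$, resp.\ $\Frac(A')$, of chosen generators. The key identity is that tensoring the ideal sheaves telescopes: for $\gamma$ one has the colength-$m_{d_m}\cdots m_{d_{n-1}}$ picture, for $\delta$ one has $m'_{\tau^{-m}p'}\cdots m'_{\tau^{-n+1}p'}$, and the composite $\delta\circ\gamma\colon A'\hookrightarrow A'^{(2)}$ should land inside $\Hom_{X'}(\Oscr_{X'}(-2n),\Oscr_{X'}(-2m)\otimes o_{X'})$, i.e.\ simply in $A'^{(2)}$, because on $Y$ the divisor classes match up: $d_{2m}+d_{2m+1}+\tau^{-m}p' \sim \psi^{2m*}[\psi_*\Lscr]$ after the shift, so the product of line bundles appearing on the $D_Y$-level is (a twist of) $\Gscr_{2m}\otimes\Gscr_{2m+1}$, which is the degree-$(0,0)$ piece of the helix for $A'^{(2)}$. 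This is precisely the reverse of the computation at the end of the proof of Theorem \ref{thm:cremona}, read with $r$ removed. Tracking the line-bundle twists carefully produces explicit sections $z_m$, and one checks they satisfy the integrality conditions \eqref{eq:definner} ($z_m\in A'_{2m,m}$ for $m<0$, $z_0=1$, $z_m^{-1}\in A'_{m,2m}$ for $m>0$) by the standard vanishing results of Lemma \ref{lem:standardvanishing} and Lemma \ref{cor:standardvanishing2}.

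\textbf{The hard part.} I expect the main obstacle to be bookkeeping rather than conceptual: keeping straight the two different conventions (single divisor $d$ vs.\ the alternating family $d_i$), the doubling of the index under $(2)$, and the pullbacks by $\psi$ vs.\ $\alpha$ vs.\ $\sigma$, so that the divisor-class identity $p+q+p'\sim[\psi_*\Lscr]$ really is equivalent to $\delta\circ\gamma$ and $\gamma\circ\delta$ being inner \emph{with} $z_0=1$ and the correct sign of integrality — a genuine choice of $p'$ has to be made and then shown to be forced. A secondary subtlety is that to even \emph{define} $\delta$ via Theorem \ref{thm:firstextension} one needs $\alpha$ of order $\geq 3$; if $p,q$ are chosen badly this could fail, but the hypothesis in Theorem \ref{thm:inversemain} that $p_i,q_i$ lie in different $\tau$-orbits (which descends to this case) rules out the degenerate situation, so I would invoke that. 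Finally, once the two composites are shown inner one must remark that $\delta$ is of the asserted form with $v=2$, so $v/w = 2/1 = 2 \in\{\tfrac12,1,2\}$, consistent with the Remark following Theorem \ref{thm:inversemain}. I would also double-check that $\delta$ really is a \emph{quadratic transform} in the sense of Definition \ref{def:quadratictransform}, which is immediate since it is literally one of the building blocks.
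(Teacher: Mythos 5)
Your strategy---define $\delta$ abstractly as the Theorem \ref{thm:firstextension} transform of $A'$ at the point $p'$ with $p+q+\tau p'\sim[\Lscr_0]$, verify the hypotheses, and then ``read off'' the inner elements---has a genuine gap at its centre. Theorem \ref{thm:firstextension} applied to $(A',p')$ produces an inclusion into $A'^{(2)}$ of the quadratic algebra attached to the helix $\Gscr_{2i}\otimes\Gscr_{2i+1}\otimes\Oscr_Y(-\tau^{-i}p')$; to regard this as a map $\delta\colon A\hookrightarrow A'^{(2)}$ you must choose an isomorphism of that abstract algebra with $A$, and nothing in your argument ties this choice to the embedding $\gamma$. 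Innerness of $\gamma\circ\delta$ in the sense of Definition \ref{def:inner} is the assertion that the composite equals conjugation by specific elements $z_m\in\Frac(A)$ satisfying \eqref{eq:definner}; knowing that the degree-one images land in the expected $\Gamma(Y,-)$-subspaces (your ``divisor classes match up'') produces no such $z_m$, and with a badly chosen identification the composite can differ from an inner map by a non-inner automorphism, so the $z_m$ are not ``forced''. This is precisely why the paper does not define $\delta$ through Theorem \ref{thm:firstextension} at all: in \S\ref{sec:quadricinquadratic} it builds the glued $\mathbb{Z}^2$-algebra $\tilde A$, uses the dimension count of Lemma \ref{lem:negativedim} (which needs $p,q$ in distinct $\tau$-orbits and, beyond the first step, the $I$-basis machinery of Appendix \ref{sec:Ibasis}) to obtain canonical one-dimensional spaces spanned by the $\delta_{i,j}$ and $\gamma_{i,j}$, and defines $\delta$ by multiplication with these elements inside $\Frac(A)$ (the identities \eqref{ref-12-14-2}), so that $\gamma\circ\delta$ is inner by construction with $z_i=\gamma_{(2i,0),(0,2i)}\delta_{(0,2i),(i,0)}$; that this conjugation actually maps $A_{i,m}$ into $A'_{2i,2m}$ is the content of \eqref{ref-10-12} and Lemma \ref{lem:negativedim}, i.e.\ exactly the step your proposal treats as automatic. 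Only afterwards does the paper prove, via the map to $\tilde B$, Lemma \ref{lem:followingidentities} and the nowhere-vanishing of the degree-zero sections $\overline{\delta_{i,j}}$, that this concrete $\delta$ is a transform as in Theorem \ref{thm:firstextension} at your point $p'$.

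A second, related gap: you treat $\delta\circ\gamma$ symmetrically as bookkeeping, but it is not even automatic in the paper's setup. Innerness of the second composite is obtained only in \S\ref{sec:invertabilitytransform}, by running the reverse construction of \S\ref{sec:quadraticinquadric} on $\delta$ and checking through the Picard-group computations ($p''=p$, $q''=q$, $[\Lscr_i'']=[\Lscr_i]$) that it returns $\gamma$. Your proposal contains no substitute for either of these steps, so as it stands it shows at best that a $\delta$ of the correct geometric shape exists, not that both composites are inner.
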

\begin{theorem} \label{thm:inversemaincase2} Let $\gamma: A'\hookrightarrow A^{(2)}$ be a quadratic transform as in Theorem \ref{thm:firstextension} and assume that the points $p,q$ used in the construction of $\gamma$ lie in different $\tau$-orbits. Then $\gamma$ is invertible and the inverse can be chosen as $\delta: A \hookrightarrow A'$ as in Theorem \ref{thm:secondextension}.
\end{theorem}


\begin{remark}
\label{rem:approach}
Our approach to proving Theorems \ref{thm:inversemaincase1} and \ref{thm:inversemaincase2} is as follows: we first construct an inclusion of $\mathbb{Z}$-algebras $\delta: A \hookrightarrow A^{'(v)}$ such that the composition $\gamma \circ \delta: A \rightarrow A^{(wv)}$ is inner. Afterwards we will show that $\delta$ is in fact a quadratic transform and that $\delta \circ \gamma$ is inner as well.
\end{remark}


\section{Inverting quadratic transforms between quadratic Sklyanin algebras and cubic Sklyanin $\mathbb{Z}$-algebras}
\label{sec:invertibletransformquadric}
 
In this section we finish the proof of Theorem \ref{thm:inversemain} by proving Theorem \ref{thm:inversemaincase1} and Theorem \ref{thm:inversemaincase2}. The proofs of these theorems are intertwined:

In \S \ref{sec:quadricinquadratic} we prove that if $\gamma: A' \hookrightarrow A$ is as in Theorem \ref{thm:secondextension}, then there is a quadratic transform $\delta: A \hookrightarrow A^{\prime (2)}$ such that $\gamma \circ \delta$ is inner. In \S \ref{sec:quadraticinquadric} we prove that if $\gamma': A' \hookrightarrow A^{(2)}$ is as in Theorem \ref{thm:firstextension}, then there is a quadratic transform $\delta': A \hookrightarrow A'$ such that $\gamma' \circ \delta'$ is inner. Finally in \S \ref{sec:invertabilitytransform} we prove that these constructions are each others inverses. I.e. if we were to construct $\delta$ out of $\gamma$ as in \S \ref{sec:quadricinquadratic}, set $\gamma' = \delta$ and compute $\delta'$ as in \S \ref{sec:quadraticinquadric}, then $\delta' = \gamma$. This allows us to conclude that not only $\gamma \circ \delta$, but also $\delta \circ \gamma$ is inner (as it is equal to $\gamma' \circ \delta'$). The analogous results are true if we were to start from $\gamma'$.

\subsection{The $\mathbb{Z}^2$-algebra associated to a noncommutative $\mathbb{P}^2 \dashrightarrow \mathbb{P}^1 \times \mathbb{P}^1$}
\label{sec:quadricinquadratic}

Throughout this subsection $\gamma: A' \hookrightarrow A$ will be a quadratic transform between a quadratic Sklyanin algebra $A = A(Y,\Lscr,\psi)$ and a cubic Sklyanin $\mathbb{Z}$-algebra $A'$. Recall from \S \ref{subsec:quadricinverse} that the construction of $\gamma$ is based on the choice of two points $p,q \in Y$. We will use the notation from this section, in particular $d_i$ is defined as in $(\ref{definitiondi})$. Moreover we define $\tau = \psi^3$, $\Lscr_i = \sigma^{*i}\Lscr$ and assume $p$ and $q$ lie different $\tau$-orbits.

We ``glue'' the algebras $A$ and $A'$ into a single $\mathbb{Z}^2$-algebra:

\[
\tilde{A}_{(i,j),(m,n)}:=
\begin{cases}
\Hom_X(\Oscr_X(-m-n),\Oscr_X(-i-j)m_{d_j}\ldots m_{d_{n-1}}&\text{if $n> j$}\\
\Hom_X(\Oscr_X(-m-n),\Oscr_X(-i-j))&\text{if $n \leq j$}
\end{cases}
\]

With $X$ and $\Oscr_X(i)$ as in Lemma \ref{lem:standardvanishing}.\\
Note that
\[
\tilde{A}_{(i,0),(m,0)}=A_{i,m}
\]
and
\[ \tilde{A}_{(0,j),(0,n)}  = \begin{cases} \Hom_X(\Oscr_X(-n),\Oscr_X(-j)m_{d_j}\cdots m_{d_{n-1}}) & \textrm{ if $n \geq j$} \\ 0 & \textrm{ if $n < j$} \end{cases} \]
such that
\[ \tilde{A}_{(0,j),(0,n)} = A'_{j,n} \]
In other words $\tilde{A}$ ``contains'' both $A$ and $A'$. 

\begin{remark}
By construction $\tilde{A}_{(i,j),(m,n)} \subset A_{i+j,m+n}$ and as such $\tilde{A}$ contains no nontrivial zero divisors.
\end{remark}

We now give some results on the dimensions of certain $\tilde{A}_{(i,j),(m,n)}$. For this let $h(n)$ be the Hilbert function of $A$ and $h'(n)$ be the Hilbert function of $A'$. I.e.
\begin{eqnarray} \label{eq:Hilbertdefinition}
\forall i \in \mathbb{Z}: \dim_k(A_{i,i+n})=h(n) \textrm{ and } \dim_k(A'_{i,i+n})=h'(n)
\end{eqnarray}

The following easy properties of $\tilde{A}$ are immediate from the definition:
\begin{proposition} Let $\tilde{A}$ be as above then
\begin{enumerate}
\item $\dim_k \tilde{A}_{(i,j),(i,j+b)}  = h'(b)$ holds for all $b,i,j \in \mathbb{Z}$ with $b \geq 0$ 
\item $\dim_k \tilde{A}_{(i,j),(i+a,j+b)}  = h(a+b)$ holds for all $a,b,i,j \in \mathbb{Z}$ with $a \geq 0, b \leq 0$
\end{enumerate}
\end{proposition}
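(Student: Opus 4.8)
The plan is to verify both statements directly from the definition of $\tilde A$, reducing each to a dimension count that has already been carried out in \S\ref{sec:PresVdB} and \S\ref{subsec:quadricinverse}. The key observation is that the two cases correspond to the two ``pure'' directions inside the $\mathbb{Z}^2$-grading: moving only in the second coordinate recovers $A'$, and moving in a direction with nonincreasing second coordinate recovers (a twist of) $A$.

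For (1), fix $i,j$ and $b\geq 0$. By definition $\tilde A_{(i,j),(i,j+b)} = \Hom_X(\Oscr_X(-i-j-b),\Oscr_X(-i-j)\,m_{d_j}\cdots m_{d_{j+b-1}})$ (the case $b=0$ being the empty product, i.e.\ $\Hom_X(\Oscr_X(-i-j),\Oscr_X(-i-j))=k$). But this is precisely $A'_{j,j+b}$ by the computation identifying $\tilde A_{(0,j),(0,n)}$ with $A'_{j,n}$, and by \eqref{eq:Hilbertdefinition} this has dimension $h'(b)$, independent of $j$ (hence of $i$). The only thing to check is that the translation-independence of $\dim_k A'_{j,j+b}$ in $j$ really holds; this is exactly the statement recalled in \S\ref{sec:Z-algebras} that $\dim_k A_{i,i+n}$ does not depend on $i$ for an AS-regular $\mathbb{Z}$-algebra, applied to $A'$ (which is a cubic Sklyanin $\mathbb{Z}$-algebra by \S\ref{subsec:quadricinverse}, Step 6).

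For (2), fix $i,j$ and $a\geq 0$, $b\leq 0$. Since $b\leq 0$ we have $j+b\leq j$, so the second coordinate does not strictly increase and we land in the second branch of the definition: $\tilde A_{(i,j),(i+a,j+b)} = \Hom_X(\Oscr_X(-i-j-a-b),\Oscr_X(-i-j))$. As computed in \S\ref{sec:addendum} just after \eqref{eq:OX2}, $\Hom_X(\Oscr_X(-n),\Oscr_X(-m)) = A_{m,n}$, so this space equals $A_{i+j,\,i+j+a+b}$, whose dimension is $h(a+b)$ by \eqref{eq:Hilbertdefinition} and, again, independent of the base index $i+j$ by translation-independence of the Hilbert function of $A$. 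Note $a+b\geq 0$ is \emph{not} required for the formula to make sense as written (one reads $h$ off the convention that $h(n)=\dim_k A_{i,i+n}$, which is $0$ for $n<0$); the statement as phrased implicitly restricts to the range where $\Oscr_X(-i-j-a-b)$ sits in nonnegative degree relative to $\Oscr_X(-i-j)$, i.e.\ $a+b\geq 0$, and there is nothing further to prove.

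I do not expect a genuine obstacle here: both parts are ``immediate from the definition'' as the statement of the proposition asserts, the only subtlety being to match up the index conventions and to invoke the correct translation-invariance of Hilbert functions (for $A'$ in (1), for $A$ in (2)). The mild care needed is simply to check that one falls into the right branch of the case-split in the definition of $\tilde A$ — which is automatic since $b\geq 0$ forces $n\geq j$ in (1) and $b\leq 0$ forces $n\leq j$ in (2) — and that the empty-product degenerate case $b=0$ in (1) is handled consistently with the second branch.
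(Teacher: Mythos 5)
Part (2) of your argument is fine: for $b\leq 0$ you land in the second branch of the definition, the space is literally $\Hom_X(\Oscr_X(-i-j-a-b),\Oscr_X(-i-j))=A_{i+j,i+j+a+b}$, and the dimension is $h(a+b)$ (zero if $a+b<0$), exactly as the paper intends. The problem is in part (1), in the case $i\neq 0$. Your key assertion that $\tilde{A}_{(i,j),(i,j+b)}$ ``is precisely $A'_{j,j+b}$'' is false: the identification $\tilde{A}_{(0,j),(0,n)}=A'_{j,n}$ holds only for $i=0$, while for general $i$ the space is $\Hom_X(\Oscr_X(-i-j-b),\Oscr_X(-i-j)\otimes m_{d_j}\cdots m_{d_{j+b-1}})$, a subspace of $A_{i+j,i+j+b}$ rather than of $A_{j,j+b}$; moreover twisting does not simply commute with the bimodules $m_{d}$ (one has $\left(\Oscr_X(n)\otimes m_p\right)(m)=\left(\Oscr_X\otimes m_{\psi^{-n}p}\right)(n+m)$, so normalizing the twist translates the points). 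Consequently your parenthetical ``independent of $j$ (hence of $i$)'' is a non sequitur: independence in $j$ of $\dim A'_{j,j+b}$ says nothing about the $i$-dependence of a space that is not $A'_{j,j+b}$ to begin with.

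The gap is easily closed, but it does require an argument rather than a quotation. Either redo the dimension count of Step 6) of \S\ref{subsec:quadricinverse} for the twisted source and target: Lemma \ref{cor:standardvanishing} is stated for arbitrary twist indices, so applying $\Hom_X(\Oscr_X(-i-j-b),-)$ to the inclusion $\Oscr_X(-i-j)\otimes m_{d_j}\cdots m_{d_{j+b-1}}\hookrightarrow\Oscr_X(-i-j)$ and using the colength formula \eqref{eq:colength} (which depends only on $b$, not on $i,j$) gives $\dim_k\tilde{A}_{(i,j),(i,j+b)}=h(b)-\operatorname{colength}=h'(b)$ for every $i$. Alternatively, use the twist identity above to identify $\tilde{A}_{(i,j),(i,j+b)}$ with the degree $(j,j+b)$ component of the cubic $\mathbb{Z}$-algebra built as in \S\ref{subsec:quadricinverse} from the translated points $\psi^{i}p,\psi^{i}q$; that algebra has the same Hilbert function $h'$, which again yields the claim. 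With either repair your proof agrees with what the paper treats as immediate from the definition.
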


More interesting is the following:

\begin{lemma} \label{lem:negativedim}
Let $a \leq 0$, then:
\begin{eqnarray}
\label{eq:negativedimension}
\dim_k \tilde{A}_{(i,j),(i+a,j+b)}=\dim_k \tilde{A}_{(i,j),(i,j+b+2\cdot a)}= h'(b+2 \cdot a)
\end{eqnarray}
\end{lemma}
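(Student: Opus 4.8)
The statement compares two spaces of homomorphisms and asserts they have the same dimension, namely $h'(b+2a)$. The key observation is that $\tilde A$ interpolates between $A$ and $A'$, and the ``direction'' in which we move matters: moving in the $(a,0)$-direction with $a\le 0$ costs the same as moving $2a$ in the $(0,b)$-direction. Since $A'$ is 2-periodic in the relevant sense (it is a cubic Sklyanin $\mathbb{Z}$-algebra obtained from the helix $(\Gscr_i)$ with $\alpha^*\Gscr_i\cong\Gscr_{i+2}$, cf.\ \S\ref{subsec:quadricinverse}), the value $h'(b+2a)$ depends only on the combined shift. The plan is therefore to reduce to an explicit computation of $\dim_k\Hom_X(\Oscr_X(-m-n),\Oscr_X(-i-j)\otimes m_{d_j}\cdots m_{d_{n-1}})$ using the vanishing of higher $\Ext$'s and a colength count, exactly as in Step~6 of \S\ref{subsec:quadricinverse}.

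\textbf{Key steps.} First I would reduce to the case $b=0$ (or some convenient normalization) using that $\tilde A_{(i,j),(i+a,j+b)}\subset A_{i+j,i+j+a+b}$ together with the first two items of the preceding Proposition; more precisely, one uses the exact sequences $0\to m_{d_j}\cdots m_{d_{n-1}}\to o_X\to (\text{colength piece})\to 0$ and the vanishing statements in Lemma~\ref{cor:standardvanishing} to express $\dim_k\tilde A_{(i,j),(m,n)}$ as $h(a+b)$ minus the colength of $m_{d_j}\cdots m_{d_{n-1}}$ inside $o_X$, provided the relevant $\Ext^1$ vanishes. Here $n = j+b+a$ and $m = i+a$ (wait: with $a\le 0$, one must be careful about whether $n>j$). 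The heart is: when $a\le 0$ we have $n-j = b+a$, so we are looking at the colength of $m_{d_j}\cdots m_{d_{j+b+a-1}}$ (a product of $b+a$ factors, when $b+a\ge 0$) inside $o_X$, and by the colength formula \eqref{eq:colength} this colength, together with $h(a+b)$, reassembles into exactly $h'(b+2a)$ — this is precisely the Hilbert-series bookkeeping that was already carried out to prove $A'$ is cubic AS-regular. Second, for the other equality $\dim_k\tilde A_{(i,j),(i+a,j+b)}=\dim_k\tilde A_{(i,j),(i,j+b+2a)}$, I would simply note that the right-hand side is $\dim_k A'_{j,j+b+2a}=h'(b+2a)$ by the case $a=0$ already recorded, so once the left-hand side is shown to equal $h'(b+2a)$ both equalities follow simultaneously. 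Throughout, one must check that the $\Ext^1$-vanishing hypothesis $i-j\le m-n+1$ of Lemma~\ref{cor:standardvanishing} is met in the relevant range; since here the first index minus the shift works out to exactly the needed inequality when $a\le 0$, this should go through.

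\textbf{Main obstacle.} The delicate point is the bookkeeping when $b+a<0$, i.e.\ when the product $m_{d_j}\cdots m_{d_{n-1}}$ is actually empty (the $n\le j$ branch of the definition of $\tilde A$) or when $b<0$ so that we are genuinely in the ``$A$-like'' regime rather than the ``$A'$-like'' one; one needs to verify that the formula $h'(b+2a)$ still holds and agrees with $h(a+b)$ in the overlap, and that the colength formula \eqref{eq:colength} is being applied to a genuine nonnegative number of factors. I expect the argument splits into the parity cases $b+a$ even/odd exactly as in \eqref{eq:colength}, and the routine-but-careful part is matching $h(a+b)-(\text{colength})$ with $h'(b+2a)$ in each case. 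Verifying that the higher $\Ext$ vanishing (Lemma~\ref{cor:standardvanishing}) applies — i.e.\ that the index inequalities hold precisely because $a\le 0$ — is the one place where the hypothesis on the sign of $a$ is used, and getting the inequality direction right there is the main thing to be careful about.
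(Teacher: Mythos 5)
Your outline reproduces the paper's argument only in the range $a\in\{0,-1\}$, and the point you yourself flag as ``the main thing to be careful about'' is exactly where it breaks. Writing the space as $\Hom_X\bigl(\Oscr_X(-i-j-a-b),\Oscr_X(-i-j)\otimes m_{d_j}\cdots m_{d_{j+b-1}}\bigr)$ and invoking Lemma \ref{cor:standardvanishing}, the $\Ext^1$-vanishing hypothesis ``$i-j\le m-n+1$'' translates to $(-i-j-a-b)-(-i-j)\le j-(j+b)+1$, i.e. $-a-b\le 1-b$, i.e. $a\ge -1$ --- not $a\le 0$. So the colength-plus-vanishing bookkeeping of Step 6 of \S\ref{subsec:quadricinverse} establishes the first equality only for $a=0$ and $a=-1$; for $a\le -2$ the required $\Ext^1$-vanishing is simply not available from Lemma \ref{cor:standardvanishing}, and this is precisely what the paper's proof of Lemma \ref{lem:negativedim} says before deferring that case. (Your treatment of the second equality, and of the degenerate range $b+2a<0$, is fine.)

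For $a\le -2$ the paper uses an entirely different mechanism, worked out in Appendix \ref{sec:Ibasis}: the $I$-basis of Tate--Van den Bergh for the quadratic Sklyanin algebra, with base points chosen adapted to $p,q$ and an auxiliary point $r$ in a third $\tau$-orbit. One shows (Proposition \ref{prp:updownimplication}, via Lemmas \ref{lem:Ibasis1} and \ref{lem:Ibasis2}) that the basis monomials $v(x^\alpha y^\beta z^\lambda)$ lying in $\Hom\bigl(\Oscr_X,(\Oscr_X\otimes m_{d_0}\cdots m_{d_{b-1}})(a+b)\bigr)$ are exactly those whose exponents satisfy two explicit linear inequalities involving $\lceil b/2\rceil$ and $\lfloor b/2\rfloor$, and then one counts these monomials combinatorially to obtain $h'(2a+b)$. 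None of this is present in, or recoverable from, your proposal, so as written it proves the lemma only for $a=-1$ (admittedly the only case used later in the paper) and leaves a genuine gap for $a\le -2$.
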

\begin{proof}

The case $a=-1$ can be done analogously to \S \ref{subsec:quadricinverse} Step 5) and 6). For $a \leq -2$ we can no longer use Lemma \ref{cor:standardvanishing} and the proof is based on the existence of an ``$I$-basis'' (see \cite{TateVdB} for the definition and construction of an $I$-basis). As we will only use the case $a=-1$, we refer the interested reader to Appendix \ref{sec:Ibasis} for the details of the proof for $a \leq -2$.
\end{proof}

As a result of Lemma \ref{lem:negativedim} both $\tilde{A}_{(i,j),(i-1,j+2)}$ and $\tilde{A}_{(i,j),(i+1,j-1)}$  are one dimensional. Let $\delta_{i,j}$ and $\gamma_{i,j}$ be nonzero elements in these spaces. We can then visualise $\tilde{A}$ on a 2-dimensional square grid.

\begin{center}
\begin{tikzpicture}
\matrix(m)[matrix of math nodes,
row sep=4em, column sep=4em,
text height=1.5ex, text depth=0.25ex]
{(3,0) & (3,1) & (3,2) & (3,3) \\
(2,0) & (2,1) & (2,2) & (2,3) \\
(1,0) & (1,1) & (1,2) & (1,3) \\
(0,0) & (0,1) & (0,2) & (0,3) \\ };
\path[->,font=\scriptsize]
(m-1-1) edge  (m-1-2)
        edge[dotted] node[left]{$\delta_{3,0}$} (m-2-3)
(m-1-2) edge  (m-1-3)
        edge[dotted] node[left]{$\delta_{3,1}$}(m-2-4)
(m-1-3) edge  (m-1-4)
(m-2-1) edge  (m-2-2)
        edge[dotted] node[left]{$\delta_{2,0}$} (m-3-3)
        edge   (m-1-1)
(m-2-2) edge  (m-2-3)
        edge[dotted] node[left]{$\delta_{2,1}$} (m-3-4)
        edge[dotted] node[below]{$\gamma_{2,1}$} (m-1-1)
        edge   (m-1-2)
(m-2-3) edge  (m-2-4)
        edge   (m-1-3)
        edge[dotted] node[above]{$\gamma_{2,2}$} (m-1-2)
(m-2-4) edge   (m-1-4)
        edge[dotted] node[above]{$\gamma_{2,3}$} (m-1-3)
(m-3-1) edge  (m-3-2)
        edge[dotted] node[left]{$\delta_{1,0}$} (m-4-3)
        edge   (m-2-1)
(m-3-2) edge  (m-3-3)
        edge[dotted] node[left]{$\delta_{1,1}$} (m-4-4)
        edge   (m-2-2)
        edge[dotted] node[below]{$\gamma_{1,1}$} (m-2-1)
(m-3-3) edge  (m-3-4)
        edge   (m-2-3)
        edge[dotted] node[below]{$\gamma_{1,2}$}(m-2-2)    
(m-3-4) edge   (m-2-4)
        edge[dotted] node[above]{$\gamma_{1,3}$}(m-2-3)
(m-4-1) edge  (m-4-2)
        edge   (m-3-1)
(m-4-2) edge  (m-4-3)
        edge   (m-3-2)
        edge[dotted] node[below]{$\gamma_{0,1}$}(m-3-1)
(m-4-3) edge  (m-4-4)
        edge   (m-3-3)
        edge[dotted] node[below]{$\gamma_{0,2}$}(m-3-2)    
(m-4-4) edge   (m-3-4)
        edge[dotted] node[above]{$\gamma_{0,3}$}(m-3-3)   
;
\end{tikzpicture}
\end{center}

The vertical arrows represent three dimensional vector spaces, whereas the horizontal arrows represent two dimensional vector spaces and the dotted arrows represent one dimensional vector spaces (labeled by $\gamma_{i,j}$ and $\delta_{i,j}$).\\

Now consider the following diagram

\begin{center}
\begin{tikzpicture}
\matrix(m)[matrix of math nodes,
row sep=4em, column sep=4em,
text height=1.5ex, text depth=0.25ex]
{(i,j) &  & (i,j+b) & \\
& (i-1,j+2) &  & (i-1,j+b+2) \\ };
\path[->,font=\scriptsize]
(m-1-1) edge  (m-1-3)
        edge (m-2-4)
        edge[dotted] node[left]{$\delta_{i,j}$} (m-2-2)
(m-1-3) edge[dotted] node[right]{$\delta_{i,j+b}$}(m-2-4)
(m-2-2) edge  (m-2-4) 
;
\end{tikzpicture}
\end{center}

From Lemma \ref{lem:negativedim} we conclude that the vector spaces on the solid arrows all have the same dimension. 
Hence since $A$ is a domain we have an isomorphism of vector spaces:
\begin{equation}
\label{ref-9-11}
\delta_{i,j}^{-1}\cdot \delta_{i,j+b}: \tilde{A}_{(i,j),(i,j+b)}\rightarrow\tilde{A}_{(i-1,j+2),(i-1,j+b+2)}
\end{equation}
Whenever $2i+j = 2m + n$ and $i \geq m$ we write
\begin{eqnarray}
\label{eq:deltaijmn1}
\delta_{(i,j),(m,n)}=\delta_{i,j}\delta_{i-1,j+2}\ldots \delta_{m+1,n-2}\in \tilde{A}_{(i,j),(m,n)} 
\end{eqnarray}
when $i < m$ we define
\begin{eqnarray}
\label{eq:deltaijmn2}
\delta_{(i,j),(m,n)}:=\delta^{-1}_{(m,n),(i,j)}\in \Frac(A)_{i+2j,m+2n}
\end{eqnarray}

In particular we always have
\[
\delta_{(i,j),(k,l)} \delta_{(k,l),(m,n)}=\delta_{(i,j),(m,n)}
\]
From \eqref{ref-9-11} we obtain an isomorphism
\[
\delta_{(0,2i+j),(i,j)}\cdot \delta_{(i,j+b),(0,2i+j+b)}:\tilde{A}_{(i,j),(i,j+b)}\rightarrow\tilde{A}_{(0,2i+j),(0,2i+j+b)}
\]
Now note that there is always an inclusion
\begin{equation}
\label{ref-10-12}
\cdot \delta_{(m,n),(i,n+2m-2i)}:\tilde{A}_{(i,j),(m,n)}\rightarrow\tilde{A}_{(i,j),(i,n+2m-2i)}
\end{equation}
If $m\geq i$ this follows from the fact that $A$ is a domain.
Lemma \ref{lem:negativedim} tells us the map is also well defined if $m<i$, in which case case \eqref{ref-10-12} even is an isomorphism.

Summarizing we obtain an inclusion
\[
\delta_{(0,2i+j),(i,j)}\cdot \delta_{(m,n),(0,2m+n)}:\tilde{A}_{(i,j),(m,n)}\rightarrow \tilde{A}_{(0,2i+j),(0,2m+n)}
\]
And hence an inclusion
\[
\delta_{(0,2i),(i,0)}\cdot \delta_{(m,0),(0,2m)}:A_{i,m}=\tilde{A}_{(i,0),(m,0)}\rightarrow \tilde{A}_{(0,2i),(0,2m)}=A'_{2i,2m}
\]
One easily checks that these inclusions are compatible with multiplication on $A$ and $A'$ such that we get an inclusion of algebras
\begin{eqnarray*}
\label{eq:theinverseinclusion}
\delta:A\hookrightarrow A^{\prime (2)}
\end{eqnarray*}

Our goal is to show that $\delta$ is in fact a quadratic transform as in Definition \ref{def:quadratictransform}. Moreover we want $\gamma \circ \delta$ to be inner (as stated in the introduction of this section, the proof to show that $\delta \circ \gamma$ is inner is postponed to \S \ref{sec:invertabilitytransform}). We first prove the latter:\\

Let $\gamma_{(i,j),(m,n)}$ be defined like $\delta_{(i,j),(m,n)}$ but using
$\gamma_{i,j}$ instead of $\delta_{i,j}$ (and $i+j=m+n$ in stead of $2i+j=2m+n$; recall $\gamma_{i,j}$ lies in the 1-dimensional space $\tilde{A}_{(i,j),(i+1,j-1)}$).\\  It is easy to see that the quadratic
transform $\gamma: A'\rightarrow A$ we started with is given by
\[
\gamma_{(j,0),(0,j)}\cdot \gamma_{(0,n),(n,0)}:A'_{j,n}=\tilde{A}_{(0,j),(0,n)}\rightarrow \tilde{A}_{(2j,0),(2n,0)}=A_{2j,2n}
\]
So the composition is given by
\begin{eqnarray}
\label{eq:innerquadratic1}
\gamma_{(2i,0),(0,2i)}\delta_{(0,2i),(i,0)}\cdot \delta_{(m,0),(0,2m)}\gamma_{(0,2m),(2m,0)}:A_{i,m}\rightarrow A_{2i,2m}
\end{eqnarray}
This composition is inner with
\[
z_i=\gamma_{(2i,0),(0,2i)}\delta_{(0,2i),(i,0)} \in \Frac(A)_{2i,i}
\]
One easily checks that the elements $z_i$ indeed satisfy the conditions in \eqref{eq:definner}.\\

We now prove that $\delta$ is a quadratic transform. For this we need to show the existence of a point $p' \in Y$ such that 

\begin{eqnarray}
\notag \delta(A_{i,i+1}) & = & \Gamma(Y, \Gscr_{2i} \Gscr_{2i+1} (-\tau^{-i} p') \\
\label{eq:existencep'} & = & \Gamma(Y,\Lscr_{2i}\Lscr_{2i+1}(-d_{2i}-d_{2i+1}-\tau^{-i}p'))
\end{eqnarray}

We first define $\tilde{B}$ like $\tilde{A}$ but starting from $B$ instead of from $A$. We find (using \eqref{eq:OX2} and \eqref{eq:identifyOY})
\[
\tilde{B}_{(i,j),(m,n)}:=
\begin{cases}
\Gamma(Y,\Lscr_{i+j} \Lscr_{i+j+1} \ldots \Lscr_{m+n}(-d_j - \ldots - d_{n-1} ))&\text{if $n> j$}\\
\Gamma(Y,\Lscr_{i+j} \Lscr_{i+j+1} \ldots \Lscr_{m+n})&\text{if $n\le j$}
\end{cases}
\]
Similar to \cite[Lemma 6.7]{PresVdB} one can show
\begin{lemma} The canonical map
\[
\tilde{A}_{(i,j),(m,n)}\rightarrow\tilde{B}_{(i,j),(m,n)}
\]
is an epimorphism in the first quadrant (i.e.\ $m\geq i$, $n \geq j$).
\end{lemma}

Recall that by the definition of $\delta$ we have for each $x\in A_{i,i+1}$
\begin{equation}
\label{ref-12-14-2}
\delta_{i,0}\delta_{i-1,2}\cdots \delta_{1,2i-2}\delta(x)=x \delta_{i+1,0} \delta_{i,2}\cdots \delta_{1,2i}
\end{equation}
when $i \geq 0$ and
\begin{equation}
\label{ref-12-14-2'}
\delta(x) \delta_{0,2i+2}\delta_{-1,2i+4}\ldots \delta_{i+2,-2}=\delta_{0,2i} \delta_{-1,2i+2} \ldots \delta_{i+1,-2} x 
\end{equation}
when $i < 0$. Hence in order to prove the existence of $p'$ in \eqref{eq:existencep'} we have to understand (the product of) the image(s) $\overline{\delta_{i,j}}$ of $\delta_{i,j}$ in 
\begin{equation}
\label{ref-13-15}
\tilde{B}_{(i,j),(i-1,j+2)}=\Gamma(Y,\Lscr_{i+j}(-d_j-d_{j+1}))
\end{equation}
As $\Lscr_{i+j}(-d_j-d_{j+1})$ has degree 1 on $Y$ we can choose a point $p'_{i,j}$ defined by
\begin{eqnarray} \label{eq:defpij} d_j + d_{j+1} + p'_{i,j} \sim [ \Lscr_{i+j} ] \end{eqnarray}
such that
\begin{equation} \label{eq:Bpij} \tilde{B}_{(i,j),(i-1,j+2)}=\Gamma(Y,\Lscr_{i+j}(-d_j-d_{j+1})) = \Gamma(Y,\Lscr_{i+j}(-d_j-d_{j+1}-p'_{i,j}))
\end{equation}

\begin{lemma} 
\label{lem:followingidentities}
Define $p'$ by the following identitiy
\begin{align*}
p+q+\tau p'&\sim [\Lscr_0]
\end{align*}
Then $p'_{i,j}=\tau^{-i+1}p'$.
\end{lemma}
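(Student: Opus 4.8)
The plan is to pin down $p'_{i,j}$ from the defining relation \eqref{eq:defpij}, namely $d_j + d_{j+1} + p'_{i,j} \sim [\Lscr_{i+j}]$, and then compare it with the analogous relation for a shifted index. First I would substitute the explicit description of the $\Lscr_i$ and $d_i$. Since $A = A(Y,\Lscr,\psi)$ is a quadratic Sklyanin algebra and we have set $\Lscr_i = \sigma^{*i}\Lscr$ (so that in the Picard group $[\Lscr_{i+j}]$ differs from $[\Lscr_0]$ by the appropriate translation), and since $d_i = \tau^{-k}p$ or $\tau^{-k}q$ according to the parity of $i$ (with $\tau = \psi^3 = \alpha^2$), the classes $d_j + d_{j+1}$ can be rewritten uniformly. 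The key observation is that $d_j + d_{j+1}$ is always (the pullback under a power of $\tau$ of) $p+q$: concretely, if $j = 2k$ then $d_j + d_{j+1} = \tau^{-k}p + \tau^{-k}q = \tau^{-k}(p+q)$, and if $j = 2k+1$ then $d_j + d_{j+1} = \tau^{-k}q + \tau^{-k-1}p$, which in $\mathrm{Pic}(Y)$ equals $\tau^{-k}(p+q)$ up to the translation discrepancy — but on a genus-one curve translations act trivially on divisor classes of a fixed degree only up to the group law, so I must be careful and track the actual linear equivalence, not just the degree.

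The cleaner route, which I would take, is to avoid splitting into parities and instead argue purely from the recursive/translation structure. From \eqref{eq:defpij} for the pair $(i,j)$ and for the pair $(i-1,j+2)$ we get
\begin{align*}
d_j + d_{j+1} + p'_{i,j} &\sim [\Lscr_{i+j}], \\
d_{j+2} + d_{j+3} + p'_{i-1,j+2} &\sim [\Lscr_{i+j+1}].
\end{align*}
Now $d_{j+2} = \tau^{-1}d_j$ and $d_{j+3} = \tau^{-1}d_{j+1}$ directly from \eqref{definitiondi}, and $[\Lscr_{i+j+1}] = \sigma^*[\Lscr_{i+j}]$; applying $\tau^{-1} = \sigma^{*(-?)}$... more precisely, pulling the first relation back along $\tau^{-1}$ (which acts on $\mathrm{Pic}(Y)$) and using that $[\Lscr_{i+j}]$ transforms appropriately, one obtains a relation between $p'_{i-1,j+2}$ and $\tau^{-1}p'_{i,j}$. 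The upshot I expect is $p'_{i-1,j+2} \sim \tau^{-1}p'_{i,j}$ as points (hence equal, since both are points), which combined with the base case $p'_{1,0}$ gives $p'_{i,j} = \tau^{-(i-1)}p'_{1,j'}$ for the appropriate $j'$; then the independence of the second index must also be checked, reducing everything to a single point $p'$.

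For the base case I would specialize to $(i,j) = (1,0)$: then \eqref{eq:defpij} reads $d_0 + d_1 + p'_{1,0} \sim [\Lscr_1]$, i.e. $p + q + p'_{1,0} \sim [\Lscr_1]$. Comparing with the definition of $p'$ in the statement, $p + q + \tau p' \sim [\Lscr_0]$, and using $[\Lscr_1] = \psi^*[\Lscr_0]$ together with $\tau = \psi^3$, a short manipulation in $\mathrm{Pic}(Y)$ should identify $p'_{1,0}$ with $\tau^{-1+1}p' = p'$... wait — I must double-check the index bookkeeping so that the formula $p'_{i,j} = \tau^{-i+1}p'$ comes out with exactly the claimed exponent; this is where the one genuinely error-prone computation lives, namely getting the powers of $\psi$ versus $\tau = \psi^3$ and the shift between $[\Lscr_0]$ and $[\Lscr_1]$ to line up. I would also need to verify that $p'_{i,j}$ genuinely does not depend on $j$ (only on $i$), which follows from the same pullback argument applied in the other lattice direction, or more simply by noting that \eqref{eq:Bpij} determines $p'_{i,j}$ only through the class $[\Lscr_{i+j}(-d_j - d_{j+1})]$ and checking that this class is independent of $j$ for fixed $i$ — again a genus-one Picard-group computation.

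The main obstacle will be the careful translation bookkeeping on the elliptic curve: on $Y$ a translation $\tau$ acts on divisor classes, and $\tau^* \Oscr_Y(-x) = \Oscr_Y(-\tau^{-1}x)$ as a \emph{point} but the induced action on the full Picard group shifts classes by the relevant amount, so I must consistently use $[\Lscr_{i+j}] = \sigma^{*(i+j)}[\Lscr]$ and keep track of whether I am pulling back line bundles or moving points. Everything else — the recursion $p'_{i-1,j+2} = \tau^{-1} p'_{i,j}$, the base case, and the $j$-independence — is then a finite, if fiddly, sequence of identities in $\mathrm{Pic}(Y)$, and the conclusion $p'_{i,j} = \tau^{-i+1}p'$ drops out by induction on $i$.
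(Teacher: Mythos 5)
Your overall strategy is the same one the paper uses (a computation in $\Pic(Y)$, exploiting that a translation shifts the class of a degree-$d$ divisor by $d$ times a fixed degree-zero class), but as written your argument is not carried out and the one concrete intermediate step you do assert is wrong. From \eqref{eq:defpij} for $(i,j)$ and $(i-1,j+2)$, using $d_{j+2}=\tau^{-1}d_j$, $d_{j+3}=\tau^{-1}d_{j+1}$ and $[\Lscr_{i+j+1}]=[\Lscr_{i+j}]+3[\Nscr]$ (where $\Nscr$ is the degree-zero bundle with $[\psi^*\Mscr]=[\Mscr]+\deg(\Mscr)[\Nscr]$, so $\tau^{-1}x\sim x+3[\Nscr]$ for a point $x$), one finds $p'_{i-1,j+2}=\tau\, p'_{i,j}$, \emph{not} $\tau^{-1}p'_{i,j}$ as you predict: translating the whole degree-$3$ relation for $(i,j)$ by $\tau^{-1}$ adds $9[\Nscr]$ to the right-hand side, while the relation for $(i-1,j+2)$ only has $3[\Nscr]$ more, leaving a discrepancy of $6[\Nscr]$, i.e.\ one application of $\tau$ in the other direction. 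Indeed your own induction output $p'_{i,j}=\tau^{-(i-1)}p'_{1,j'}$ (which is the correct formula) is incompatible with the recursion $p'_{i-1,j+2}\sim\tau^{-1}p'_{i,j}$ you state; the two cannot both hold, which shows the sign bookkeeping you yourself flag as the error-prone point was never actually resolved. Likewise the base case ("a short manipulation \dots should identify") and the $j$-independence are announced but not verified, so the proof has a genuine gap rather than just stylistic looseness.

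For comparison, the paper avoids any recursion: it invokes \cite[Theorem 4.2.3]{VdB38} to get $\Nscr$ with $[\Lscr_n]=[\Lscr_0]+3n[\Nscr]$ and $\tau^{-i}x\sim x+3i[\Nscr]$, and then simply solves \eqref{eq:defpij} directly. For either parity of $j$ (using \eqref{definitiondi}) the contributions of $d_j+d_{j+1}$ cancel the $j$-dependence, giving
\[
p'_{i,j}\;\sim\;[\Lscr_{i+j}]-d_j-d_{j+1}\;\sim\;\bigl([\Lscr_0]-p-q\bigr)+3i[\Nscr]\;\sim\;\tau p'+3i[\Nscr]\;\sim\;p'+3(i-1)[\Nscr]\;\sim\;\tau^{-i+1}p',
\]
and since linearly equivalent points on the elliptic curve coincide, $p'_{i,j}=\tau^{-i+1}p'$. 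This single computation settles the base case, the recursion and the $j$-independence all at once; if you want to keep your inductive formulation, you must redo it with the corrected recursion $p'_{i-1,j+2}=\tau p'_{i,j}$ and actually perform the $\Pic(Y)$ manipulations you deferred.
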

\begin{proof}
As $\psi$ is a translation such that $\Lscr_{i+1} \cong \psi^* \Lscr_i$, there is an invertible sheaf $\Nscr$ of degree zero (see for example \cite[Theorem 4.2.3]{VdB38}) such that
\[ [\Lscr_n] = [\Lscr_0] + 3n [\Nscr] \]
and 
\[ \tau^{-i} p \sim p + 3i [\Nscr] \]
As $p'_{i,j}$ is uniquely defined by $(\ref{eq:defpij})$ this proves the lemma.
\end{proof}

In particular if $p'$ is as in the above lemma, then \eqref{eq:Bpij} gives rise to
\begin{equation} \label{eq:Bpij2} \tilde{B}_{(i,j),(i-1,j+2)}=\Gamma(Y,\Lscr_{i+j}(-d_j-d_{j+1})) = \Gamma(Y,\Lscr_{i+j}(-d_j-d_{j+1}-\tau^{-i+1}p'))
\end{equation}



In particular  $\overline{\delta_{i,j}}$ is a non-zero section of $\Lscr_{i+j}(-d_j-d_{j+1}-\tau^{-i+1}p')$. As the latter has degree zero on $Y$ $\overline{\delta_{i,j}}$ is everywhere non-zero on $Y$.

In particular, going back to \eqref{ref-12-14-2} (and hence assuming $i \geq 0$, the case $i < 0$ being completely similar) we see that 
\[ \overline{\delta}_{i,0}\overline{\delta}_{i-1,2}\ldots \overline{\delta}_{1,2i-2} \]
is an everywhere non-zero section of
\[
\Lscr_{i} \ldots \Lscr_{2i-1}(-d_0-d_1-\tau^{-i+1}p'-d_2-d_3-\tau^{-i+2}p'-\ldots-d_{2i-2}-d_{2i-1}-p')
\]
Likewise 
\[
\overline{x} \ \overline{\delta}_{i+1,0}\overline{\delta}_{i,2}\ldots \overline{\delta}_{1,2i}
\]
is a section of
\[
\Lscr_{i} \ldots \Lscr_{2i+1}(-d_0-d_1-\tau^{-i}p'-d_2-d_3 -\tau^{-i+1}p'-\ldots
-d_{2i}-d_{2i+1} -p')
\]

so that $\overline{\delta(x)}$ is a section of $\Lscr_{2i}\Lscr_{2i+1}(-d_{2i}-d_{2i+1}-\tau^{-i}p')$. 
This is precisely what we had to show according to \eqref{eq:existencep'}.

\subsection{The $\mathbb{Z}^2$ algebra associated to a noncommutative  $\mathbb{P}^1 \times \mathbb{P}^1 \dashrightarrow \mathbb{P}^2$ }
\label{sec:quadraticinquadric}

Throughout this subsection $\gamma: A' \hookrightarrow A^{(2)}$ will be a quadratic transform between a cubic Sklyanin $\mathbb{Z}$-algebra $A = A(Y,(\Lscr_i)_{i \in \mathbb{Z}})$ and a quadratic Sklyanin algebra $A'$. Recall from \S \ref{subsec:quadric} that the construction of $\gamma$ is based on the choice of a points $p \in Y$. We will use the notation from this section, in particular $\tau = \alpha^2$. \\

We define the $\mathbb{Z}^2$-algebra $\tilde{A}$ as follows:

\[
\tilde{A}_{(i,j),(m,n)}:=
\begin{cases}
\Hom_X(\Oscr_X(-m-2n),\Oscr_X(-i-2j)m_{\tau^{-j}p}\ldots m_{\tau^{-n+1}p}&\text{if $n> j$}\\
\Hom_X(\Oscr_X(-m-2n),\Oscr_X(-i-2j))&\text{if $n \leq j$}
\end{cases}
\]

As in the previous section the following easy properties of $\tilde{A}$ are immediate from the definition.:
\begin{proposition} \label{prp:Atildequadric} Let $\tilde{A}$ be as above then
\begin{enumerate}
\item $\tilde{A}_{(i,0),(m,0)}=A_{i,m} $
\item $\tilde{A}_{(0,j),(0,n)} = A'_{j,n}$
\item $\tilde{A}$ contains no nontrivial zero divisors
\item $\dim_k \tilde{A}_{(i,j),(i,j+b)}  = h'(b)$ holds for all $b,i,j \in \mathbb{Z}$ with $b \geq 0$ and $h'$ the Hilbert series of $A'$
\item $\dim_k \tilde{A}_{(i,j),(i+a,j+b)}  = h(a+2b)$ holds for all $a,b,i,j \in \mathbb{Z}$ with $a \geq 0, b \leq 0$ and $h$ the Hilbert series of $A$
\end{enumerate}
\end{proposition}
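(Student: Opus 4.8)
The plan is to treat the five assertions separately: each is a direct unwinding of the definition of $\tilde A$ together with facts already recorded above, and I would carry them out in the order (1), (5), (2), (3), (4); only the last requires anything beyond bookkeeping.

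For (1) and (5) the relevant bidegrees land in the branch $n\le j$ of the definition of $\tilde A$. In (1), with the bidegrees $(i,0)$ and $(m,0)$, this branch reads $\tilde A_{(i,0),(m,0)}=\Hom_X(\Oscr_X(-m),\Oscr_X(-i))$, which is $A_{i,m}$ by the identification $\Hom_X(\Oscr_X(-j),\Oscr_X(-i))=A_{i,j}$ established just before Lemma \ref{lem:standardvanishing}. In (5), since $b\le 0$ we have $j+b\le j$, so $\tilde A_{(i,j),(i+a,j+b)}=\Hom_X(\Oscr_X(-(i+a)-2(j+b)),\Oscr_X(-i-2j))=A_{i+2j,\,(i+2j)+(a+2b)}$, whose dimension is $h(a+2b)$ by the definition of the Hilbert function $h$ of $A$; the convention $h(\ell)=0$ for $\ell<0$ covers the subcase $a+2b<0$.

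For (2) I would match the definition of $\tilde A$ against the construction of $A'=D$ in \S\ref{subsec:quadric}, where $A'_{j,n}=\Hom_X(\Oscr_X(-2n),\Oscr_X(-2j)\otimes_{o_X}m_{\tau^{-j}p}\cdots m_{\tau^{-n+1}p})$ for $n\ge j$ and $A'_{j,n}=0$ for $n<j$: putting $i=m=0$ in the definition of $\tilde A$ reproduces this verbatim when $n>j$, gives $\Hom_X(\Oscr_X(-2j),\Oscr_X(-2j))=A_{2j,2j}=k$ when $n=j$, and gives $\Hom_X(\Oscr_X(-2n),\Oscr_X(-2j))=A_{2j,2n}=0$ when $n<j$. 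For (3), the inclusions $m_{\tau^{-k}p}\hookrightarrow o_X$ multiply to an inclusion $m_{\tau^{-j}p}\cdots m_{\tau^{-n+1}p}\hookrightarrow o_X$; tensoring with the invertible $o_X$-bimodule $\Oscr_X(-i-2j)$ and applying $\Hom_X(\Oscr_X(-m-2n),-)$ yields, exactly as for $D\hookrightarrow\check A^{(2)}$ via \cite[Lemma 8.2.1]{VdB19}, an injection $\tilde A_{(i,j),(m,n)}\hookrightarrow A_{i+2j,\,m+2n}$ compatible with multiplication. As $A$ is a $\mathbb{Z}$-domain (Theorem \ref{thm:quadricdomain}), the product of two nonzero homogeneous elements of $\tilde A$ has nonzero image in $A$ and is therefore nonzero, which is (3).

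The remaining assertion (4) is the one where a computation is needed, and I expect it to be the only real content of the proof. For $b=0$ the space is $\Hom_X(\Oscr_X(-i-2j),\Oscr_X(-i-2j))=k$, of dimension $1=h'(0)$. For $b\ge 1$ I would repeat the Hilbert-series computation of \S\ref{subsec:quadric} word for word: apply $\Hom_X(\Oscr_X(-i-2(j+b)),\Oscr_X(-i-2j)\otimes_{o_X}-)$ to the short exact sequence $0\to m_{\tau^{-j}p}\cdots m_{\tau^{-(j+b)+1}p}\to o_X\to\Tau\to 0$, with $\Tau$ supported on the points $\tau^{-j}p,\ldots,\tau^{-(j+b)+1}p$. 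The obstructing $\Ext^1$ vanishes by Lemma \ref{cor:standardvanishing2} (whose hypothesis reads $-2b\le 2$ here and so always holds), so $\dim_k\tilde A_{(i,j),(i,j+b)}=\dim_k A_{i+2j,\,(i+2j)+2b}-\dim_k\Hom_X(\Oscr_X(-i-2j-2b),\Oscr_X(-i-2j)\otimes_{o_X}\Tau)$; the first term is $h(2b)$ and the second is the colength of $m_{\tau^{-j}p}\cdots m_{\tau^{-(j+b)+1}p}$ inside $o_X$. Neither term depends on $i$, so the difference equals the value already found in \S\ref{subsec:quadric} for $\dim_k A'_{j,j+b}$, namely $h'(b)$. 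The one point to be careful about is that this colength is insensitive to the auxiliary twist $\Oscr_X(-i-2j)$, which holds because $\Tau$ is a finite sum of point bimodules and every invertible $o_X$-bimodule restricts trivially to each such point; with this in hand there is no genuine obstacle, and the proposition is, as advertised, essentially immediate from the definition.
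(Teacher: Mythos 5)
Your proof is correct and follows essentially the same route as the paper, which treats (1), (2), (4), (5) as immediate from the definition (with (4) being exactly the colength/$\Ext^1$-vanishing count already carried out in \S\ref{subsec:quadric}) and invokes Theorem \ref{thm:quadricdomain} for (3), just as you do via the embedding $\tilde{A}_{(i,j),(m,n)}\subset A_{i+2j,m+2n}$. The only nitpick is in step (4): the hypothesis of Lemma \ref{cor:standardvanishing2} there reads $-2b\le -2b+2$ rather than $-2b\le 2$, but both hold for all $b\ge 1$, so nothing is affected.
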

Where for $(3)$ we used the fact that $A$ is a $\mathbb{Z}$-domain as in Theorem \ref{thm:quadricdomain}.

We also have the following partial analogue of Lemma \ref{lem:negativedim}:
\begin{lemma}
\label{lem:negativedimension2}
\begin{eqnarray*}
\dim_k \tilde{A}_{(i,j),(i-1,j+b)}=\dim_k \tilde{A}_{(i,j),(i,j+b-1)}= h'(b-1)
\end{eqnarray*}
\end{lemma}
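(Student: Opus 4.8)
\textbf{Proof proposal for Lemma~\ref{lem:negativedimension2}.}

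The plan is to mimic the strategy used to prove Lemma~\ref{lem:negativedim}, adapted to the weighting $(i,j) \mapsto i+2j$ that governs the $\mathbb{Z}^2$-algebra $\tilde A$ of \S\ref{sec:quadraticinquadric}. First I would record the claimed second equality. The space $\tilde A_{(i,j),(i,j+b-1)}$ lives over the ``pure $A'$-direction'', and by Proposition~\ref{prp:Atildequadric}(4) it has dimension $h'(b-1)$, so that equality holds by definition of $h'$ (and is nonzero precisely when $b \geq 1$). Hence the content of the lemma is the first equality: moving one step in the negative $A$-direction (the $i$-coordinate decreases by $1$) and simultaneously increasing $j$ is supposed to cost exactly the same as two steps of pure $A'$-shift, i.e.\ to keep the dimension equal to $h'(b-1)$.

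To prove this I would fix the relevant $\Oscr_X$-modules and compute $\dim_k \tilde A_{(i,j),(i-1,j+b)} = \dim_k \Hom_X\!\big(\Oscr_X(-(i-1)-2(j+b)),\ \Oscr_X(-i-2j)\otimes m_{\tau^{-j}p}\cdots m_{\tau^{-j-b+1}p}\big)$ via the long exact sequence obtained from the resolutions recalled in \S\ref{subsec:quadric}. Concretely: (a) the colength computation from \S\ref{subsec:quadric}/\cite[\S5]{PresVdB} gives the colength of $m_{\tau^{-j}p}\cdots m_{\tau^{-j-b+1}p}$ inside $o_X$; (b) Lemma~\ref{cor:standardvanishing2} (the vanishing of $\Ext^1$ and $\Ext^2$ in the relevant range) lets me read the dimension of the $\Hom$ as $\dim_k A_{i-1,\,i-1+2b} $ minus that colength, exactly as in \S\ref{subsec:quadric} Step~6; (c) an elementary bookkeeping with $h$ and $h'$ — using that $h$ is the cubic Hilbert function, $h'$ the quadratic one, and the helix relation $\Gscr_i\otimes\Gscr_{i+1}^{\otimes-2}\otimes\Gscr_{i+2}\cong\Oscr_Y$ implicit in Step~4 — shows the result equals $h'(b-1)$. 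For the sign of the shift I would invoke the analogue of the argument proving \eqref{eq:negativedimension}: namely that the comparison map to $\tilde B$ (Lemma following \eqref{ref-13-15}, i.e.\ the $\tilde B$-version in this subsection) is surjective in the first quadrant and an isomorphism in the relevant degrees, so the dimension count on $\tilde A$ is forced by the corresponding Euler-characteristic/Riemann--Roch count on $Y$.

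Alternatively — and this is likely the cleaner route — I would handle only the case of one negative step ``by hand'' as above, and for any genuinely negative multi-step situation defer to the $I$-basis machinery of \cite{TateVdB} exactly as the proof of Lemma~\ref{lem:negativedim} does for $a \leq -2$; but since Lemma~\ref{lem:negativedimension2} only asserts the single-step statement ($i \to i-1$), the elementary argument via Lemma~\ref{cor:standardvanishing2} and the colength formula should suffice without appeal to the appendix.

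The main obstacle I anticipate is (b): verifying that the required $\Ext^1$ and $\Ext^2$ terms genuinely vanish in the precise numerical range $i-j \leq 2$ (resp.\ $i-j\leq m-n+1$ style bounds) that Lemma~\ref{cor:standardvanishing2} provides — the indices here involve the $i+2j$-weighting, so one must be careful that translating the bound from the $\mathbb{Z}$-algebra $A$ to the $\mathbb{Z}^2$-algebra $\tilde A$ keeps the inequalities on the correct side. Once the vanishing is confirmed, the dimension count is routine linear algebra with the Hilbert functions, and the identification of the total with $h'(b-1)$ follows from the same helix/colength arithmetic already used in \S\ref{sec:quadraticinquadric}.
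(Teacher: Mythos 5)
Your route is the same as the paper's: the proof of Lemma \ref{lem:negativedimension2} given there is exactly the reduction you describe, namely repeat the $a=-1$ computation of Lemma \ref{lem:negativedim} (colength of the product of point ideals plus the standard $\Ext$-vanishing), and no $I$-basis input is needed for a single negative step; your choice of Lemma \ref{cor:standardvanishing2} is indeed the appropriate vanishing statement for the quadric setting of this subsection. Two details should be fixed or completed. First, there is an index slip: the ambient graded piece is $A_{i+2j,\,(i-1)+2(j+b)}$, whose dimension is $h(2b-1)=b(b+1)$, not $A_{i-1,\,i-1+2b}$ (dimension $h(2b)$); since the colength of $m_{\tau^{-j}p}\cdots m_{\tau^{-j-b+1}p}$ in $o_X$ is $\tfrac{b(b+1)}{2}$, the correct count is $b(b+1)-\tfrac{b(b+1)}{2}=\tfrac{b(b+1)}{2}=h'(b-1)$, whereas your indices as written would output $h(2b)-\tfrac{b(b+1)}{2}=h'(b)$, i.e.\ the dimension of $\tilde A_{(i,j),(i,j+b)}$, which is the wrong value. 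Second, the range check you flag as the main obstacle does close: for the source $\Oscr_X(-(i-1)-2(j+b))$ and target $\Oscr_X(-i-2j)\otimes m_{\tau^{-j}p}\cdots m_{\tau^{-j-b+1}p}$ the difference of twists is $1-2b$, and the hypotheses of Lemma \ref{cor:standardvanishing2} become $1-2b\le 3$ and $1-2b\le -2b+2$, both satisfied for every $b\ge 1$ (for $b\le 0$ both sides of the lemma vanish, consistently with $h'(b-1)=0$). Finally, the helix relation and the surjection onto $\tilde B$ play no role in this dimension count; the map to $\tilde B$ only enters later, when identifying the points defining the inverse transform.
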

\begin{proof}
The computation is completely similar to the $a=-1$ case of Lemma \ref{lem:negativedim} using Lemma \ref{cor:standardvanishing} and \cite[Lemma 6.5]{PresVdB}
\end{proof}
\begin{remark}
Although one cannot use an I-basis in the classical sense we expect $\dim_k \tilde{A}_{(i,j),(i+a,j+b)}= h'(b+2 \cdot a)$ to hold for all $a \leq 0$.
\end{remark}
As a corollary of Lemma \ref{lem:negativedimension2} and Proposition \ref{prp:Atildequadric}(3) we know $\tilde{A}_{(i,j),(i-1,j+2)}$ and $\tilde{A}_{(i,j),(i+1,j-1)}$  are one dimensional. Let $\delta_{i,j}$ and $\gamma_{i,j}$ be nonzero elements in these spaces. We can then visualise $\tilde{A}$ on a 2-dimensional square grid.

\begin{center}
\begin{tikzpicture}
\matrix(m)[matrix of math nodes,
row sep=4em, column sep=4em,
text height=1.5ex, text depth=0.25ex]
{(3,0) & (3,1) & (3,2) & (3,3) \\
(2,0) & (2,1) & (2,2) & (2,3) \\
(1,0) & (1,1) & (1,2) & (1,3) \\
(0,0) & (0,1) & (0,2) & (0,3) \\ };
\path[->,font=\scriptsize]
(m-1-1) edge  (m-1-2)
        edge[dotted] node[above]{$\delta_{3,0}$} (m-2-2)
(m-1-2) edge  (m-1-3)
        edge[dotted] node[above]{$\delta_{3,1}$}(m-2-3)
(m-1-3) edge  (m-1-4)
        edge[dotted] node[above]{$\delta_{3,2}$}(m-2-4)
(m-2-1) edge  (m-2-2)
        edge[dotted] node[left]{$\delta_{2,0}$} (m-3-2)
        edge   (m-1-1)
(m-2-2) edge  (m-2-3)
        edge[dotted] node[left]{$\delta_{2,1}$} (m-3-3)
        edge   (m-1-2)
(m-2-3) edge  (m-2-4)
        edge   (m-1-3)
        edge[dotted] node[left]{$\delta_{2,2}$} (m-3-4)
(m-2-4) edge   (m-1-4)
(m-3-1) edge  (m-3-2)
        edge[dotted] node[below]{$\delta_{1,0}$} (m-4-2)
        edge   (m-2-1)
(m-3-2) edge  (m-3-3)
        edge[dotted] node[below]{$\delta_{1,1}$} (m-4-3)
        edge   (m-2-2)
        edge[dotted] node[above]{$\gamma_{1,1}$} (m-1-1)
(m-3-3) edge  (m-3-4)
        edge   (m-2-3)
        edge[dotted] node[above]{$\gamma_{1,2}$}(m-1-2)  
        edge[dotted] node[below]{$\delta_{1,2}$} (m-4-4)  
(m-3-4) edge   (m-2-4)
        edge[dotted] node[above]{$\gamma_{1,3}$}(m-1-3)
(m-4-1) edge  (m-4-2)
        edge   (m-3-1)
(m-4-2) edge  (m-4-3)
        edge   (m-3-2)
        edge[dotted] node[below]{$\gamma_{0,1}$}(m-2-1)
(m-4-3) edge  (m-4-4)
        edge   (m-3-3)
        edge[dotted] node[below]{$\gamma_{0,2}$}(m-2-2)    
(m-4-4) edge   (m-3-4)
        edge[dotted] node[below]{$\gamma_{0,3}$}(m-2-3)   
;
\end{tikzpicture}
\end{center}
All horizontal arrows represent three dimensional  vector spaces whereas the vertical arrows represent two dimensional vector spaces and dotted arrows represent one dimensional vector spaces (labeled by $\gamma_{i,j}$ and $\delta_{i,j}$).

Completely identical to previous sections there is an inclusion
\[
\delta_{(0,i),(i,0)}\cdot \delta_{(m,0),(0,m)}:A_{i,m}=\tilde{A}_{(i,0),(m,0)}\rightarrow \tilde{A}_{(0,i),(0,m)}=A'_{i,m}
\]
(where the elements $\delta_{(i,j),(m,n)}$ are defined as in $(\ref{eq:deltaijmn1})$ and $(\ref{eq:deltaijmn2})$. The only thing which essentially changed is that $\delta_{(i,j),(m,n)}$ is now only defined when $i+j=m+n$ in stead of $i+2j=m+2n$.)\\
The induced inclusion of algebras
\begin{eqnarray*}
\label{eq:theinverseinclusion3}
\delta:A\hookrightarrow A'
\end{eqnarray*}
is such that the composition $\gamma \circ \delta$ is inner with
\[
z_i=\gamma_{(i,0),(0,2i)}\delta_{(0,i),(i,0)}
\]

Our next aim is to show that $\delta$ is a quadratic transform. For this we need to show the existence of two points $p', q' \in Y$ such that if we define $d'_i$ as
\begin{eqnarray} \label{definitiondi'}
d'_i = \begin{cases} 
\tau^{-l}p' & \text{if $i=2l$} \\
\tau^{-l}q' & \text{if $i=2l+1$} \\
\end{cases}
\end{eqnarray}
then we have
\begin{eqnarray}
\label{eq:existencep'q'} \delta(A_{i,i+1})  =  \Gamma(Y, \Gscr_{i} (-d'_i))  =  \Gamma(Y,\Lscr_{2i}\Lscr_{2i+1}(-\tau^{-i}p-d'_i))
\end{eqnarray}

We again start by defining a $\mathbb{Z}^2$-algebra $\tilde{B}$. This time it takes the following form:
\[
\tilde{B}_{(i,j),(m,n)}:=
\begin{cases}
\Gamma(Y,\Lscr_{i+2j} \Lscr_{i+2j+1} \ldots \Lscr_{m+2n}(-\tau^{-j}p - \ldots - \tau^{-n+1}p ))&\text{if $n> j$}\\
\Gamma(Y,\Lscr_{i+2j} \Lscr_{i+2j+1} \ldots \Lscr_{m+2n})&\text{if $n\leq j$}
\end{cases}
\]
Similar to \cite[Lemma 6.7]{PresVdB} one can show
\begin{lemma} The canonical map
\[
\tilde{A}_{(i,j),(m,n)}\rightarrow\tilde{B}_{(i,j),(m,n)}
\]
is an epimorphism in the first quadrant (i.e.\ $m\geq i$, $n \geq j$).
\end{lemma}

Recall that for each $x\in A_{i,i+1}$, $\delta(x)$ is related to $x$ and elements $\delta_{(i,j),(m,n)}$ via
\begin{equation}
\label{ref-12-14-3}
\delta_{i,0}\delta_{i-1,1}\cdots \delta_{1,i-1}\delta(x)=x \delta_{i+1,0} \delta_{i,1}\cdots \delta_{1,i}
\end{equation}
when $i \geq 0$ and
\begin{equation}
\label{ref-12-14-3'}
\delta(x) \delta_{0,i+1}\delta_{-1,i+2}\ldots \delta_{i+2,-1}=\delta_{0,i} \delta_{-1,i+1} \ldots \delta_{i+1,-1} x 
\end{equation}
when $i < 0$. Hence in order to understand $\delta(x)$ in $A'_{2i,2i+2} = B'_{2i,2i+2}$ we need to understand (the product of) the image(s) $\overline{\delta_{i,j}}$ of $\delta_{i,j}$ in $\tilde{B}$. First remark that if we choose $p', q' \in Y$ such that
\begin{eqnarray}
\notag p + \tau q' & \sim & [\Lscr_0] \\
\label{eq:defp'q'} p +  p' & \sim & [\Lscr_1]
\end{eqnarray}
then similar to Lemma \ref{lem:followingidentities} we then have for all $i,j$:
\[ \tau^{-j}p + d'_{i-1} \sim [\Lscr_{i+2j}] \]
with $d'_i$ as in \eqref{definitiondi'}.

$\overline{\delta_{i,j}}$ is a nonzero element of
\[ \Gamma( Y, \Lscr_{i+2j} (-\tau^{-j}p-d'_{i-1})) \]
As $\Lscr_{i+2j} (-\tau^{-j}p-d'_{i-1})$ has degree zero on $Y$, $\overline{\delta_{i,j}}$ is everywhere non-zero on $Y$.

In particular, going back to \eqref{ref-12-14-2} (and hence assuming $i \geq 0$, the case $i < 0$ being completely similar) we see that 
\[ \overline{\delta}_{i,0}\overline{\delta}_{i-1,1}\ldots \overline{\delta}_{1,i-1} \]
is an everywhere non-zero section of
\[
\Lscr_{i} \ldots \Lscr_{2i-1}(-p-d'_{i-1}-\tau^{-1}p-d'_{i-2}-\ldots-\tau^{-i+1}p-d'_0)
\]
Likewise 
\[
\overline{x} \ \overline{\delta}_{i+1,0}\overline{\delta}_{i,1}\ldots \overline{\delta}_{1,i}
\]
is a section of
\[
\Lscr_{i} \ldots \Lscr_{2i+1}(-p-d'_i-\tau^{-1}p -d'_{i-1}-\ldots
-\tau^{-i}p -d'_0)
\]

so that $\overline{\delta(x)}$ is a section of $\Lscr_{2i}\Lscr_{2i+1}(-\tau^{-i}p-d'_i)$. 
This is precisely what we had to show according to \eqref{eq:existencep'}.

\subsection{Invertability of the quadratic transforms}
\label{sec:invertabilitytransform}

We now show that if $\gamma$ and $\delta$ are as in \S \ref{sec:quadricinquadratic} or \S \ref{sec:quadraticinquadric}, then $\delta \circ \gamma$ is inner. This boils down to computations on the geometric data associated to $\gamma$ and $\delta$. First assume $A=A(Y,\Lscr,\psi)$ is quadratic and $\gamma: A' \hookrightarrow A$ is constructed with respect to $p,q \in Y$. Then according to Lemma \ref{lem:followingidentities} the quadratic transform $\delta: A \hookrightarrow A^{\prime (2)}$ is constructed with respect to a point $p' \in Y$ satisfying
\[ p + q+ \tau p' \sim [ \Lscr_0 ] \]
Using the techniques in \S \ref{sec:quadraticinquadric} we find a quadratic transform $\tilde{\gamma}: A' \rightarrow A$ such that $\tilde{gamma} \circ \delta$ is inner. By \eqref{eq:defp'q'} we know $\tilde{\gamma}$ is constructed with respect to points $p'', q'' \in Y$ satisfying

\begin{eqnarray*}
 p' + \tau q'' & \sim & [\Gscr_0] \\
 p' +  p'' & \sim & [\Gscr_1]
\end{eqnarray*}
 with $\Gscr_i$ as in \eqref{eq:Gscriquadratic}. Moreover \S \ref{sec:quadricinquadratic} constructs $A$ out of an elliptic helix $(\Lscr''_i)_{i \in \mathbb{Z}}$ given by
\[ \Lscr''_i = \Gscr_{2i} \otimes \Gscr_{2i+1} \otimes \Oscr_Y(-\tau^{-i} p) \]

We need to check that $p''=p, q''=q$ and $\Lscr''_i \cong \Lscr_i$. For this recall from \cite[Theorem 4.2.3]{VdB38} that there exists a linebundle $\Nscr$ of degree zero on $Y$ such that for each linebundle $\Mscr$ we have $[\psi^* \Mscr] = [\Mscr] + \deg(\Mscr) \cdot [\Nscr]$. Using this we find:
\begin{eqnarray*}
 p' + \tau q'' & \sim &  [ \Gscr_0] = [ \Lscr_0 \otimes \Oscr_Y(-p) ] \\
 & \Downarrow & \\
 p' + q'' -3[\Nscr] + p  & \sim & [\Lscr_0] \\
 & \Downarrow & \\
 p + q'' + \tau p' & \sim &  p + q + \tau p' \\
 & \Downarrow & \\
q'' & = & q
\end{eqnarray*}
Similarly $p''=p$. Next we show that the elliptic helix $(\Lscr_i'')_{i \in \mathbb{Z}}$ coincides with  $(\Lscr_i)_{i \in \mathbb{Z}}$:
\begin{eqnarray*}
[ \Lscr_i'' ] & = & [\Gscr_{2i}] + [\Gscr_{2i+1}] + [\Oscr_Y(-\tau^{-i}p')] \\
 & = & [\Lscr_{2i}] + [\Lscr_{2i+1}] + [\Oscr_Y(-d_{2i}-d_{2i+1})] + [\Oscr_Y(-\tau^{-i}p')] \\
 & = & [\Lscr_{2i}] + [\Lscr_{2i+1}] + [\Oscr_Y(-d_{i}-d_{i+1})] - 3i \cdot [\Nscr] + [\Oscr_Y(-\tau^{-i}p')] \\
 & = & \left( [\Lscr_{2i}] - 3i \cdot [\Nscr] \right) + \left([\Lscr_{2i+1}] + [\Oscr_Y(-d_{i}-d_{i+1} -\tau^{-i}p')] \right) \\ 
 & = & [\psi_*^i \Lscr_{2i}] = [\Lscr_i]
\end{eqnarray*}

Next we do similar computations in case $A = A(Y,(\Lscr_i)_{i \in \mathbb{Z}}$ is a quadric and $\gamma: A' \hookrightarrow A^{(2)}$ in constructed with respect to a point $p \in Y$ as in \S \ref{sec:quadricinquadratic}. Similar to the above it suffices to prove $p''=p$ and $\Lscr_i'' = \Lscr_i$ where
\begin{eqnarray*}
p'+q'+\tau p'' & \sim & [\Gscr_0] \\
\Lscr_i'' & = & \Gscr_i \otimes \Oscr_Y(-d_i')
\end{eqnarray*}
where $\Gscr_i$ is as in \eqref{eq:Gscriquadric} and $p',q',d_i'$ are as in \eqref{eq:defp'q'} and \eqref{definitiondi'}. First we prove $p''=p$, for this we take $\Nscr$ a degree zero linebundle on $Y$ such that $[\alpha^* \Mscr] = [\Mscr] + \deg(\Mscr) \cdot [\Nscr]$ with $\alpha$ as in Theorem \ref{thm:firstextension}

\begin{eqnarray*}
 p' + q' + \tau p'' & \sim &  [ \Gscr_0] = [ \Lscr_0 \otimes \Lscr_1 \otimes \Oscr_Y(-p) ] \\
 & \Downarrow & \\
 p' + q' + p'' - 2[\Nscr] + p  & \sim & [\Lscr_0] + [\Lscr_1] \\
 & \Downarrow & \\
 p' + \tau q' + p'' + p & \sim &  p + \tau q' + p + p' \\
 & \Downarrow & \\
p'' & = & p
\end{eqnarray*}
We now that the elliptic helix $(\Lscr_i'')_{i \in \mathbb{Z}}$ coincides with  $(\Lscr_i)_{i \in \mathbb{Z}}$:
\begin{eqnarray*}
[ \Lscr_i'' ] & = & [\Gscr_{i}] + [\Oscr_Y(-d_i)] \\
 & = & [\Lscr_{2i}] + [\Lscr_{2i+1}] + [\Oscr_Y(-d_i)] + [\Oscr_Y(-\tau^{-i}p')] \\
 & = & [\Lscr_{i}] + [\Lscr_{3i+1}] + [\Oscr_Y(-d_i)] + [\Oscr_Y(-\tau^{-i}p')] \\
 & = & [\Lscr_{i}] 
\end{eqnarray*}
Finishing the proof of Theorem \ref{thm:inversemain}.

\newpage
\appendix
\addtocontents{toc}{\protect\setcounter{tocdepth}{1}}

\section{Quadrics admit $\mathbb{Z}$-fields of fractions}
\label{sec:appendix}
In this appendix we prove the following
\begin{theorem*}[Theorem \ref{thm:quadricdomain} and \ref{thm:quadricsfunctionfield}]
Let $A$ be a quadric, then $A$ is a $\mathbb{Z}$-domain and $A$ admits a $\mathbb{Z}$-field of fractions.
\end{theorem*}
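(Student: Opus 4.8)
The plan is to exploit the geometric description $A = A(Y,(\Lscr_i)_i)$ together with the fact that $A$ sits inside its associated twisted homogeneous coordinate ring in high degree, so that ``enough'' of the multiplicative structure of $A$ is governed by sections of line bundles on the (reduced, projective) scheme $Y$. First I would reduce the $\mathbb{Z}$-domain statement to a statement about products $xy$ with $x\in A_{i,j}$, $y\in A_{j,k}$: since $A$ is generated in degree $1$, it suffices to prove $xy\neq 0$ when $y\in A_{j,j+1}$, and then iterate. The key input is that the canonical map $A_{i,j}\to B_{i,j}$ is an isomorphism for $j-i$ large (this is exactly the statement that $A$ and $B$ differ only in low-degree relations, combined with a vanishing/Hilbert-series count as in \cite{VdB38}), while $B = B(Y,(\Lscr_i)_i)$ is manifestly a $\mathbb{Z}$-domain because $B_{i,j} = \Gamma(Y,\Lscr_i\otimes\cdots\otimes\Lscr_{j-1})$ and $Y$ is a projective scheme over $k$ on which tensoring nonzero sections of line bundles cannot vanish (here one uses that the $\Lscr_i$ are globally generated / very ample in the relevant range, which is part of the helix data).

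The heart of the argument for the $\mathbb{Z}$-domain property is a ``lifting'' step: given $0\neq x\in A_{i,j}$ and $0\neq y\in A_{j,j+1}$, I want to conclude $xy\neq 0$. I would multiply on the right by a suitable element $z\in A_{j+1,N}$ with $N\gg 0$ (chosen so that $xz\neq 0$, using induction on degree, and so that the image of everything in $B$ is faithful); then $xyz$ lands in a degree where $A\to B$ is injective, and its image is the product of the nonzero images of $x$, $y$, $z$ in the domain $B$, hence nonzero. The delicate point is arranging $z$ so that $xz\neq 0$ \emph{and} $yz\neq 0$ simultaneously with $N$ large — this is where one genuinely needs that $e_jA$ has ``large'' image in $e_jB$, i.e. that the torsion (the kernel of $A\to B$ in each module) is concentrated in bounded degree. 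This follows from the structure theory of quadrics in \cite{VdB38}: the relations of $A$ beyond degree $s+1$ are exactly the relations of $B$, so $\ker(e_jA\to e_jB)$ is a finite-dimensional graded piece. I expect \textbf{this} step — controlling the torsion and making the two non-vanishing conditions compatible — to be the main obstacle, and it is presumably why the proof is relegated to an appendix.

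Once $A$ is known to be a $\mathbb{Z}$-domain, Theorem \ref{thm:quadricsfunctionfield} follows via Proposition \ref{prp:channyman}: it suffices to show each $e_iA$ is a uniform module, i.e. any two nonzero submodules of $e_iA$ intersect nontrivially. I would prove this by a Hilbert-series / growth argument. Each $e_iA$ has $\dim_k (e_iA)_n$ growing polynomially of degree $2$ (quadratic Hilbert series $\sim n^2/2$), hence $e_iA$ has Gelfand--Kirillov dimension $3$ as a module over the GK-dimension-$3$ algebra $A$. If $M,N\subset e_iA$ were nonzero with $M\cap N=0$ then $M\oplus N\hookrightarrow e_iA$, and since $A$ is a $\mathbb{Z}$-domain each of $M,N$ contains an isomorphic copy (by right multiplication by a nonzero element) of a ``shifted'' $e_jA$-like module of full GK-dimension $3$; comparing leading coefficients of Hilbert functions (the leading coefficient of $e_iA$ being $1/2$, indivisible in the relevant sense) gives a contradiction, exactly as in the classical proof that noetherian or subexponential-growth graded domains are uniform. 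Alternatively, and perhaps more cleanly, I would cite the criterion already quoted in the excerpt — ``graded domains admit fields of right fractions when they have subexponential growth'' — in its $\mathbb{Z}$-algebra form: $A$ has polynomially bounded growth, hence subexponential growth, hence (being a $\mathbb{Z}$-domain) it admits a $\mathbb{Z}$-field of right fractions. The only thing to check is that the standard Ore-type argument goes through verbatim in the $\mathbb{Z}$-algebra setting, which is routine given the calculus-of-fractions formalism of Remark \ref{rem:quotientfield}.
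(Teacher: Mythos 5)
Your proposal for the second statement (existence of the $\mathbb{Z}$-field of fractions) is essentially the paper's own argument: granted the domain property, one shows each $e_iA$ is uniform and invokes Proposition \ref{prp:channyman}, using that every nonzero submodule of $e_iA$ has GK-dimension $3$ (multiply by a nonzero element and use the domain property) and that the multiplicity $e(e_iA)=\frac{1}{2}$ cannot be split between a submodule and a quotient both of GK-dimension $3$. The domain half, however, has a genuine gap, and it sits exactly where you predicted the main obstacle would be. The claim that the canonical map $A_{i,j}\to B_{i,j}$ is an isomorphism for $j-i$ large --- equivalently that $\ker(e_jA\to e_jB)$ is a finite-dimensional graded piece --- is false. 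The map is surjective, but its kernel is enormous: $\dim_k A_{i,i+n}$ grows quadratically in $n$ (Hilbert series $1/(1-t)^2(1-t^2)$), while $\dim_k B_{i,i+n}=\dim_k\Gamma(Y,\Lscr_i\otimes\cdots\otimes\Lscr_{i+n-1})$ grows only linearly, since $Y$ is (at most) a curve. ``$A$ and $B$ differ only in low-degree relations'' means $A_{i,j}\cong B_{i,j}$ for $j-i\leq s$; in degree $s+1$ the coordinate ring $B$ has an extra relation (the $\mathbb{Z}$-algebra analogue of the normal element $g$ with $B=A/gA$ in the graded case), and the two-sided ideal it generates has GK-dimension $3$, not finite dimension. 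Consequently the lifting step collapses: a nonzero $x\in A_{i,j}$ may have zero image in $B$, so no choice of $z$ makes $xyz$ detectable there. A further problem is that for a general quadric $Y$ can be reducible or non-reduced (or all of $\mathbb{P}^1\times\mathbb{P}^1$ in the linear case), so $B$ itself need not be a $\mathbb{Z}$-domain.

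The paper's actual proof of the domain property is homological, in the spirit of \cite{ATV2}, and makes no use of $B$: one lets $N_i\subset e_iA$ be the largest submodule of GK-dimension $\leq 2$, shows $N=\bigoplus_i N_i$ is a two-sided ideal with $A/N$ a $\mathbb{Z}$-domain, and then proves $N=0$. For the latter one shows (using the left annihilator of $e_iN$ and $2$-periodicity) that $e_iN$ is a second syzygy, hence has projective dimension $\leq 1$ and GK-dimension exactly $2$; that $\Ext^1_{\Gr(A)}(e_iN,A)\cong\Ext^3_{\Gr(A)}(M,A)$ is finite dimensional; and that dualizing flips the sign of the multiplicity, $e((C^\bullet)^D)=(-1)^{m+1}e(C^\bullet)$. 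If $e_0N\neq 0$ this forces $e((e_0N)^*)=-e(e_0N)<0$, a contradiction. So to repair your argument you would need to replace the reduction to $B$ by some version of this duality/multiplicity machinery; controlling the torsion of $A\to B$ cannot work, because that ``torsion'' is a GK-dimension $3$ ideal.
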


The proof of this theorem is based on several preliminary results. 

\begin{notation}
Throughout this appendix $A$ will always be a quadric. Moreover for any $A$-module $M$ we let $\pd(M)$ and $\gkdim(M)$ denote the projective and Gelfand-Kirillov dimension respectively.
\end{notation}

\subsection{Preliminary results}
\subsubsection{Some lemmas}

\begin{lemma} \label{lem:pdimpliesgkdim}
Let $M$ be a finitely generated left- or right-$A$-module and assume $\pd(M) \leq 1$, then $\gkdim(M) \geq 2$.
\end{lemma}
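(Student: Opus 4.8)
The statement to prove is: if $M$ is a finitely generated (left- or right-) $A$-module with $\pd(M) \le 1$, then $\gkdim(M) \ge 2$. Since $A$ is a quadric, it is an AS-regular $\mathbb{Z}$-algebra of global dimension $3$; in particular it is connected, generated in degree $1$, and the simple modules $S_{n,A}$ have projective resolutions whose graded Betti numbers are governed by the (known) Hilbert series $h(n) = \dim_k A_{i,i+n}$ of a cubic AS-regular $\mathbb{Z}$-algebra, namely $h(0)=1$, $h(1)=2$, $h(2)=4$, and in general $h(2a)=(a+1)^2$, $h(2a+1)=(a+1)(a+2)$ (as recorded in \S\ref{subsec:quadricinverse}). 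The idea is a Hilbert-series / growth argument: a module of projective dimension $\le 1$ has a short projective resolution $0 \to P_1 \to P_0 \to M \to 0$ with $P_0, P_1$ finite direct sums of the indecomposable projectives $e_n A$ (resp. $A e_n$), so its Hilbert function is an alternating sum of shifted copies of $h$, and one shows that unless $M = 0$ this forces $\dim_k M_{i,i+n}$ to grow at least linearly in $n$, i.e. $\gkdim(M) \ge 2$.

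\textbf{Key steps.} First I would reduce to the case of a \emph{cyclic} module, or more precisely observe that it suffices to bound $\gkdim$ from below on each indecomposable summand-type building block, using that $\gkdim$ of a finitely generated module is the max over a finite generating set; but actually the cleanest route is to work directly with the two-term resolution. So: (1) Write $0 \to P_1 \xrightarrow{f} P_0 \to M \to 0$ with $P_0 = \bigoplus_{a} e_{i_a} A$ and $P_1 = \bigoplus_b e_{j_b} A$; minimality (we may assume the resolution minimal since $A$ is connected graded) forces $f(P_1) \subseteq P_0 \cdot A_{\ge \bullet+1}$, i.e. each matrix entry of $f$ lies in strictly positive degree, hence $j_b > i_a$ whenever the $(a,b)$ entry is nonzero, and in particular $j_b \ge i_a + 1$ for the relevant indices. (2) Compare ranks: applying the Gorenstein/Euler-characteristic bookkeeping, or simply counting, if $M \ne 0$ then either $P_1 = 0$ (so $M$ is projective, $\gkdim M = \gkdim e_n A = 3 \ge 2$, done) or $P_1 \ne 0$ and I compare the number of generators $r_0 = \operatorname{rk} P_0$ and $r_1 = \operatorname{rk} P_1$. (3) Compute $\dim_k M_{i,i+n} = \sum_a h(\text{shift}_a) - \sum_b h(\text{shift}_b)$ and show that the leading-order term in $n$ is $(r_0 - r_1)\cdot \frac{n^2}{4}$ (since $h(n) \sim n^2/4$); if $r_0 > r_1$ this already gives $\gkdim M \ge 3$. (4) The borderline case is $r_0 = r_1$: here the quadratic terms cancel and one must look at the linear term, which is a positive combination of the degree shifts $j_b - i_a > 0$; I would argue that since $M \ne 0$ and $f$ cannot be injective with a cokernel supported only in finitely many degrees (because $A$ is a $\mathbb{Z}$-domain by Theorem~\ref{thm:quadricdomain}, so a nonzero map between projectives of equal rank has torsion-free cokernel of positive growth), the linear coefficient is strictly positive, giving $\dim_k M_{i,i+n} \gtrsim c\, n$ for some $c > 0$, hence $\gkdim M \ge 2$. (5) Handle the left-module case symmetrically, using that the opposite of a quadric is again a quadric (or directly, that the relevant Hilbert-series computations are left-right symmetric).

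\textbf{Main obstacle.} The delicate point is step (4): ruling out the possibility that a projective-dimension-$\le 1$ module of equal Betti ranks could be ``small'' (finite-dimensional or zero-growth). For $M \ne 0$ finitely generated with $\pd M \le 1$ and $\operatorname{rk} P_0 = \operatorname{rk} P_1$, one needs to know $M$ cannot be finite-dimensional — equivalently, that $A$ has no finite-dimensional module of projective dimension $\le 1$ — which should follow from the Gorenstein condition (a finite-dimensional module would have all its $\Ext^i$ into $A$ concentrated in a single degree $i = \gldim A = 3$, forcing $\pd M = 3$, a contradiction). Making this rigorous in the $\mathbb{Z}$-algebra setting, invoking Theorem~\ref{thm:quadricdomain} and the Gorenstein duality for quadrics, is where the real content lies; the growth/Hilbert-series estimates in steps (1)--(3) are routine once the Betti-number bookkeeping is set up. I expect the author's proof to isolate exactly this finite-dimensionality obstruction, possibly via a lemma that $\pd M \le 1 \Rightarrow M$ is ``$1$-pure'' or has no finite-length submodules.
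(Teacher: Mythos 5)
There is a genuine gap, and it sits exactly where you locate the ``main obstacle'': step (4). Two problems. First, your justification invokes Theorem \ref{thm:quadricdomain} ($A$ is a $\mathbb{Z}$-domain), but in the paper this lemma is a preliminary result in the appendix whose very purpose is to \emph{prove} Theorem \ref{thm:quadricdomain}: the lemma is applied to $e_0N$ (via Lemma \ref{lem:syzygy}) inside that proof, and no independent proof that quadrics are domains is available. So any argument for Lemma \ref{lem:pdimpliesgkdim} that uses the domain property is circular in the paper's logical order. (The same ordering occurs in \cite{ATV2}: the domain theorem there is Theorem 3.9, proved \emph{after} Proposition 2.41, so the original proof cannot and does not use it either.) Second, even granting the domain property, your step (4) does not actually close the borderline case $r_0=r_1$. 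What must be excluded there is $\gkdim(M)\leq 1$, i.e.\ the possibility $\sum_b j_b=\sum_a i_a$, which makes the pole order of $h_M$ at $t=1$ drop to $\leq 1$. Your sketch only rules out the finite-dimensional case ($\gkdim=0$) via the Gorenstein/$\Ext^3$ argument; a module with bounded but infinitely often nonzero graded pieces ($\gkdim=1$, e.g.\ point-module-like growth) is not ``supported in finitely many degrees'' and is untouched by that argument. The phrase ``a nonzero map between projectives of equal rank has torsion-free cokernel of positive growth'' is essentially a restatement of the conclusion in exactly the critical case, not something the $\mathbb{Z}$-domain property gives you: point modules show that modules of GK-dimension $1$ with small resolutions do exist, and the whole content of the lemma is that such a module cannot have $\pd\leq 1$.

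For comparison, the paper's own proof is a transport argument: it observes that the proof of \cite[Proposition 2.41]{ATV2} goes through verbatim once the shifted projectives $A(-i)$ are replaced by $e_iA$ (resp.\ $Ae_i$); that argument relies only on the connected, Gorenstein, Hilbert-series structure of $A$ (duality of the two-term resolution), not on $A$ being a domain, which is precisely why it can be used upstream of Theorem \ref{thm:quadricdomain}. Your steps (1)--(3) and (5) are consistent with that bookkeeping, but to repair the proposal you would need a non-circular argument excluding $\gkdim(M)=1$ (for instance via duality: showing $\Hom_{\Gr(A)}(M,A)=0$ forces $\Ext$-purity incompatible with $\pd(M)\leq 1$), rather than an appeal to Theorem \ref{thm:quadricdomain}.
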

\begin{proof}
Upon replacing the projective modules $A(-i)$ by $e_iA$ or $Ae_i$ one can copy the proof of \cite[Proposition 2.41]{ATV2}
\end{proof}

\begin{lemma} \label{lem:trivialsocle}
Let $M$ be a finitely generated right-$A$-module and let $S_i$ denote the simple module $e_iA / e_iA_{>i}$, then
\[ pd(M) \leq 2 \ \Rightarrow \ \forall i \in \mathbb{Z}: \Hom_A(S_i,M) = 0 \]
\end{lemma}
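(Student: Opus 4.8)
The plan is to argue by contradiction, using only the standard homological bookkeeping of an AS-regular $\mathbb{Z}$-algebra of dimension $3$. Since $A$ is a quadric it is AS-regular of dimension $3$, so $\gldim A = 3$ (in particular $\Ext^{\geq 4}_A(-,-) = 0$) and, for every $i$, $\pd_A S_i = 3$; the latter holds because the Gorenstein condition supplies an $n$ with $\Ext^3_A(S_i, e_n A) \neq 0$. Both facts are part of the classification of three dimensional AS-regular $\mathbb{Z}$-algebras in \cite{VdB38}, and this is the only place where the specific hypothesis on $A$ enters.

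The heart of the argument: suppose $\Hom_A(S_i, M) \neq 0$ for some $i$. As $S_i$ is simple, any nonzero morphism $S_i \to M$ has zero kernel, so $S_i$ is a submodule of $M$; set $N := M/S_i$ and consider $0 \to S_i \to M \to N \to 0$. Applying $\Hom_A(-,T)$ for an arbitrary right $A$-module $T$ produces a long exact sequence containing
\[ \Ext^3_A(M,T) \longrightarrow \Ext^3_A(S_i,T) \longrightarrow \Ext^4_A(N,T). \]
Here $\Ext^3_A(M,T) = 0$ because $\pd_A M \leq 2$, and $\Ext^4_A(N,T) = 0$ because $\gldim A = 3$. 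Hence $\Ext^3_A(S_i,T) = 0$ for every $T$, contradicting $\pd_A S_i = 3$ (take $T = e_n A$ as above). This forces $\Hom_A(S_i,M) = 0$ for all $i$.

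I do not expect a genuine obstacle here: once $\gldim A = 3$ and $\pd_A S_i = 3$ are granted the argument is purely formal, and the embedding $S_i \hookrightarrow M$ does not even require the $\mathbb{Z}$-domain property of Theorem \ref{thm:quadricdomain}. A more computational alternative would mimic the proof of \cite[Proposition 2.46]{ATV2}, replacing the graded projectives $A(-i)$ by $e_iA$ exactly as in Lemma \ref{lem:pdimpliesgkdim} and comparing the minimal projective resolution of $S_i$ with that of a hypothetical extension of $M$ by $S_i$; but the $\Ext$-sequence argument above seems the cleanest and is what I would write down.
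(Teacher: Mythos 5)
Your argument is correct. A nonzero map from the simple module $S_i$ is automatically injective, and the long exact sequence of $\Ext_A(-,T)$ for $0 \to S_i \to M \to N \to 0$ (with $N=M/S_i$), combined with $\Ext^3_A(M,T)=0$ (from $\pd M\leq 2$) and $\Ext^4_A(N,T)=0$, forces $\Ext^3_A(S_i,-)=0$, contradicting $\pd S_i=3$; the latter is exactly what the length-three minimal resolution of $S_i$ from \cite[Definition 4.1.1]{VdB19} together with the Gorenstein condition gives. The paper offers no independent argument here: its proof consists of transporting \cite[Proposition 2.46(i)]{ATV2} to the $\mathbb{Z}$-algebra setting by replacing the projectives $A(-i)$ with $e_iA$, and the mechanism there is the same kind of Ext bookkeeping against the Gorenstein property, so your proof is essentially a self-contained version of what the paper delegates to ATV2 (your use of an arbitrary test module $T$, with the Gorenstein condition only needed to witness $\Ext^3(S_i,-)\neq 0$, is if anything slightly more economical). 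The one step you should not leave as a bare assertion is $\Ext^4_A(N,-)=0$: the paper's definition of an AS-regular $\mathbb{Z}$-algebra only bounds the projective dimensions of the simples $S_n$, so "$\gldim A=3$" requires the standard deduction that for a connected $\mathbb{Z}$-algebra generated in degree 1 every finitely generated graded module has $\pd\leq \sup_n \pd S_n=3$ (minimal resolutions plus graded Nakayama); since $N$ is finitely generated this suffices, and with that sentence added the proof is complete.
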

\begin{proof}
Upon replacing the projective modules $A(-i)$ by $e_iA$ one can copy the proof of \cite[Proposition 2.46 (i)]{ATV2}
\end{proof}

\begin{lemma} \label{lem:d-length}
Let $i \in \mathbb{Z}$ be any integer and $M$ be some graded submodule of $e_iA$ then $\gkdim(M) = 3 \ \Leftrightarrow \ \gkdim(e_iA/M) < 3$.
\end{lemma}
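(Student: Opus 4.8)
The plan is to prove the lemma by showing that each of the two conditions $\gkdim(M)=3$ and $\gkdim(e_iA/M)<3$ is equivalent to the single condition $M\neq 0$; the content of the lemma is then essentially that $e_iA$ is a \emph{critical} $A$-module of Gelfand--Kirillov dimension $3$.

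First I would record the two generalities I intend to use. From the short exact sequence of graded modules $0\to M\to e_iA\to e_iA/M\to 0$ we get $\dim_k(e_iA)_n=\dim_k(M)_n+\dim_k(e_iA/M)_n$ for all $n$, and for graded modules the Gelfand--Kirillov dimension is read off from the order of growth of the Hilbert function (here $e_iA$ and its cyclic quotient $e_iA/M$ are finitely generated over the quadric $A$, and $\dim_k(M)_n\le\dim_k(e_iA)_n=h(n-i)$). In particular $3=\gkdim(e_iA)=\max\bigl(\gkdim(M),\gkdim(e_iA/M)\bigr)$, which already gives the implication ``$\gkdim(e_iA/M)<3\Rightarrow\gkdim(M)=3$'', and which reduces the converse to the statement: if $M\neq 0$ then $\gkdim(e_iA/M)\le 2$.

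To prove this, assume $M\neq 0$ and choose a nonzero homogeneous $a\in M_j\subseteq A_{i,j}$, necessarily with $j\ge i$ (the case $j=i$ forces $M=e_iA$ and there is nothing to show). This is where I would invoke that $A$ is a $\mathbb Z$-domain (Theorem \ref{thm:quadricdomain}): the right $A$-module map $e_jA\to e_iA$, $x\mapsto ax$, is then injective, so $aA\cong e_jA$ and hence $\dim_k(aA)_n=h(n-j)$. Since $aA\subseteq M$, the module $e_iA/M$ is a quotient of $e_iA/aA$, so
\[
\dim_k(e_iA/M)_n\ \le\ \dim_k(e_iA/aA)_n\ =\ h(n-i)-h(n-j).
\]
Now $h$ is the Hilbert function of a cubic AS-regular $\mathbb Z$-algebra, so for $n\gg 0$ it is a polynomial of degree $2$ (up to a bounded $2$-periodic correction coming from the factor $1-t^2$) whose quadratic coefficient is independent of any shift; therefore $h(n-i)-h(n-j)=O(n)$, whence $\gkdim(e_iA/M)\le 2<3$, as required.

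I do not expect a genuine obstacle here: the argument is short once Theorem \ref{thm:quadricdomain} is available, which is precisely why this lemma is placed later in the appendix, after that theorem. The only two points that need a word of care are the identification $aA\cong e_jA$ --- the step that actually uses the domain property --- and the passage between Hilbert-function growth and Gelfand--Kirillov dimension, which I would state explicitly since $M$ itself need not be finitely generated, even though $e_iA$ and $e_iA/M$ are and that is all the argument uses.
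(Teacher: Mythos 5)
Your argument is mathematically fine \emph{if} the domain property were available, but inside this paper it is circular: you invoke Theorem \ref{thm:quadricdomain} (that the quadric $A$ is a $\mathbb{Z}$-domain) to get $aA\cong e_jA$, yet Lemma \ref{lem:d-length} is one of the preliminary results on which the appendix proof of Theorem \ref{thm:quadricdomain} rests. Concretely, Lemma \ref{lem:ANdomain} (that $A/N$ is a $\mathbb{Z}$-domain), Lemma \ref{lem:annihilatorgkdim} and Lemma \ref{lem:syzygy} all cite Lemma \ref{lem:d-length}, and the proof of Theorem \ref{thm:quadricdomain} is built from exactly these lemmas. So your remark that the lemma ``is placed later in the appendix, after that theorem'' has the logical order backwards: the lemma sits in the preliminary subsection precisely because it must be proved \emph{without} the domain property, and any proof of it that uses Theorem \ref{thm:quadricdomain} cannot be spliced in here.

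The paper's own proof avoids this entirely by a multiplicity argument. The direction $\gkdim(e_iA/M)<3\Rightarrow\gkdim(M)=3$ is the same trivial Hilbert-function observation you make. For the converse one assumes, for contradiction, that $\gkdim(M)=\gkdim(e_iA/M)=3$; then additivity of Hilbert functions gives $e(e_iA)=e(M)+e(e_iA/M)$ with both summands positive, while a direct computation shows $e(e_jA)=\tfrac12$ for every $j$, and (as in the argument of Proposition 2.21(iii) of ATV) the multiplicity of any module of Gelfand--Kirillov dimension $3$ is a multiple of $\tfrac12$; two positive such multiplicities cannot sum to $\tfrac12$. This uses only Hilbert-series data of the quadric, no injectivity of left multiplication, which is what makes it admissible at this point of the appendix. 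If you want to keep your line of reasoning, you would first have to reprove the domain property by some independent route, which is exactly what the appendix is structured to avoid.
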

\begin{proof}
$\gkdim(e_iA/M) < 3 \Rightarrow gkdim(M) = 3$ is trivial. Let us prove the other direction. \\
Assume by way of contradiction that $\gkdim(M)=\gkdim(e_iA/M)=3$. As both $M$ and $e_iA/M$ are nonzero we have $e(M)>0$ and $e(e_iA/M)>0$. However as they have equal $\gkdim$, we have $e(e_iA) = e(M) + e(e_iA/M)$. A direct computation shows that $e(e_jA)=\frac{1}{2}$ holds for all $j$. Similar to the proof of \cite[Proposition 2.21 (iii)]{ATV2} we then know that $e(M)$ and $e(e_iA/M)$ must be a nonnegative multiple of $\frac{1}{2}$. Contradiction!
\end{proof}

We now introduce a homogeneous ideal $N$ of $A$ in a similar fashion as was done in \cite{ATV2}:

\begin{enumerate}
\item $e_iA$ is a Noetherian object in $\Gr(A)$. In particular any ascending chain of submodules of $e_iA$ must stabilize. 
This allows us to set $N_i$ to be the largest submodule of $e_iA$ of $\gkdim \leq 2$. 
\item Define $N$ as $\displaystyle \bigoplus_{i \in \mathbb{Z}} N_i$. Then $N$ is a homogeneous two-sided ideal of $A$. To see why $N$ also has the structure of a left ideal, note that if $a \in A_{i,j}$ then $aN_j$ is a submodule of $e_i A$ of $\gkdim \leq 2$. This implies that $aN_j \subset N_i$ for otherwise $aN_j + N_i$ would be a strictly larger submodule than $N_i$ but it still has $\gkdim \leq 2$.
\end{enumerate}

\begin{remark} \label{rem:N2periodic}
Recall that $A$, being a quadric, is 2-periodic \cite[Proposition 5.6.1]{VdB38}. I.e. there is an isomorphism $A \cong A(2)$. This isomorphism induces an isomorphism $N \cong N(2)$.\\
To see this, fix any $i \in \mathbb{Z}$ and let $f_i: e_iA \rightarrow e_{i+2}A(2)$ be the induced isomorphism. Then $f_i(e_iN)$ has $\gkdim = 2$ in particular, being an $A(2)$-submodule of $e_{i+2}A(2)$ we must have $f_i(e_iN) \subset e_{i+2}N(2)$. By considering $f_i^{-1}$ we see that this must in fact be an equality. 
\end{remark}

\begin{lemma} \label{lem:ANdomain}
Let $N$ be as above, then $\overline{A} := A/N$ is a $\mathbb{Z}$-domain.
\end{lemma}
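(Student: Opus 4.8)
The plan is to show that $\overline{A} = A/N$ inherits the domain property by a standard "multiply into the $\gkdim$-large part" argument, mimicking the corresponding statement in \cite{ATV2}. Suppose $\bar{x} \in \overline{A}_{i,j}$ and $\bar{y} \in \overline{A}_{j,k}$ are nonzero with $\bar{x}\bar{y} = 0$; lift them to $x \in A_{i,j}$, $y \in A_{j,k}$, so that $x \notin N_i$ (viewing $x$ as an element of $e_iA$ in degree $j$), $y \notin N_j$, but $xy \in N_i$. The first step is to observe that since $x \notin N_i$, the cyclic submodule $xA \subseteq e_iA$ generated by $x$ in degree $j$ is \emph{not} contained in $N_i$; because $N_i$ is the \emph{largest} submodule of $\gkdim \le 2$, this forces $\gkdim(xA) = 3$. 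Indeed, if $\gkdim(xA) \le 2$ then $xA \subseteq N_i$. Now $xA$ is a homomorphic image of $e_jA$ via $a \mapsto xa$ (using that $A$ is a $\mathbb{Z}$-domain, Theorem \ref{thm:quadricdomain}, this map is injective, so $xA \cong e_jA$), hence $xA$ has the same Hilbert behaviour as $e_jA$.

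The key step is then: $xy \in N_i$ means $x y A \subseteq N_i$, so $\gkdim(xyA) \le 2$. But $xyA$ is the image of $yA \subseteq e_jA$ under the injection $e_jA \hookrightarrow e_iA$, $a \mapsto xa$ (injectivity again from Theorem \ref{thm:quadricdomain}), so $\gkdim(xyA) = \gkdim(yA)$. Therefore $\gkdim(yA) \le 2$, and since $yA$ is a submodule of $e_jA$ generated in degree $k$, this puts $yA \subseteq N_j$; in particular $y \in N_j$, contradicting $\bar{y} \ne 0$. This completes the argument, and the symmetric-looking roles of $x$ and $y$ do not actually need to be symmetric — the contradiction already arrives on the $y$ side.

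The main obstacle — really the only subtle point — is making sure the $\gkdim$ bookkeeping is clean: that $\gkdim(yA) = \gkdim(xyA)$ genuinely holds, which relies on $e_jA \hookrightarrow e_iA$, $a \mapsto xa$, being injective (so that it restricts to an isomorphism $yA \xrightarrow{\sim} xyA$), and this injectivity is exactly the $\mathbb{Z}$-domain property of $A$ from Theorem \ref{thm:quadricdomain}. One should also note that $yA$ really is a submodule of $e_jA$ so that the defining maximality property of $N_j$ applies, and that $N_j$ being the largest submodule of $\gkdim \le 2$ (which exists by the Noetherianity of $e_jA$ noted in the construction of $N$) lets us conclude $yA \subseteq N_j$. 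With those points in place the lemma follows; none of it requires the finer results (Lemmas \ref{lem:pdimpliesgkdim}--\ref{lem:d-length} or Remark \ref{rem:N2periodic}), which are presumably needed for later statements about $\overline{A}$ rather than for this one.
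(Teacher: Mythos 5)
Your argument is circular. Lemma \ref{lem:ANdomain} is a preliminary step in the appendix whose entire purpose is to prove Theorem \ref{thm:quadricdomain}: the paper's proof of that theorem begins ``By Lemma \ref{lem:ANdomain} it suffices to prove that $N=0$.'' So at the point where this lemma is proved, you are not allowed to assume that $A$ is a $\mathbb{Z}$-domain, yet you invoke Theorem \ref{thm:quadricdomain} twice, and both invocations are load-bearing: without injectivity of $e_jA \to e_iA$, $a \mapsto xa$, the map $yA \to xyA$ is only surjective, which gives $\gkdim(xyA) \leq \gkdim(yA)$ --- the wrong direction for your key step $\gkdim(yA) \leq 2$. (If $A$ were already known to be a domain, the lemma would of course be needed only as a formality in the paper's scheme, which is another sign the hypothesis cannot be assumed here.)

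The paper's proof avoids the domain property entirely and instead runs a Gelfand--Kirillov dimension argument: if $\overline{b}\in\overline{A}_{i,j}$ is nonzero and $\ker(\overline{b}\,\cdot\,)\subseteq e_j\overline{A}$ were nonzero, then since $e_j\overline{A}=e_jA/N_j$ has no nonzero submodules of $\gkdim\leq 2$ (by maximality of $N_j$), this kernel has $\gkdim = 3$, which forces $\gkdim(\ker(b\,\cdot\,))=3$ inside $e_jA$; Lemma \ref{lem:d-length} (the multiplicity count $e(e_jA)=e(M)+e(e_jA/M)$ with $e(e_jA)=\tfrac12$) then forces $\gkdim(bA)<3$, hence $\gkdim(\overline{b}\overline{A})\leq 2$, and maximality of $N_i$ gives $\overline{b}\overline{A}=0$, a contradiction. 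So contrary to your closing remark, Lemma \ref{lem:d-length} is exactly the ingredient that replaces the domain property here. To repair your proposal you would have to rephrase it along these lines (working with kernels and Lemma \ref{lem:d-length} rather than with injectivity of left multiplication), at which point it becomes the paper's proof.
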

\begin{proof}
Let $b \in A_{i,j} \setminus N_{i,j}$, we then need to show that the induced morphism
\[ e_j \overline{A} \xrightarrow{ \overline{b} \cdot } e_i \overline{A} \]
is injective. For this consider the commutative diagram

\begin{center}
\begin{tikzpicture}
\matrix(m)[matrix of math nodes,
row sep=3em, column sep=3em,
text height=1.5ex, text depth=0.25ex]
{0 & \ker(b \cdot) & e_jA & bA & 0 \\
 0 & \ker(\overline{b} \cdot) & e_j \overline{A} & \overline{b} \overline{A} & 0 \\};
\path[->,font=\scriptsize]
(m-1-1) edge (m-1-2)
(m-1-2) edge (m-1-3)
(m-1-3) edge (m-1-4)
(m-1-4) edge (m-1-5)
(m-2-1) edge (m-2-2)
(m-2-2) edge (m-2-3)
(m-2-3) edge (m-2-4)
(m-2-4) edge (m-2-5);
\path[->>,font=\scriptsize]
(m-1-2) edge (m-2-2)
(m-1-3) edge (m-2-3)
(m-1-4) edge (m-2-4)
;
\end{tikzpicture}
\end{center}
Now suppose by way of contradiction that $\ker(\overline{b} \cdot) \neq 0$. By construction $e_j\overline{A} = e_jA/N_j$ does not contain submodules of $\gkdim \leq 2$, hence $\gkdim \left( \ker(\overline{b} \cdot) \right) = 3$. This implies that $\gkdim \left( \ker(b \cdot) \right) = 3$ as well. By Lemma \ref{lem:d-length} we must have $\gkdim \left( bA \right) < 3$, hence also $\gkdim \left( \overline{b}\overline{A} \right) < 3$. As $\overline{b}\overline{A} \subset e_i \overline{A}$ and $e_i \overline{A}$ does not contain submodules of $\gkdim \leq 2$ we must have $\overline{b} \overline{A} =0$, contradicting the fact that $\overline{b} \neq 0$.
\end{proof}

\subsubsection{Dual modules}
Next we introduce the notion of dualization of (right-)$A$-modules. Throughout this section we will use $A^{op}$ to denote the opposite algebra of $A$. $A^{op}$ is a $\mathbb{Z}$-algebra by setting $(A^{op})_{i,j} = (A_{-j,-i})^{op}$. With this $\mathbb{Z}$-algebra structure graded right-$A^{op}$-modules can be identified with graded left-$A$-modules; for example $e_i A^{op}$ naturally corresponds to $Ae_{-i}$. It hence makes sense to let $\Gr(A^{op})$ denote the categories of graded left $A$-modules.\\
Let $M$ be a graded right-$A$-module, then $\displaystyle \bigoplus_{i \in \mathbb{Z}} \Hom_{\Gr(A)}(M,e_i A)$ naturally has the structure of a graded left-$A$-module via:

\[ A_{ij} \otimes \Hom_{\Gr(A)}(M,e_j A) \rightarrow  \Hom_{\Gr(A)}(M,e_i A): x \otimes f \mapsto x \cdot f \]
where
\[ (x \cdot f)(m) := x \cdot f(m) \]
We denote this graded left-$A$-module by $M^*$ or $\Hom_{\Gr(A)}(M,A)$ and it is called the dual of $M$. One easily checks that this induces a left-exact functor $\Hom_{\Gr(A)}(-,A) = (-)^*: \Gr(A) \rightarrow \Gr(A^{op})$.\\
Note that as $\Hom_{\Gr(A)}(e_iA,e_j A) \cong A_{ji}$ we naturally have $\Hom_{\Gr(A)}(e_iA,A) = (e_iA)^* = Ae_i$. This allows us to define the right derived functors 
$\RHom_{\Gr(A)}(-,A): D^b_f(\Gr(A)) \rightarrow D^b_f(\Gr(A^{op}))$. If $C^\bullet$ is some object in $D^b_f(\Gr(A))$ which is represented by a bounded exact complex of finitely generated projectives, say
\[  0 \rightarrow \bigoplus_{i \in \mathbb{Z}} e_iA^{\oplus l_{i,n}} \xrightarrow{\cdot M_n} \bigoplus_{i \in \mathbb{Z}} e_iA^{\oplus l_{i,n-1}} \rightarrow \ldots \rightarrow \bigoplus_{i \in \mathbb{Z}} e_iA^{\oplus l_{i,m}}  \rightarrow 0 \]
where $M_n$ is some 
matrix whose entries are homogeneous elements in $A$, then $\RHom_{\Gr(A)}(C^\bullet,A)$ is represented by the complex
\[ 0 \leftarrow \bigoplus_{i \in \mathbb{Z}} Ae_i^{\oplus l_{i,n}} \xleftarrow{ M_n \cdot } \bigoplus_{i \in \mathbb{Z}} Ae_i^{\oplus l_{i,n-1}} \leftarrow \ldots \leftarrow \bigoplus_{i \in \mathbb{Z}} Ae_i^{\oplus l_{i,m}}  \leftarrow 0 \]
(where each term in position $j$ in the original complex gives rise to a term in position $-j$ in the new complex)
Similar to the graded case we use the shorthand notation $(C^\bullet)^D := \RHom_{\Gr(A)}(C^\bullet,A)$
If $M$ is some graded right-$A$-module, then we denote $\Ext^i_{\Gr(A)}(M,A) := R^i \Hom_{\Gr(A)}(M,A) = h^i \left( M^D \right)$.

\begin{remark}
If we introduce $\RHom_{\Gr(A^{op})}(-,A): D^b_f(\Gr(A^{op})) \rightarrow D^b_f(\Gr(A))$ in an analogous way, then $((-)^D)^D \cong Id$ holds, giving rise to a biduality spectral sequence as in the graded case.
\end{remark}

For a bounded complex $C^\bullet$ of (finitely generated, graded right-)$A$-modules (or $A^{op}$-modules) we define the Hilbert series of $C^\bullet$ as
\[ h_{C^\bullet}(t) = \sum_{i \in \mathbb{Z}} h_i(C^\bullet) t^i \textrm{ with } h_i(C^\bullet) = \sum_{j \in \mathbb{Z}} (-1)^j \dim_k \left( (C^j)_i \right) \]
and we denote $e(C^\bullet)$ to be the leading coefficient of the series expension of $h_{C^\bullet}(t)$ in terms of $(1-t)^{-1}$ and $\gkdim(C^\bullet)$ as the highest power of $(1-t)^{-1}$ in this expansion, i.e. the order of pole of $h_{C^\bullet}(t)$

We then have the following:

\begin{lemma} \label{lem: Hilbertdualcomplex1}
Let $C^\bullet \in D^b_f(\Gr(A))$, then we have the following equality of rational functions:
\[ h_{(C^\bullet)^D}(t) = - t^4 \cdot h_{C^\bullet}(t^{-1}) \]
\end{lemma}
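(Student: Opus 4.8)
The plan is to reduce, by standard homological algebra, to the case of a single free module $e_aA$, and then to finish with an elementary Hilbert series computation driven by the Gorenstein property.

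\textbf{Reduction to complexes of projectives.} Since $A$ is a quadric it is AS-regular, so the simples $S_n$ have projective dimension bounded independently of $n$ and $\Gr(A)$ has finite global dimension. Hence every $C^\bullet\in D^b_f(\Gr(A))$ is quasi-isomorphic to a bounded complex $P^\bullet$ whose terms $P^j$ are finite direct sums of modules $e_aA$. First I would note that neither side of the claimed identity changes under quasi-isomorphism: the internal-degree-$i$ coefficient of $h_{C^\bullet}(t)$ is the Euler characteristic of the complex of $k$-vector spaces $(C^\bullet)_i$, which equals that of its cohomology, and $\RHom_{\Gr(A)}(-,A)$ sends quasi-isomorphisms to quasi-isomorphisms, so the same holds for $h_{(C^\bullet)^D}(t)$. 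Thus we may assume $C^\bullet=P^\bullet$. By the explicit description of $\RHom_{\Gr(A)}(-,A)$ recalled before the statement, $(P^\bullet)^D$ is obtained by dualizing each term ($(e_aA)^*=Ae_a$) and sending a term in cohomological position $j$ to position $-j$; since $(-1)^{-j}=(-1)^{j}$ this gives
\[ h_{(P^\bullet)^D}(t)=\sum_j(-1)^{j}h_{(P^j)^*}(t),\qquad h_{P^\bullet}(t)=\sum_j(-1)^{j}h_{P^j}(t). \]
So both sides are additive in the terms $P^j$, and it suffices to verify the identity for $C^\bullet=e_aA$ placed in a single cohomological degree, where $(C^\bullet)^D=(e_aA)^*=Ae_a$.

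\textbf{The base case.} Here I would simply unwind the definitions. Writing $H(t)=\sum_{n\ge 0}h(n)t^{n}$ for the Hilbert series of $A$ and keeping careful track of internal degrees under $(-)^*\colon\Gr(A)\to\Gr(A^{op})$ together with the reindexing $j\mapsto -j$, the Hilbert series of $e_aA$ and of its dual $Ae_a$ are expressed through $H(t)$ and $H(t^{-1})$, and the asserted identity then reduces to the Gorenstein functional equation for $H$. This equation holds precisely because $A$ is a quadric: $A$ has the Hilbert series $1/\bigl((1-t)^{2}(1-t^{2})\bigr)$ of a cubic AS-regular algebra, and the elementary identity $1/\bigl((1-t^{-1})^{2}(1-t^{-2})\bigr)=-t^{4}/\bigl((1-t)^{2}(1-t^{2})\bigr)$ is exactly the input needed, the exponent $4$ being the Gorenstein shift recorded by the resolution of Lemma~\ref{lem:quadricresolution} (the gap between $e_{i+1}A$ and the last term $e_{i+5}A$, corrected by the length of the resolution).

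\textbf{Main obstacle.} The homological reductions are routine; the one place that demands care is the bookkeeping of internal degrees and cohomological positions across the dualization and the reindexing $j\mapsto -j$, i.e.\ making sure that the power of $t$ produced by dualizing $e_aA$ combines with the alternating signs in a complex so as to yield exactly the factor $-t^{4}$ and the substitution $t\mapsto t^{-1}$. Once the conventions are fixed the remaining content is the short Hilbert series manipulation above.
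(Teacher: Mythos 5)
Your strategy coincides with the paper's: both reduce, by additivity of $h_{(-)}$ and the term-by-term description of $(-)^D$ on a bounded complex of projectives, to the case of a single module $e_aA$ concentrated in one cohomological position, and then conclude by an explicit rational-function computation with $H(t)=1/\bigl((1-t)^2(1-t^2)\bigr)$. Your preliminary remarks on quasi-isomorphism invariance and finite global dimension are fine and slightly more explicit than the paper's one-line appeal to linearity.

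The problem is the step you defer, which you yourself identify as the crux and then do not carry out. With the conventions in force ($h_{e_aA}(t)=t^aH(t)$, and $(e_aA)^*=Ae_a$ graded as $e_{-a}A^{op}$, so $h_{Ae_a}(t)=t^{-a}H(t)$), the Gorenstein identity is $H(t^{-1})=-t^4H(t)$, and for $C^\bullet=e_aA$ in position $j$ one finds $h_{(C^\bullet)^D}(t)=(-1)^jt^{-a}H(t)$ whereas $-t^4\,h_{C^\bullet}(t^{-1})=(-1)^jt^{\,8-a}H(t)$: the two sides differ by $t^8$, already for $C^\bullet=A=e_0A$. What the computation actually yields is $h_{(C^\bullet)^D}(t)=-t^{-4}\,h_{C^\bullet}(t^{-1})$, which is exactly where the paper's own proof ends; the exponent $+4$ in the displayed statement is a typo (harmless for Corollary \ref{lem:Hilbertdualcomplex}, which only uses the order of the pole at $t=1$ and the overall sign). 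So your claim that the elementary identity $1/\bigl((1-t^{-1})^2(1-t^{-2})\bigr)=-t^{4}/\bigl((1-t)^2(1-t^2)\bigr)$ delivers ``exactly the factor $-t^{4}$'' is not substantiated: combined with the degree shift $t^a\mapsto t^{-a}$ coming from dualization it delivers $-t^{-4}$ instead. A complete write-up would have either detected this discrepancy and proved the corrected formula, or else contains a genuine sign-of-exponent error in the bookkeeping you flagged as the main obstacle.
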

\begin{proof}
By linearity of the definition of $h_{C^\bullet}$, it suffices to prove the equality in case $C^\bullet$ is given by some projective $e_iA$ concentrated in position $j$. In this case $(C^\bullet)^D$ is given by $Ae_i$ (hence $e_{-i}A^{op}$) concentrated in position $(-j)$ such that
\begin{eqnarray*} h_{(C^\bullet)^D}(t) = (-1)^{-j} \cdot \frac{t^{-i}}{(1-t^2)(1-t)^2} & = & (-1)^j \cdot \frac{(t^{-1})^i}{-t^{4}(1-(t^{-1})^2)(1-t^{-1})^2}\\
& = &  -t^{-4} \cdot \frac{(t^{-1})^i}{(1-(t^{-1})^2)(1-t^{-1})^2} \\ &  = & -t^{-4} \cdot h_{C^\bullet}(t^{-1})
\end{eqnarray*}

\end{proof}

\begin{corollary} \label{lem:Hilbertdualcomplex}
Let $C^\bullet$ be a bounded complex of (finitely generated) right $A$-modules. Let $m = \gkdim(C^\bullet)$, then
\begin{enumerate}
\item $\gkdim((C^\bullet)^D) = m$
\item $e((C^\bullet)^D) = (-1)^{m+1} e(C^{\bullet})$
\end{enumerate}
\end{corollary}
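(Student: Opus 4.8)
The plan is to deduce Corollary \ref{lem:Hilbertdualcomplex} from Lemma \ref{lem: Hilbertdualcomplex1} by reading off the order and leading coefficient of the pole at $t=1$ on both sides of the identity $h_{(C^\bullet)^D}(t) = -t^{-4} \cdot h_{C^\bullet}(t^{-1})$. First I would recall the definitions: $\gkdim(C^\bullet)$ is by definition the order of the pole of $h_{C^\bullet}(t)$ at $t=1$ (equivalently the highest power of $(1-t)^{-1}$ in the expansion), and $e(C^\bullet)$ is the corresponding leading coefficient. So the whole statement is really about how these two invariants transform under the substitution $t \mapsto t^{-1}$ together with multiplication by the unit $-t^{-4}$.

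The key computation is the behaviour of a pole at $t=1$ under $t \mapsto t^{-1}$. Write $h_{C^\bullet}(t) = e \cdot (1-t)^{-m} + (\text{lower order poles at } t=1)$ near $t=1$, where $m = \gkdim(C^\bullet)$ and $e = e(C^\bullet)$. Substituting $t \mapsto t^{-1}$ gives $(1 - t^{-1})^{-m} = \bigl( \tfrac{t-1}{t} \bigr)^{-m} = (-1)^m t^m (1-t)^{-m}$. Hence $h_{C^\bullet}(t^{-1}) = (-1)^m t^m e \cdot (1-t)^{-m} + \cdots$ near $t = 1$, and multiplying by $-t^{-4}$ and evaluating the regular factor $(-1)^{m+1} t^{m-4}$ at $t=1$ (where it equals $(-1)^{m+1}$) shows that $h_{(C^\bullet)^D}(t)$ has a pole at $t=1$ of order exactly $m$ with leading coefficient $(-1)^{m+1} e$. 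That is precisely the two assertions. I would also note that the lower-order terms in the expansion of $h_{C^\bullet}(t^{-1})$ at $t=1$ stay of strictly lower order after multiplication by the regular function $-t^{-4}$, so they do not interfere with the order or the leading coefficient; and one should check $m \geq 0$ is implicitly fine since if $h_{C^\bullet}$ is regular at $t=1$ the statement is vacuous or trivially consistent.

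The only mild subtlety — and the place I would be most careful — is making sure the pole "at $t=1$" is the right thing and that no cancellation can occur: since $-t^{-4}$ and $t^m$ are units in the local ring at $t=1$ and are nonzero there, multiplication by them cannot raise or lower the pole order, and it multiplies the leading Laurent coefficient by their value at $t=1$, namely $(-1)^{m+1}$. I would phrase the argument in terms of the Laurent expansion of the rational function $h_{C^\bullet}(t)$ in the variable $(1-t)$ around $t=1$, which is legitimate because $h_{C^\bullet}(t) \in \mathbb{Q}(t)$ by the definition of the Hilbert series of a bounded complex of finitely generated modules. This makes the proof a two-line consequence of Lemma \ref{lem: Hilbertdualcomplex1}; no essential obstacle remains beyond bookkeeping the sign $(-1)^{m+1}$ correctly.

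\begin{proof}
By definition $\gkdim(C^\bullet)$ is the order of the pole of the rational function $h_{C^\bullet}(t)$ at $t=1$ and $e(C^\bullet)$ is the leading coefficient of its Laurent expansion in powers of $(1-t)$ there; likewise for $(C^\bullet)^D$. Put $m = \gkdim(C^\bullet)$ and $e = e(C^\bullet)$, so that near $t=1$
\[ h_{C^\bullet}(t) = \frac{e}{(1-t)^m} + (\text{terms with a pole of order} < m). \]
Substituting $t \mapsto t^{-1}$ and using $1 - t^{-1} = -t^{-1}(1-t)$ we get $(1-t^{-1})^{-m} = (-1)^m t^m (1-t)^{-m}$, hence
\[ h_{C^\bullet}(t^{-1}) = \frac{(-1)^m t^m e}{(1-t)^m} + (\text{lower order at } t=1). \]
By Lemma \ref{lem: Hilbertdualcomplex1}, $h_{(C^\bullet)^D}(t) = -t^{-4} h_{C^\bullet}(t^{-1})$, so
\[ h_{(C^\bullet)^D}(t) = \frac{(-1)^{m+1} t^{m-4} e}{(1-t)^m} + (\text{lower order at } t=1). \]
Since $t^{m-4}$ and $-t^{-4}$ are regular and nonvanishing at $t=1$, multiplication by them does not change the order of the pole, and the lower-order terms remain of order $< m$. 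Therefore $h_{(C^\bullet)^D}(t)$ has a pole of order exactly $m$ at $t=1$, i.e. $\gkdim((C^\bullet)^D) = m$, and its leading coefficient is $(-1)^{m+1} t^{m-4} e$ evaluated at $t=1$, namely $(-1)^{m+1} e$. This proves both assertions.
\end{proof}
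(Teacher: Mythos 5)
Your proof is correct and follows essentially the same route as the paper: both deduce the corollary from Lemma \ref{lem: Hilbertdualcomplex1} by expanding the Hilbert series in powers of $(1-t)$ around $t=1$ and tracking how the substitution $t\mapsto t^{-1}$ and the unit factor affect the pole order and leading coefficient, your version merely phrasing locally (units at $t=1$) what the paper spells out via the explicit expansions $t^{n}=1+\sum_{j\geq 1}\beta_{n,j}(1-t)^{j}$. Note that you use $-t^{-4}$ rather than the $-t^{4}$ appearing in the lemma's statement; this matches what the lemma's own proof actually establishes, and in any case both factors equal $-1$ at $t=1$, so the conclusion is unaffected.
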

\begin{proof}
Suppose 
\[ h_{C^\bullet}(t) = \frac{\sum_{i=0}^\infty \alpha_i (1-t)^i}{(1-t)^m} \]
with $\alpha_0 = e(C^{\bullet}) \neq 0$. Then we need to show that 
\[ h_{(C^\bullet)^D}(t) = \frac{\sum_{i=0}^\infty \widetilde{\alpha_i} (1-t)^i}{(1-t)^m} \]
with $\widetilde{\alpha_0} = (-1)^{m+1} \alpha_0$.\\ \\

First note that for each $n \in \mathbb{Z}$ we can write:
\[ t^n = 1 + \sum_{j =1}^\infty \beta_{n,j} (1-t)^j \]
with $\beta_{n,j} \in k^*$. Then by Lemma \ref{lem: Hilbertdualcomplex1} we have

\begin{eqnarray*}
 h_{(C^\bullet)^D}(t) & = & \frac{\sum_{i=0}^\infty \alpha_i (-t^{-4})(1-t^{-1})^i}{(1-t^{-1})^m} \\
& = & \frac{\sum_{i=0}^\infty - \alpha_i (-1)^i t^{-i-4} (1-t)^i}{(-1)^m t^{-m} (1-t)^m} \\
& = & \frac{\sum_{i=0}^\infty \alpha_i (-1)^{m+i+1} t^{m-i-4} (1-t)^i}{(1-t)^m} \\
& = & \frac{\sum_{i=0}^\infty \widetilde{\alpha_i} (1-t)^i}{(1-t)^m} \\
\end{eqnarray*}
where 
\[ \widetilde{\alpha_i} = (-1)^{m+i+1} \left( \alpha_i + \sum_{j=1}^i (-1)^j \beta_{m-i+j-4,j} \alpha_{i-j} \right) \]
\end{proof}

\begin{lemma} \label{lem:torsionext3}
Let $M$ be a finitely generated right-$A$-module, then $\Ext_{\Gr(A)}^3(M,A)$ is a finite dimensional $k$-vectorspace.
\end{lemma}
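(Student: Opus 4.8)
The plan is to reduce the statement to a claim about an $\Ext^1$ and then feed it into the Gelfand--Kirillov dimension machinery of this appendix.

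\emph{Reduction to an $\Ext^1$ of a second syzygy.} First I would use that $A$ has finite global dimension and that each $e_iA$ is a Noetherian object of $\Gr(A)$: then $M$ has a resolution $0\to P_3\xrightarrow{\phi}P_2\xrightarrow{\psi}P_1\to P_0\to M\to 0$ by finitely generated projectives. In particular $\Ext^q_{\Gr(A)}(M,A)=0$ for $q\geq 4$, so $\Ext^3_{\Gr(A)}(-,A)$ is right exact. Put $L:=\ker(P_1\to P_0)=\operatorname{im}\psi\subseteq P_1$, a second syzygy of $M$. As $\phi$ is injective, $0\to P_3\xrightarrow{\phi}P_2\to L\to 0$ is exact, so $\pd(L)\leq 1$; dualizing this sequence and using $\Ext^1_{\Gr(A)}(P_2,A)=0$ gives a natural isomorphism $\Ext^3_{\Gr(A)}(M,A)\cong\operatorname{coker}\!\bigl(\phi^{\ast}\colon P_2^{\ast}\to P_3^{\ast}\bigr)=\Ext^1_{\Gr(A)}(L,A)$. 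Note that $L$, being a submodule of a finitely generated free module, has trivial socle (Lemma \ref{lem:trivialsocle} applied to the $e_iA$) and injects into $L^{\ast\ast}$. It thus suffices to prove that $\Ext^1_{\Gr(A)}(L,A)$ is finite-dimensional for such an $L$.

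\emph{A vanishing from biduality.} Next I would feed $L$ into the biduality of the previous subsection. The complex $L^D=\bigl[L^{\ast}\to\Ext^1_{\Gr(A)}(L,A)\bigr]$ is concentrated in cohomological degrees $0,1$ (because $\pd(L)\leq 1$), and $(L^D)^D\cong L$ is concentrated in degree $0$. In the biduality spectral sequence, whose $E_2$-page consists of the modules $\Ext^p_{\Gr(A^{op})}\!\bigl(\Ext^q_{\Gr(A)}(L,A),A\bigr)$ with $q\in\{0,1\}$, the entry $\Hom_{\Gr(A^{op})}\!\bigl(\Ext^1_{\Gr(A)}(L,A),A\bigr)$ survives unchanged to $E_\infty$: every differential meeting it has source or target either a negative-degree $\Ext_{\Gr(A^{op})}$ or an $\Ext^{\geq2}_{\Gr(A)}(L,A)=0$. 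But it sits in a total degree in which the abutment $L$ vanishes, so it must be zero. Hence $V:=\Ext^1_{\Gr(A)}(L,A)$ satisfies $\Hom_{\Gr(A^{op})}(V,A)=0$.

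\emph{Bounding the Gelfand--Kirillov dimension.} It then remains to show $\gkdim(V)\leq 0$, which forces $V$ finite-dimensional. This is the $\mathbb{Z}$-algebra transcription of the dimension theory of \cite[\S 2]{ATV2}: starting from $\Hom_{\Gr(A^{op})}(V,A)=0$ and $\pd(L)\leq 1$, one combines Lemma \ref{lem:pdimpliesgkdim}, the multiplicity computations of Lemma \ref{lem: Hilbertdualcomplex1} and Corollary \ref{lem:Hilbertdualcomplex} applied to $L^D$, Lemma \ref{lem:d-length} relating a submodule of $e_iA$ to its quotient, and the fact that $\overline{A}=A/N$ is a $\mathbb{Z}$-domain (Lemma \ref{lem:ANdomain}, to run a d\'evissage on the part of $\gkdim=3$); the outcome is $\min\{p:\Ext^p_{\Gr(A^{op})}(V,A)\neq 0\}\geq 2$, and then the Cohen--Macaulay equality $\gkdim(V)=3-\min\{p:\Ext^p_{\Gr(A^{op})}(V,A)\neq 0\}$ gives $\gkdim(V)\leq 0$. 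Throughout, every occurrence of $A(-i)$ in the arguments of loc.\ cit.\ is to be replaced by $e_iA$, exactly as in the earlier lemmas of this appendix.

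The hard part will be precisely this last step: proving --- before one even knows that $A$ itself is a $\mathbb{Z}$-domain --- that grade and Gelfand--Kirillov dimension are complementary for finitely generated $A$- and $A^{op}$-modules. That is where the preliminary lemmas of the appendix genuinely do the work; the reduction in the first two paragraphs is then formal homological algebra.
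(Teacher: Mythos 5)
Your first two steps are sound: the identification $\Ext^3_{\Gr(A)}(M,A)\cong\Ext^1_{\Gr(A)}(L,A)$ for a second syzygy $L$ with $\pd(L)\leq 1$ is correct, and the biduality spectral sequence does force $\Hom_{\Gr(A^{op})}(V,A)=0$ for $V=\Ext^3_{\Gr(A)}(M,A)$, exactly as you argue. The genuine gap is the third step, which is where all the content of the lemma sits and which you yourself flag as ``the hard part''. Two concrete problems. First, the numbers do not close: even granting a Cohen--Macaulay equality $\gkdim(V)=3-\min\{p:\Ext^p_{\Gr(A^{op})}(V,A)\neq0\}$, the bound $\min\geq 2$ you propose to extract only gives $\gkdim(V)\leq 1$, not $\leq 0$; finite dimensionality needs the vanishing of $\Ext^p_{\Gr(A^{op})}(V,A)$ for $p=0,1,2$. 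Your spectral sequence only kills $p=0$: the entries in the row of $V$ with $p\geq 2$ receive incoming $d_2$-differentials from $\Ext^{p-2}_{\Gr(A^{op})}(L^{\ast},A)$, so their vanishing at $E_\infty$ gives no information about $\Ext^p_{\Gr(A^{op})}(V,A)$ itself. What you actually need is the Auslander-type inequality $j\bigl(\Ext^3(M,A)\bigr)\geq 3$, which is essentially as hard as the lemma being proved. Second, the machinery you invoke is not available here: neither the Cohen--Macaulay equality nor the Auslander condition is established anywhere in this paper for quadrics, and they cannot simply be cited from the graded literature, since the whole point of this appendix is that such dimension theory has to be re-proved (or circumvented) in the $\mathbb{Z}$-algebra setting with the short list of lemmas given. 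Any attempted transcription would moreover have to avoid circularity: Theorem \ref{thm:quadricdomain} (that $A$ itself is a $\mathbb{Z}$-domain) is deduced \emph{from} the present lemma, so your unspecified d\'evissage may use Lemma \ref{lem:ANdomain} for $A/N$ but not the domain property of $A$.

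For comparison, the paper's own proof is a one-line citation: the proof of \cite[Proposition 2.46(ii)]{ATV2} is observed to go through verbatim once the shifted free modules $A(-i)$ are replaced by $e_iA$, exactly as in the neighbouring lemmas of the appendix; it does not route through grade/Gelfand--Kirillov complementarity at all. If you want a self-contained argument along your lines, the missing ingredient you must actually supply is the vanishing $\Ext^p_{\Gr(A^{op})}\bigl(\Ext^3_{\Gr(A)}(M,A),A\bigr)=0$ for $p\leq 2$ (equivalently, a direct proof that $\Ext^3_{\Gr(A)}(M,A)$ is right bounded), and that is not a formal consequence of the lemmas currently in the appendix.
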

\begin{proof}
This is an immediate generalization of \cite[Proposition 2.46(ii)]{ATV2}.
\end{proof}

We now prove some more results on the homogeneous ideal $N$ as above:

\begin{lemma} \label{lem:leftgkdim}
For each $i \in \mathbb{Z}$ we have $\gkdim(Ne_i) \leq 2$.
\end{lemma}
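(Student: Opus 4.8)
The plan is to read off $\gkdim(Ne_i)\le 2$ directly from the defining property of $N$ — that every ``row'' $e_jN$ has $\gkdim\le 2$ — together with the $2$-periodicity recorded in Remark \ref{rem:N2periodic}. The key observation is that a \emph{column} of $N$ has, as its Hilbert function, an interleaving of the Hilbert functions of two of the rows, so its growth is bounded by theirs.

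First I would note that the isomorphism $N\cong N(2)$ of Remark \ref{rem:N2periodic} is homogeneous, so $\dim_k N_{m,n}=\dim_k N_{m+2,n+2}$ for all $m,n$; hence $\dim_k N_{m,n}$ depends only on $n-m$ and on the parity of $m$. Fix $i$. Since $A$ is connected we have $N_{m,i}=0$ unless $m<i$, so as a graded left $A$-module $Ne_i=\bigoplus_{l>0}N_{i-l,i}$, with Hilbert function $l\mapsto\dim_k N_{i-l,i}$ in the grading $l=i-m$. Because the parity of $i-l$ alternates with $l$, for any $m_0\equiv i$ and $m_1\equiv i-1 \pmod 2$ we have $\dim_k N_{i-l,i}=\dim_k N_{m_0,m_0+l}$ when $l$ is even and $\dim_k N_{i-l,i}=\dim_k N_{m_1,m_1+l}$ when $l$ is odd. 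Therefore
\[
\dim_k (Ne_i)_{\le L}\;=\;\sum_{l=1}^{L}\dim_k N_{i-l,i}\;\le\;\sum_{l=1}^{L}\dim_k N_{m_0,m_0+l}+\sum_{l=1}^{L}\dim_k N_{m_1,m_1+l}\;=\;\dim_k(e_{m_0}N)_{\le L}+\dim_k(e_{m_1}N)_{\le L}.
\]
Applying $\limsup_{L\to\infty}\log(-)/\log L$ gives $\gkdim(Ne_i)\le\max\{\gkdim(e_{m_0}N),\gkdim(e_{m_1}N)\}\le 2$, the last inequality being the defining property of $N$ (each $N_j$ is the largest submodule of $e_jA$ of $\gkdim\le 2$). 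This proves the lemma.

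I do not expect a genuine obstacle: the only points needing (routine) care are the parity bookkeeping in the interleaving and the standard fact that, for the modules in sight, $\gkdim$ is governed by the growth of $\dim_k M_{\le L}$ — legitimate since all Hilbert functions are polynomially bounded by the boundedness axiom for AS-regular $\mathbb Z$-algebras. The same computation can equivalently be run on (rational) Hilbert series, writing $h_{Ne_i}(t)$ as a combination of $h_{e_{m_0}N}(t)$, $h_{e_{m_1}N}(t)$ and their reflections $t\mapsto -t$ and noting that only the untwisted terms contribute to the pole at $t=1$, which therefore has order $\le 2$. Alternatively, granting the left-handed analogue of Lemma \ref{lem:d-length} (valid since $e(Ae_j)=\frac12$ for all $j$ and the Auslander–Gorenstein machinery is left-right symmetric), it would suffice to check $\gkdim(\overline A e_i)=3$, which follows from $\overline A$ being a $\mathbb Z$-domain (Lemma \ref{lem:ANdomain}) with all right columns $e_j\overline A$ of $\gkdim 3$ together with the same $2$-periodicity.
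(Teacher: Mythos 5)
Your proof is correct and follows essentially the same route as the paper: both use the $2$-periodicity of $N$ from Remark \ref{rem:N2periodic} to identify $\dim_k N_{i-l,i}$ with $\dim_k N_{i,i+l}$ or $\dim_k N_{i+1,i+l+1}$ according to the parity of $l$, and then bound the growth of the column $Ne_i$ by that of the two rows $e_iN$, $e_{i+1}N$, which have $\gkdim\le 2$ by the definition of $N$. The only difference is cosmetic (you sum cumulative dimensions and take a $\limsup$, the paper takes a maximum of two polynomial bounds), and the small point about $N_{i,i}=0$ is immaterial to the growth estimate.
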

\begin{proof}
By construction we know that for each $i$ we have $\gkdim(e_iN) \leq 2$. In particular there is for each $i$ a degree 2 polynomial $P_i$ such that $\dim_k(N_{i,i+l}) \leq P_i(l)$ holds for all $l$ sufficiently large. Now fix some $i \in \mathbb{Z}$, we must show that there is a degree 2 polynomial $Q$ such that $\dim_k(N_{i-l,i}) \leq Q(l)$. For this recall that the 2-periodicity of $A$ descends to $N$ (see Remark \ref{rem:N2periodic}). In particular we have
\[ \dim_k(N_{i-l,i}) = \begin{cases} \dim_k(N_{i,i+l}) & \textrm{if } l \textrm{ is even} \\ \dim_k(N_{i+1,i+l+1}) & \textrm{if } l \textrm{ is odd} \end{cases} \]
Without loss of generality we can now assume that $P_i(l) \geq P_{i+1}(l)$ holds for all $l$ sufficiently large. We can finish the proof by setting $Q=P_i$.

\end{proof}

\begin{lemma} \label{lem:annihilatorgkdim}
Fix some $i \in \mathbb{Z}$ and let $I \subset Ae_i$ be the left-annihilator of $e_i N$, then $\gkdim(I) =3$.
\end{lemma}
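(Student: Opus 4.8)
The plan is to realise $I$ as a dual module and compare its Hilbert function with that of $Ae_i$.

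First I would dualise the short exact sequence $0 \to e_iN \to e_iA \to e_i\overline{A} \to 0$ by applying $\Hom_{\Gr(A)}(-,A) = (-)^{\ast}$. Since $(e_iA)^{\ast} = Ae_i$ and $\Ext^1_{\Gr(A)}(e_iA,A)=0$, this yields an exact sequence of graded left $A$-modules
\[ 0 \to I' \to Ae_i \xrightarrow{\ \rho\ } (e_iN)^{\ast} \to \Ext^1_{\Gr(A)}(e_i\overline{A},A) \to 0 , \]
where $I' := (e_i\overline{A})^{\ast} = \Hom_{\Gr(A)}(e_i\overline{A},A)$. A graded homomorphism $e_i\overline{A} = e_iA/e_iN \to e_jA$ is exactly an element $a \in A_{j,i}$ with $a\cdot e_iN = 0$, so $I' = I$ and $\rho$ is the restriction map $a\mapsto (x\mapsto ax)$. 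In particular $Ae_i/I$ embeds into $(e_iN)^{\ast}$, and it remains to bound the growth of $(e_iN)^{\ast}$.

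Next I would show $\gkdim\!\big((e_iN)^{\ast}\big)\le 2$. Since $e_iA$ is Noetherian, $e_iN = N_i$ is finitely generated, say by elements in degrees $m_1,\ldots,m_r$. Any graded homomorphism $f\colon e_iN \to e_jA$ has image of $\gkdim$ at most $\gkdim(e_iN)\le 2$, while $e_jN = N_j$ is by definition the largest submodule of $e_jA$ of $\gkdim\le 2$; hence $f(e_iN)\subseteq e_jN$. Therefore $\dim_k (e_iN)^{\ast}_j = \dim_k \Hom_{\Gr(A)}(e_iN,e_jA) = \dim_k \Hom_{\Gr(A)}(e_iN,e_jN) \le \sum_{l=1}^r \dim_k N_{j,m_l}$, a homomorphism being determined by the images of the generators. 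By Lemma \ref{lem:leftgkdim} each $Ne_{m_l}$ has $\gkdim \le 2$, so its Hilbert function is eventually bounded by a linear polynomial, whence $\dim_k (e_iN)^{\ast}_j$ grows at most linearly in $|j|$ and $\gkdim\!\big((e_iN)^{\ast}\big)\le 2$.

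Finally, from $Ae_i/I \hookrightarrow (e_iN)^{\ast}$ we get $\dim_k I_j \ge \dim_k A_{j,i} - \dim_k (e_iN)^{\ast}_j = h(i-j) - O(|j|)$, which grows quadratically since $h$ is (eventually) a degree-$2$ quasi-polynomial; hence $\gkdim(I)\ge 3$. As $I\subseteq Ae_i$ forces $\gkdim(I)\le \gkdim(Ae_i)=3$, we conclude $\gkdim(I)=3$. The only genuinely delicate point is the reduction $\Hom_{\Gr(A)}(e_iN,e_jA)=\Hom_{\Gr(A)}(e_iN,e_jN)$ together with the passage from "$\gkdim\le 2$" to the pointwise linear bound on $\dim_k N_{j,m_l}$; once these are in place the Hilbert-function comparison is immediate.
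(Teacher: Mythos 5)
Your proof is correct and, underneath the dual-module packaging, runs on the same engine as the paper's: in both cases one kills $I$ by right multiplication against a finite set of homogeneous generators of $e_iN$ and observes that the image lands in left pieces of $N$, whose growth is controlled by Lemma \ref{lem:leftgkdim}, so that $\gkdim(Ae_i/I)\le 2$. The paper does this by writing $e_iN=\sum_j x_{i,j}A$, noting $Ae_i/I$ embeds in $\bigoplus_j Ax_{i,j}\subseteq \bigoplus_j Ne_j$ (two-sidedness of $N$), and then quoting Lemma \ref{lem:d-length}; you instead dualize $0\to e_iN\to e_iA\to e_i\overline{A}\to 0$, identify $(e_i\overline{A})^{*}=I$, and bound $(e_iN)^{*}$ using the maximality of $N_j$ to get $\Hom_{\Gr(A)}(e_iN,e_jA)=\Hom_{\Gr(A)}(e_iN,e_jN)$ --- the same fact in different clothing, since your map $\rho$ is exactly the product of the paper's maps $\cdot x_{i,j}$. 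One small caveat in your finish: $\gkdim(Ne_{m_l})\le 2$ controls the growth of partial sums (equivalently the pole order of the Hilbert series), but does not by itself give the pointwise linear bound on $\dim_k N_{j,m_l}$ that your ``$h(i-j)-O(|j|)$'' subtraction uses. This is easily repaired and costs nothing: the termwise bound $\dim_k(e_iN)^{*}_j\le\sum_l\dim_k N_{j,m_l}$ already forces $\gkdim\bigl((e_iN)^{*}\bigr)\le 2$, hence $\gkdim(Ae_i/I)\le 2$, and then the left-module analogue of Lemma \ref{lem:d-length} (or the multiplicity argument $e(Ae_i)=\tfrac12>0$) yields $\gkdim(I)=3$ without any pointwise estimate, which is exactly how the paper concludes.
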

\begin{proof}

As $e_i N$ is a submodule of the noetherian right $A$-module $e_i A$, it is finitely generated. I.e. there are elements $x_{i,i} \in A_{i,i}, x_{i,i+1} \in A_{i,i+1}, \ldots , x_{i,n} \in A_{i,n}$ such that
\[ e_i N = \sum_{j=i}^n x_{i,j} A = \sum_{j=i}^n x_{i,j} e_j A \]
Let $I_j$ be the left annihilator of $x_{i,j}$, i.e.
\[ I_j = \{ a \in Ae_i \mid ax_{i,j} = 0 \]
Then there is an exact sequence of left $A$-modules
\[ 0 \rightarrow I_j \rightarrow Ae_i \xrightarrow{ \cdot x_{i,j}} Ae_j \]
such that $Ae_i / I_j \cong A x_{i,j}$. Moreover $I = \bigcap_j I_j$ such that
\[ \gkdim(A/I) \leq max_j \gkdim(A x_{i,j}) \leq max_j \gkdim(A e_j) = 2 \]
The result now follows from Lemma \ref{lem:d-length}.
\end{proof}

\begin{lemma} \label{lem:syzygy}
For each $i \in \mathbb{Z}$ we have:
\begin{enumerate}[(i)]
\item $e_i N$ is a second syzygy
\item $pd(e_iN) \leq 1$
\end{enumerate}
\end{lemma}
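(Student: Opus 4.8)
The plan is to build everything on the short exact sequence
\[
0 \to e_iN \to e_iA \to e_i\overline{A} \to 0, \qquad \overline{A} := A/N,
\]
which presents $e_iN$ as the first syzygy of the cyclic module $e_i\overline{A}$ (and $e_iA \to e_i\overline{A}$ is a projective cover, since $N_{i,i}=0$: otherwise $e_i\in N$, forcing $N_i=e_iA$ and $\gkdim(e_iA)\le 2$, absurd as $\gkdim(e_iA)=3$). The single fact doing all the work is a consequence of the maximality built into $N_i$: \emph{$e_i\overline{A}$ has no nonzero submodule of $\gkdim\le 2$}. Indeed, if $M_0\subseteq e_i\overline{A}$ had $\gkdim(M_0)\le 2$, its preimage $\widetilde{M_0}\subseteq e_iA$ would be an extension of $M_0$ by $e_iN$, hence also of $\gkdim\le 2$ (recall $\gkdim(e_iN)\le 2$ by construction), forcing $\widetilde{M_0}\subseteq N_i$ and $M_0=0$. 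In particular $e_i\overline{A}$ has no nonzero finite-dimensional submodule.

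For $(ii)$: since $e_iN$ is the first syzygy of $e_i\overline{A}$ it suffices to prove $\pd(e_i\overline{A})\le 2$, equivalently (by minimality of projective resolutions) that $\Ext^3_{\Gr(A)}(e_i\overline{A},A)=0$. This is the genuine obstacle, and it is here that the homological backbone of the theory must be invoked: since $e_i\overline{A}$ has no finite-dimensional submodule it has positive depth, and the Auslander--Buchsbaum equality $\pd+\operatorname{depth}=\gldim=3$ — equivalently, local duality identifying $\Ext^3(-,A)$ with the graded $k$-dual of the torsion submodule functor up to a twist — gives $\pd(e_i\overline{A})\le 2$, hence $\pd(e_iN)\le 1$. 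As with the other lemmas of this appendix, the input needed for the quadric $A$ (positive depth $\Rightarrow \pd\le 2$, i.e.\ the converse of Lemma \ref{lem:trivialsocle}, together with Cohen--Macaulayness) is obtained by transcribing the corresponding material of \cite{ATV2} or citing \cite{VdB19}.

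For $(i)$: I would show that $e_i\overline{A}$ is torsionless, i.e.\ embeds in a finitely generated free $A$-module; composing the surjection $e_iA\twoheadrightarrow e_i\overline{A}$ with such an embedding $e_i\overline{A}\hookrightarrow F$ produces an exact sequence $0\to e_iN\to e_iA\to F$ with $F$ free, exhibiting $e_iN$ as a second syzygy. For torsionlessness, the kernel of the biduality map $e_i\overline{A}\to(e_i\overline{A})^{**}$ is the largest submodule $M_0$ with $\Hom_{\Gr(A)}(M_0,A)=0$; by Cohen--Macaulayness of $A$ (for $0\neq M$ finitely generated, $j(M)+\gkdim(M)=3$ with $j(M)=\min\{\,j:\Ext^j(M,A)\neq 0\,\}$) the vanishing of $\Hom(M_0,A)$ forces $\gkdim(M_0)\le 2$, whence $M_0=0$ by the observation above. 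Finally $(e_i\overline{A})^{**}$ embeds in a free module: $(e_i\overline{A})^{*}$ is finitely generated ($A$ being graded coherent and $e_i\overline{A}$ finitely generated), so a finite free presentation $F_0\twoheadrightarrow(e_i\overline{A})^{*}$ dualizes to $(e_i\overline{A})^{**}\hookrightarrow F_0^{*}$; chaining, $e_i\overline{A}\hookrightarrow(e_i\overline{A})^{**}\hookrightarrow F_0^{*}$, which finishes $(i)$.
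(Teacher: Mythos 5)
Your reduction is sensible and your key observation---that $e_i\overline{A}$ has no nonzero submodule of $\gkdim\le 2$---is correct, but as written both halves of the argument have genuine gaps. First, everything is made to rest on homological machinery for the quadric $A$ that the paper does not have and that cannot simply be cited: the Cohen--Macaulay equality $j(M)+\gkdim(M)=3$ and the implication ``no finite-dimensional submodule $\Rightarrow \pd\le 2$'' (the converse of Lemma \ref{lem:trivialsocle}, i.e.\ local duality / Auslander--Buchsbaum). These are not in \cite{ATV2} even for graded three-dimensional regular algebras (there they are later theorems, of Levasseur/local-duality type), and for quadrics --- which are $\mathbb{Z}$-algebras, not graded algebras --- they would have to be re-proved from scratch, a task at least as heavy as this whole appendix; indeed the appendix is deliberately built so that the only homological inputs are the Gorenstein condition, Lemma \ref{lem:torsionext3} and the Hilbert-series duality of Corollary \ref{lem:Hilbertdualcomplex}. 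Second, in part (i) the step ``the kernel $K$ of the biduality map is the largest submodule with vanishing dual'' is not a formal fact: zero-dual submodules do lie in $K$, but $\Hom_{\Gr(A)}(K,A)=0$ fails over general noetherian rings (e.g.\ for $R=k[x,y]/(x^2,xy)$ and $M=R/(y)$ the kernel of biduality is a copy of $k$, which has nonzero dual because the maximal ideal is an associated prime of $R$); in our situation it would require the Auslander condition on top of Cohen--Macaulayness. Nor can you shortcut this by saying ``$A$ is a domain, so torsion-free modules embed in free ones'': the domain property of $A$ is Theorem \ref{thm:quadricdomain}, which is exactly what this lemma is feeding into, so invoking it here would be circular.

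For comparison, the paper's proof is a short element-theoretic argument using precisely the two lemmas proved immediately beforehand, which your proposal never uses: by Lemmas \ref{lem:annihilatorgkdim} and \ref{lem:leftgkdim} there is a homogeneous $b\in A_{j,i}$ with $b\,e_iN=0$ but $b\notin Ne_i$; since $\overline{A}$ is a $\mathbb{Z}$-domain (Lemma \ref{lem:ANdomain}), left multiplication by $b$ is injective on $e_i\overline{A}$, so $e_iN=\ker\bigl(b\cdot\colon e_iA\rightarrow e_jA\bigr)$, which exhibits $e_iN$ as a second syzygy, and (ii) then follows from (i) because $\gldim A=3$ --- the opposite logical order to yours. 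Note that this single element $b$ also produces directly the embedding $e_i\overline{A}\hookrightarrow e_jA$ you were after, with no bidual, depth, or duality theory needed.
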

\begin{proof}
$(ii)$ obviously follows from $(i)$, so we only need to prove $e_i N$ is a second syzygy. By Lemmas \ref{lem:annihilatorgkdim} and \ref{lem:leftgkdim} we know there exists an element $b \in A_{ji}$ such that $bN=0$ while $b \not \in Ne_i$. Hence $e_iN \subset \ker(b \cdot)$ while $e_i \overline{A} \xrightarrow{ b \cdot} e_j \overline{A}$ is injective by Lemma \ref{lem:ANdomain}. This implies that we have a left exact sequence
\[ 0 \rightarrow e_iN \rightarrow e_iA \xrightarrow{b \cdot} e_jA \]
finishing the proof.
\end{proof}
\subsection{Proof of Theorem \ref{thm:quadricdomain}}

Let $N$ be as above. By Lemma \ref{lem:ANdomain} it suffices to prove that $N=0$. Suppose by way of contradiction that this is not the case. Without loss of generality we can assume $e_0 N \neq 0$. Then by Lemma \ref{lem:syzygy} $pd(e_0N) \leq 1$ which by Lemma \ref{lem:pdimpliesgkdim} implies $\gkdim(e_0N) \geq 2$. As by construction $\gkdim(e_0N) \leq 2$, we have $\gkdim(e_0N) = 2$. Let $(e_0N)^D = \RHom(e_0N, A)$ denote the dual complex as above, by the projective dimension of $e_0N$, this complex only has homology at position 0 and 1. By Lemma \ref{lem:syzygy} $e_0N$ is a second syzygy and hence we have $h^1((e_0N)^D) = \Ext^1(e_0N,A) \cong \Ext^3(M,A)$ for some module $M$.\\
Lemma \ref{lem:torsionext3} then implies that  $h^1((e_0N)^D)$ is finite dimensional. In particular the Gelfand-Kirillov dimension and multiplicity of $(e_0N)^D$ are solely determined by $(e_0N)^*$. Corollary \ref{lem:Hilbertdualcomplex} gives $e((e_0N)^*) = e((e_0N)^D) = -e (e_0N)$. A contradiction!

\subsection{Proof of Theorem \ref{thm:quadricsfunctionfield}}
By Theorem \ref{thm:quadricdomain} and Proposition \ref{prp:channyman} it suffices to prove that all $e_iA$ are uniform modules. For this fix any $i$ and nonzero $M \subset e_iA$. Then $\gkdim(M)=3$. To see this let $x$ be any nonzero element in $M_j \subset A_{i,j}$, then by Theorem \ref{thm:quadricdomain} we have
\[ 3 = \gkdim(e_jA) = \gkdim(x e_j A) \leq \gkdim(M) \leq \gkdim(e_iA) = 3 \]
Now let $N$ be any other nonzero submodule of $e_iA$. Then obviously $\gkdim(N)=3$ as well. Suppose by way of contradiction that $M \cap N = 0$, then the following composition is a monomorphism:
\[ N \hookrightarrow e_iA \rightarrow e_iA/M \]
such that $\gkdim(e_iA/M)=3$. This gives a contradiction with Lemma \ref{lem:d-length}. Hence for any nonzero $M,N \subset e_A$ we must have $M \cap N \neq 0$, s that $e_iA$ is a uniform module.

\section{$I$-bases for quadratic Sklyanin algebras and Lemma \ref{lem:negativedim}}
\label{sec:Ibasis}
Throughout this section we assume $A = A(Y,\Lscr,\psi)$ is a quadratic Sklyanin algebra with Hilbert series $h$. $p$ and $q$ are points lying in different $\tau$-orbits with $\tau = \psi^3$. Our goal is to prove that for $a \leq -2:$
\begin{eqnarray} \notag \dim_k \left( \Hom(\Oscr_X(-i-a-j-b), \Oscr_X(-i-j) \otimes m_{d_j} \ldots m_{d_{j+b-1}} ) \right) = \\
\label{eq:h2a+b} h'(2a+b) = \begin{cases} (n+1)^2 & \textrm{ if } 2a+b = 2n \geq 0 \\ (n+1)(n+2) & \textrm{ if } 2a+b = 2n+1 > 0 \\ 0 & \textrm{ if } 2a+b < 0 \end{cases} \end{eqnarray}
where $d_i$ is as in \eqref{definitiondi}. Using $\left( \Oscr_X(n) \otimes m_p \right)(m) = \left(\Oscr_X\otimes m_{\psi^{-n} p}\right)(n+m)$ (see for example \cite[\S 6]{PresVdB}) and replacing $p$ and $q$ by $\psi^x p$, $\psi^y q$ for the appropriate values of $x$ and $y$ this is equivalent to proving
\begin{eqnarray} \label{eq:Ibasisproof} \dim_k \left( \Hom(\Oscr_X, \left( \Oscr_X \otimes m_{d_0} \ldots m_{d_{b-1}} \right)(a+b) ) \right) = h'(2a+b) \end{eqnarray}
We will prove this using $I$-bases.
\subsection{$I$-bases}
In this subsection we recall the definition and construction of an $I$-basis for a quadratic Sklyanin algebra. For a more thorough introduction to $I$-bases we refer the reader to \cite{TateVdB}.

\begin{definition}
Let $A = A(Y,\Lscr,\psi)$ be a quadratic Sklyanin algebra and let $G$ denote the monoid of monomials in $x,y,z$. Let $G_n$ denote the subset of all degree $n$ monomials. An $I$-basis for $A$ is then given by a map $v: G \rightarrow A$ satisfying the following properties:
\begin{enumerate}[i)]
\item $v(G_n)$ is a $k$-basis for $A_n$
\item for any $g \in G$ there are elements $x_g, y_g, z_g \in A_1$ such that 
\[ v(gx)=v(g)x_g , v(gy)=v(g)y_g \textrm{ and } v(gz)=v(g)z_g \]
\end{enumerate}
\end{definition}
\begin{remark} \label{rem:scalarmultiple}
Note that $v(x)=x_1, v(y)=y_1, v(z)=z_1$. An $I$ basis can hence alternatively be given by a collection of $\{x_g, y_g, z_g\}_{g \in G}$ satisfying $x_g y_{xg} = y_g x_{yg}, x_g z_{xg} = z_g x_{zg},y_g z_{yg} = z_g y_{zg}$
\end{remark}
In \cite[\S 4]{TateVdB} Tate and Van den Bergh give a construction for an $I$-basis for a Sklyanin algebra. In the case of a quadratic Sklyanin algebra this construction depends on the choice of a rational point $\overline{o} = (o_1,o_2,o_3) \in Y^3$. For each $g \in G$ one defines $\overline{og}$ by setting
\begin{eqnarray*}
 \overline{ox} & = & (\psi o_1, \psi^{-2} o_2, \psi^{-2} o_3) \\
 \overline{oy} & = & (\psi^{-2} o_1, \sigma o_2, \psi^{-2} o_3) \\
 \overline{oz} & = & (\psi^{-2} o_1, \psi^{-2} o_2, \psi o_3) \\
\end{eqnarray*}
such that if $g=x^\alpha y^\beta z^\lambda$ then $\overline{og} = (\psi^{\alpha-2\beta-2\lambda} o_1, \psi^{\beta-2\alpha-2\lambda} o_2, \psi^{\lambda-2\alpha-2\beta} o_3)$.\\

We then define $x_g, y_g, z_g \in A_1 = \Gamma(Y,\Lscr)$ (up to a scalar multiple) by setting
\begin{eqnarray*} x_g((\overline{og})_1) \neq 0 & x_g((\overline{og})_2) = 0 & x_g((\overline{og})_3) = 0 \\
y_g((\overline{og})_1) = 0 & y_g((\overline{og})_2) \neq 0 & y_g((\overline{og})_3) = 0 \\
z_g((\overline{og})_1) = 0 & z_g((\overline{og})_2) = 0 & z_g((\overline{og})_3) \neq 0
\end{eqnarray*}
(the scalar multiples are then chosen such that the relations in Remark \ref{rem:scalarmultiple} hold)\\

In particular
\begin{eqnarray*}
v(x)=x_1 \in \Gamma(Y,\Lscr(-o_2-o_3)) & = & \Hom_X(\Oscr_X,\Oscr_X(1) \otimes m_{o_2+o_3})\\
& = & \Hom_X(\Oscr_X, (\Oscr_X \otimes m_{\sigma^{-1}o_2+\sigma^{-1}o_3})(1))
\end{eqnarray*}
and analogously 
\begin{eqnarray*} v(y) \in \Hom_X(\Oscr_X,(\Oscr_X \otimes m_{\psi^{-1}o_1+\psi^{-1}o_3})(1) ) \\ v(z) \in \Hom_X(\Oscr_X,(\Oscr_X  \otimes m_{\psi^{-1}o_1+\psi^{-1}o_2})(1)) \end{eqnarray*}

Similar computations are possible for monomials of higher degree, for example: $v(xy) =x_1 \cdot y_x$ lies in the image of
\[ \Hom_X(\Oscr_X,(\Oscr_X \otimes m_{\psi^{-1}o_2+ \psi^{-1}o_3})(1)) \otimes \Hom_X(\Oscr_X,(\Oscr_X \otimes m_{\psi^{-1}\overline{ox}_1+ \psi^{-1}\overline{ox}_3})(1)) \hookrightarrow A_1 \otimes A_1 \rightarrow A_2 \]
This image is given by 
\begin{eqnarray*} \Hom\left( \Oscr_X, \left( \Oscr_X \otimes m_{\psi^{-1}o_2+ \psi^{-1} o_3} m_{\psi^{-1} o_1 + \psi^{-4} o_3} \right) (2) \right) \\
\cong \Hom\left( \Oscr_X, \left( \Oscr_X \otimes m_{\psi^{-1} o_1 + \psi^{-1}o_2+ \psi^{-1} o_3} m_{\tau^{-1}\psi^{-1} o_3} \right) (2) \right)  \end{eqnarray*}
Where we used the fact that $o_1, o_2, o_3$ lie in different $\tau$-orbits. Inspired by the above results we make the following choices for $o_1, o_2, o_3$:
\begin{eqnarray} \label{eq:pointso} o_1 = \sigma p \textrm{ and } o_2 = \sigma q \textrm{ and }o_3 = \sigma r \end{eqnarray}
Where $r$ is some point on $Y$ lying in a different $\tau$-orbit than $p$ and $q$.
(recall that we required $p$ and $q$ to lie in different $\tau$-orbits).

We can can inductively show the following: assume $\alpha, \beta , \lambda$ are nonnegative integers then:
\begin{eqnarray} \notag v(x^\alpha y^\beta z^\lambda) \in \Hom \Big( \Oscr_X , \big( \Oscr_X & \otimes & m_p m_{\tau^{-1}p} \ldots m_{\tau^{-\beta - \gamma +1} p} \\
 \label{eq:vliesin} & & m_q  m_{\tau^{-1}q} \ldots m_{\tau^{-\alpha - \gamma + 1} q} \\
\notag & & m_r m_{\tau^{-1}r} \ldots  m_{\tau^{-\alpha - \beta + 1} r}\big)(\alpha + \beta + \gamma) \Big) \end{eqnarray}

\subsection{Proof of Lemma  \ref{lem:negativedim}}
Using the above language of I-bases the following proposition reduces the proof of \eqref{eq:Ibasisproof} to a combinatorial problem
\begin{proposition} \label{prp:updownimplication}
Let $\alpha, \beta, \gamma \geq 0$ then
\begin{eqnarray} \notag v(x^\alpha y^\beta z^\lambda) & \in &  \Hom \Big( \Oscr_X , \big( \Oscr_X \otimes m_{d_0} \ldots m_{d_{h-1}} \big)(\alpha + \beta + \gamma) \Big) \\
\label{eq:implication} &  \Updownarrow & \\
\notag \left \lceil \frac{h}{2} \right \rceil \leq \beta + \gamma & \textrm{and} & \left \lfloor \frac{h}{2} \right \rfloor \leq \alpha + \gamma
 \end{eqnarray}
\end{proposition}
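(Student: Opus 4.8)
\emph{Proof idea.} The plan is to treat both implications geometrically after decomposing every product of $m$-bimodules into ``single--orbit chains''. Since $p,q,r$ lie in pairwise distinct $\tau$-orbits, any two of the points $\tau^{-i}p,\tau^{-j}q,\tau^{-k}r$ satisfy the hypothesis of Lemma \ref{lem:msplits}, so the corresponding bimodules $m_\bullet$ pairwise commute. Writing $C_t(\ell):=m_t m_{\tau^{-1}t}\cdots m_{\tau^{-(\ell-1)}t}$ for $\ell\ge 0$ (with $C_t(0)=o_X$), one rewrites, as subobjects of $o_X$,
\[
m_{d_0}m_{d_1}\cdots m_{d_{h-1}}\;=\;C_p\!\left(\lceil h/2\rceil\right)C_q\!\left(\lfloor h/2\rfloor\right),
\]
and the bimodule occurring in \eqref{eq:vliesin} rearranges to $C_p(\beta+\gamma)\,C_q(\alpha+\gamma)\,C_r(\alpha+\beta)$. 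I would record the elementary facts $IJ\hookrightarrow I$, $J'\hookrightarrow J\Rightarrow NJ'\hookrightarrow NJ$ and $J'N\hookrightarrow JN$ inside $o_X$, and that by \cite[Lemma 8.2.1]{VdB19} an inclusion of bimodules $M'\hookrightarrow M\hookrightarrow o_X$ induces an inclusion of the corresponding $\Hom$-spaces inside $A_{\alpha+\beta+\gamma}$.

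For the implication from the displayed inequalities to membership I would just chain these inclusions: if $\lceil h/2\rceil\le\beta+\gamma$ and $\lfloor h/2\rfloor\le\alpha+\gamma$ then
\[
C_p(\beta+\gamma)C_q(\alpha+\gamma)C_r(\alpha+\beta)\;\hookrightarrow\;C_p(\beta+\gamma)C_q(\alpha+\gamma)\;\hookrightarrow\;C_p(\lceil h/2\rceil)C_q(\lfloor h/2\rfloor)=m_{d_0}\cdots m_{d_{h-1}},
\]
so the $\Hom$-space of the first bimodule, which contains $v(x^\alpha y^\beta z^\gamma)$ by \eqref{eq:vliesin}, is contained in that of $m_{d_0}\cdots m_{d_{h-1}}$; this is exactly the membership asserted in \eqref{eq:implication}.

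The converse is the real content, and I expect it to be the main obstacle. I would argue by contraposition: assume, say, $\lceil h/2\rceil\ge\beta+\gamma+1$ (the case $\lfloor h/2\rfloor\ge\alpha+\gamma+1$ being symmetric under $p\leftrightarrow q$). Since $m_{d_0}\cdots m_{d_{h-1}}\hookrightarrow C_p(\lceil h/2\rceil)\hookrightarrow C_p(\beta+\gamma+1)$, it suffices to show that $v(x^\alpha y^\beta z^\gamma)$ does \emph{not} lie in the $\Hom$-space attached to $C_p(\beta+\gamma+1)$. For this I would pass to the commutative curve: using the comparison between the noncommutative surface and $Y$ (the epimorphism $\tilde{A}\to\tilde{B}$ of \S\ref{sec:quadricinquadratic}, in the spirit of \cite[Lemma 6.7]{PresVdB} and \eqref{eq:commutativecurve}) membership in that $\Hom$-space would force the image of $v(x^\alpha y^\beta z^\gamma)$ in $B_{\alpha+\beta+\gamma}$ to vanish at $\tau^{-(\beta+\gamma)}p$ in addition to $p,\tau^{-1}p,\dots,\tau^{-(\beta+\gamma-1)}p$. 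The remaining and most delicate point is to rule this out from the explicit Tate--Van den Bergh construction of the $I$-basis (\cite[\S 4]{TateVdB}): each factor $x_g,y_g,z_g$ of $v(x^\alpha y^\beta z^\gamma)$ has a zero divisor prescribed by $\overline{og}$, and with the choices $o_1=\sigma p$, $o_2=\sigma q$, $o_3=\sigma r$ of \eqref{eq:pointso} the divisor of zeros of the image of $v(x^\alpha y^\beta z^\gamma)$ in $B_{\alpha+\beta+\gamma}$ meets the $\tau$-orbit of $p$ in \emph{exactly} the points $p,\dots,\tau^{-(\beta+\gamma-1)}p$. In other words the containment \eqref{eq:vliesin} is tight along the orbits of $p$ and of $q$; making this tightness precise is where the hypothesis that $p,q$ (and the auxiliary point $r$) lie in pairwise distinct $\tau$-orbits is really used, and I expect the bookkeeping of the ``extra'' zeros of the $x_g,y_g,z_g$ to be the technical heart of the argument. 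Once it is in place the contradiction is immediate, and both directions of \eqref{eq:implication} follow.
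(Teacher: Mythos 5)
Your easy direction (inequalities $\Rightarrow$ membership) is fine and is essentially what the paper does: it is an immediate consequence of \eqref{eq:vliesin} together with the obvious inclusions of products of the $m_\bullet$'s. The problem is the converse, where your argument has a genuine gap at exactly the point you flag. Your plan is to pass to the curve and show that the divisor of zeros of the image $\overline{v(x^\alpha y^\beta z^\gamma)}$ in $B_{\alpha+\beta+\gamma}$ meets the $\tau$-orbit of $p$ in \emph{exactly} $p,\tau^{-1}p,\dots,\tau^{-(\beta+\gamma-1)}p$. This ``tightness'' is not proved, and it is in general false: each degree-one factor $x_g,y_g,z_g$ has, besides the two zeros prescribed by $\overline{og}$, a third zero which is determined only by linear equivalence (it is the point $t$ with $t+(\overline{og})_2+(\overline{og})_3\sim[\Lscr]$, and similarly for $y_g,z_g$). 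Nothing in the hypotheses of the proposition (only that $p,q,r$ lie in pairwise distinct $\tau$-orbits; $\Lscr$ is an arbitrary degree-three line bundle) prevents such an extra zero, after the relevant $\psi$-twist, from landing precisely at the forbidden point $\tau^{-(\beta+\gamma)}p$. When that happens your contrapositive collapses: the necessary condition you extract on $Y$ (vanishing at one more point of the orbit of $p$) is satisfied even though $v(x^\alpha y^\beta z^\gamma)$ need not lie in the Hom-space, because vanishing conditions on $Y$ only capture part of membership in $\Hom_X(\Oscr_X,(\Oscr_X\otimes m_{d_0}\cdots m_{d_{h-1}})(n))$ --- note that the colength of $m_{d_0}\cdots m_{d_{h-1}}$ in $o_X$ grows quadratically in $h$, while divisorial conditions on $Y$ only grow linearly, so the curve alone cannot certify non-membership. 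In short, the ``bookkeeping of the extra zeros'' is not a technicality to be filled in later; it is where the proposed route breaks down without extra genericity assumptions that the proposition does not make.

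For comparison, the paper avoids any pointwise analysis of divisors and argues globally: it sets $M_n=\Hom_X(\Oscr_X,(\Oscr_X\otimes m_{d_0}\cdots m_{d_{h-1}})(n))$ and $M'_n=\mathrm{Span}\{v(x^\alpha y^\beta z^\lambda)\mid \lceil h/2\rceil\le\beta+\gamma,\ \lfloor h/2\rfloor\le\alpha+\gamma\}$, observes $M'\subset M$ (your easy direction), computes that $\dim_k M_n=\dim_k M'_n$ for $n\gg 0$ (Lemma \ref{lem:Ibasis1}, using Lemma \ref{cor:standardvanishing} and the colength formula on one side and a count of monomials on the other), and then uses the multiplicative property of the $I$-basis to show that a strict inclusion $M'_{n_0}\subsetneq M_{n_0}$ would propagate to all degrees $n\ge n_0$ (Lemma \ref{lem:Ibasis2}), contradicting the equality of dimensions. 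Hence $M=M'$, and since $v(G_n)$ is a basis of $A_n$ this yields the hard implication. If you want to salvage a geometric proof along your lines you would have to control all the ``extra'' zeros uniformly in $(\Lscr,p,q,r)$, which is precisely what the Hilbert-function argument is designed to sidestep.
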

\begin{proof}
$\Uparrow$ follows from $(\ref{eq:vliesin})$. For $\Downarrow$ we need some more computations ... 
\end{proof}
Fix $h \in \mathbb{N}$ and define the following right submodules $M$ and $M'$ of $A_A$:
\begin{eqnarray*}
M_n & = & \Hom_X \left( \Oscr_X, \left( \Oscr_X \otimes m_{d_0} \ldots m_{d_{h-1}} \right) (n) \right) \\
M'_n & = & \textrm{Span} \{ v(x^\alpha y^\beta z^\lambda) \mid \alpha + \beta + \lambda = n, \lceil h/2 \rceil \leq \beta + \gamma \textrm{ and } \lfloor h/2 \rfloor \leq \alpha + \gamma \}
\end{eqnarray*}
To see that $M'$ is a right $A$-module, recall that for each $g = x^\alpha y^\beta z^\lambda$, the elements $x_g, y_g, z_g$ give a $k$-basis for $A_1$ hence the image of
\[ k \cdot v(x^\alpha y^\beta z^\lambda) \otimes A_1 \rightarrow A_n \otimes A_1 \rightarrow A_{n+1} \]
lies inside $\textrm{Span} \{ v(x^{\alpha+1} y^\beta z^\lambda), v(x^\alpha y^{\beta+1} z^\lambda) , v(x^\alpha y^\beta z^{\lambda+1}) \}$.\\
We then have the following lemmas.
\begin{lemma} \label{lem:Ibasis1}
Let $M$ and $M'$ be as above then for all $n$ sufficiently large we have 
\[ \dim_k (M_n) = \dim_k(M'_n) \]
\end{lemma}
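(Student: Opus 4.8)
The plan is to compute both sides and check that they agree for $n\gg0$. By the implication $\Uparrow$ of Proposition~\ref{prp:updownimplication}, every spanning vector $v(x^\alpha y^\beta z^\lambda)$ of $M'_n$ lies in $M_n$; hence $M'\subseteq M$ as right $A$-modules and in particular $\dim_k M'_n\le\dim_k M_n$ for all $n$. Since $v(G_n)$ is a $k$-basis of $A_n$, the relevant $v(x^\alpha y^\beta z^\lambda)$ are linearly independent, so
\[
\dim_k M'_n \;=\; \#\bigl\{(\alpha,\beta,\lambda)\in\mathbb{N}^3 \;:\; \alpha+\beta+\lambda=n,\ \beta+\lambda\ge\lceil h/2\rceil,\ \alpha+\lambda\ge\lfloor h/2\rfloor\bigr\},
\]
and it remains to show that for $n$ large this count equals $\dim_k M_n$.

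First I would evaluate the count. The total number of triples with $\alpha+\beta+\lambda=n$ is $h(n)$. The triples failing $\beta+\lambda\ge\lceil h/2\rceil$ are exactly those with $n-\alpha\in\{0,\dots,\lceil h/2\rceil-1\}$, of which there are $\sum_{k=0}^{\lceil h/2\rceil-1}(k+1)=\binom{\lceil h/2\rceil+1}{2}$; likewise there are $\binom{\lfloor h/2\rfloor+1}{2}$ triples failing $\alpha+\lambda\ge\lfloor h/2\rfloor$. For $n\ge h$ these two excluded sets are disjoint (one forces $\alpha$ large, the other $\beta$ large, while $\alpha+\beta\le n$), so
\[
\dim_k M'_n \;=\; h(n)-\binom{\lceil h/2\rceil+1}{2}-\binom{\lfloor h/2\rfloor+1}{2}.
\]
A direct check gives $\binom{\lceil h/2\rceil+1}{2}+\binom{\lfloor h/2\rfloor+1}{2}=a(a+1)$ when $h=2a$ and $(a+1)^2$ when $h=2a+1$, i.e.\ this is exactly the colength $c$ of $m_{d_0}\cdots m_{d_{h-1}}$ inside $o_X$ recorded in \eqref{eq:colength}.

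Next I would compute $\dim_k M_n$ in the same way as in Steps 5)--6) of \S\ref{subsec:quadricinverse}: apply $\Hom_X(\Oscr_X,(-)(n))$ to the short exact sequence $0\to m_{d_0}\cdots m_{d_{h-1}}\to o_X\to \Cscr\to0$ with $\Cscr$ a torsion bimodule of length $c$; using $\dim_k\Hom_X(\Oscr_X,\Oscr_X(n))=h(n)$ together with the vanishing $\Ext^1_X\bigl(\Oscr_X,(\Oscr_X\otimes m_{d_0}\cdots m_{d_{h-1}})(n)\bigr)=0$ for $n$ large from Lemma~\ref{cor:standardvanishing}, one obtains $\dim_k M_n=h(n)-c$ for all $n$ sufficiently large. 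Combined with the previous paragraph this gives $\dim_k M_n=\dim_k M'_n$ for $n\gg0$, as claimed.

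The only real obstacle is seeing why one must argue through dimensions at all. The reverse implication $\Downarrow$ of Proposition~\ref{prp:updownimplication} only says that an $I$-basis monomial \emph{contained in} $M_n$ already lies in $M'_n$; it does not yield $M_n\subseteq M'_n$, since a subspace of $A_n$ need not be the span of the $I$-basis monomials it contains. Replacing this by the numerical identity $\dim_k M_n=h(n)-c=\dim_k M'_n$ is what closes the gap, and it is precisely this that forces the hypothesis ``$n$ sufficiently large'', as the identity relies both on the $\Ext^1$-vanishing of Lemma~\ref{cor:standardvanishing} and on the disjointness of the two excluded sets of monomials.
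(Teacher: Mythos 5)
Your proposal is correct and follows essentially the same route as the paper: both sides are computed explicitly for $n\gg0$, with $\dim_k M_n=h(n)-c$ coming from Lemma~\ref{cor:standardvanishing} together with the colength computation, and $\dim_k M'_n$ from counting monomials using the disjointness of the two excluded sets. The only cosmetic difference is that you treat $h$ even and odd uniformly via floors and ceilings, whereas the paper does the even case and notes the odd case is similar.
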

\begin{proof}
We prove this for $h$ even. The case $h$ odd is completely similar. Let $h=2a$, then we have for $n \geq 2a-1$ we have 
\[ \dim_k (M_n) = \frac{(n+1)(n+2)}{2} - 2 \cdot \frac{a(a+1)}{2} = \dim_k(M'_n) \]
for $M$ this follows from Lemma \ref{cor:standardvanishing} and \cite[Corollary 5.2.4]{VdB19}. For $M'$ this follows from the fact that for $n \geq 2a-1$ at most one of the inequalities $\alpha + \lambda \geq a, \beta + \lambda \geq a \}$ can fail when $\alpha + \beta + \lambda = n$.
\end{proof}
\begin{lemma} \label{lem:Ibasis2}
Let $N$ be a right $A$-modules such that $M' \subset N \subset A$ and suppose there is an $n_0 \in \mathbb{N}$ such that $M'_{n_0} \subsetneq N_{n_0}$ then we have $M'_n \subsetneq N_n$ for all $n \geq n_0$
\end{lemma}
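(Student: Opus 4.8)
The plan is to prove the statement by induction on $n\geq n_0$, with the base case supplied by the hypothesis, so that the whole content is the induction step: if $M'_n\subsetneq N_n$ for some $n\geq n_0$, then $M'_{n+1}\subsetneq N_{n+1}$. I would carry this out entirely inside the $I$-basis, exploiting that right multiplication by $x_g,y_g,z_g$ sends $v(g)$ to $v(gx),v(gy),v(gz)$. Note first that for any monomial $g$ of degree $n$ the three vectors $v(gx),v(gy),v(gz)$ are distinct members of the basis $v(G_{n+1})$ of $A_{n+1}$, hence linearly independent; since $v(g)(\mu x_g+\nu y_g+\rho z_g)=\mu v(gx)+\nu v(gy)+\rho v(gz)$, this forces $\{x_g,y_g,z_g\}$ to be a $k$-basis of the three-dimensional space $A_1$.

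First I would translate the definition of $M'$ into exponent inequalities. With $a=\lceil h/2\rceil$ and $b=\lfloor h/2\rfloor$, a monomial $x^\alpha y^\beta z^\lambda$ of degree $n$ lies in (the index set of) $M'_n$ iff $\alpha\leq n-a$ and $\beta\leq n-b$; otherwise I will call it \emph{bad}, and observe that a bad monomial of degree $n$ satisfies $\alpha\geq n-a+1$ (``type $X$'') or $\beta\geq n-b+1$ (``type $Y$''). Since $v(G_n)$ is a basis, $M'_n$ is exactly the span of the good $v(g)$, so an element $\eta\in A_n$ fails to lie in $M'_n$ precisely when, in its $I$-basis expansion $\eta=\sum_g c_g v(g)$, one has $c_g\neq 0$ for some bad $g$.

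Now fix $n\geq n_0$ with $M'_n\subsetneq N_n$, choose $\eta\in N_n\setminus M'_n$, and let $B$ be the nonempty set of bad monomials $g$ with $c_g\neq 0$. Suppose $B$ contains a type-$X$ monomial. Pick $g_0=x^{\alpha_0}y^{\beta_0}z^{\lambda_0}\in B$ with $\alpha_0$ maximal, so $\alpha_0\geq n-a+1$, and consider $\eta\,x_{g_0}\in N_{n+1}$. Expanding $x_{g_0}$ in the basis $\{x_g,y_g,z_g\}$ attached to each $g$, the only monomials of degree $n$ that can contribute to the $v(g_0x)$-coefficient of $\eta\,x_{g_0}$ are $g_0$ (via $x$, with coefficient $c_{g_0}$) and, when the exponents permit, $x^{\alpha_0+1}y^{\beta_0-1}z^{\lambda_0}$ and $x^{\alpha_0+1}y^{\beta_0}z^{\lambda_0-1}$ (via $y$, resp. $z$). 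These last two have first exponent $\alpha_0+1>\alpha_0$ and are themselves bad of type $X$ at degree $n$ (since $\alpha_0+1\geq n-a+2\geq n-a+1$), so they do not lie in $B$ by maximality of $\alpha_0$, i.e. their $c$-coefficients vanish. Hence the $v(g_0x)$-coefficient of $\eta\,x_{g_0}$ equals $c_{g_0}\neq 0$; and $g_0x$ is bad of type $X$ at degree $n+1$ because $\alpha_0+1\geq(n+1)-a+1$. Therefore $\eta\,x_{g_0}\in N_{n+1}\setminus M'_{n+1}$. If instead $B$ has no type-$X$ element, then every element of $B$ is of type $Y$, and the mirror-image argument — take $g_0\in B$ with $\beta_0$ maximal and multiply by $y_{g_0}$ — again produces an element of $N_{n+1}\setminus M'_{n+1}$. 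Since $M'_{n+1}\subseteq N_{n+1}$ always holds, this gives $M'_{n+1}\subsetneq N_{n+1}$ and closes the induction.

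I expect the only delicate point to be the ``no cancellation'' step just used: one must check that the two auxiliary monomials competing for the $v(g_0x)$-coefficient really do have degree $n$ and are bad of type $X$ at degree $n$, so that maximality of $\alpha_0$ applies — and this is the short computation $\alpha_0+1\geq n-a+2\geq n-a+1$ together with a degree count. Everything else is formal bookkeeping with the $I$-basis (the fact that $\{x_g,y_g,z_g\}$ is a basis of $A_1$, and that $N$ is a graded right module so $\eta\in N_n$ implies $\eta\,x_{g_0}\in N_{n+1}$); no estimate or growth argument enters. As a sanity check, combined with Lemma~\ref{lem:Ibasis1} this forces $M'=M$ in all degrees, which is exactly what is needed to deduce Lemma~\ref{lem:negativedim} for $a\leq-2$.
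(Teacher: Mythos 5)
Your proof is correct and follows essentially the same route as the paper: induct on $n$, pick a ``bad'' monomial with maximal relevant exponent among the nonzero coefficients, right-multiply by the corresponding $I$-basis element $x_{g_0}$ (or $y_{g_0}$ in the mirror case), and check that the coefficient of $v(g_0x)$ survives while $g_0x$ remains bad in degree $n+1$. Your explicit no-cancellation bookkeeping (identifying the only two competing monomials and ruling them out by maximality) is in fact a slightly more careful rendering of the paper's displayed formula, so nothing further is needed.
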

\begin{proof}
By induction it suffices to show  $M'_{n_0 +1} \subsetneq N_{n_0 +1}$. For this choose some nonzero element in $N_{n_0} \setminus M'_{n_0}$. This element can be written as $\displaystyle \sum_{\substack{\alpha, \beta, \lambda \\ \alpha + \beta + \lambda =n}} t_{ \alpha, \beta, \lambda } v(x^\alpha y^\beta z^\lambda)$. Without loss of generality we can assume there is a $t_{\alpha_0, \beta_0, \lambda_0} \neq 0$ with $\displaystyle \beta_0 + \lambda_0 < \left \lceil \frac{h}{2} \right \rceil$. Choose such a $t$ with $\alpha_0$ maximal and let $g = x^{\alpha_0}y^{\beta_0}z^{\lambda_0}$ then
\[ \left( \sum_{\substack{\alpha, \beta, \lambda \\ \alpha + \beta + \lambda =n}} t_{ \alpha, \beta, \lambda } v(x^\alpha y^\beta z^\lambda) \right) x_g  = t_{ \alpha_0, \beta_0, \lambda_0 } v(x^{\alpha_0+1} y^{\beta_0} z^{\lambda_0}) + \sum_{\substack{\alpha', \beta', \lambda' \\ \alpha' + \beta' + \lambda' =n+1 \\
\alpha' \leq \alpha_0}} t'_{ \alpha', \beta', \lambda'} v(x^{\alpha'} y^{\beta'} z^{\lambda'}) \]
is a nonzero element in $N_{n_0+1} \setminus M'_{n_0+1}$
\end{proof}
\begin{proof}[continuation of Proposition \ref{prp:updownimplication}]
We have already proven $\Downarrow$ which is equivalent to $M' \subset M$. It then immediately follows from Lemmas \ref{lem:Ibasis1} and \ref{lem:Ibasis2} that $M = M'$, finishing the proof of the proposition.
\end{proof}

We can now prove $(\ref{eq:Ibasisproof})$. By Proposition \ref{prp:updownimplication} it suffices to count the number of triples of natural numbers $\alpha, \beta, \lambda$ satisfying $\alpha + \beta + \lambda = a+b$ and $\left \lceil \frac{b}{2} \right \rceil \leq \beta + \gamma$ and $ \left \lfloor \frac{b}{2} \right \rfloor \leq \alpha + \gamma$. The latter is equivalent to $\alpha \leq a+b-\left \lceil \frac{b}{2} \right \rceil, \beta \leq a+b - \left \lfloor \frac{b}{2} \right \rfloor$ such that our problem has turned into a combinatorial problem: we need to show

\begin{lemma} \label{prp:Ibasiscomb}
\begin{eqnarray}
\notag \# \Big \{ (\alpha, \beta, \lambda) \in \mathbb{N}^3 & \mid & \alpha + \beta + \lambda = a+ b, \\ 
\label{eq:combinatorial} & & \alpha \leq a+b-\left \lceil \frac{b}{2} \right \rceil, \beta \leq a+b - \left \lfloor \frac{b}{2} \right \rfloor \Big \} = h'(2a+b) 
\end{eqnarray}
\end{lemma}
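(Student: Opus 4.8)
The plan is to turn \eqref{eq:combinatorial} into a lattice-point count in a rectangle. Recall that in the setting of this appendix we have $a \le -2$; the argument below will only use $a \le 0$ and $b \ge 0$. First I would write $c = \lfloor b/2 \rfloor$, so that $b = 2c + \epsilon$ with $\epsilon = b - 2c \in \{0,1\}$ and $\lceil b/2 \rceil = b - c$. Setting $d = a + c$, the two constraints on the right-hand side of \eqref{eq:combinatorial} become $\alpha \le d$ and $\beta \le d + \epsilon$, while the prescribed total degree is $\alpha + \beta + \lambda = a + b = d + c + \epsilon$. Since $\lambda$ is determined by $\alpha$ and $\beta$, the set in \eqref{eq:combinatorial} is in bijection with
\[
S := \{(\alpha,\beta) \in \mathbb{N}^2 \mid \alpha \le d,\ \beta \le d + \epsilon,\ \alpha + \beta \le d + c + \epsilon\}.
\]

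The key point is that the third inequality defining $S$ is redundant. Indeed, on the rectangle $0 \le \alpha \le d$, $0 \le \beta \le d + \epsilon$ the sum $\alpha + \beta$ is at most $2d + \epsilon$, and $2d + \epsilon \le d + c + \epsilon$ holds exactly because $d = a + c \le c$, i.e.\ because $a \le 0$. (This is where the hypothesis is genuinely used: for $a \ge 1$ the stated identity fails.) Hence $S$ is the full rectangle, so $\#S = \#\{0,\dots,d\} \cdot \#\{0,\dots,d+\epsilon\}$, which equals $(d+1)(d+1+\epsilon)$ when $d \ge 0$ and $0$ when $d < 0$.

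It then remains only to identify this with $h'(2a+b)$, and since $2a + b = 2d + \epsilon$ this is immediate from the definition of $h'$ in \eqref{eq:h2a+b}: for $\epsilon = 0$, $d \ge 0$ we get $(d+1)^2 = h'(2d)$; for $\epsilon = 1$, $d \ge 0$ we get $(d+1)(d+2) = h'(2d+1)$; and for $d < 0$ both $\#S$ and $h'(2d+\epsilon)$ vanish (note $2d+\epsilon < 0$ in that range, and in particular $h'(-1) = 0$). Everything here is elementary, so there is no serious obstacle; the only thing requiring attention is the redundancy of the third constraint, which is precisely the input $a \le 0$ coming from the geometry.
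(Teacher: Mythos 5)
Your proof is correct, and it takes a noticeably different route from the paper's. The paper works on the full simplex $\alpha+\beta+\lambda=a+b$ and applies inclusion--exclusion: it subtracts the counts violating $\alpha\le a+b-\lceil b/2\rceil$ and $\beta\le a+b-\lfloor b/2\rfloor$, adds back their intersection, converts each term via a shift bijection into a count of triples of smaller total degree, applies the formula $\#\{\alpha+\beta+\lambda=n\}=\tfrac{(n+1)(n+2)}{2}$, and then grinds through the algebra separately for $b$ even and $b$ odd (having first disposed of the case $2a+b<0$ by hand). You instead eliminate $\lambda$, observe that the constraint $\lambda\ge 0$, i.e.\ $\alpha+\beta\le d+c+\epsilon$, is inactive on the rectangle $0\le\alpha\le d$, $0\le\beta\le d+\epsilon$ precisely because $d=a+c\le c$ (i.e.\ $a\le 0$), and read off the product $(d+1)(d+1+\epsilon)$, with the degenerate cases absorbed uniformly into $d<0$. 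Your argument is shorter, avoids the even/odd algebra, makes the role of the sign of $a$ transparent (and correctly notes the identity fails for $a\ge 1$), and matches $h'(2a+b)$ case by case without extra work; the paper's inclusion--exclusion is more mechanical and would still be the natural tool in a regime where the simplex constraint is active, but for the range actually needed here ($a\le -2$) your rectangle count is cleaner and fully rigorous.
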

We first show that the left hand side equals zero when $2a+b<0$. Note that in this case $a+b<\frac{b}{2} \leq \left \lceil \frac{b}{2} \right \rceil$, hence the condition $\alpha \leq a+b-\left \lceil \frac{b}{2} \right \rceil$ contradicts $\alpha \in \mathbb{N}$ such that the left hand side of \eqref{eq:combinatorial} is zero. Hence from now on we can assume $2a+b \geq 0$.\\


This combinatorial problem has a graphical interpretation: it asks for counting the number of dots in Figure \ref{fig:dots1} whose coefficients $(\alpha,\beta,\lambda)$ satisfy the above inequalities. 
\begin{figure}[h]
\includegraphics[height=6cm]{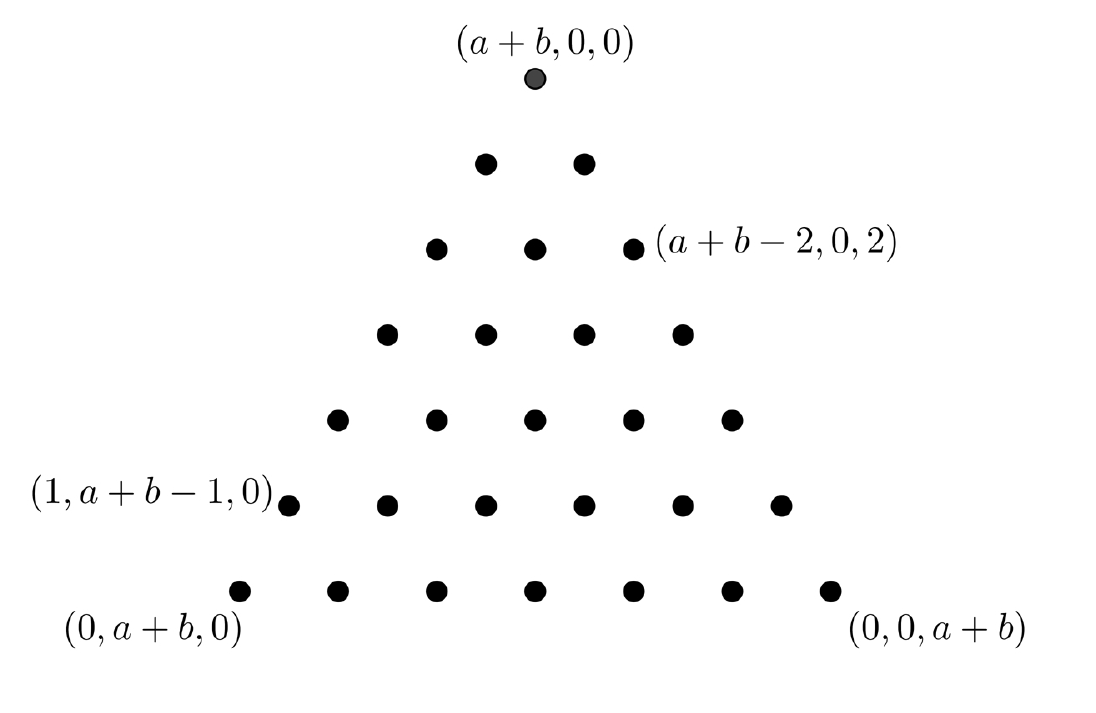}
\caption{Combinatorial problem} \label{fig:dots1}
\end{figure}

In order to compute this number of dots we can rewrite $(\ref{eq:combinatorial})$ as:
\begin{eqnarray}
\notag \# \Big \{ (\alpha, \beta, \lambda) \in \mathbb{N}^3 \mid \alpha + \beta + \lambda = a+ b, \alpha \leq a+b-\left \lceil \frac{b}{2} \right \rceil, \beta \leq a+b - \left \lfloor \frac{b}{2} \right \rfloor \Big \} & = & \\
 \notag \# \Big\{ (\alpha, \beta, \lambda) \in \mathbb{N}^3 \mid \alpha + \beta + \lambda = a+ b \Big\} - & & \\
\label{eq:calculations} \textcolor{LimeGreen}{  \# \Big \{ (\alpha, \beta, \lambda) \in \mathbb{N}^3 \mid \alpha + \beta + \lambda = a+ b, \alpha \geq a+b+1-\left \lceil \frac{b}{2} \right \rceil \Big\} } - & & \\
\notag \textcolor{Red}{\# \Big \{ (\alpha, \beta, \lambda) \in \mathbb{N}^3 \mid \alpha + \beta + \lambda = a+ b, \beta \geq a+b+1 - \left \lfloor \frac{b}{2} \right \rfloor \Big \}} + & & \\
\notag \textcolor{Blue}{\# \Big \{ (\alpha, \beta, \lambda) \in \mathbb{N}^3 \mid \alpha + \beta + \lambda = a+ b, \alpha \geq a+b+1-\left \lceil \frac{b}{2} \right \rceil, \beta \geq a+b+1 - \left \lfloor \frac{b}{2} \right \rfloor \Big \} }
\end{eqnarray}

The green, red and blue numbers can be visualized as in Figure \ref{fig:dots2}. 
\begin{figure}[h]
\includegraphics[height=6cm]{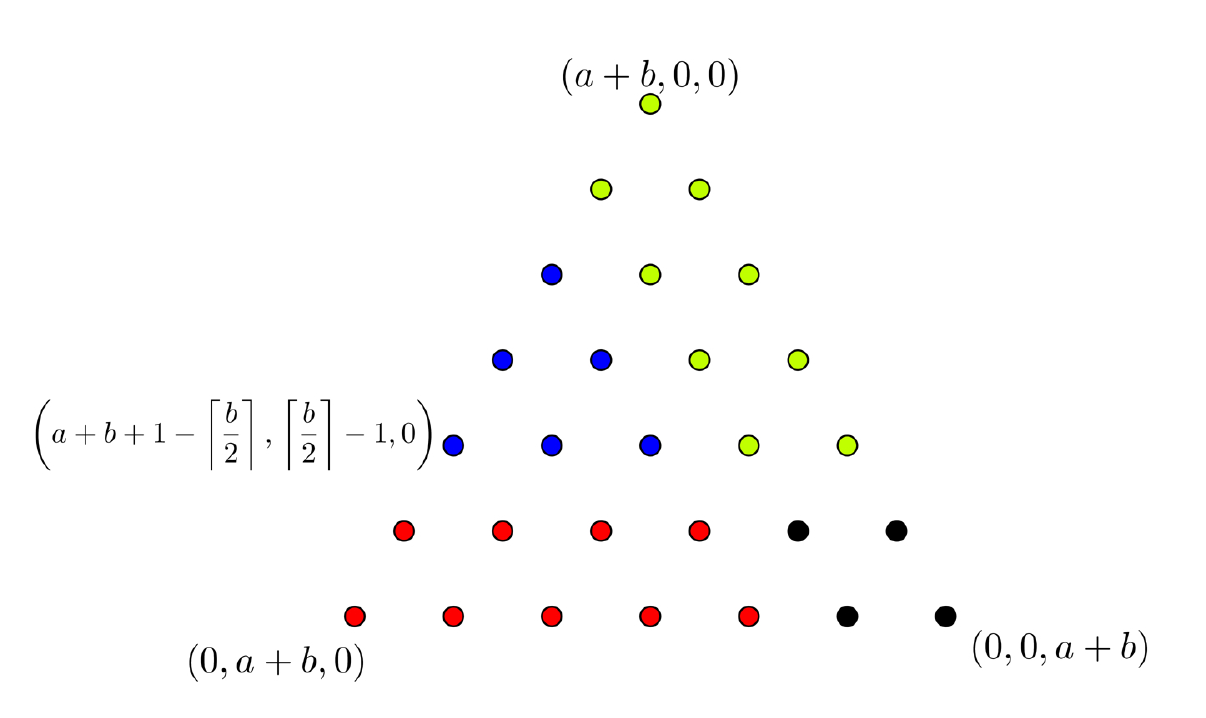}
\caption{Visualizing formula \eqref{eq:calculations}} \label{fig:dots2}
\end{figure}

The reason for writing our combinatorial problem as in $(\ref{eq:calculations})$ is the existence of the following bijection:
\begin{eqnarray*} \{ (\alpha, \beta, \lambda) \in \mathbb{N}^3 \mid \alpha + \beta + \lambda = n_1 , \alpha \geq n_2 \} & \rightarrow & \{ (\alpha, \beta, \lambda) \in \mathbb{N}^3 \mid \alpha + \beta + \lambda = n_1-n_2 \}:\\ (\alpha, \beta, \lambda) & \mapsto & (\alpha-n_2, \beta, \lambda) \end{eqnarray*}
Using similar bijections for the other sets we can write $(\ref{eq:calculations})$ as
\begin{eqnarray}
\notag \# \Big \{ (\alpha, \beta, \lambda) \in \mathbb{N}^3 \mid \alpha + \beta + \lambda = a+ b, \alpha \leq a+b-\left \lceil \frac{b}{2} \right \rceil, \beta \leq a+b - \left \lfloor \frac{b}{2} \right \rfloor \Big \} & = & \\
\notag \# \Big\{ (\alpha, \beta, \lambda) \in \mathbb{N}^3 \mid \alpha + \beta + \lambda = a+ b \Big\} - & & \\
 \label{eq:calculations2} \textcolor{LimeGreen}{  \# \Big \{ (\alpha, \beta, \lambda) \in \mathbb{N}^3 \mid \alpha + \beta + \lambda = \left \lceil \frac{b}{2} \right \rceil -1 \Big\} } - & & \\
 \notag \textcolor{Red}{\# \Big \{ (\alpha, \beta, \lambda) \in \mathbb{N}^3 \mid \alpha + \beta + \lambda = \left \lfloor \frac{b}{2} \right \rfloor -1 \Big \}} + & & \\
 \notag \textcolor{Blue}{\# \Big \{ (\alpha, \beta, \lambda) \in \mathbb{N}^3 \mid \alpha + \beta + \lambda = \left \lceil \frac{b}{2} \right \rceil + \left \lfloor \frac{b}{2} \right \rfloor -a-b-2 = -a-2 \Big \} }
\end{eqnarray}
Now we can use the following: for all $n \geq 0$:
\begin{eqnarray}
\notag \# \{ (\alpha, \beta, \lambda) \in \mathbb{N}^3 \mid \alpha + \beta + \lambda = n \} & = & \sum_{i=0}^n \# \{ (\alpha, \beta) \in \mathbb{N}^2 \mid \alpha + \beta = i \} \\
\label{eq:calculations3} & = & \sum_{i=0}^n i+1 \\
\notag & = & \frac{(n+2)(n+1)}{2}
\end{eqnarray}
(recall that we assumed $a \leq -2$ and $a+b \geq 0$ such that $\displaystyle -a-2, \left \lceil \frac{b}{2} \right \rceil -1,  \left \lfloor \frac{b}{2} \right \rfloor-1 \geq 0$)
Hence combining \eqref{eq:calculations2} and \eqref{eq:calculations3} we find that \[ \# \Big \{ (\alpha, \beta, \lambda) \in \mathbb{N}^3 \mid \alpha + \beta + \lambda = a+ b, \alpha \leq a+b-\left \lceil \frac{b}{2} \right \rceil, \beta \leq a+b - \left \lfloor \frac{b}{2} \right \rfloor \Big \} \]
equals
\begin{eqnarray}
\notag \frac{(a+b+2)(a+b+1)}{2} - \frac{\left\lceil \frac{b}{2} \right \rceil \left( \left \lceil \frac{b}{2} \right \rceil +1 \right)}{2} - \frac{\left \lfloor \frac{b}{2} \right \rfloor \left( \left \lfloor \frac{b}{2} \right \rfloor +1 \right)}{2} + \frac{(-1-a)(-a)}{2} & = & \\
\label{eq:calculations4} \frac{a^2+2ab+b^2+3a+3b+2-\left \lceil \frac{b}{2} \right \rceil ^2 - \left \lfloor \frac{b}{2} \right \rfloor^2 -\left \lceil \frac{b}{2} \right \rceil - \left \lfloor \frac{b}{2} \right \rfloor +a^2+a}{2} & = & \\
\notag \frac{2a^2+2ab+b^2+4a+2b+2-\left \lceil \frac{b}{2} \right \rceil ^2 - \left \lfloor \frac{b}{2} \right \rfloor^2 }{2} & &
\end{eqnarray}
We now treat the cases $b$ even and $b$ odd separately. First assume $b=2r$ for some $r \in \mathbb{N}$. Then \eqref{eq:calculations4} equals
\[ \frac{2a^2+4ar+4r^2+4a+4r+2- 2 r^2 }{2} = a^2+2ar+r^2+2a+2r+1 = (a+r+1)^2 \] 
Next assume $b=2r+1$. Then \eqref{eq:calculations4} equals
\begin{eqnarray*} \frac{2a^2+4ar+2a+4r^2+4r+1+4a+4r+2+2- (r+1)^2 - r^2 }{2} & = & \\
\frac{2a^2+4ar+2r^2+6r+6a+4}{2} & = & \\
a^2+2ar+r^2+3r+3a+2 & = & \\
 (a+r+1)(a+r+2) & & \end{eqnarray*}
Finally letting $n=a+r$ we see that this agrees with \eqref{eq:h2a+b}
\bibliographystyle{amsplain}
\bibliography{Symreferences}

\end{document}